\documentclass[11pt,reqno]{amsart}

\usepackage{etex}
\usepackage{amsmath,amssymb,amsthm,amsfonts,mathrsfs,dsfont,mathtools,mathrsfs}
\usepackage[frame,cmtip,arrow,matrix,line,graph,curve]{xy}
\usepackage{graphpap,color,paralist,pstricks}
\usepackage[mathscr]{eucal}
\usepackage{mathabx}
\usepackage[pdftex,colorlinks,backref=page,citecolor=blue]{hyperref}
\usepackage[latin1]{inputenc}
\usepackage{tikz}
\usepackage{tikz-cd}
\usetikzlibrary{fit,matrix}
\usepackage{epic,eepic}
\usepackage{yfonts}
\usepackage[shortlabels]{enumitem}
\usepackage{caption}
\usepackage{extpfeil}
\usepackage[normalem]{ulem}
\usepackage{tikz-cd}

\usepackage{enumitem}

\setlist[enumerate]{label=(\arabic*), itemsep=3pt, leftmargin=*}

\usepackage{todonotes}

\theoremstyle{definition}
\newtheorem*{theorem*}{Theorem}
\newtheorem{thmx}{Theorem}
\newtheorem{theorem}{Theorem}[section]
\newtheorem{definition}[theorem]{Definition}

\newtheorem{lemma}[theorem]{Lemma}
\newtheorem{proposition}[theorem]{Proposition}
\newtheorem{corollary}[theorem]{Corollary}
\theoremstyle{remark}
\newtheorem{remark}[theorem]{Remark}
\newtheorem{example}[theorem]{Example}

\newcommand*{\ub}{\underline{\smash{\beta}}}
\newcommand{\excise}[1]{}

\newcommand{\im}{\operatorname{im}}
\newcommand{\id}{\operatorname{id}}
\newcommand{\coker}{\operatorname{coker}}

\newcommand{\C}{{\mathbb{C}}}
\newcommand{\Z}{{\mathbb{Z}}}
\newcommand{\Q}{{\mathbb{Q}}}
\renewcommand{\H}{\operatorname{H}}
\newcommand{\IH}{\operatorname{IH}}

\renewcommand{\S}{\mathrm{S}}

\newcommand{\N}{\mathrm{N}}

\renewcommand{\P}{\mathbb{P}}
\newcommand{\F}{\mathbb{F}}

\renewcommand{\b}{\ub}

\newcommand{\crk}{\operatorname{crk}}

\newcommand{\subs}{\subseteq}
\newcommand{\sups}{\supseteq}

\newcommand{\mmod}{\text{--}\mathrm{mod}}

\newcommand{\D}{\operatorname{D}}

\newcommand{\M}{\mathrm{M}}

\newcommand{\rk}{\operatorname{rk}}
\newcommand{\IC}{\operatorname{IC}}
\newcommand{\gr}{\operatorname{gr}}

\newcommand{\cC}{\mathcal{C}}
\newcommand{\cV}{\mathcal{V}}
\newcommand{\cW}{\mathcal{W}}

\newcommand{\cL}{\mathscr{L}}
\newcommand{\cE}{\mathcal{E}}
\newcommand{\cM}{\mathcal{M}}

\newcommand{\cF}{\mathcal{F}}
\newcommand{\cG}{\mathcal{G}}
\newcommand{\cH}{\mathcal{H}}
\newcommand{\cS}{\mathcal{S}}

\renewcommand{\and}{\ \ \text{and}\ \ }

\newcommand{\eE}{\mathcal{Q}}

\newcommand{\End}{\operatorname{End}}

\newcommand{\Hom}{\operatorname{Hom}}

\newcommand{\beq}{\begin{eqnarray*}}
\newcommand{\eeq}{\end{eqnarray*}}

\newcommand{\Span}{\operatorname{span}}
\newcommand{\Ann}{\operatorname{ann}}

\newcommand{\mm}{\mathfrak{m}}

\newcommand{\uE}{\underline{\operatorname{E}}}

\newcommand{\CH}{\operatorname{CH}}
\newcommand{\uCH}{\underline{\CH}}

\newcommand{\oL}{\mathrm{L}}

\newcommand{\HH}{\mathbb{H}}
\newcommand{\PP}{\mathbb{P}}

\newcommand{\reg}{\mathrm{reg}}
\newcommand{\pure}{\mathrm{pure}}

\newcommand{\V}{\mathcal{V}}
\newcommand{\W}{\mathcal{W}}

\newcommand{\kk}{\Bbbk}
\newcommand{\olkk}{\overline{\kk}}
\newcommand{\ukk}{\underline{\kk}}

\newcommand{\E}{\mathrm{E}}
\newcommand{\Poin}{\operatorname{Poin}}

\renewcommand{\varnothing}{{\hat{0}}}

\newcommand{\hz}{{\hat{0}}}
\newcommand{\ho}{{\hat{1}}}
\newcommand{\nwa}{\nwarrow}
\newcommand{\swa}{\swarrow}

\begin{document}

\title{The parity module of a matroid}

\author{Tom Braden}
\address{Department of Mathematics and Statistics, University of Massachusetts, Amherst, MA.}
\email{braden@umass.edu}

\author{June Huh}
\address{Department of Mathematics, Princeton University, Princeton, NJ, and Korea Institute for Advanced Study, Seoul, Korea.}
\email{huh@princeton.edu}

\author{Jacob P. Matherne}
\address{Department of Mathematics, North Carolina State University, Raleigh, NC.}
\email{jpmather@ncsu.edu}

\author{Nicholas Proudfoot}
\address{Department of Mathematics, University of Oregon, Eugene, OR.}
\email{njp@uoregon.edu}

\author{Botong Wang}
\address{Department of Mathematics, University of Wisconsin-Madison, Madison, WI.}
\email{wang@math.wisc.edu}


\begin{abstract}
We introduce the parity module of a matroid with coefficients in a field $\kk$. When $\kk$ has characteristic zero, we show that the parity module of a matroid is isomorphic to its intersection cohomology module, yielding a new intrinsic characterization of the latter. The structure and dimension of the parity module can vary depending on the characteristic, 
giving rise to characteristic-dependent generalizations of the Kazhdan--Lusztig and Z-polynomials of matroids.  When the matroid is realizable over the complex numbers, the parity module is isomorphic to the hypercohomology of the parity sheaf supported on the arrangement Schubert variety of any such realization.
We also introduce the notion of pure modules over any field $\kk$ and show that  they satisfy a matroid analogue of the Beilinson--Bernstein--Deligne--Gabber decomposition theorem: Every pure module over the graded M\"obius algebra decomposes as a direct sum of shifts of the parity modules 
of matroids obtained by restricting to flats.
\end{abstract}

\maketitle

\section{Introduction}

Parity sheaves were introduced by Juteau, Mautner, and Williamson in 2014, and have since become fundamental objects in modular geometric representation theory \cite{JMW14}. 
They provide a well-behaved substitute for intersection cohomology sheaves with coefficients in a field of positive characteristic, where the usual formalism can fail in important ways. In particular, parity sheaves retain many of the structural features familiar from characteristic zero, including an analogue of the Beilinson--Bernstein--Deligne--Gabber Decomposition Theorem in \cite{BBD}.

The intersection cohomology module of a matroid was introduced in \cite{SHCG} and played a key role there in the proof of the Dowling--Wilson top-heavy conjecture \cite{DW1}, as well as in the proof of the nonnegativity of Kazhdan--Lusztig polynomials of matroids conjectured in \cite{EPW}. In this paper, we introduce the parity module of a matroid (Theorem~\ref{def of E}). This object is defined over any coefficient field and agrees with the intersection cohomology module when the coefficient field has characteristic zero (Theorem~\ref{E=IH}). Thus, in addition to giving a new characterization of the intersection cohomology module of a matroid, our construction initiates the study of positive characteristic analogues of Kazhdan--Lusztig and Z-polynomials of matroids.


\subsection{Pure modules}\label{sec:pure}
Let $\cL$ be a geometric lattice with minimal element $\varnothing$ and maximal element $\hat 1$.
This means that $\cL$ is a finite graded lattice that is {\bf atomic}, meaning that every element is a join of atoms, and {\bf semimodular}, meaning that
\[
\rk(F)+\rk(G) \ge \rk(F \vee G) +\rk(F \wedge G) \ \ \text{for all $F,G \in \cL$.}
\]
Equivalently, $\cL$ is the lattice of flats of a matroid \cite[Theorem 1.7.5]{Oxley}. 
For any field $\kk$, the {\bf graded M\"obius algebra} $\H(\cL;\kk)$
is a $\kk$-algebra with additive basis $\{y_F\mid F\in \cL\}$, where $y_F$ is homogeneous of degree $\rk F$, 
with multiplication given by the rule
\[
y_Fy_G \coloneq \begin{cases} y_{F\vee G} & \text{if $\rk F + \rk G = \rk (F\vee G)$,}\\
0 & \text{if $\rk F + \rk G > \rk (F\vee G)$.}
\end{cases}
\]
The element $y_{\varnothing}$ is the identity element of $\H(\cL;\kk)$.
Whenever we use the word ``module'', we will always mean a module that is finitely generated and $\Z$-graded.
Likewise, the word ``module homomorphism'' will always mean a degree-preserving homomorphism of graded modules.
We write $\rk\cL$ for the rank of the maximal element of $\cL$.

A subset $\Sigma$ of  $\cL$ is called an {\bf upper set} if it is upwardly closed, meaning that, whenever we have $F \le G$ and $F \in \Sigma$, 
we also have $G\in \Sigma$.
Given an upper set $\Sigma$ and an $\H(\cL;\kk)$-module $\M$, we define submodules
\[
\Ann_\Sigma(\M) \coloneq \{m\in \M\mid \text{$y_F\cdot m = 0$ for all $F\in \Sigma$}\}\and
\Span_\Sigma(\M) \coloneq \kk\{y_F \cdot m\mid \text{$F\in \Sigma$ and $m\in \M$}\}.
\]
These two constructions allow us to define a notion of purity for $\H(\cL;\kk)$-modules.

\begin{definition}\label{pure module def}
	Let $\M$ be an $\H(\cL;\kk)$-module.
	\begin{enumerate}[(a)]
		\item  We call $\M$ \textbf{flabby} if $\Ann_{\Sigma_1}(\M) + \Ann_{\Sigma_2}(\M) = \Ann_{\Sigma_1\cap\Sigma_2}(\M)$
		for any upper sets $\Sigma_1$ and $\Sigma_2$.
		\item  We call $\M$ \textbf{coflabby} if 
		$\Span_{\Sigma_1}(\M) \cap \Span_{\Sigma_2}(\M) = \Span_{\Sigma_1\cap\Sigma_2}(\M)$
		for any upper sets $\Sigma_1$ and $\Sigma_2$.
		\item If $\M$ is both flabby and coflabby, we say that $\M$ is \textbf{pure}.
	\end{enumerate}
\end{definition}

For an $\H(\cL;\kk)$-module $\M=\bigoplus_{i} \M^i$, we write
$\M^*$ for its  {\bf dual module}, whose degree \(i\) piece is
$\operatorname{Hom}_{\kk}(\M^{-i},\kk)$.
Note that the flabby and coflabby conditions are interchanged by duality: if $\M$ is flabby, then $\M^*$ is coflabby, and vice versa.  Note also that we always have
$$\Ann_{\Sigma_1}(\M) + \Ann_{\Sigma_2}(\M) \subs \Ann_{\Sigma_1\cap\Sigma_2}(\M)
\and \Span_{\Sigma_1}(\M) \cap \Span_{\Sigma_2}(\M) \sups \Span_{\Sigma_1\cap\Sigma_2}(\M);$$
the opposite inclusions characterize flabbiness and coflabbiness.

Our first main results are as follows (see the beginning of Section \ref{sec:structure} for a discussion of the distinction between lettered and numbered
theorems).

\begin{thmx}\label{def of E}
Up to isomorphism of $\H(\cL;\kk)$-modules, there is a unique $\H(\cL;\kk)$-module $\E(\cL;\kk)$ 
	satisfying the following three conditions:
	\begin{enumerate}[(a)]
		\item $\E(\cL;\kk)$ is pure, 
		\item $\E(\cL;\kk)$ is indecomposable, 
		\item there is an element $x\in \E^0(\cL;\kk)$ such that $y_{\hat 1}\cdot x$ is nonzero.
	\end{enumerate}
\end{thmx}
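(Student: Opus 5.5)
The plan is to treat pure $\H(\cL;\kk)$-modules as combinatorial analogues of parity sheaves, or of Braden--MacPherson sheaves on a poset. Each flat $F$ will carry local ``stalk'' and ``costalk'' data, extracted using the upper sets $\Sigma_{\ge F}=\{G\mid G\ge F\}$ and $\Sigma_{>F}=\{G\mid G>F\}$. The candidates are
\[
\M_{(F)}\coloneq \Span_{\Sigma_{\ge F}}(\M)/\Span_{\Sigma_{>F}}(\M)
\and
\M^{(F)}\coloneq \Ann_{\Sigma_{>F}}(\M)/\Ann_{\Sigma_{\ge F}}(\M).
\]
Flabbiness and coflabbiness should say that $\M$ is glued from these pieces without obstruction, in the way flabby sheaves admit extension of sections. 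Multiplication by $y_F$ should identify $\M_{(F)}$ with a module over the graded M\"obius algebra of the interval $[\hz,F]$ (or $[F,\ho]$), so that induction over the lattice applies. Condition (c) says precisely that the top piece $\M_{(\ho)}=y_{\ho}\M$ (recall $\Sigma_{>\ho}=\emptyset$) is nonzero in degree $\rk\cL$.

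The first key step is an extension lemma. Let $\M,\M'$ be pure and let $\Sigma\subseteq\Sigma'$ be upper sets. Then any homomorphism defined on the ``$\Sigma$-part'', meaning compatible maps $\M/\Ann_\Sigma(\M)\to\M'/\Ann_\Sigma(\M')$, extends to the $\Sigma'$-part. I would prove this by adding one flat at a time, from the top down. At each step, flabbiness of the source (the equality $\Ann_{\Sigma_1}+\Ann_{\Sigma_2}=\Ann_{\Sigma_1\cap\Sigma_2}$) lets one patch the new component onto the old map. Coflabbiness of the target handles the dual obstruction, where an image must lie in an intersection of spans. I expect this to be the main obstacle: the obstruction to extending across a single flat $F$ has to be identified with a map between the appropriate costalk of $\M$ and stalk of $\M'$, and purity is what must kill it. The first attempt will be to reduce, via multiplication by $y_F$, to the case where $F$ is the bottom of an interval, and there use that the relevant pieces are free over the smaller M\"obius algebra.

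Uniqueness then follows the standard Krull--Schmidt argument for parity sheaves. Let $\E$ and $\E'$ both satisfy (a)--(c). Both top pieces $y_{\ho}\E$ and $y_{\ho}\E'$ are nonzero in degree $\rk\cL$. Choose $\kk$-linear maps between them that are mutually inverse on a one-dimensional summand, and extend them by the extension lemma to module maps $f\colon\E\to\E'$ and $g\colon\E'\to\E$. Since everything is finite-dimensional and graded, the endomorphism ring of the indecomposable $\E$ is local. The endomorphism $g\circ f$ is nonzero (hence not nilpotent) on the top piece, so it is an automorphism, and likewise $f\circ g$. Hence $\E\cong\E'$.

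For existence, it suffices to produce some pure module $\M$ with an $x\in\M^0$ such that $y_{\ho}x\ne0$. Purity passes to direct summands, because $\Ann_\Sigma$ and $\Span_\Sigma$ commute with direct sums. So decomposing $\M$ into indecomposables, some summand receives a component of $x$ with nonzero image under $y_{\ho}$, and that summand satisfies (a)--(c). To build $\M$, I would use a Braden--MacPherson-type construction by downward induction on flats. Start with a copy of $\kk$ in degree $0$ at $\ho$. At each flat $F$, once the module over $\Sigma_{>F}$ is built, define the new piece at $F$ as a free module (a projective cover) over the interval algebra, mapping onto the costalk-type data determined by the part already built. This is designed so that flabbiness holds at each stage, and coflabbiness follows by applying the same construction dually and checking the two agree. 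Verifying purity of the resulting module is again an instance of the one-flat-at-a-time analysis from the extension lemma.
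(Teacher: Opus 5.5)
There is a genuine gap, and it sits in your extension lemma, on which both halves of your argument rest. As formulated, that lemma is false. For flabby $\M$, the quotient $\M/\Ann_\Sigma(\M)$ is just the space of sections over $\Sigma$. It forgets the cosheaf side, so an $\H(\cL;\kk)$-linear map between such quotients need not induce maps $y_F\M\to y_F\M'$ for $F\in\Sigma$.

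Here is a counterexample.
\begin{itemize}
\item Take $\cL$ Boolean on $\{1,2\}$ and $\Sigma=\cL\setminus\{\hz\}$.
\item Let $\M=\E(\cL^{F};\kk)$ for the atom $F=1$, so $\M=\kk a\oplus\kk y_1a$ with $\deg a=0$ and $y_2a=0$.
\item Let $\M'=\H(\cL;\kk)[1]=\E(\cL;\kk)[1]$.
\end{itemize}
Then $\M/\Ann_\Sigma\M=\kk\bar a$ and $\Ann_\Sigma\M'=\kk y_{12}$, and the map $\bar a\mapsto\bar y_1$ is $\H(\cL;\kk)$-linear. But any lift must send $a\mapsto y_1$, since $\Ann_\Sigma\M'$ vanishes in degree $0$, and then $0=f(y_2a)=y_{12}$, a contradiction.

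If ``compatible'' instead means compatible families on the quotients $\M/\Ann_{\Sigma_{\ge G}}(\M)$, you get sheaf maps that need not be module maps. For example, when $\rk\cL=1$, the linear map $1\mapsto 1$, $y_{\ho}\mapsto 0$ on $\H(\cL;\kk)$ preserves every $\Ann_\Sigma$ but is not $\H$-linear.

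The missing idea is the explosion. Over $\Sigma$ you must keep the spaces $y_F\M[\rk F]$ together with \emph{both} the surjections $\varphi$ and the injections $\psi$, and morphisms must commute with both. In that category:
\begin{itemize}
\item The composite $\Xi_F=\Phi_F\Psi_F$ from cosections to sections over $\Sigma_{>F}$ is determined by the data above $F$ (Lemma~\ref{composition}).
\item Extending a morphism across a minimal $F$ is then the linear algebra of Lemma~\ref{lem:extension lemma}(a). This needs split injectivity of $\Psi_F$ on the \emph{source} (coflabbiness) and surjectivity of $\Phi_F$ on the \emph{target} (flabbiness), which is the reverse of your assignment.
\item There is no obstruction to kill. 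The stalk-to-costalk space $\hom(\V_F,\W_{[F]})$ measures the non-uniqueness of the extension (Theorem~\ref{thm:Soergel hom formula}).
\end{itemize}
Also, the pieces $y_F\M$ are not free over $\H(\cL_F;\kk)$ in general. An indecomposable free module is a shift of $\H$, so $\E(\cL;\kk)$ itself is not free whenever $\E(\cL;\kk)\not\cong\H(\cL;\kk)$. The freeness that matters is over $\kk$.

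Your existence construction has the same defect. The phrase ``once the module over $\Sigma_{>F}$ is built'' has no meaning without the intermediate categories $\cC(\Sigma;\kk)$. A projective cover over an interval algebra controls only flabbiness, and ``construct dually and check the two agree'' is exactly the step that needs an argument. In the paper both conditions are imposed at once. An extension of $\V'$ from $\Sigma\setminus\{F\}$ to $\Sigma$ is the same as a factorization of $\Xi_F$. Setting $\V(F)=\Gamma_!(\Sigma_{>F},\V')[\rk F]\oplus\Gamma_*(\Sigma_{>F},\V')$, with $\Psi_F$ the first inclusion and $\Phi_F=(\Xi_F,\id)$, already gives a pure extension, which is enough for your summand argument. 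Shrinking the second summand to a lift of a complement of $\im\overline{\Xi_F}$ gives the taut, indecomposable extension. Even more cheaply, $\CH(\cL;\kk)$ is pure (Proposition~\ref{true facts}(e)) and satisfies $y_{\ho}\cdot 1\neq 0$.

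Once Proposition~\ref{prop:extending morphisms} is available (applied from $\{\ho\}$ to $\cL$ and transported through $\Lambda$), your Fitting-lemma uniqueness argument is correct. It is a genuinely different and shorter route than the paper's, which classifies all indecomposable pure objects via tautness (Theorem~\ref{thm:indecomposables}) and then reads off $F=\ho$, $k=0$ from (c). The paper's route costs more, but it simultaneously yields Theorems~\ref{indecomposables} and~\ref{general decomposition} and the explicit algorithm used in Section~\ref{sec:properties}.
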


\begin{thmx}\label{props of E}
This module $\E(\cL;\kk)$ has the following additional properties:
\begin{enumerate}[(a)]
 \item $\E^i(\cL;\kk) = 0$ if $i<0$ or $i>\rk \cL$,
 \item $\E^0(\cL;\kk)$ and $\E^{\rk \cL}(\cL;\kk)$ are $1$-dimensional $\kk$-vector spaces.
  \item If $\cL = \cL_1\times\cL_2$, then $\E(\cL;\kk)\cong \E(\cL_1;\kk)\otimes\E(\cL_2;\kk)$
 as modules over the graded M\"obius algebra $\H(\cL;\kk)\cong \H(\cL_1;\kk)\otimes\H(\cL_2;\kk)$.
 \end{enumerate} 
\end{thmx}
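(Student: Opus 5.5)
The plan is to derive all three properties from an explicit inductive model of $\E(\cL;\kk)$ and then pin it down with the uniqueness part of Theorem~\ref{def of E}. The model is built by descending induction over the flats. At each flat $F$ the stalk-like piece $\E_F$ is the smallest piece compatible with purity. The indecomposable pure module is generated in degree $0$ by an element $x$ with $y_{\hat 1}x\neq 0$. So I would first show that in any pure module the natural filtration by $\Span_\Sigma$, with $\Sigma$ running over upper sets, has subquotients
\[
\Span_{\Sigma}(\M)/\Span_{\Sigma\setminus\{F\}}(\M),
\]
where $F$ is a minimal element of $\Sigma$. I would then show that these subquotients are modules over the graded M\"obius algebra of the interval $[F,\hat 1]$ (the contraction), shifted by $\rk F$. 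Flabbiness and coflabbiness should make these subquotients behave like costalks and stalks.

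For (a), vanishing in negative degrees comes from generation by $\E^0$. Suppose instead that there were a nonzero element in negative degree. Purity plus indecomposability would then let us split off a summand: for a minimal-degree homogeneous component, we use the exactness of the $\Ann$ and $\Span$ lattices to build a complement stable under $\H(\cL;\kk)$. Dually, the top bound $\rk\cL$ follows by applying the same argument to $\E^*$ shifted by $\rk\cL$. This dual module is again pure, indecomposable and satisfies (c), because $y_{\hat 1}x\neq0$ pairs the degree $0$ and degree $\rk\cL$ pieces. Uniqueness then gives $\E^*\cong\E[\rk\cL]$, which yields the symmetric vanishing.

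For (b), I would argue that $\E^0$ is $1$-dimensional by showing that $\E$ is cyclic generated by the $x$ of condition (c). Take the submodule generated by $x$ and use purity to produce a projection onto it. Indecomposability then forces equality. If $\dim\E^0>1$, an element of $\E^0$ killed by $y_{\hat1}$ together with the annihilator condition for $\Sigma=\{\hat1\}$ should split off a complement, contradicting indecomposability. The $1$-dimensionality of $\E^{\rk\cL}$ then follows from the self-duality established above.

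For (c), the tensor product of pure modules over $\H(\cL_1)\otimes\H(\cL_2)$ should be pure. The key point is that upper sets in a product lattice are unions of products of upper sets. Flabbiness and coflabbiness are distributive-lattice conditions, since the relevant sub-lattices are distributive, and tensor products over a field preserve distributive families of subspaces. The element $x_1\otimes x_2$ satisfies (c), because $y_{\hat1}=y_{\hat1_1}\otimes y_{\hat1_2}$. Indecomposability follows from cyclicity together with the $1$-dimensional degree-$0$ part: any decomposition must put the generator in one summand. Uniqueness then concludes. I expect the main obstacle to be proving purity of the tensor product, specifically reducing arbitrary upper sets in $\cL_1\times\cL_2$ to products and checking distributivity of the $\Ann$ and $\Span$ lattices.
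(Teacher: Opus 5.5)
There is a genuine gap. Your proof depends on the claim that $\E(\cL;\kk)$ is cyclic, generated by the element $x\in\E^0(\cL;\kk)$, and this is false in general.

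If $\E(\cL;\kk)=\H(\cL;\kk)\cdot x$, then ${}_pP_\cL(t)=1$. Corollary~\ref{one implication} then gives $\E(\cL;\kk)\cong\H(\cL;\kk)$, and Theorem~\ref{E=H} says this happens only when $\cL$ is a product of a Boolean lattice and projective geometries whose orders are not multiples of $p$. Two concrete failures:
\begin{enumerate}
\item For $U_{2,3}$ in characteristic $2$, the parity module is $\CH(\cL;\kk)$, with graded dimensions $1,4,1$, whereas $\H(\cL;\kk)$ has graded dimensions $1,3,1$.
\item For any non-modular $\cL$ (for example, the rank $3$ uniform matroid on four elements), $\E(\cL;\kk)$ needs generators in positive degree.
\end{enumerate}
Consequently three of your steps fail: deriving vanishing in negative degrees from ``generation by $\E^0$'', proving $\dim\E^0=1$ by projecting onto the submodule generated by $x$, and proving indecomposability in (c) because ``any decomposition must put the generator in one summand''. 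Your top-degree statements are then left with nothing to dualize.

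The other ingredient you invoke for (c) is not enough on its own either. The module $\CH(U_{2,3};\Q)\cong\E(\cL;\Q)\oplus\E(\cL^{\hz};\Q)[-1]$ is pure, has a one-dimensional degree-$0$ part not killed by $y_{\ho}$, and is decomposable.

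What is missing is the property that actually singles out $\E(\cL;\kk)$ among pure modules. This is tautness: $\beta_F(\E(\cL;\kk))=0$ for $F\neq\ho$, together with $\rk\beta_{\ho}=1$ (Remark~\ref{characterization}). The inductive model in your first paragraph is the right tool, but you never run it.

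The paper does run it, in Proposition~\ref{prop:E in small and large degree}, by descending induction. There $\Gamma_*(\Sigma_{>G},\eE^{\ho})$ is concentrated in degrees $0,\dots,\crk G-1$ with degree-$0$ part $\kk$. Likewise $\Gamma_!(\Sigma_{>G},\eE^{\ho})[\rk G]$ is concentrated in degrees $1,\dots,\crk G$ with top part $\kk$. Since $\Phi_G$ is surjective and $\ker\Phi_G\subseteq\im\Psi_G$, the space $\eE^{\ho}(G)$ is squeezed between these two, which gives (a) and (b) at once.

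Your splitting idea can also be made rigorous this way. Take $v\neq 0$ in the lowest nonzero degree $m$, or $v\in\E^0$ with $y_{\ho}v=0$, and let $F$ be maximal with $y_Fv\neq 0$. Then $\Span_{\Sigma_{>F}}\E$ vanishes in the degree of $y_Fv$, and $y_Fv$ lies in the costalk, so $\beta_F(y_Fv)\neq 0$. This forces $F=\ho$. It gives $m=0$ and injectivity of $y_{\ho}\colon\E^0\to y_{\ho}\E\cong\kk$. After that, your duality step via Proposition~\ref{EPD} correctly handles degree $\rk\cL$.

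For (c), your adapted-basis (distributivity) argument for purity of $\E(\cL_1;\kk)\otimes\E(\cL_2;\kk)$ is a workable substitute for Lemma~\ref{pure tensor} over a field. Indecomposability, however, should come from the identification $\beta_{(F_1,F_2)}(\M_1\otimes\M_2)\cong\beta_{F_1}(\M_1)\otimes\beta_{F_2}(\M_2)$. This vanishes unless $F_1$ and $F_2$ are both maximal, and combined with Remark~\ref{characterization} it gives indecomposability.
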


The module $\E(\cL;\kk)$ is called the {\bf parity module} of $\cL$ with coefficients in $\kk$. See Remark~\ref{characterization} for a more explicit way of expressing the indecomposability condition (b) in Theorem~\ref{def of E}, in terms of costalk-to-stalk homomorphisms. In addition to this abstract characterization, we will give an inductive construction of $\E(\cL;\kk)$ using elementary linear algebra. 


\begin{remark}
The terms flabby and coflabby
refer to a sheaf and a cosheaf on $\cL$ which can be associated with an $\H(\cL;\kk)$-module $\M$, where $\cL$ is given the topology with upper sets as open sets.  This point of view is important for the proof of Theorem \ref{def of E} that begins in Section \ref{sec:sheafifying}, but it is not needed for the rest of the introduction or for the applications in the earlier sections. 
\end{remark}

\begin{remark}
In \cite{SHCG}, we gave a different definition of purity for modules over $\H(\cL;\Q)$.  See Proposition \ref{prop:pure equals pure} for a proof that the two definitions agree.
\end{remark}

Suppose that $\cL$ is the lattice of flats of a matroid,
and that this matroid is realizable by a linear subspace $V\subseteq\C^{\cL^1}$, where $\cL^1$ is the set of atoms of $\cL$.  
The {\bf arrangement Schubert variety} $Y_V$ of $V$
is by definition the closure of $V$ inside of $(\mathbb{CP}^1)^{\cL^1}$, and it has the property that there
exists a degree-doubling isomorphism $\H(\cL;\kk)\cong \H^\bullet(Y_V;\kk)$
\cite{HW}.  Let $\cE(Y_V;\kk)$ denote the parity sheaf of $Y_V$
with coefficients in $\kk$ \cite{JMW14}.  The following theorem justifies our use of the name ``parity module''.

\begin{thmx}\label{topological interpretation}
If $\cL$ is the lattice of flats of a matroid realized by a linear subspace $V\subseteq\C^{\cL^1}$,
then there exists a degree-doubling isomorphism $\E(\cL;\kk)\cong \HH^\bullet(\cE(Y_V;\kk))$ as modules over 
the ring $\H(\cL;\kk)\cong \H^\bullet(Y_V;\kk)$.
\end{thmx}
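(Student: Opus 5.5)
The plan is to show that the module $\M \coloneq \HH^\bullet(\cE(Y_V;\kk))$, with degrees halved, satisfies conditions (a), (b), (c) of Theorem~\ref{def of E}. The uniqueness part of that theorem then gives $\M\cong\E(\cL;\kk)$. The variety $Y_V$ has an affine paving by strata $V^F\cong\C^{\rk F}$, one for each flat $F\in\cL$, with $V^{F}\subseteq \overline{V^G}$ iff $F\le G$. The open stratum $V^{\hat 1}=V$ is dense, and the closed stratum $V^{\varnothing}$ is a point. Parity sheaves are taken with respect to this stratification with constant coefficients, and $\cE(Y_V;\kk)$ is the indecomposable parity sheaf extending $\kk_V[\dim V]$. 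I would normalize the shift so that the cohomology sits in degrees $0,\dots,2\rk\cL$. Each $\cE$ is $*$-even and $!$-even on every stratum, because the strata are affine spaces and hence simply connected with even cohomology. It follows that all hypercohomology is even, and that for any locally closed union $Z$ of strata, $\HH^\bullet(Z,\cE|_Z)$ and $\HH^\bullet_Z(\cE)$ are computed by spectral sequences that degenerate by parity.

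The key step is to translate the two purity conditions into sheaf theory. An upper set $\Sigma$ corresponds to an open union of strata $U_\Sigma=\bigcup_{F\in\Sigma}V^F$, since upper sets are open in the topology of the Remark after Theorem~\ref{props of E}. I expect $\Ann_\Sigma(\M)$ to be identified with the image of $\HH^\bullet_{Y\setminus U_\Sigma}(\cE)\to\HH^\bullet(\cE)$. Dually, $\Span_\Sigma(\M)$ should be identified with the image of compactly supported, or costalk-type, cohomology coming from $U_\Sigma$. To prove these identifications I would use the description of $\H^\bullet(Y_V)$ from \cite{HW}: $y_F$ is the class of the closure of the stratum of codimension $\rk F$ (or its Poincaré dual), and multiplication by $y_F$ factors through restriction to a normal slice.

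Given these identifications, parity vanishing makes the relevant long exact sequences split into short exact sequences. Flabbiness then becomes a Mayer--Vietoris statement for the closed sets $Y\setminus U_{\Sigma_1}$ and $Y\setminus U_{\Sigma_2}$, whose union is $Y\setminus U_{\Sigma_1\cap\Sigma_2}$. That statement holds because the Mayer--Vietoris sequence of local cohomology has vanishing connecting maps in odd degrees. Coflabbiness follows by Verdier duality, since $\mathbb D\cE\cong\cE$ up to shift, together with the remark that duality exchanges the two conditions.

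Condition (c) follows because $\M^0=\HH^0$ is the stalk at the point stratum $V^{\varnothing}$, or equivalently global sections of the constant sheaf on the open stratum. The class $y_{\hat 1}$ is the point class, and its product with the generator of $\HH^0$ is the top class, which is nonzero.

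For (b), an idempotent decomposition of $\M$ as an $\H$-module must be shown to come from a decomposition of $\cE$. This uses the equivalence, valid for parity complexes on such pavings, between $\End(\cE)$ and $\End_{\H}(\HH^\bullet\cE)$, in the style of Soergel's functor. I expect this to be the main obstacle. I would prove it by induction over strata, extending the open set one stratum at a time. Morphisms between parity sheaves then extend along the usual stalk/costalk triangles. On the module side, the analogous extension should be governed by the sheaf on $\cL$ attached to a pure module, whose sections over $\Sigma$ are $\M/\Ann_\Sigma$. Since the proof of Theorem~\ref{def of E} begins in Section~\ref{sec:sheafifying}, I would match the two inductive constructions stratum by stratum. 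Once the functor is fully faithful, $\End(\M)$ is local because $\End(\cE)$ is, and therefore $\M$ is indecomposable.
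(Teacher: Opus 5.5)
Your purity argument follows the paper in outline. The paper also shows that $\ker y_F$ is the kernel of restriction $\HH^\bullet(\cF)\to\HH^\bullet(j_F^*\cF)$, and that $\im y_F$ is the image of corestriction $\HH^\bullet_c(j_F^*\cF)\to\HH^\bullet(\cF)$. Parity vanishing then makes $\Sigma\mapsto\HH^\bullet(j_\Sigma^*\cF)$ a flabby sheaf and $\Sigma\mapsto\HH^\bullet_c(j_\Sigma^*\cF)$ a coflabby cosheaf. Your Verdier-duality shortcut for coflabbiness is fine over a field.

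Be aware, though, that ``$y_F$ factors through restriction to a normal slice'' only gives one inclusion for kernels and one for images. The real input is Proposition~\ref{prop:action of yf}: in the factorization $\HH^\bullet(\cF)\to\HH^\bullet(j_F^*\cF)\to\HH^\bullet_c(j_F^*\cF)[2\rk F]\to\HH^\bullet(\cF)[2\rk F]$ of $y_F$, the middle map is an isomorphism. This uses the following ingredients:
\begin{itemize}
\item $y_F=p^*\alpha$, where $p\colon Y\to(\PP^1)^B$ is the projection for a basis $B$ of $F$ and $\alpha$ is the point class; in particular $y_F$ is not the class of a stratum closure;
\item the slice $Y_F=p^{-1}(0)$ and the splitting $U_F\cong\C^{\rk F}\times Y_F$;
\item the contraction lemma.
\end{itemize}

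Where you genuinely diverge is indecomposability. You want $\End(\cE)\to\End_{\H(\cL;\kk)}(\HH^\bullet\cE)$ to be bijective, which is a non-equivariant Soergel-type Erweiterungssatz. This is not available off the shelf for general affine pavings, so it would have to be proved here. One way is to build a comparison functor into $\cC(\Sigma;\kk)$ over every open $U_\Sigma$. You would then match the JMW filtration pieces $\Hom(i_F^*\cF,i_F^!\cG)$ with the terms $\hom(\V_F,\W_{[F]})$ of the exact sequence in the proof of Theorem~\ref{thm:Soergel hom formula}, and induct with the five lemma. This is feasible and would prove more, namely full faithfulness of hypercohomology on parity sheaves. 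But it is the one step of your plan left undone.

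The paper avoids this step entirely. From the same identifications it reads off $\M_G\cong\HH^\bullet(i_G^*\cE)$ and $\M_{[G]}\cong\HH^\bullet(i_G^!\cE)$, with $\beta_G(\M)$ equal to the natural map $\HH^\bullet(i_G^!\cE)\to\HH^\bullet(i_G^*\cE)$. Applying \cite[Lemma 2.21]{JMW14} to the indecomposable restriction $j_G^*\cE$ shows this map vanishes for every $G\neq\hat 1$. Hence $\Lambda\M$ is taut away from $\hat 1$, with $(\Lambda\M)(\hat 1)\cong\kk$. Theorem~\ref{thm:indecomposables} then identifies $\Lambda\M$ with $\eE^{\hat 1}_{\cL,\kk}$, with no comparison of Hom spaces at all.
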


The {\bf intersection cohomology} $\H(\cL;\Q)$-module $\IH(\cL)$ was introduced in \cite{SHCG}, where it was the key character
in the proof of the Dowling--Wilson conjecture for $\cL$.  The definition of $\IH(\cL)$ given there is rather complicated \cite[Section 3]{SHCG}, and involves taking the orthogonal complement of a recursively defined submodule of the {\bf augmented Chow ring} $\CH(\cL;\Q)$ introduced in \cite{BHMPW}.  
(See Section~\ref{Section:AugmentedChow} for a review of the augmented Chow ring.) The following theorem
provides a direct and intrinsic characterization of the module $\IH(\cL)$.

\begin{theorem}\label{E=IH}
There is an isomorphism of $\H(\cL;\Q)$-modules $\E(\cL;\Q)\cong\IH(\cL)$, and this isomorphism is unique up to multiplication by a nonzero scalar in  $\Q$.
\end{theorem}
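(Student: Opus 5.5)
The plan is to verify that $\IH(\cL)$ satisfies the three conditions (a), (b), (c) of Theorem~\ref{def of E} with $\kk=\Q$; the existence of an isomorphism then follows from the uniqueness part of that theorem. Uniqueness of the isomorphism up to scalar will then be deduced from an endomorphism computation.

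Condition (c) is straightforward from \cite{SHCG}. There $\IH^0(\cL)$ is spanned by the unit $1$ of the augmented Chow ring $\CH(\cL;\Q)$, and $\IH^{\rk\cL}(\cL)$ is spanned by the class of a point. The element $y_{\hat 1}\cdot 1$ is exactly this nonzero top-degree class. So we may take $x=1$.

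Condition (b) I would obtain as follows. In \cite{SHCG}, $\IH(\cL)$ is shown to be an indecomposable $\H(\cL;\Q)$-module; alternatively it is a direct summand of $\CH(\cL;\Q)$ characterized by stalk vanishing conditions. If indecomposability is not stated explicitly there, I would argue via the characterization in Remark~\ref{characterization}: the costalk-to-stalk maps at each flat $F\neq\hat 1$ vanish in the relevant degrees. This vanishing comes from the local "stalk" condition in \cite{SHCG}, namely that the stalk of $\IH$ at $F$ lives in degrees $<(\rk\cL-\rk F)/2$ while the costalk lives in complementary degrees. That degree separation forces these maps to be zero and pins down indecomposability.

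Condition (a), purity, is the main obstacle. My first try is Proposition~\ref{prop:pure equals pure}, which identifies the present notion of purity with the one in \cite{SHCG}. In \cite{SHCG}, $\IH(\cL)$ is shown to be pure in that older sense: it has a canonical decomposition of $\CH(\cL;\Q)$ into pure pieces, and a filtration by upper sets whose subquotients are free over the relevant local rings. Invoking that proposition then gives flabbiness and coflabbiness directly. If the proposition only applies to modules with a given structure, I would instead verify flabbiness directly. The approach is to show that $\CH(\cL;\Q)$ is flabby and coflabby, using the explicit basis and the Mayer--Vietoris-type exactness over unions of upper sets, and then note that both properties pass to direct summands. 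Coflabbiness of a summand then follows by duality, using Poincar\'e duality $\IH(\cL)^*\cong\IH(\cL)[\rk\cL]$.

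Finally, for uniqueness of the isomorphism up to scalar, it suffices to show $\End(\E(\cL;\Q))\cong\Q$. Any endomorphism preserves the one-dimensional space $\E^0$ (Theorem~\ref{props of E}(b)), so it acts on $\E^0$ by a scalar $c$. Subtracting $c\cdot\id$ gives an endomorphism $\phi$ killing $\E^0$. I expect the proof of Theorem~\ref{def of E} to show that a pure indecomposable module is generated, as a sheaf, from its top stalk via the inductive construction. In that construction $\E(\cL;\kk)$ is built from $x\in\E^0$ by adjoining minimal extensions flat by flat, so $\phi$ is determined by its value on $\E^0$ and must vanish. If this is not directly available, I would use that $\End$ of an indecomposable finite-dimensional graded module is local. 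Over $\Q$, the semisimplicity coming from Hodge--Riemann relations on $\IH$ then makes this local ring a division algebra acting faithfully on the one-dimensional space $\E^0$, hence equal to $\Q$.
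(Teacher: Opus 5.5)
Your existence argument is essentially the paper's. $\IH(\cL)$ is a direct summand of $\CH(\cL;\Q)$ by \cite[Theorem 3.21]{SHCG}, and $\CH(\cL;\Q)$ is pure by Proposition~\ref{true facts}(e). Purity passes to direct summands because $\Ann_\Sigma$ and $\Span_\Sigma$ commute with direct sums. Indecomposability comes from \cite{SHCG}, and $x=1$ works. (Your paraphrase of the purity notion in \cite{SHCG} is off: there it just means a direct sum of summands of shifts of $\CH(\cL^F;\Q)$. Your backup argument nevertheless supplies what is needed.)

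The genuine gap is in the uniqueness step. Your main argument is that an endomorphism $\phi$ vanishing on $\E^0$ must be zero because $\E(\cL;\kk)$ is ``built from $x$'' by the inductive construction. This is false. Proposition~\ref{prop:extending morphisms} gives existence of extensions of morphisms, not uniqueness. When a minimal element $F$ is added, the extension is ambiguous exactly up to $\hom(\V_F,\W_{[F]})$, which is the kernel in the exact sequence in the proof of Theorem~\ref{thm:Soergel hom formula}. Nothing in your argument uses characteristic $0$, and the claim fails in characteristic $p$. For $\cL=U_{2,n}$ with $n\equiv 1$ modulo $p$, the paper shows $\End\E(\cL;\kk)\cong\kk[\varepsilon]/\langle\varepsilon^2\rangle$, and the nilpotent $\varepsilon$ necessarily acts by zero on the one-dimensional space $\E^0$.

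Your fallback is, as written, only an assertion: ``semisimplicity from Hodge--Riemann'' needs an argument. It can be supplied as follows.
\begin{itemize}
\item The Hodge--Riemann relations give a positive definite form for which the Poincar\'e adjoint of an $\H$-linear endomorphism is again $\H$-linear and is its adjoint.
\item So $\End$ is a finite-dimensional algebra with a positive involution, hence semisimple.
\item Being local, it is then a division algebra, and it embeds in $\End(\E^0)\cong\Q$.
\end{itemize}

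The simplest repair uses what you already invoked in (b). By Theorem~\ref{thm:Soergel hom formula} in its module form, the degree-preserving endomorphisms of $\E(\cL;\Q)$ are noncanonically the degree-$0$ part of $\bigoplus_F\hom(\E_F,\E_{[F]})$. For $F\neq\ho$ the stalk lives in degrees $<\crk F/2$ and the costalk in degrees $>\crk F/2$, so only $F=\ho$ contributes, giving $\Q$. This is the content of \cite[Proposition 6.4]{SHCG}, which the paper's proof cites for the scalar endomorphism ring and hence for uniqueness of the isomorphism up to a scalar.
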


\begin{remark}
As in the setting of Theorem \ref{topological interpretation}, suppose that $\cL$ comes from a linear subspace $V\subseteq\C^E$.
If $\kk=\Q$, the parity sheaf of $Y_V$ with coefficients in $\kk$ coincides with the intersection cohomology sheaf.
Thus, combining Theorems \ref{topological interpretation} and \ref{E=IH},
we obtain the statement that $\IH(\cL)$ is isomorphic to the intersection cohomology module $\IH^\bullet(Y_V;\kk)$.
This is implicit in \cite[Section 1.3]{SHCG}, but it is not stated or proved explicitly.
\end{remark}

For any $F \in \cL$, we consider the {\bf lower interval} $\cL^F$ and the {\bf upper interval} $\cL_F$ defined by
\[
\cL^F \coloneq \{G\in\cL\mid G\leq F\}
\and \cL_F \coloneq \{G\in\cL\mid F\leq G\}.
\] 
The former corresponds to the restriction of the matroid of $\cL$ to the flat $F$, while the latter corresponds to the contraction of the matroid of $\cL$ by the flat $F$.
Note that the inclusion of $\cL^F$ into $\cL$ is rank-preserving, while the inclusion of $\cL_F$ into $\cL$ increases rank by $\rk F$.
There is a surjective homomorphism $\H(\cL;\kk) \to \H(\cL^F;\kk)$ taking $y_G$ to $y_G$ if $G\le F$ and to $0$ otherwise.  
Thus an $\H(\cL^F;\kk)$-module can be given an induced $\H(\cL;\kk)$-module structure.

\begin{thmx}\label{indecomposables}
Up to isomorphism and degree shift, the modules $\E(\cL^F;\kk)$ for  $F\in \cL$ give a complete set of indecomposable pure $\H(\cL;\kk)$-modules.
Consequently, every pure $\H(\cL;\kk)$-module decomposes as a direct sum of shifts of these modules.
\end{thmx}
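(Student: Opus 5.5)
We will prove Theorem~\ref{indecomposables} by induction on the set of flats supporting a pure module, in the style of the Juteau--Mautner--Williamson classification of parity sheaves. Throughout we use the sheaf/cosheaf point of view on $\cL$ with upper sets as open sets.

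\textbf{Step 1: the $\E(\cL^F;\kk)$ are indecomposable, pure, and pairwise distinct.} First we check that each $\E(\cL^F;\kk)$, viewed as an $\H(\cL;\kk)$-module via the surjection $\H(\cL;\kk)\to\H(\cL^F;\kk)$, is pure and indecomposable. For an upper set $\Sigma\subs\cL$, the generators $y_G$ with $G\in\Sigma\setminus\cL^F$ act by zero. Hence $\Ann_\Sigma$ and $\Span_\Sigma$ computed over $\cL$ agree with those computed over $\cL^F$ for the upper set $\Sigma\cap\cL^F$. Since $\Sigma\mapsto\Sigma\cap\cL^F$ preserves intersections, purity transfers from Theorem~\ref{def of E}(a). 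Indecomposability is unchanged because the two module structures have the same submodule lattice. The modules for distinct $F$ are not isomorphic, even up to shift: $F$ is recovered as the unique maximal flat with $y_F\cdot\E\neq 0$. This uses Theorem~\ref{def of E}(c) together with the fact that $y_G$ acts by zero for $G\not\le F$.

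\textbf{Step 2: every nonzero pure module has such a summand.} Let $\M$ be a nonzero pure module, and choose $F$ maximal in $\cL$ such that $y_F\M\neq0$. By maximality, $\M$ is annihilated by every $y_G$ with $G\not\le F$. We expect that such $G$ generate the kernel of $\H(\cL)\to\H(\cL^F)$, so $\M$ is an $\H(\cL^F;\kk)$-module, and Step~1 shows it is still pure there. We may therefore assume $F=\hat1$.

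Choose a homogeneous $x\in\M^d$ with $y_{\hat1}x\neq0$. After shifting by $d$, condition (c) of Theorem~\ref{def of E} holds. The goal is to construct maps $\phi:\E(\cL)[-d]\to\M$ and $\psi:\M\to\E(\cL)[-d]$ with $\psi\phi$ an isomorphism. We would take $\phi$ to send the generator of $\E^0$ to $x$, and $\psi$ to send $x$ to that generator. This requires extension lemmas: a homomorphism defined on the part of a pure module supported on an open (upper) set extends over a larger open set. These are the analogues of the parity-sheaf facts that $\Hom$ out of a flabby object into a coflabby one is surjective under restriction, and we intend to take them from the sheaf-theoretic setup of Section~\ref{sec:sheafifying}.

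We build $\phi$ and $\psi$ one flat at a time, going down from $\hat1$ and adding a single flat to the upper set at each stage. Then $\psi\phi$ is a degree-zero endomorphism of $\E(\cL)$ acting nontrivially on $y_{\hat1}\E^0$, and $y_{\hat1}\E^0$ is one-dimensional by Theorem~\ref{props of E}(b). Since $\E(\cL)$ is indecomposable and finite dimensional, its graded endomorphism ring is local, by Fitting's lemma. An element of the local ring that is not nilpotent on $\E^0$ is a unit, so $\psi\phi$ is an automorphism and $\E(\cL)[-d]$ is a direct summand of $\M$.

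\textbf{Step 3: conclude the decomposition.} Direct summands of pure modules are pure, because $\Ann_\Sigma$ and $\Span_\Sigma$ commute with direct sums. Induction on $\dim_\kk\M$ therefore decomposes $\M$ completely into shifts of the modules $\E(\cL^F;\kk)$. Any indecomposable pure module must then equal a single such summand, which proves completeness. The main obstacle is the extension lemma in Step~2: each one-flat extension step must be controlled by stalk and costalk conditions at that flat coming from flabbiness and coflabbiness. We would first try to prove it by expressing the obstruction as a map from a costalk quotient to a stalk submodule and showing that purity makes the relevant restriction surjective.
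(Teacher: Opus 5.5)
Your proposal has one concrete false step and one more serious gap: it assumes the existence of the parity modules, which is the substantive content of the statement.

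\textbf{The false step in Step~2.} Maximality of $F$ among flats with $y_F\M\neq 0$ only gives $y_G\M=0$ for $G>F$, not for every $G\not\le F$. For a counterexample, take incomparable $F_1,F_2$ and $\M=\E(\cL^{F_1};\kk)\oplus\E(\cL^{F_2};\kk)$: then $F_1$ is maximal in the support, but $y_{F_2}\M\neq 0$. So $\M$ need not be an $\H(\cL^F;\kk)$-module, and the reduction to $F=\hat 1$ fails. It can be repaired without reducing. Since $(\Lambda\M)(G)=0$ for $G>F$, morphisms between $\Lambda\E(\cL^F;\kk)[-d]$ and $\Lambda\M$ over the upper set $\Sigma_{\ge F}$ are just linear maps between $\kk[-d]$ and $\M(F)$. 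Take $1\mapsto y_Fx$ and a retraction of $\M(F)$ onto that line. You cannot prescribe values on $x$ or on a ``generator of $\E^0$'', since $\E$ is not cyclic. Then extend both maps to $\cL$ by Proposition~\ref{prop:extending morphisms}, which works for arbitrary upper sets $\Sigma'\subseteq\Sigma$. Two corrections to how you described that lemma:
\begin{itemize}
\item Its hypotheses are a coflabby, locally free source and a flabby target. This is the reverse of what you wrote, though harmless here since both objects are pure.
\item There is no obstruction to control. Lemma~\ref{lem:extension lemma}(a) defines the map on $\im\Psi_F$ and lifts a complement through the surjection $\Phi_F$.
\end{itemize}
The composite of the two extended maps is the identity at $F$, hence not nilpotent, and your Fitting argument then splits off $\E(\cL^F;\kk)[-d]$.

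\textbf{The missing existence argument.} The whole argument is conditional on the existence of $\E(\cL;\kk)$ satisfying Theorem~\ref{def of E}(a)--(c) and Theorem~\ref{props of E}(a),(b). None of this is available beforehand in the paper:
\begin{itemize}
\item $\E(\cL;\kk)$ is defined as $\Theta\eE^{\hat 1}_{\cL,\kk}$.
\item Theorem~\ref{def of E} is deduced from the very statement you are proving.
\item Theorem~\ref{props of E}(a),(b) come out of the construction (Proposition~\ref{prop:E in small and large degree}).
\end{itemize}
Read in the paper's logical order, your proposal therefore only shows that if such modules exist, they classify. Producing an indecomposable pure object with $\kk$ at the top flat is the real content, and the proposal has no mechanism for it.

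The missing idea is tautness. Given a pure $\V'$ on $\Sigma\setminus\{F\}$, set $\V(F)=\Gamma_!(\Sigma_{>F},\V')[\rk F]\oplus W$, where $\overline W$ is a complement to $\im\overline{\Xi_F}$. This gives the pure extension with $\ker\overline{\Phi_F}\subseteq\im\overline{\Psi_F}$, unique up to isomorphism (Proposition~\ref{prop:max indecomp extension}). It is indecomposable exactly when $\V'$ is (Proposition~\ref{prop:extending indecomposables}). Starting from $\kk$ at $F$ and descending through the lattice produces $\eE^F$. The same induction on $|\Sigma|$ classifies all indecomposable pure objects (Theorem~\ref{thm:indecomposables}), and transporting through $\Lambda$ gives the statement.

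Once existence is supplied, your splitting argument (maps built by extension, then locality of $\End^0$) is a legitimate alternative to the paper's use of uniqueness of taut extensions for the classification half. It is close in spirit to the uniqueness proof for parity sheaves. But it does not replace the construction.
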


\begin{example}
The augmented Chow ring $\CH(\cL;\kk)$ is a pure module over $\H(\cL;\kk)$ (Proposition \ref{true facts}(e)), and therefore decomposes as a direct sum of shifts of the parity modules $\E(\cL^F;\kk)$. The shape of this decomposition reflects an interesting interaction between the combinatorics of $\cL$ and the characteristic of $\kk$. For example, when $\cL$ is the lattice of flats of the uniform matroid $U_{2,3}$, we have 
\[
\CH(\cL;\kk)\cong \begin{cases} \E(\cL;\kk) & \text{if $\operatorname{char}(\kk)=2$,}\\  \E(\cL;\kk)\oplus \E(\cL^{\hat 0};\kk)[-1] &  \text{if $\operatorname{char}(\kk)\neq 2$.} \end{cases}
\]
In Theorem~\ref{general decomposition}, we describe how an arbitrary pure module decomposes into shifts of parity modules using  costalk-stalk homomorphisms.
\end{example}

\subsection{Polynomials}
Consider the {\bf Z-polynomial} $Z_\cL(t)$, introduced in \cite{PXY}, and the {\bf KL-polynomial} $P_\cL(t)$, introduced in \cite{EPW}.
The {\bf stalk} of the parity module of $\cL$ at $\hat 0$ is the graded vector space 
\[
\uE(\cL;\kk) \coloneq \E(\cL;\kk) \big{/} \Span_{\cL\setminus\varnothing} \E(\cL;\kk).
\]
By Theorem \ref{E=IH} along with \cite[Theorem 1.19]{SHCG}, we have
\begin{equation}\label{PZ0}
Z_\cL(t) = \sum_{i\geq 0}t^i \dim \E^i(\cL;\Q) \and P_\cL(t) = \sum_{i\geq 0}t^i \dim \uE^i(\cL;\Q).
\end{equation}
By analogy with classical Kazhdan--Lusztig theory, for any prime $p$, we define the {\bf \boldmath{$p$}-Z-polynomial} and {\bf \boldmath{$p$}-KL-polynomial}
\begin{equation}\label{PZp}
{}_pZ_\cL(t) \coloneq \sum_{i\geq 0}t^i \dim \E^i(\cL;\F_p) \and _pP_\cL(t) \coloneq \sum_{i\geq 0}t^i \dim \uE^i(\cL;\F_p). 
\end{equation}
We will show in Corollary~\ref{cor:field extensions} 
that field extensions do not affect the structure of parity modules. 
Thus, in fact, Equation \eqref{PZ0} holds with $\Q$ replaced with any field of characteristic zero,
and Equation \eqref{PZp} holds with $\F_p$ replaced with any field of characteristic $p$.
By convention, we will write $$_0Z_\cL(t) = Z_\cL(t) \and {}_0P_\cL(t) = P_\cL(t),$$ and we will sometimes write $\F_0=\Q$
so that Equation \eqref{PZp} holds when $p=0$.

\begin{theorem}\label{combinatorial recursion}
Let $p$ be a prime number or zero.
The $p$-Z-polynomial and $p$-KL-polynomial satisfy the following properties:
\begin{enumerate}[(a)]
	\item If $\rk \cL = 0$, then ${}_pP_\cL(t) = 1$.
	\item For all $\cL$, the $p$-Z-polynomial is palindromic:  ${}_pZ_\cL(t) = t^{\rk \cL}{}_pZ_\cL(t^{-1})$. 
	\item For all $\cL$, we have $\displaystyle {}_pZ_\cL(t) = \sum_{F\in \cL} t^{\rk F}{}_pP_{\cL_F}(t)$.
\end{enumerate} 
\end{theorem}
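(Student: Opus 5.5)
We plan to prove the three properties of Theorem~\ref{combinatorial recursion} directly from the module-theoretic characterization of $\E(\cL;\F_p)$ given by Theorems~\ref{def of E}, \ref{props of E}, and \ref{indecomposables}, together with Corollary~\ref{cor:field extensions} to pass between $\F_p$ and an arbitrary field of characteristic $p$.

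Part (a) is immediate. If $\rk\cL=0$, then $\cL=\{\hz\}$ and $\H(\cL;\kk)=\kk$. By Theorem~\ref{props of E}(a),(b), $\E(\cL;\kk)$ is one-dimensional and concentrated in degree $0$. The span $\Span_{\cL\setminus\hz}$ is zero, so the stalk is $\kk$ in degree $0$, which gives ${}_pP_\cL(t)=1$.

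For part (b) we use duality. The dual $\E(\cL;\kk)^*[-\rk\cL]$ is again a module over $\H(\cL;\kk)$, since the algebra is commutative. Duality interchanges flabby and coflabby, so this module is pure. It is also indecomposable, because dualizing a decomposition would decompose $\E$. Shifting by $\rk\cL$ puts it in degrees $[0,\rk\cL]$ with a one-dimensional degree-zero part. For condition (c) of Theorem~\ref{def of E}, we observe that $y_\ho\colon \E^0\to\E^{\rk\cL}$ is a nonzero map between one-dimensional spaces, hence an isomorphism. Its transpose is therefore also nonzero, and the transpose is exactly multiplication by $y_\ho$ from degree $0$ to degree $\rk\cL$ on the shifted dual. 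Uniqueness in Theorem~\ref{def of E} now gives $\E^*[-\rk\cL]\cong\E$, and comparing dimensions in each degree yields palindromicity.

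Part (c) is the main step. We plan to filter $\E=\E(\cL;\kk)$ by the submodules $\Span_\Sigma(\E)$ for upper sets $\Sigma$, adding one flat at a time in decreasing order of rank. The associated graded pieces should be
\[
\Span_{\Sigma}(\E)/\Span_{\Sigma\setminus F}(\E),
\]
where $F$ is a minimal element of $\Sigma$. Coflabbiness is what makes this filtration behave well. The key claim is that each such piece is isomorphic to the stalk of $\E$ at $F$. We then want to identify that stalk with $\uE(\cL_F;\kk)[-\rk F]$. For this we would use the module $y_F\E$, or the quotient of $\Ann$ by $y_F$, viewed as a module over $\H(\cL_F;\kk)$. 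We would show it is pure and satisfies conditions (a)--(c) of Theorem~\ref{def of E} for $\cL_F$, up to a shift by $\rk F$. Presumably this is done via a local description of the associated sheaf near $F$. Granting the claim, summing dimensions over all $F$ gives the formula in (c).

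The hardest point is proving that the localization of $\E(\cL)$ at $F$ is $\E(\cL_F)$, in particular that it stays indecomposable. We would try to deduce indecomposability from the characterization via costalk-to-stalk maps in Remark~\ref{characterization}, or else from the inductive construction of $\E$.
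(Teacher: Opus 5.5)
Your proposal is correct and follows the paper's proof. Part (a) is immediate, and part (b) is exactly the duality argument of Proposition~\ref{EPD}. Part (c) combines two ingredients, as in the paper. The first is the identification, via coflabbiness, of the filtration quotients with stalks (Lemma~\ref{sigma doesn't really matter}); your one-flat-at-a-time filtration refines the $\mm$-adic filtration of Proposition~\ref{associated graded} and, for the Poincar\'e polynomial count, avoids that proposition's extra injectivity argument. The second is the contraction isomorphism $y_F\E(\cL;\kk)[\rk F]\cong\E(\cL_F;\kk)$ of Proposition~\ref{contraction module}. The step you flag as hardest is settled by the first route you suggest: the costalk-stalk maps of $y_F\E$ at $G\geq F$ coincide with those of $\E$ (Lemma~\ref{pure and indecomposable}), so Remark~\ref{characterization} gives indecomposability, and the inductive construction gives the same conclusion (Remark~\ref{rmk:alternate contraction module}).
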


In \cite{SHCG}, it was shown that $\deg {}_0P_\cL(t) < (\rk \cL)/2$ for all $\cL$ with $\rk \cL > 0$.
This inequality, along with the three properties of Theorem \ref{combinatorial recursion},
completely characterizes the polynomials $_0Z_\cL(t)$ and $_0P_\cL(t)$ \cite[Theorem 2.2]{BV}.
For positive $p$, the degree of $_pP_\cL(t)$ can be larger than $(\rk \cL)/2$, and Theorem \ref{combinatorial recursion}
is not sufficient to compute the polynomials $_pZ_\cL(t)$ and $_pP_\cL(t)$.  We do not know of a purely combinatorial
algorithm to compute these polynomials, except for the linear term --- see Proposition \ref{resonance linear} below.

\begin{remark}
It follows from Theorem~\ref{E=IH} and the hard Lefschetz property for $\IH(\cL)$ of \cite[Theorem 1.6]{SHCG} that the coefficients of  $Z_\cL(t)$ form a unimodal sequence.
Do the coefficients of  ${}_pZ_\cL(t)$ form a unimodal sequence for $p$ positive?
\end{remark}

If $f(t), g(t)\in\mathbb{N}[t]$, we write $f(t) \leq g(t)$ to mean that $g(t)-f(t)\in \mathbb{N}[t]$.
Theorem \ref{it goes up} below asserts that working in positive characteristic can only result in larger Z- and KL-polynomials.
The analogous result for classical Kazhdan--Lusztig polynomials can be found in \cite[Proposition 4.2(3)]{JensenWilliamson}.

\begin{thmx}\label{it goes up}
If $p$ is prime, we have
$$_0Z_\cL(t) \leq {}_pZ_\cL(t)\and _0P_\cL(t) \leq {}_pP_\cL(t),$$
with equality for all but finitely many primes.
\end{thmx}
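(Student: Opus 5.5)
We compare the parity modules in characteristic $0$ and $p$ through an integral form. The model is the classical argument of \cite[Proposition 4.2(3)]{JensenWilliamson}, transported to modules over the graded M\"obius algebra. We use the following, which Theorem~\ref{indecomposables} and the inductive linear-algebra construction of $\E(\cL;\kk)$ should supply. Parity modules are built recursively: $\E(\cL;\kk)$ is obtained from the $\E(\cL^F;\kk)$ with $F<\hat 1$ by adding generators at $\hat 1$. This step is governed by finitely many $\kk$-linear maps (costalk-to-stalk maps), whose ranks determine the dimensions of $\E^i(\cL;\kk)$ and $\uE^i(\cL;\kk)$. By Corollary~\ref{cor:field extensions} we may work over $\Q$, $\F_p$, or any extension without changing dimensions.

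First we build a free $\Z_{(p)}$-module $\E_{\Z_{(p)}}$ (or a $\Z$-form) over $\H(\cL;\Z)\otimes\Z_{(p)}$. We do this by induction on $\cL$, mimicking the construction: at each stage choose integral lifts of the relevant maps and define the lattice as the corresponding image/kernel. Then there are two base changes to check. Tensoring with $\Q$ should recover $\E(\cL;\Q)$. Tensoring with $\F_p$ should give a pure $\H(\cL;\F_p)$-module $\M_p$ that satisfies condition (c) of Theorem~\ref{def of E}, with graded dimension equal to that of $\E(\cL;\Q)$. The key facts to verify are that purity survives reduction mod $p$ and that the ranks of the stalks and costalks are constant across the specialization. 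The cleanest route is to characterize flabbiness and coflabbiness by exactness of certain sheaf/cosheaf complexes on $\cL$ with free terms. Then, since a free complex over $\Z_{(p)}$ that is exact after $\otimes\Q$ need not stay exact after $\otimes\F_p$, we arrange the recursive construction so that all the relevant terms are free and split. The natural tool here is stalks and costalks of parity-like objects being free.

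Granting this, Theorem~\ref{indecomposables} decomposes $\M_p$ as a direct sum of shifts of parity modules $\E(\cL^F;\F_p)$. Condition (c) and the $1$-dimensionality of the degree-$0$ and top pieces (Theorem~\ref{props of E}) force exactly one summand $\E(\cL;\F_p)$, unshifted. Hence $\dim \E^i(\cL;\F_p)\le \dim \M_p^i=\dim\E^i(\cL;\Q)$ — which is the wrong direction. So we instead reverse the roles, following Jensen--Williamson. Lift $\E(\cL;\F_p)$ (whose costalk-stalk maps are defined over $\F_p$) to a $\Z_{(p)}$-form, which is possible because the recursive construction only involves choosing splittings, and these lift over a complete local ring. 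Work over $\Z_p$ if needed, which is harmless by Corollary~\ref{cor:field extensions}. Tensoring this lift with $\Q_p$ gives a pure module containing $\E(\cL;\Q_p)$ as a summand, by the same decomposition argument. This yields $\dim\E^i(\cL;\Q)\le\dim\E^i(\cL;\F_p)$, so ${}_0Z_\cL\le{}_pZ_\cL$. The same summand inclusion applied to stalks at $\hat 0$ gives ${}_0P_\cL\le{}_pP_\cL$, since $\uE$ of a direct sum is the sum of the $\uE$'s.

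Finitely many exceptions is the easier half. The whole construction over $\Q$ involves finitely many matrices with rational entries, since $\cL$ is finite and the recursion runs over the finitely many intervals of $\cL$. Invert the finitely many primes dividing denominators or the minors that witness the relevant ranks. For every other $p$ the integral form reduces to a module with identical ranks, so it is pure, indecomposable, and satisfies (c); by uniqueness in Theorem~\ref{def of E} it is $\E(\cL;\F_p)$, and equality of both polynomials follows. The main obstacle I expect is the lifting step: showing that a pure $\F_p$-module (specifically $\E(\cL;\F_p)$) lifts to a $\Z_p$-free module whose generic fiber is still pure. I would try to prove it inductively on upper sets using the sheaf-on-$\cL$ viewpoint of Section~\ref{sec:sheafifying}. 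There, flabbiness amounts to surjectivity of restriction maps between free modules, which lifts by Nakayama's lemma, and coflabbiness is handled dually.
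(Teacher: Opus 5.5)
Once you reverse the roles, your argument is the paper's proof. There $\E(\cL;\Z_p)$ is built directly by running the extension procedure over the DVR, with purity including freeness of stalks and tautness tested mod $p$. Theorem~\ref{thm:base change preserves purity} and Proposition~\ref{prop:base change for IH} then show that its reduction is $\E(\cL;\F_p)$ and that its generic fiber is pure with $y_{\hat 1}$ acting nontrivially on degree $0$, so it contains $\E(\cL;\Q_p)$, and ranks are compared. The one ingredient your Nakayama sketch leaves implicit is that, for pure objects over $\Z_p$, the sections $\Gamma_*$ commute with reduction mod $p$; this is what makes each inductive lift comparable with the $\F_p$ data.

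You should, however, delete your second paragraph. A $\Z_{(p)}$-form of $\E(\cL;\Q)$ whose reduction is pure and satisfies (c) cannot exist in general: combined with the correct inequality it would force equality for every $p$. Yet ${}_2Z_{U_{2,3}}(t)=1+4t+t^2\neq 1+3t+t^2={}_0Z_{U_{2,3}}(t)$ by Theorem~\ref{E=CH}. Your inverted-bad-primes argument for the finitely-many-exceptions clause, which the paper's proof does not spell out, is sound.
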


We define an element $F$ in $\cL$ to be {\bf \boldmath{$p$}-resonant} if $F$ is not $\hat 1$ and,
for all $G>F$ with $\rk(G) = \rk(F) + 2$, the cardinality of the open interval $(F,G)$
is congruent to $1$ modulo $p$.  The only $0$-resonant elements are those of corank $1$,
for which there are no such $G$.  However, there may be $p$-resonant elements
of  corank larger than $1$.  For example,  every proper flat of the Fano matroid is $2$-resonant, and every nonempty proper flat of the ternary affine plane is $3$-resonant.

\begin{proposition}\label{resonance linear}
The coefficient of $t$ in $_pZ_\cL(t)$ is equal to the number of $p$-resonant elements of $\cL$.
The coefficient of $t$ in $_pP_\cL(t)$ is equal to the number of $p$-resonant elements of $\cL$
minus the number of rank $1$ elements of $\cL$.
\end{proposition}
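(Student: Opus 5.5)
The second statement follows from the first. Compare coefficients of $t$ in Theorem~\ref{combinatorial recursion}(c). Only $F=\hz$ and the atoms $F$ contribute: $\hz$ contributes the linear coefficient of ${}_pP_\cL(t)$, and each atom contributes the constant term of ${}_pP_{\cL_F}(t)$. That constant term is $\dim\uE^0(\cL_F;\kk)$. Since $\Span_{\cL_F\setminus\{F\}}$ lives in degrees $\geq 1$, it equals $\dim\E^0(\cL_F;\kk)=1$ by Theorem~\ref{props of E}(b). Hence $[t]\,{}_pZ_\cL=[t]\,{}_pP_\cL+\#\cL^1$. So everything reduces to computing $\dim \E^1(\cL;\F_p)$.

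\emph{Reformulation.} By palindromicity (Theorem~\ref{combinatorial recursion}(b)), $\dim\E^1=\dim\E^{r-1}$ with $r=\rk\cL$. Expand $\dim\E^{r-1}$ using Theorem~\ref{combinatorial recursion}(c): the flat $F$ contributes the coefficient of $t^{\crk F-1}$ in ${}_pP_{\cL_F}(t)$. Here $\cL_F$ is again a geometric lattice, of rank $\crk F$. It therefore suffices to prove the following local claim. For every $\cL$ with $r\geq 1$, $\dim\uE^{r-1}(\cL;\kk)$ equals $1$ if $\hz$ is $p$-resonant and $0$ otherwise, and it is $0$ when $r=0$. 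Summing the claim over $F$ counts exactly the $p$-resonant flats.

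\emph{Proving the local claim.} I would dualize. Assume the parity module is self-dual, $\E^*\cong\E[r]$ up to the appropriate sign convention; I expect this to be available from the uniqueness in Theorem~\ref{def of E}, since duality preserves purity and indecomposability. Then the stalk in degree $r-1$ is dual to the costalk in degree $1$:
\[
\{m\in\E^1 : y_a m=0 \text{ for all atoms } a\}.
\]
The rank-$2$ case is a direct computation. Let $n$ be the number of atoms, and write $m=\sum_a c_a y_a x$. Then
\[
y_a m=\Bigl(\sum_{b\neq a}c_b\Bigr)y_{\ho}x,
\]
so $m$ is killed by all atoms iff all $c_a$ are equal to some $c$ with $(n-1)c=0$. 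A nonzero solution exists iff $n\equiv 1 \pmod p$, which is exactly resonance of $\hz$. (In rank $2$ one checks that $\E^1=\H^1\cdot x$.)

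\emph{General rank: the main obstacle.} In general $\E^1$ may contain degree-$1$ generators beyond $\H^1\cdot x$, namely those coming from ${}_pP_\cL$. I would handle this by induction on rank, using the inductive construction of $\E$ from its restrictions to the upper sets $\cL\setminus\{\hz\}$. A costalk element $m$ is determined by its images in the rank-$2$ lower intervals $\E(\cL^G)$ with $\rk G=2$, and the rank-$2$ computation applies to each. Compatibility across the various $G$ should force the $c_a$ to be globally constant, with $(|(\hz,G)|-1)c=0$ for every rank-$2$ flat $G$. This gives dimension $1$ precisely when $\hz$ is $p$-resonant.

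The delicate step is showing that the extra degree-$1$ generators contribute nothing to the costalk, which requires a careful use of flabbiness. Its hypotheses, in particular the self-duality of $\E$ assumed above, must be confirmed in the paper before this step can be carried out.
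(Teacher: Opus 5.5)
Your reductions are correct. The second statement follows from the first via Theorem~\ref{combinatorial recursion}(c) and $\dim\uE^0=1$. Palindromicity plus the recursion reduce everything to the local claim about $\uE^{r-1}(\cL;\kk)$. Stalk--costalk duality (Proposition~\ref{EPD} together with Proposition~\ref{prop:duality preserves purity}) turns that into the degree-$1$ costalk $\{m\in\E^1 : y_a m=0 \text{ for all atoms } a\}$. The gap is that the local claim itself, which carries all the content, is not proved.

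Your rank-$2$ computation rests on ``$\E^1=\H^1\cdot x$''. This is unjustified, and it is false exactly in the resonant case. For $U_{2,3}$ in characteristic $2$ we have $\E\cong\CH$ by Theorem~\ref{E=CH}, so $\dim\E^1=4>3=\dim\H^1$. It even contradicts your own conclusion. If $\E^1=\H^1\cdot x$, then $\uE^1=0$, so by your duality the degree-$1$ costalk would vanish. Yet you exhibit the nonzero element $\sum_a y_a x$ in it. So the computation only describes costalk elements that happen to lie in $\H^1\cdot x$.

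In general rank, ``compatibility should force the $c_a$ to be constant'' presupposes the same thing: that every costalk element has the form $\sum_a c_a y_a x$. That is precisely the step you flag as delicate. You also never define the maps $\E(\cL;\kk)\to\E(\cL^G;\kk)$ it refers to. Flabbiness cannot supply this step. The module $\E(\cL;\kk)\oplus\E(\cL^{\hz};\kk)[-1]$ is pure, agrees with $\E$ in degree $0$, and satisfies $y_{\ho}x\neq 0$. But its degree-$1$ costalk at $\hz$ contains elements outside $\H^1\cdot x$. Any correct argument must therefore use indecomposability.

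The missing idea is tautness at $\hz$. By Remark~\ref{characterization}, $\beta_{\hz}(\E)=0$ when $\rk\cL\ge 1$, which says $\Ann_{\Sigma_{>\hz}}(\E)\subseteq\Span_{\Sigma_{>\hz}}(\E)$. By Theorem~\ref{props of E}(a),(b), the degree-$1$ part of the right-hand side is exactly $\H^1\cdot x$. Hence every degree-$1$ costalk element is $m=\sum_a c_a y_a x$, and
\[
y_a m=\sum_{G}\Bigl(\sum_{b\le G,\ b\neq a}c_b\Bigr)y_G x ,
\]
where $G$ runs over rank-$2$ flats with $G>a$. Since $y\mapsto yx$ is injective (Proposition~\ref{H-E-CH}), $m$ is killed by all atoms iff, for each rank-$2$ flat $G$, the $c_b$ with $b\le G$ all equal some $c_G$ satisfying $(m_G-1)c_G=0$. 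Any two atoms lie below a common rank-$2$ flat, so $c$ is constant. The costalk is therefore $1$-dimensional iff $\hz$ is $p$-resonant, and trivially $1$-dimensional when $\rk\cL=1$.

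This works in all ranks at once, with no induction and no lower intervals. With this repair your argument is a valid proof that avoids the Chow ring. The paper's proof instead reads $\dim\E^1$ off the decomposition \eqref{Chow decomposition} of $\CH(\cL;\kk)$ together with Corollary~\ref{beta resonance}. Your repaired route is closer in spirit to the paper's alternative argument in Theorem~\ref{thm:linear term} and Lemma~\ref{lem:Radon}, whose resonant case uses the same all-ones vector.
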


\subsection{Relation with the augmented Chow ring}\label{sec:Chow}

Recall from Theorem \ref{E=IH} that $\E(\cL;\Q)$ is isomorphic to $\IH(\cL)$, which is a submodule of the
augmented Chow ring $\CH(\cL;\Q)$ containing the graded M\"obius algebra $\H(\cL;\Q)$. 
The following proposition gives an analogous pair of inclusions over an arbitrary coefficient field.

\begin{proposition}\label{H-E-CH}
For any geometric lattice $\cL$ and field $\kk$, there are inclusions of $\H(\cL;\kk)$-modules $$\H(\cL;\kk)\subs\E(\cL;\kk)\subs\CH(\cL;\kk).$$
\end{proposition}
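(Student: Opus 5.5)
The plan is to use Theorem~\ref{indecomposables} together with the fact (Proposition~\ref{true facts}(e), stated earlier) that $\CH(\cL;\kk)$ is a pure $\H(\cL;\kk)$-module. By Theorem~\ref{indecomposables}, $\CH(\cL;\kk)$ decomposes as a direct sum of shifts of the modules $\E(\cL^F;\kk)$. The first step is to identify one summand isomorphic to $\E(\cL;\kk)$ with no shift. Since $\CH^0(\cL;\kk)$ is spanned by the unit $1$, and $y_{\hat 1}\cdot 1 = y_{\hat 1}$ is nonzero in $\CH(\cL;\kk)$ (the inclusion $\H(\cL;\kk)\subs\CH(\cL;\kk)$ is injective, as recalled in Section~\ref{Section:AugmentedChow}), write $1=\sum_j x_j$ according to the decomposition. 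Some component $x_j$ must satisfy $y_{\hat 1}x_j\neq 0$. That summand is some $\E(\cL^F;\kk)[-s]$. Because $y_{\hat 1}$ acts by zero on any $\H(\cL^F;\kk)$-module with $F<\hat 1$, we get $F=\hat 1$. Because $x_j$ lives in degree $0$ of the summand and $\E(\cL;\kk)$ vanishes in negative degrees with one-dimensional degree-$0$ part (Theorem~\ref{props of E}), the shift must be trivial. We may instead phrase this via the uniqueness in Theorem~\ref{def of E}: the summand is pure and indecomposable and has a degree-$0$ element not killed by $y_{\hat 1}$, so it is $\E(\cL;\kk)$. Either way we obtain a direct summand $\E\subs\CH(\cL;\kk)$ isomorphic to $\E(\cL;\kk)$, with projection $\pi\colon\CH(\cL;\kk)\to\E$.

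The second step is to show that $\H(\cL;\kk)$ embeds in $\E(\cL;\kk)$. Take $x$ to be a generator of $\E^0$, for instance $x=\pi(1)$. I need $\pi(1)\neq 0$, which holds because its component already satisfies $y_{\hat 1}\pi(1)\neq 0$ by construction. Consider the module map $\H(\cL;\kk)\to\E$ given by $h\mapsto h\cdot x$. Its kernel is a graded ideal $I$ of $\H(\cL;\kk)$.

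The key point, and what I expect to be the main obstacle, is showing $I=0$. My approach is to use the fact that $\H(\cL;\kk)$ has an essential socle spanned by $y_{\hat 1}$: every nonzero graded ideal contains $y_{\hat 1}$. Given a nonzero homogeneous $h=\sum_{\rk F=i} c_F y_F$, choose $F$ with $c_F\neq 0$. Then choose $G$ with $\rk G=\rk\cL-i$ and $F\vee G=\hat 1$ independent, meaning $G$ is spanned by atoms extending a basis of $F$ to a basis of $\hat 1$. The concern is that multiplying by $y_G$ should isolate a single term. To handle this, I would pick $G$ so that it is complementary to exactly one of the flats in the support, or else argue by a triangularity/ordering argument on the support. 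If this proves awkward, the alternative is to compare directly inside $\CH$: the composite $\H\to\CH\to\E$ kills $I$. Since $y_{\hat 1}\notin I$ because $y_{\hat 1}x\neq 0$, essentiality would then force $I=0$.

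Combining the two steps gives $\H(\cL;\kk)\hookrightarrow\E(\cL;\kk)\hookrightarrow\CH(\cL;\kk)$ as $\H(\cL;\kk)$-modules. If desired, one can arrange the first inclusion to be literally $\H\subs\E\subs\CH$ by choosing the decomposition of $\CH$ so that $\E$ contains $1$, or simply by interpreting the statement up to the isomorphisms constructed.
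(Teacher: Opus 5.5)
Your first step is sound. The uniqueness in Theorem~\ref{def of E} identifies the summand carrying a degree-$0$ component of $1$ not killed by $y_{\hat 1}$ with $\E(\cL;\kk)$, which gives the second inclusion as in the paper.

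The gap is in the second step. You claim that every nonzero graded ideal of $\H(\cL;\kk)$ contains $y_{\hat 1}$. This is equivalent to the socle of $\H(\cL;\kk)$ being $\kk y_{\hat 1}$, that is, to nondegeneracy of the Poincar\'e pairing. By Lemma~\ref{PD} and Theorem~\ref{E=H}, that holds only when $\cL$ is a product of a Boolean lattice and projective geometries of order not divisible by the characteristic of $\kk$.

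Concretely, take the rank $3$ uniform matroid on $\{1,2,3,4\}$. The element $h=y_{12}-y_{13}-y_{24}+y_{34}$ is annihilated by every $y_i$, so the ideal it generates is $\kk h$, which does not contain $y_{\hat 1}$. More generally, whenever $|\cL^i|>|\cL^{\rk\cL-i}|$, some nonzero $h\in\H^i(\cL;\kk)$ satisfies $h\cdot\H^{\rk\cL-i}(\cL;\kk)=0$, so the ideal it generates misses $y_{\hat 1}$. Neither your main argument nor your fallback (which invokes the same essentiality) shows $I=0$. No clever choice of $G$ or triangularity argument can rescue it, because there may be no way to reach $y_{\hat 1}$ from $h$ at all.

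The missing idea is a dimension count. $\CH^0(\cL;\kk)=\kk\cdot 1$ is one-dimensional. For an internal decomposition $\CH(\cL;\kk)=\bigoplus_j N_j$ into graded submodules, we have $\CH^0(\cL;\kk)=\bigoplus_j N_j^0$. So exactly one summand has nonzero degree-$0$ part, and it contains $1$ itself. Since $y_{\hat 1}\cdot 1\neq 0$, that summand is your $\E$-summand.

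Hence $\pi(1)=1$. The map $h\mapsto h\cdot\pi(1)$ is then just the map $\H(\cL;\kk)\to\CH(\cL;\kk)$, $h\mapsto h\cdot 1$, which is injective by Proposition~\ref{true facts}(a). This is exactly the paper's proof: $\E^0(\cL;\kk)=\CH^0(\cL;\kk)$ together with Proposition~\ref{true facts}(a).

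Your closing remark about arranging the decomposition so that $\E$ contains $1$ is therefore not a cosmetic choice. It is the entire content of the first inclusion, and it holds automatically.
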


The graded dimensions of $\H(\cL;\kk)$ and $\CH(\cL;\kk)$ depend only on $\cL$, not on $\kk$: The dimension of $\H^k(\cL;\kk)$ is the number of rank $k$ elements of $\cL$, while the Hilbert series of $\CH(\cL;\kk)$ is the {\bf augmented Chow polynomial} $C_\cL(t)$ studied in \cite{FMSV}. 
When $\kk=\Q$, the first inclusion, together with the hard Lefschetz property of $\E(\cL;\kk)$,  was used in \cite{SHCG} to prove the Dowling--Wilson conjecture:
\[
|\cL^k|\leq |\cL^{d-k}|
\ \ \text{for all $k\leq d/2=\rk\cL/2$}.
\]
The second inclusion is built into the construction of $\IH(\cL)$ in \cite{SHCG}.

In the next two theorems, we specify exactly when each of these inclusions is an equality.
A simple, connected, modular matroid of rank greater than $1$ is called a {\bf projective geometry} \cite[Chapter 12]{Welsh}.
Given a projective geometry, there exists an integer $q\geq 2$ such that every rank $2$ flat has cardinality $q+1$; this number
is called the {\bf order}.  Examples of projective geometries include the uniform matroid $U_{2,n}$ for $n \ge 3$, which has order $n-1$,
the projective geometry $\P^{d-1}(\F_q)$ for a prime power $q$, which has order $q$, and all non-Desarguesian projective planes.
We write $\P^{d-1}(\F_q)$ to denote both the matroid and its lattice of flats.

\begin{theorem}\label{E=H}
The following conditions are equivalent for $\cL$ and $\kk$, with $p$ the characteristic of $\kk$:
\begin{enumerate}[(a)]
\item The $\H(\cL;\kk)$-module $\H(\cL;\kk)$ is pure;
\item $\E(\cL;\kk) \cong \H(\cL;\kk)$ as $\H(\cL;\kk)$-modules;
\item ${}_pP_\cL(t) = 1$;
\item $\cL$ is modular and every $p$-resonant element of $\cL$ has corank $1$;
\item $\cL$ is isomorphic to a product of a Boolean lattice and a collection of finite projective spaces, each with an order that is not
a multiple of $p$.
\end{enumerate}
\end{theorem}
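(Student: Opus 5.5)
We prove the chain (a)$\Rightarrow$(b)$\Rightarrow$(c)$\Rightarrow$(d)$\Rightarrow$(e)$\Rightarrow$(a). Throughout we use that the truth of each condition is unchanged under field extension (Corollary~\ref{cor:field extensions}), so only $p=\operatorname{char}\kk$ matters.

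\emph{(a)$\Rightarrow$(b).} Suppose $\H(\cL;\kk)$ is pure. By Theorem~\ref{indecomposables}, $\H(\cL;\kk)$ is a direct sum of shifts of the modules $\E(\cL^F;\kk)$. Since $\H(\cL;\kk)$ is generated by $y_{\hat0}$ in degree $0$ and $\H^0$ is one-dimensional, it is a cyclic module over a graded-local ring, hence indecomposable. Taking $x=y_{\hat 0}$, we have $y_{\hat1}\cdot x=y_{\hat1}\ne0$. Theorem~\ref{def of E} then gives $\H\cong\E(\cL;\kk)$.

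\emph{(b)$\Rightarrow$(c).} The stalk of $\H(\cL;\kk)$ at $\hat0$ is $\H/\Span_{\cL\setminus\hat0}\H=\kk\, y_{\hat0}$, so ${}_pP_\cL=1$.

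\emph{(c)$\Rightarrow$(d).} We apply Theorem~\ref{combinatorial recursion}(c) to every upper interval $\cL_F$. First we show that ${}_pP_{\cL_F}=1$ for all $F$. One way is to use that $\E(\cL;\kk)$ contains $\H(\cL;\kk)$ (Proposition~\ref{H-E-CH}), together with a localization statement: the stalk of $\E(\cL)$ at $F$ is $\uE(\cL_F)$, up to shift. If that statement is not available, we instead run an induction on rank, reducing via the coefficient formula. Assuming ${}_pP_{\cL_F}=1$ for all $F$, Theorem~\ref{combinatorial recursion}(c) gives ${}_pZ_\cL(t)=\sum_k |\cL^k|t^k$. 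Palindromicity (Theorem~\ref{combinatorial recursion}(b)) then forces $|\cL^k|=|\cL^{d-k}|$. Comparing linear coefficients via Proposition~\ref{resonance linear}, the number of $p$-resonant elements equals the number of atoms, which in turn equals the number of coatoms. Every coatom is $p$-resonant, so the $p$-resonant elements are exactly the coatoms. For modularity, we use the equality case of Dowling--Wilson type inequalities, $|\cL^1|=|\cL^{d-1}|$, which holds iff $\cL$ is modular. Since this is a known combinatorial fact (a consequence of Greene's hyperplane theorem), we cite it.

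\emph{(d)$\Rightarrow$(e).} A modular geometric lattice decomposes as a product of its connected components, and these are points (giving the Boolean factor) or projective geometries of rank $\ge2$ (Birkhoff, Veblen--Young). In a projective geometry of order $q$, each interval $(F,G)$ of length $2$ is the lattice of a line and so has $q+1$ elements. Hence an element of corank $\ge 2$ is $p$-resonant iff $q+1\equiv1$, that is, iff $p\mid q$. In a product, an element is $p$-resonant iff one factor has corank $\ge1$ with the other factors at top and that factor coordinate is resonant, plus length-2 intervals crossing two factors have $2$ interior elements; these are to be checked. Crossing intervals have $2\equiv1 \pmod p$ only when $p=1$, which never happens, so they impose nothing. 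Requiring that no corank $\ge2$ element be resonant therefore forces $p\nmid q$ for each factor.

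\emph{(e)$\Rightarrow$(a).} This is the main obstacle. Purity is compatible with products: $\H(\cL_1\times\cL_2)=\H(\cL_1)\otimes\H(\cL_2)$, and by Theorem~\ref{props of E}(c) we can reduce to checking $\E\cong\H$ on each factor, with Boolean rank-one factors trivial. So the case to handle is a projective geometry $\cL$ of order $q$ with $p\nmid q$. We prove purity of $\H(\cL;\kk)$ directly. For upper sets $\Sigma$, $\Ann_\Sigma(\H)$ is spanned by the $y_G$ with no $F\in\Sigma$ modular-complementary in the appropriate rank sense. By modularity, $y_Fy_G\neq0$ iff $F\wedge G=\hat0$. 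The flabby condition then becomes a linear-algebra statement about incidence matrices between flats of complementary ranks. These matrices are invertible over $\kk$ exactly when $q\not\equiv0\pmod p$; this uses Kantor-type incidence rank results, whose determinant is a power of $q$. The coflabby condition follows by the same computation applied to the dual.
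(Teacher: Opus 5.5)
\textbf{Gap in (e)$\Rightarrow$(a).} This step, for a single projective geometry $\cL$ of order $q$ with $p\nmid q$, is the real content of the theorem, and what you give there is not an argument.

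\begin{itemize}
\item Your starting claim, that $\Ann_\Sigma(\H(\cL;\kk))$ is spanned by the monomials $y_G$ it contains, is false. Take the projective line $U_{2,3}$ (order $2$) and $\Sigma=\Sigma_{\geq 1}$. Then $y_1(ay_1+by_2+cy_3)=(b+c)\,y_{\ho}$, so $y_2-y_3\in\Ann_\Sigma$ but $y_2\notin\Ann_\Sigma$.
\item Consequently flabbiness is not a monomial incidence condition. Your assertion that it reduces, for all pairs $\Sigma_1,\Sigma_2$, to invertibility of the complementary-rank matrices (entry $1$ iff $F\wedge G=\hz$) is unproved.
\item The coflabby half needs no computation: $\H(\cL;\kk)$ is always coflabby, since $\Span_\Sigma\H(\cL;\kk)$ has basis $\{y_F\mid F\in\Sigma\}$.
\item Kantor's theorem is a rank statement about containment incidence matrices of $\P^n(\F_q)$. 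It does not evaluate determinants of these meet-zero matrices. It also does not cover non-Desarguesian planes or lines $U_{2,n}$ with $n-1$ not a prime power, which are among the factors allowed in (e).
\end{itemize}

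\textbf{The missing idea is duality.} The matrices you have in mind are exactly the nonzero blocks of the Poincar\'e pairing $\langle\eta,\xi\rangle=\tau(\eta\xi)$ on $\H(\cL;\kk)$. If this pairing is nondegenerate, then $\eta\mapsto\langle\eta,-\rangle$ is an $\H(\cL;\kk)$-module isomorphism $\H(\cL;\kk)\cong\H(\cL;\kk)^*[-\rk\cL]$. The dual of a coflabby module is flabby, so $\H(\cL;\kk)$ is then pure (Lemma~\ref{PD}).

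Nondegeneracy when $p\nmid q$ is Proposition~\ref{determinant}:
\begin{itemize}
\item The trace form on the ungraded M\"obius algebra $\D(\cL;\kk)$ is block triangular, with the same diagonal blocks as the graded pairing.
\item In the idempotent basis $v_F=\sum_{G\geq F}\mu(F,G)u_G$ (a unitriangular change of basis), this form is diagonal with entries $\mu(F,\ho)=(-1)^{\crk F}q^{\binom{\crk F}{2}}$.
\item Hence its determinant is a signed power of $q$, which is nonzero in $\kk$ when $p\nmid q$. This works uniformly for every projective geometry.
\end{itemize}

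\textbf{The rest of the chain.} With that supplied, the rest of your argument is sound.
\begin{itemize}
\item (a)$\Rightarrow$(b), (b)$\Rightarrow$(c) and (d)$\Rightarrow$(e) match the paper.
\item Your (c)$\Rightarrow$(d), via palindromicity, Proposition~\ref{resonance linear} and Greene's hyperplane theorem, is a legitimate and more elementary alternative to the paper's route. The paper instead passes to characteristic zero via Theorem~\ref{it goes up} and then cites \cite{EPW}.
\item Replace your hedged first step in (c)$\Rightarrow$(d) with the direct argument. From ${}_pP_\cL=1$ and graded Nakayama, $\E(\cL;\kk)$ is generated by $\E^0(\cL;\kk)=\kk\cdot 1$. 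So $\E(\cL;\kk)=\H(\cL;\kk)$ by Proposition~\ref{H-E-CH} (or Corollary~\ref{one implication}). Hence ${}_pZ_\cL(t)=\sum_k|\cL^k|t^k$, with no appeal to upper intervals.
\end{itemize}
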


The augmented Chow ring $\CH(\cL;\kk)$ has as a quotient the {\bf Chow ring}
$\uCH(\cL;\kk)$; see \cite{AHK} and \cite{BHMPW}.  The {\bf Chow polynomial} $\underline{C}_\cL(t)$, which depends only on $\cL$ and not on $\kk$, is defined to be the Poincar\'e polynomial
of  $\uCH(\cL;\kk)$.  For combinatorial descriptions of Chow and augmented Chow polynomials, see
\cite{FMSV}.

\begin{theorem}\label{E=CH}
The following conditions are equivalent for $\cL$ and $\kk$, with $p$ the characteristic of $\kk$:
\begin{enumerate}[(a)]
\item The $\H(\cL;\kk)$-module $\CH(\cL;\kk)$ is indecomposable;
\item $\E(\cL;\kk) \cong \CH(\cL;\kk)$ as $\H(\cL;\kk)$-modules;
\item ${}_pZ_\cL(t) = C_\cL(t)$;
\item ${}_pP_\cL(t) = \underline{C}_\cL(t)$;
\item Every non-maximal element of $\cL$ is $p$-resonant.
\end{enumerate}
\end{theorem}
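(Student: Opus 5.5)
We will prove the cycle of implications (a)$\Leftrightarrow$(b), (b)$\Leftrightarrow$(c)$\Leftrightarrow$(d), and (b)$\Leftrightarrow$(e), using Proposition~\ref{H-E-CH} and Theorem~\ref{combinatorial recursion} as the main tools.

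For (a)$\Leftrightarrow$(b), note that $\CH(\cL;\kk)$ is always pure (Proposition~\ref{true facts}(e)) and contains an element of degree $0$, namely $1$, with $y_{\hat 1}\cdot 1\neq 0$. So if $\CH(\cL;\kk)$ is indecomposable, then it satisfies the three conditions of Theorem~\ref{def of E}, and uniqueness gives $\CH\cong\E$. Conversely $\E$ is indecomposable by definition.

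For (b)$\Rightarrow$(c)$\Rightarrow$(d), the inclusion $\E(\cL;\kk)\subs\CH(\cL;\kk)$ of Proposition~\ref{H-E-CH} shows that (c), the equality of Hilbert series, is equivalent to the inclusion being an isomorphism, which is (b). To relate (c) and (d), we use the recursion of Theorem~\ref{combinatorial recursion}(c),
\[
{}_pZ_\cL(t)=\sum_F t^{\rk F}\,{}_pP_{\cL_F}(t),
\]
together with the analogous identity $C_\cL(t)=\sum_F t^{\rk F}\,\underline C_{\cL_F}(t)$ from \cite{FMSV}. We also use the termwise inequality ${}_pP_{\cL_F}\le \underline C_{\cL_F}$, which holds because the stalk $\uE(\cL_F)$ is a subquotient of the stalk of $\CH$, namely $\uCH$; this needs a stalk-level version of Proposition~\ref{H-E-CH}, obtained by applying the functor $\M\mapsto \M/\Span_{\cL\setminus\hz}\M$ together with a splitting. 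Given these, (c) for $\cL$ forces equality for every $F$, in particular for $F=\hz$, which gives (d). Conversely, (d) for all upper intervals gives (c).

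The substance is (b)$\Leftrightarrow$(e). Since $\E$ is characterized by purity plus indecomposability, and $\CH$ is pure, we need a criterion for when $\CH$ splits off a summand $\E(\cL^F;\kk)[-k]$ with $F\neq\hat 1$. We will use Theorem~\ref{general decomposition}: the multiplicity of such summands is governed by the ranks of costalk-to-stalk maps of $\CH$ at $F$. By the product formula (Theorem~\ref{props of E}(c)) and localization to intervals, indecomposability should reduce to the condition that these maps vanish for every non-maximal $F$. In degree one this is the rank-2 interval computation: the relevant map at $F$ is multiplication by the class $\sum_{G\gtrdot F}$, which on corank-$2$ intervals $[F,G]$ acts as $|(F,G)|-1$. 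Its vanishing mod $p$ is exactly $p$-resonance. Proposition~\ref{resonance linear} already encodes this. Comparing linear coefficients, ${}_pZ$ has linear coefficient equal to the number of $p$-resonant elements, while $C_\cL$ has linear coefficient equal to the number of non-maximal elements (via the generators $x_F$ and $y_i$). Hence (c)$\Rightarrow$(e) follows immediately.

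The main obstacle is (e)$\Rightarrow$(b), showing that degree-one resonance forces indecomposability in all degrees. I would argue by induction on $\rk\cL$. Resonance is inherited by intervals, so we may assume (b) holds for all proper upper and lower intervals. Then $\CH$ is locally indecomposable away from $\hz$, and the only possible extra summands are shifts of $\E(\cL^{\hz})$, which are one-dimensional skyscrapers in the interior degrees. Such a summand would be an element of $\CH^k$, with $0<k<\rk\cL$, annihilated by all $y_G$ with $G\neq\hz$ and not lying in $\Span_{\cL\setminus\hz}$. I would try to rule this out by showing that $\uCH$ in positive degree is generated by the degree-one part, on which the costalk-stalk map is zero by resonance.
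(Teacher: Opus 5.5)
\textbf{There is a genuine gap at (e)$\Rightarrow$(b).} Your treatment of (a)$\Leftrightarrow$(b)$\Leftrightarrow$(c), (c)$\Rightarrow$(d) and (c)$\Rightarrow$(e) is sound. The implication you flag as the crux is left as a plan, and that plan aims at the wrong object.

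After your induction you must show that $\beta_{\hz}(\CH(\cL;\kk))$ vanishes in every degree when $\hz$ is $p$-resonant. The image of $\beta_\hz$ is the image in $\uCH(\cL;\kk)$ of the ideal $\Ann_{\cL\setminus\hz}\CH(\cL;\kk)$. Knowing that $\uCH$ is generated in degree one says nothing about whether this image is generated in degree one, since an ideal of a ring generated in degree one need not be.

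What you need is that the costalk itself is cyclic. By Proposition~\ref{true facts}(d) (the ideals $\langle x_\hz\rangle$ and $\langle y_i\rangle$ are mutual annihilators), $\CH(\cL;\kk)_{[\hz]}=x_\hz\CH(\cL;\kk)\cong\uCH(\cL;\kk)[-1]$. Since $\beta_\hz$ is $\CH$-linear, it becomes multiplication by $\b_\cL=-x_\hz$ (Proposition~\ref{Chow stalks}). Its image is the principal ideal $\b_\cL\,\uCH(\cL;\kk)$, which vanishes iff $\b_\cL=0$, iff $\hz$ is $p$-resonant (Corollary~\ref{beta resonance}). This mutual-annihilator input is what lets degree-one vanishing propagate to all degrees; without it your argument stops at degree one.

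With that ingredient your induction is unnecessary. The same identification holds at every $F<\ho$ via $y_F\CH(\cL;\kk)\cong\CH(\cL_F;\kk)[-\rk F]$. Feeding these into Theorem~\ref{general decomposition} gives the decomposition \eqref{Chow decomposition}, from which (b)$\Leftrightarrow$(e) is immediate. This is the paper's argument.

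\textbf{A smaller gap at (d).} As written, your chain of implications never returns from (d): the remark that (d) for all upper intervals gives (c) assumes something you have not derived from (d) for $\cL$ alone. The fix is to prove (d)$\Rightarrow$(e) by comparing linear coefficients, exactly as you did for (c)$\Rightarrow$(e). Use the second half of Proposition~\ref{resonance linear} together with $\dim\uCH^1(\cL;\kk)=|\cL|-|\cL^1|-1$ from Lemma~\ref{bases}.
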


We say that $\cL$ is {\bf \boldmath{$p$}-resonant} if $\cL$ and $p$ satisfy the equivalent conditions of Theorem~\ref{E=CH}.
A matroid is $p$-resonant if its lattice of flats is $p$-resonant.

\begin{example}
We collect here some infinite families of $p$-resonant matroids.
\begin{enumerate}[(1)]
\item The uniform matroid $U_{2,n}$ is $p$-resonant if and only if $n$ is congruent to $1$ modulo $p$.
\item A projective plane of order $q$ is $p$-resonant if and only if $p$ divides $q$. The {\bf prime power conjecture} predicts that every projective plane is $p$-resonant for a unique $p$ \cite[Section 6.1]{Oxley}.  
\item The projective geometry $\mathbb{P}^{d-1}(\mathbb{F}_q)$ is $p$-resonant if and only if $q$ is a power of $p$. More generally,  a {$q$-niform matroid} \cite{QNiform} of rank at least $3$ obtained by truncating a projective geometry over $\mathbb{F}_q$  is $p$-resonant if and only if $q$ is a power of $p$. 
\item For $1<t<k<n$, the blocks of a Steiner system $S(t,k,n)$ form the set of hyperplanes of a rank $t+1$ matroid on $n$ elements \cite[Chapter 12]{Welsh}. A Steiner system $S(2,3,n)$ exists if and only if $n$ is congruent to $1$ or $3$ modulo $6$, and the corresponding matroid is $2$-resonant if and only if $n$ is congruent to $3$ or $7$ modulo $12$.
A Steiner system $S(2,4,n)$ exists if and only if $n$ is congruent to $1$ or $4$ modulo $12$, and the corresponding matroid is $3$-resonant if and only if $n$ is congruent to $4$ or $13$ modulo $36$.
\item A hyperoval is a set of $q+2$ points in $\mathbb{P}^2(\mathbb{F}_q)$ that intersects every line in either $0$ or $2$ points; such a set exists if and only if $q$ is even. The restriction of $\mathbb{P}^2(\mathbb{F}_q)$ to the $q^2-1$ points in the complement of a hyperoval yields a $2$-resonant matroid.
\item 
Start with a projective space of order $q$, and choose a codimension $2$ subspace $S$. 
Consider the pencil of hyperplanes containing $S$, choose $k$ hyperplanes from the pencil, and restrict to the union of those hyperplanes. If $k$ is congruent to $1$ modulo $p$, then the resulting matroid is $p$-resonant, and the same holds for its truncations.
\end{enumerate}
\end{example}

\subsection{Inclusions of geometric lattices}\label{inclusions of lattices}
Consider a rank-preserving injective map of posets $\cL\to\cM$, where $\cL$ and $\cM$ are geometric lattices.
In Proposition~\ref{prop:rank-preserving}, we show that any such inclusion comes from the operation of deleting elements
from the ground set of the corresponding simple matroid.

\begin{example}
If $\cL$ is the lattice of flats of a simple matroid of rank $n$ that is realizable over $\F_q$, then $\cL$ includes into the lattice $\mathbb{P}^{n}(\F_q)$
of linear subspaces of $\F_q^{n+1}$, with different realizations inducing different inclusions.  If $\cL$ is the lattice of flats of a matroid associated with a simple
graph with $n$ vertices, then $\cL$ includes into the lattice $\Pi_{n}$
of partitions of the set $[n]$, which is the lattice of flats of the matroid of the complete graph on $n$ vertices.
\end{example}

The inclusion $\varphi\colon\cL\to\cM$ induces an inclusion of rings $\H(\cL;\kk)\to\H(\cM;\kk)$, taking $y_F$ to $y_{\varphi(F)}$.

\begin{theorem}\label{pullback of pure}
If $\M$ is a pure $\H(\cM;\kk)$-module, then it is also a pure $\H(\cL;\kk)$-module.
\end{theorem}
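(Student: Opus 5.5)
The plan is to reduce the purity conditions for $\H(\cL;\kk)$ to the purity conditions for $\H(\cM;\kk)$ by replacing each upper set of $\cL$ with a suitable upper set of $\cM$. For an upper set $\Sigma\subseteq\cL$, let $\widehat\Sigma\subseteq\cM$ be the upward closure of $\varphi(\Sigma)$. Since $y_F$ acts on $\M$ as $y_{\varphi(F)}$, we have
$$\Ann^{\cL}_\Sigma(\M)=\{m\mid y_{\varphi(F)}m=0 \text{ for all } F\in\Sigma\}\and \Span^{\cL}_\Sigma(\M)=\kk\{y_{\varphi(F)}m\mid F\in\Sigma\}.$$

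The first step is to show that these equal $\Ann_{\widehat\Sigma}(\M)$ and $\Span_{\widehat\Sigma}(\M)$, computed over $\H(\cM;\kk)$. The key input is the following fact about the geometric lattice $\cM$: if $A\le G$, then there is $H\in\cM$ with $A\vee H=G$ and $\rk A+\rk H=\rk G$. To get $H$, extend a set of atoms spanning $A$ to a basis of $G$ and take $H$ to be the join of the added atoms. Then $y_G=y_Ay_H$.

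With this factorization both equalities follow at once. If $y_{\varphi(F)}m=0$ and $G\ge\varphi(F)$, then $y_Gm=y_H\,y_{\varphi(F)}m=0$. Conversely, $y_Gm=y_{\varphi(F)}(y_Hm)$ shows that $y_Gm$ lies in $\Span^{\cL}_\Sigma(\M)$. The reverse inclusions are trivial, since $\varphi(\Sigma)\subseteq\widehat\Sigma$.

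The second step is the lattice-theoretic identity
$$\widehat{\Sigma_1}\cap\widehat{\Sigma_2}=\widehat{\Sigma_1\cap\Sigma_2}.$$
The inclusion $\supseteq$ is clear. For $\subseteq$, suppose $G\ge\varphi(F_1)$ and $G\ge\varphi(F_2)$ with $F_i\in\Sigma_i$. Then $G\ge\varphi(F_1)\vee\varphi(F_2)$. Also $F_1\vee F_2$ lies in $\Sigma_1\cap\Sigma_2$, because both sets are upper sets. So it suffices to know that $\varphi(F_1\vee F_2)\le\varphi(F_1)\vee\varphi(F_2)$.

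I expect this join compatibility to be the main point needing care, since $\varphi$ is only assumed to be a rank-preserving injective poset map. I will invoke Proposition~\ref{prop:rank-preserving}: $\cL$ is the lattice of flats of a deletion $M\setminus S$ of the simple matroid $M$ of $\cM$, and $\varphi$ sends a flat $F$ to its closure $\mathrm{cl}_M(F)$. Then
$$\varphi(F_1\vee F_2)=\mathrm{cl}_M(\mathrm{cl}_{M\setminus S}(F_1\cup F_2))=\mathrm{cl}_M(F_1\cup F_2)=\varphi(F_1)\vee\varphi(F_2).$$
So $\varphi$ in fact preserves joins, which is more than enough.

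Finally, combine the two steps with purity of $\M$ over $\H(\cM;\kk)$. Flabbiness transfers:
$$\Ann^{\cL}_{\Sigma_1}+\Ann^{\cL}_{\Sigma_2}=\Ann_{\widehat{\Sigma_1}}+\Ann_{\widehat{\Sigma_2}}=\Ann_{\widehat{\Sigma_1}\cap\widehat{\Sigma_2}}=\Ann_{\widehat{\Sigma_1\cap\Sigma_2}}=\Ann^{\cL}_{\Sigma_1\cap\Sigma_2}.$$
The identical chain with $\Span$ in place of $\Ann$ and intersections in place of sums gives coflabbiness. Finite generation of $\M$ over $\H(\cL;\kk)$ is automatic, because $\H(\cM;\kk)$ is finite-dimensional, so $\M$ is finite-dimensional over $\kk$.
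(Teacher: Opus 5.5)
Your proof is correct and follows the paper's argument. The paper also replaces $\Sigma$ by its $\cM$-saturation $\Sigma^+$ (your $\widehat\Sigma$), uses $\Ann_\Sigma(\M)=\Ann_{\Sigma^+}(\M)$ and $\Span_\Sigma(\M)=\Span_{\Sigma^+}(\M)$, and runs the same chain of equalities. The only difference is that it gets $(\Sigma_1\cap\Sigma_2)^+=\Sigma_1^+\cap\Sigma_2^+$ from $\Sigma^+=\delta^{-1}(\Sigma)$ rather than from join-preservation.

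Your detour through Proposition~\ref{prop:rank-preserving} is unnecessary. The paper proves directly, at the start of Section~\ref{sec:inclusion}, that a rank-preserving injection of geometric lattices preserves joins, and that is exactly the fact you need. The detour also carries two small issues: identifying $\varphi$ with $F\mapsto\mathrm{cl}_M(F)$ would itself need a rank check, and the relevant matroid is the restriction $M|S$, not $M\setminus S$.
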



Recall that the category of finite-dimensional graded modules over a finite-dimensional graded algebra satisfies the Krull--Schmidt theorem \cite{Atiyah}. Therefore, when the graded  
$\H(\cL;\kk)$-module $\E(\cM;\kk)$ is decomposed into a direct sum of indecomposable graded submodules, the graded isomorphism classes of the indecomposable summands and their graded multiplicities are uniquely determined. Consequently, the Poincar\'e polynomials of the corresponding multiplicity spaces are well-defined invariants of the $\H(\cL;\kk)$-module $\E(\cM;\kk)$.

Let $\S(\varphi;\kk) \coloneq \Ann_{\cL\setminus\varnothing}(\E(\cM;\kk))$ be the socle of $\E(\cM;\kk)$ as a module over $\H(\cL;\kk)$,
and let $$\oL(\varphi;\kk) \coloneq \S(\varphi;\kk) \big{/} \big( \Span_{\cL\setminus\varnothing}(\E(\cM;\kk))\cap \S(\varphi;\kk) \big).$$ 
For each $F\in\cL$, the injection $\varphi$ induces an injection between the upper intervals $\cL_F\to\cM_{\varphi(F)}$, denoted $\varphi_F$. 
The following theorem describes a decomposition of $\E(\cM;\kk)$ into indecomposable $\H(\cL;\kk)$-modules in terms of the graded vector spaces $\oL(\varphi_F;\kk)$.

\begin{theorem}\label{decomposition}
There is a (noncanonical) isomorphism of $\H(\cL;\kk)$-modules
$$\E(\cM;\kk)\cong \bigoplus_{F\in\cL} \oL(\varphi_F;\kk)\otimes\E(\cL^F;\kk).$$
\end{theorem}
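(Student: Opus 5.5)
By Theorem~\ref{pullback of pure}, $\E(\cM;\kk)$ is a pure $\H(\cL;\kk)$-module. Theorem~\ref{indecomposables} and Krull--Schmidt then give a decomposition
$$\E(\cM;\kk)\cong\bigoplus_{F\in\cL}V_F\otimes\E(\cL^F;\kk)$$
with $V_F$ graded multiplicity spaces. The task is therefore to identify $V_F$ with $\oL(\varphi_F;\kk)$ up to graded dimension. I would use the general decomposition recipe (Theorem~\ref{general decomposition}), which reads off multiplicities from costalk-to-stalk homomorphisms. For the identification I plan to compute these maps for $\E(\cM;\kk)$ regarded as an $\H(\cL;\kk)$-module.

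First, treat $F=\hz$. The summand $\E(\cL^{\hz};\kk)=\kk$ is killed by $y_G$ for every $G\neq\hz$, so it sits in the socle $\S(\varphi;\kk)$. Conversely, every other summand $\E(\cL^F;\kk)$ with $F>\hz$ has socle contained in its span $\Span_{\cL\setminus\hz}$, by Theorem~\ref{props of E}(b) together with condition~(c) of Theorem~\ref{def of E}. More precisely, I would check that the socle of $\E(\cL^F;\kk)$ with respect to $\cL\setminus\hz$ is the top degree $\E^{\rk F}$. This is spanned by $y_F\cdot x$, which lies in the span when $F\neq\hz$; this is the costalk-to-stalk description applied to the indecomposable. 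Hence the quotient $\S/(\Span\cap\S)$ counts exactly the copies of $\E(\cL^{\hz};\kk)$, that is, $V_{\hz}\cong\oL(\varphi;\kk)$ as graded spaces.

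For general $F$, I would reduce to the upper interval. Let $\widetilde F=\varphi(F)$. The module $y_{\widetilde F}\E(\cM;\kk)$ (or the corresponding localization/restriction to the open set $\cM_{\widetilde F}$) should be identified, up to shift by $\rk F$, with $\E(\cM_{\widetilde F};\kk)$ as an $\H(\cM_{\widetilde F};\kk)$-module. This is a standard restriction property of parity modules, which I expect follows from the inductive construction and purity. Under this identification, each summand $\E(\cL^G;\kk)$ contributes $\E((\cL^G)_F;\kk)$ when $G\geq F$ and nothing otherwise; for $G=F$ this is one-dimensional. Applying the $F=\hz$ case to $\varphi_F\colon\cL_F\to\cM_{\widetilde F}$ then isolates $V_F$ as $\oL(\varphi_F;\kk)$, again up to shift, with the shifts bookkept so that they cancel.

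The main obstacle is this localization step. One must show that the stalk/costalk of the $\H(\cL;\kk)$-module $\E(\cM;\kk)$ at $F$ matches the socle-modulo-span of $\E(\cM_{\varphi(F)};\kk)$ over $\H(\cL_F;\kk)$, and in particular that $\E(\cL^G;\kk)$ localized at $F$ is the parity module of the interval $[F,G]$. I would attempt this using the sheaf/cosheaf viewpoint: flabbiness makes restriction to the open set $\cL_{\geq F}$ exact and compatible with the decomposition.
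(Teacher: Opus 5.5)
Your plan is essentially the paper's proof. You pull back purity via Theorem~\ref{pullback of pure} and read off multiplicities as $\im\beta_F$ from Theorem~\ref{general decomposition}. The case $F=\hz$ is then the definition of $\oL(\varphi;\kk)$ (Example~\ref{beta0}), and you reduce general $F$ to the $\hz$ case for $\varphi_F\colon\cL_F\to\cM_{\varphi(F)}$ using $y_{\varphi(F)}\E(\cM;\kk)[\rk F]\cong\E(\cM_{\varphi(F)};\kk)$. That isomorphism is exactly Proposition~\ref{contraction module}, the ``standard restriction property'' you expected.

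The step you flag as the main obstacle is in fact immediate. Lemma~\ref{pure and indecomposable}(b), a direct consequence of Equation~\eqref{eq:spanann}, says that passing from $\M$ to $y_F\M$ preserves stalks, costalks and $\beta_F$ up to shift. This is how the paper gets $\beta_F(\E(\cM;\kk))\cong\beta_F(\E(\cM_{\varphi(F)};\kk))$. Your summand-by-summand variant also works: apply $y_F$ to the Krull--Schmidt decomposition, use $y_F\E(\cL^G;\kk)[\rk F]\cong\E((\cL_F)^G;\kk)$ for $G\ge F$ (and $0$ otherwise), and note that only $G=F$ gives the trivial module.

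One claim in your $F=\hz$ step is false and should be removed. The socle $\Ann_{\cL\setminus\hz}\E(\cL^F;\kk)$ is \emph{not} the top degree $\E^{\rk F}$ in general. It is the costalk $\E(\cL^F;\kk)_{[\hz]}$, which by Propositions~\ref{EPD} and~\ref{prop:duality preserves purity} has Poincar\'e polynomial $t^{\rk F}\,{}_pP_{\cL^F}(t^{-1})$. So it is concentrated in degree $\rk F$ only when ${}_pP_{\cL^F}(t)=1$. For instance, for $U_{2,3}$ over $\F_2$ one has $\E\cong\CH$, whose costalk at $\hz$ is nonzero in degree $1$ (see the example at the end of Section~\ref{sec:hom}).

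What you actually need is only $\S\subseteq\Span_{\cL\setminus\hz}$ on the summands with $F\neq\hz$, i.e.\ $\beta_{\hz}(\E(\cL^F;\kk))=0$. This holds because $\eE^F$ is taut at $\hz$ (Theorem~\ref{thm:indecomposables}(c) together with Lemma~\ref{lem:tautness and (co)stalks}; equivalently Remark~\ref{characterization}). Alternatively, skip the summand analysis and cite Theorem~\ref{general decomposition} directly, as the paper does.
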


In analogy with \cite{subdivisions}, we define
the {\bf local \boldmath{$h$}-polynomials} of $\varphi$ by
$$\ell_\varphi(t) \coloneq  {}_0\ell_\varphi(t) \coloneq \sum_{i\geq 0} t^i \dim \oL^i(\varphi;\Q)
\and _p\ell_\varphi(t) \coloneq \sum_{i\geq 0} t^i \dim \oL^i(\varphi;\F_p).$$

The module decomposition in Theorem \ref{decomposition} has the following polynomial interpretation. It gives a recursive method for computing ${}_p\ell_{\varphi}(t)$, assuming that the $p$-Z-polynomials of all lower intervals in $\cL$ and all upper intervals in $\cM$ are known.

\begin{corollary}\label{Z and l}
Let $p$ be prime or zero.  We have 
\[
 _pZ_\cM(t) = \sum_{F\in\cL} {}_p\ell_{\varphi_F}(t)\ {}_pZ_{\cL^F}(t).
 \]
\end{corollary}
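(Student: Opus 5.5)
The identity follows by taking graded dimensions in Theorem~\ref{decomposition}, applied with $\kk=\F_p$ (where $\F_0=\Q$). That theorem gives a noncanonical isomorphism of graded $\H(\cL;\F_p)$-modules
\[
\E(\cM;\F_p)\cong \bigoplus_{F\in\cL} \oL(\varphi_F;\F_p)\otimes\E(\cL^F;\F_p).
\]
Only the underlying graded vector spaces matter here. Since Hilbert series are multiplicative under tensor products of graded vector spaces and additive under direct sums, we get
\[
\sum_i t^i\dim\E^i(\cM;\F_p)=\sum_{F\in\cL}\Bigl(\sum_i t^i\dim\oL^i(\varphi_F;\F_p)\Bigr)\Bigl(\sum_j t^j\dim\E^j(\cL^F;\F_p)\Bigr).
\]

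Next we identify each factor with the polynomials in the statement.

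\emph{Left side.} By Equation~\eqref{PZp} (or Equation~\eqref{PZ0} when $p=0$), the left side is ${}_pZ_\cM(t)$.

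\emph{First factor.} This is ${}_p\ell_{\varphi_F}(t)$ by definition. Here $\varphi_F\colon\cL_F\to\cM_{\varphi(F)}$ is again a rank-preserving injection of geometric lattices, once ranks in the upper intervals are shifted by $\rk F$. The lattice $\cM_{\varphi(F)}$ is geometric, and the local $h$-polynomial is defined using $\E(\cM_{\varphi(F)};\F_p)$.

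\emph{Second factor.} This is ${}_pZ_{\cL^F}(t)$ by definition, where $\E(\cL^F;\F_p)$ is regarded as a module over $\H(\cL^F;\F_p)$. Pulling it back to $\H(\cL;\F_p)$ does not change its grading, so no shift enters.

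The only point needing care is whether the summands in Theorem~\ref{decomposition} carry any additional degree shift, for instance by $\rk F$. I expect none, because any such shift is already absorbed into the grading of $\oL(\varphi_F;\kk)$. This grading comes from $\E(\cM_{\varphi(F)};\kk)$ with its native grading, and it is exactly this graded space that appears in the decomposition. With that convention fixed, the two sides agree term by term, which gives the formula.
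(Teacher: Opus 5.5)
Your proposal is correct and takes the same route as the paper, which obtains Corollary~\ref{Z and l} simply by taking Poincar\'e polynomials of both sides of the isomorphism in Theorem~\ref{decomposition}. Your worry about a hidden degree shift is unnecessary, since the theorem as stated carries none. Its proof already absorbs the shift: it identifies $\im\beta_F(\E(\cM;\kk))$ with $\oL(\varphi_F;\kk)$ using the shifted isomorphism $y_{\varphi(F)}\E(\cM;\kk)[\rk F]\cong\E(\cM_{\varphi(F)};\kk)$ from Proposition~\ref{contraction module}, together with Lemma~\ref{pure and indecomposable}(b).
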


\begin{remark}
Suppose that $\cL$ and $\cM$ have the same rank and that the number of atoms of $\cM$ is one more than that of $\cL$.
In matroid terminology, this means that $\cL$ is obtained from $\cM$ by deleting a single element that is not a coloop.
In this case, with $p=0$, Corollary \ref{Z and l} specializes to the second part of \cite[Theorem 2.8]{BV}.
\end{remark}

\begin{remark}
Let $\cL^1$ and $\cM^1$ denote the sets of atoms of $\cL$ and $\cM$, respectively; we regard $\cL^1$ as a subset of $\cM^1$
via $\varphi$.  Suppose that we have a commutative diagram of complex vector spaces
\[
\begin{tikzcd}
V_{\cM} \ar[r,hookrightarrow] \ar[d,twoheadrightarrow]&\mathbb{C}^{\cM^1} \ar[d,twoheadrightarrow] \\
V_{\cL} \ar[r,hookrightarrow]&\mathbb{C}^{\cL^1}
\end{tikzcd}
\]
such that $\cL$ is the lattice of flats of the matroid realized by the linear subspace $V_\cL\subseteq\C^{\cL^1}$, and similarly for $\cM$.
This induces a commutative diagram of arrangement Schubert varieties
\[
\begin{tikzcd}
Y_{V_\cM} \ar[r,hookrightarrow] \ar[d,twoheadrightarrow]&(\mathbb{CP}^1)^{\cM^1} \ar[d,twoheadrightarrow] \\
Y_{V_\cL} \ar[r,hookrightarrow]&(\mathbb{CP}^1)^{\cL^1}.
\end{tikzcd}
\]
The case $p=0$ of Theorem~\ref{decomposition} gives a combinatorial description of the direct image of the intersection complex $\IC(Y_{V_\cM};\mathbb{Q})$ under this projection as a direct sum of  intersection complexes on $Y_{V_\cL}$.  When $p>0$ it describes the decomposition of the direct image of the parity sheaf $\mathcal{E}(Y_{V_\cM};\F_p)$ as a direct sum of parity sheaves \cite[Section 3.3]{JMW14}.
\end{remark}

The second part of the following theorem is proved by showing that the graded vector space
$\oL(\varphi;\kk)$ satisfies a version of 
the relative hard Lefschetz theorem when $p=0$. 
It follows that the polynomial  $_p\ell_\varphi(t)$ is unimodal when $p=0$.
Do the coefficients of  $_p\ell_\varphi(t)$  form a unimodal sequence for $p$ positive?

\begin{theorem}\label{Kahler}
Let $p$ be a prime or zero.
\begin{enumerate}[(a)]
\item The polynomial $_p\ell_\varphi(t)$ is palindromic:
$t^{\rk \cM}{}_p\ell_\varphi(t^{-1}) = {}_p\ell_\varphi(t)$.
\item When $p=0$, the polynomial ${}_p\ell_\varphi(t)$ is unimodal.
\end{enumerate}
\end{theorem}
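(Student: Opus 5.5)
Part (a) will come from duality, and part (b) from a relative hard Lefschetz theorem for $\oL(\varphi;\Q)$, which I will derive from the hard Lefschetz theorem for $\IH$ (via Theorem~\ref{E=IH}).

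\textbf{Part (a).} The first step is to record that the parity module is self-dual up to shift: $\E(\cM;\kk)^*[-\rk\cM]\cong\E(\cM;\kk)$. The dual of a pure module is pure, since flabby and coflabby are exchanged by duality. The dual of an indecomposable module is indecomposable. By Theorem~\ref{props of E}(a),(b), the shifted dual is concentrated in degrees $0,\dots,\rk\cM$ with one-dimensional ends. One still has to check condition (c) of Theorem~\ref{def of E}: this amounts to the pairing $\E^0\times\E^{\rk\cM}\to\kk$ being compatible with the action of $y_{\hat 1}$. Uniqueness in Theorem~\ref{def of E} then gives the isomorphism.

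Duality exchanges $\Ann_{\cL\setminus\hz}$ with the annihilator of $\Span_{\cL\setminus\hz}$. Hence $\oL(\varphi)^*$ identifies with $\S\big/\big(\S\cap\Span_{\cL\setminus\hz}\big)$ computed in the dual module, shifted by $\rk\cM$. The underlying fact is that for any module $\M$ the map $\S\to\M/\Span$ has image dual to the image of the same map for $\M^*$: both are the image of $\Ann(\M)\to\M\to\M/\Span(\M)$, and the dual of this composite is the corresponding composite for $\M^*$. Combining this with self-duality gives $\dim\oL^i(\varphi)=\dim\oL^{\rk\cM-i}(\varphi)$, which is (a).

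A more robust alternative is to take dimensions in Theorem~\ref{decomposition} and apply the palindromicity of each ${}_pZ_{\cL^F}$ (Theorem~\ref{combinatorial recursion}(b)) and of ${}_pZ_{\cM}$, then argue by induction on $\cL$ via Corollary~\ref{Z and l}. The difficulty is that the summands have mixed degrees, so matching terms requires care. I will therefore use the duality argument as the primary route.

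\textbf{Part (b).} Here $p=0$, so by Theorem~\ref{E=IH} we have $\E(\cM;\Q)\cong\IH(\cM)$. This module carries the hard Lefschetz property for multiplication by a positive linear class $\ell\in\H^1(\cM;\Q)$, for instance $\ell=\sum_{\text{atoms } a\in\cM}y_a$.

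The relative Lefschetz element will be $\ell_\cM-\ell_\cL$, where $\ell_\cL=\sum_{a\in\cL^1}y_a$; equivalently, take $\ell'=\sum_{a\in\cM^1\setminus\cL^1}y_a$. This class commutes with the $\H(\cL)$-action, so it preserves $\S(\varphi)$ and $\Span_{\cL\setminus\hz}$, and therefore acts on $\oL(\varphi)$. The goal is to show that $(\ell')^{\rk\cM-2i}\colon\oL^i\to\oL^{\rk\cM-i}$ is an isomorphism. Unimodality then follows by the standard argument.

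The approach is the analogue of the relative hard Lefschetz theorem in \cite{subdivisions}. One uses the decomposition of Theorem~\ref{decomposition}, in which the piece $\oL(\varphi)\otimes\E(\cL^{\hz})$ is exactly the primitive part supported at $\hz$. One then establishes hard Lefschetz and Hodge--Riemann relations for $\ell_\cL+t\ell'$ on $\IH(\cM)$, and passes to the limit $t\to\infty$.

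\textbf{Main obstacle.} The hardest step is the Hodge--Riemann relations for the class $\ell'$, which is not ample but is ample only relative to $\cL$. I would first try to deduce them from the Hodge--Riemann relations for $\IH(\cM)$ proved in \cite{SHCG}, applied to $\ell_\cL+t\ell'$, combined with a local-to-global argument: restrict to the socle and pass to the quotient by the span, inducting on the lattice via the upper intervals $\varphi_F$.
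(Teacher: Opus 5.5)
\textbf{Part (a).} Your duality argument is correct, and it takes a different route from the paper. The paper inducts on $\rk\cL$ using Corollary~\ref{Z and l}. There the $F=\hz$ term is ${}_p\ell_\varphi$. Each other term ${}_p\ell_{\varphi_F}\,{}_pZ_{\cL^F}$ is a product of a $(\rk\cM-\rk F)$-palindromic polynomial and a $\rk F$-palindromic polynomial. Since ${}_pZ_\cM$ is $\rk\cM$-palindromic, the ``mixed degrees'' you worried about cause no difficulty.

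Your route avoids Theorem~\ref{decomposition} and the induction altogether. Over a field, $\Ann_\Sigma(\M^*)=\Span_\Sigma(\M)^\perp$ and $\Span_\Sigma(\M^*)=\Ann_\Sigma(\M)^\perp$. Hence $\Ann_\Sigma(\M^*)\to\M^*/\Span_\Sigma(\M^*)$ is the transpose of $\Ann_\Sigma(\M)\to\M/\Span_\Sigma(\M)$, and Proposition~\ref{EPD} (whose proof is exactly the check you describe) finishes the argument. Equivalently, the invariant pairing on $\E(\cM;\kk)$ descends to a perfect pairing on $\oL(\varphi;\kk)$, which is also the pairing one would need in (b).

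\textbf{Part (b).} Here there is a genuine gap. The relative hard Lefschetz theorem for $\oL(\varphi)$ is the entire content of (b), and you only indicate where you would look for it.

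First, there is nothing to establish about $\ell_\cL+t\ell'$ for $t>0$. It lies in the interior of the cone spanned by the $y_i$, $i\in\cM^1$, so hard Lefschetz and Hodge--Riemann for it on $\IH(\cM)$ are already \cite[Theorem 1.6]{SHCG}.

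Second, the maximal ideal of $\H(\cL)$ kills $\S(\varphi)=\Ann_{\cL\setminus\hz}(\E(\cM))$. So $\ell_\cM$ and $\ell'$ induce the same operator on $\oL(\varphi)$. The whole problem is that hard Lefschetz on $\E(\cM)$ does not descend to the subquotient $\S(\varphi)/\bigl(\S(\varphi)\cap\Span_{\cL\setminus\hz}\E(\cM)\bigr)$. Letting $t\to\infty$ does not fix this: hard Lefschetz is an open condition, and it genuinely fails for the limit class $\ell'$ on $\E(\cM)$. For example, if $\rk\cM\ge 2$ and $\cM^1\setminus\cL^1=\{a\}$, then $\ell'=y_a$ squares to zero.

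One must use the Hodge--Riemann relations to control this degeneration, and then identify $\oL(\varphi)$ with the corresponding graded piece. This is an algebraic analogue of de Cataldo--Migliorini's proof of relative hard Lefschetz. ``A local-to-global argument inducting via the $\varphi_F$'' supplies no mechanism for it.

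The paper gets this step from the decomposition theorem for Lefschetz modules:
\begin{enumerate}
\item It extends scalars to $\mathbb{R}$ by Proposition~\ref{prop:base change for IH}, since the positivity in Hodge--Riemann needs real coefficients.
\item Then $\E(\cM;\mathbb{R})\cong\IH(\cM;\mathbb{R})$ is a Lefschetz module for the cone of the $y_i$, $i\in\cM^1$.
\item It applies \cite[Corollary 1.8]{AHL} to the subalgebra generated by the $y_i$ with $i\in\cL^1$, identified with $\H(\cL;\mathbb{R})$. This yields relative hard Lefschetz for $\oL(\varphi;\mathbb{R})$, and hence unimodality.
\end{enumerate}
To complete (b), you must either invoke such a result or actually carry out the degeneration argument.
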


Let $\cM/\cL$ be the geometric lattice $\cM_{\!\varphi(\hat 1)}$.  
Note that, by definition, $\oL(\varphi_{\hat 1}; \kk)$ is isomorphic to the parity module of  $\cM/\cL$ with coefficients in $\kk$ as a graded vector space over $\kk$.
Thus, Proposition~\ref{resonance linear} and Corollary~\ref{Z and l}  imply the following inequalities.


\begin{corollary}\label{Z inequality}
Let $p$ be a prime or zero.  If there is a rank-preserving inclusion $\varphi\colon\cL\to\cM$, then 
\[
_pZ_{\cM /\cL}(t) \cdot \,  _pZ_\cL(t) \leq {}_pZ_\cM(t).
\]
In particular, the number of $p$-resonant elements of $\cM$ is at least as large as the sum of the corresponding
numbers for $\cL$ and $\cM/\cL$.
\end{corollary}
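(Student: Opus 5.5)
The first inequality should follow from Corollary~\ref{Z and l} by keeping only two terms of the sum. Every summand ${}_p\ell_{\varphi_F}(t)\,{}_pZ_{\cL^F}(t)$ has nonnegative integer coefficients, since both factors are Poincar\'e polynomials of graded vector spaces. So ${}_pZ_\cM(t)$ is at least the single term with $F=\hat 1$, namely ${}_p\ell_{\varphi_{\hat 1}}(t)\,{}_pZ_{\cL}(t)$.

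It then remains to identify ${}_p\ell_{\varphi_{\hat 1}}(t)$ with ${}_pZ_{\cM/\cL}(t)$. As noted just before the statement, $\oL(\varphi_{\hat 1};\kk)$ is isomorphic to $\E(\cM/\cL;\kk)$ as a graded vector space. To see this, observe that $\varphi_{\hat 1}$ is the inclusion of the one-point lattice $\cL_{\hat 1}$ into $\cM_{\varphi(\hat 1)}=\cM/\cL$. The ring $\H(\cL_{\hat 1};\kk)$ is just $\kk$, so $\cL_{\hat 1}\setminus\varnothing$ is empty. Hence the socle $\S$ is the whole module and the span is zero, which gives $\oL=\E(\cM/\cL;\kk)$. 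I would also take care with degree conventions: ranks in $\cM/\cL$ are shifted by $\rk\varphi(\hat 1)=\rk\cM$. Since $\varphi$ is rank-preserving, $\cM/\cL$ has rank $0$ relative to this shift, so the polynomials match exactly.

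For the resonance statement, I would compare coefficients of $t$ on both sides of ${}_pZ_{\cM/\cL}(t)\,{}_pZ_\cL(t)\le {}_pZ_\cM(t)$. By Theorem~\ref{props of E}(b), both factors have constant term $1$. So the coefficient of $t$ in the product is the sum of the linear coefficients of the two factors. Proposition~\ref{resonance linear} interprets each linear coefficient as a count of $p$-resonant elements, and the inequality of coefficients gives the claim.

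The only delicate point is the bookkeeping in the degenerate case $F=\hat 1$. There I need to confirm that Corollary~\ref{Z and l} is applied with the correct degree normalization of $\cM_{\varphi(\hat1)}$, so that no shift $t^{\rk}$ appears.
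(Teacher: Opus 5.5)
This is correct and is essentially the paper's argument: keep the $F=\hat 1$ term of Corollary~\ref{Z and l}, note that $\oL(\varphi_{\hat 1};\kk)=\E(\cM/\cL;\kk)$ directly from the definitions, and compare coefficients of $t$ using Proposition~\ref{resonance linear} together with the fact that both $Z$-polynomials have constant term $1$. One correction to your bookkeeping remark: $\rk\varphi(\hat 1)=\rk\cL$, not $\rk\cM$, so $\cM/\cL$ has rank $\rk\cM-\rk\cL$, which is positive whenever $\rk\cL<\rk\cM$ (e.g.\ for $\cL=\cM^F$ with $F\neq\hat 1$, the case used in the next corollary); this is harmless, because $\oL(\varphi_{\hat 1};\kk)$ is $\E(\cM/\cL;\kk)$ with its own grading and the summands of Corollary~\ref{Z and l} carry no extra shift, so the $F=\hat 1$ term is exactly ${}_pZ_{\cM/\cL}(t)\,{}_pZ_\cL(t)$, of degree $\rk\cM$ as it should be.
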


\begin{corollary}
For any $F \in \cM$ and any $p$ which is prime or zero, we have 
\[
_pZ_{\cM_F}(t) \cdot \, _pZ_{\cM^F}(t) \leq {}_pZ_\cM(t). 
\]
The equality holds if $\cM$ is the product of $\cM_F$ and $\cM^F$.
\end{corollary}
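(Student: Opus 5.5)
We apply Corollary~\ref{Z and l} to the rank-preserving inclusion $\varphi\colon \cM^F\to\cM$ and keep only two kinds of terms. This is the same argument as for Corollary~\ref{Z inequality}, with $\cL=\cM^F$. We then check that $\cM/\cM^F=\cM_F$.

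\textbf{Step 1: the inclusion.} Write $\cL=\cM^F$. The inclusion $\varphi\colon\cL\to\cM$ is injective and rank-preserving, so Corollary~\ref{Z and l} gives
\[
{}_pZ_\cM(t)=\sum_{G\le F}{}_p\ell_{\varphi_G}(t)\,{}_pZ_{\cM^G}(t).
\]
Here we used $\cL^G=\cM^G$ for $G\le F$.

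\textbf{Step 2: bound below by a single term.} Every summand lies in $\mathbb{N}[t]$, since each ${}_p\ell$ and each ${}_pZ$ is a Poincar\'e polynomial of a graded vector space. So the sum is bounded below by the term $G=F$, namely ${}_p\ell_{\varphi_F}(t)\,{}_pZ_{\cM^F}(t)$.

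Now $\cL_F=\{F\}$, so $\varphi_F$ is the inclusion of the one-point lattice into $\cM_F$. Its socle relative to the trivial Möbius algebra is the whole module, and $\Span_{\emptyset}=0$. Hence $\oL(\varphi_F;\kk)\cong\E(\cM_F;\kk)$ as graded vector spaces, and ${}_p\ell_{\varphi_F}(t)={}_pZ_{\cM_F}(t)$. This is exactly the observation stated before Corollary~\ref{Z inequality}. We obtain ${}_pZ_{\cM_F}(t)\,{}_pZ_{\cM^F}(t)\le{}_pZ_\cM(t)$.

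\textbf{Step 3: the product case.} Suppose $\cM\cong\cM_F\times\cM^F$, with $F$ corresponding to $(\hat0,\hat1)$. Theorem~\ref{props of E}(c) gives $\E(\cM;\kk)\cong\E(\cM_F;\kk)\otimes\E(\cM^F;\kk)$. Taking Hilbert series shows that ${}_pZ$ is multiplicative, which gives equality.

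A second route to the equality is to show directly that every other term $G<F$ of the sum vanishes. That would use the tensor decomposition to see that $\E(\cM;\kk)$, viewed as an $\H(\cM^F;\kk)$-module, is $\E(\cM_F;\kk)\otimes\E(\cM^F;\kk)$, a sum of shifts of $\E(\cM^F)$ alone. The first route is simpler, so we use it.

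The only delicate point is the identification ${}_p\ell_{\varphi_F}={}_pZ_{\cM_F}$, together with the correct grading convention for $\varphi_F$ (rank shift by $\rk F$). We take this from the remark preceding Corollary~\ref{Z inequality}.
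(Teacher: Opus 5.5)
Your proposal is correct and follows the paper's intended argument. The corollary is Corollary~\ref{Z inequality} applied to the inclusion $\cM^F\to\cM$, using $\cM/\cM^F=\cM_F$; its proof is exactly your Steps~1--2, which keep only the top term of Corollary~\ref{Z and l}. The equality in the product case comes from the K\"unneth isomorphism of Theorem~\ref{props of E}(c), and for that step an abstract isomorphism $\cM\cong\cM_F\times\cM^F$ suffices, so you do not need to track where $F$ is sent.
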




We illustrate Corollary \ref{Z inequality}  with a few examples.

\begin{example}
Corollary \ref{Z inequality} implies that the Fano plane $\P^2(\F_2)$ is not realizable over $\F_3$: An $\F_3$-realization of $\P^2(\F_2)$ would induce a rank-preserving inclusion $\P^2(\F_2)\to \P^2(\F_3)$, but the number of $2$-resonant flats of $\P^2(\F_2)$ is $15$, whereas the number of $2$-resonant flats of $\P^2(\F_3)$ is $13$. 
Similarly, $\P^2(\F_5)$ is not realizable over $\F_7$, because  the number of $5$-resonant flats of $\P^2(\F_5)$ is $63$ and  the number of $5$-resonant flats  of $\P^2(\F_7)$ is $57$. 
\end{example}

\begin{example}\label{An Pn}
Let $\mathbb{A}^n(\F_q)$ be the lattice of affine subspaces of $\F_q^n$.
We may regard these as linear subspaces of $\F_q^{n+1}$ that are not contained in the hyperplane
at infinity, so we have an embedding $\mathbb{A}^n(\F_q)\subs \P^n(\F_q)$.
The lattice $\P^n(\F_q)$ has $q^n+\cdots+q+1$ elements of rank $n$,
whereas $\mathbb{A}^n(\F_q)$ has one fewer such element.  Thus the second inequality of
Corollary \ref{Z inequality} is strict when $p=0$.  If $q$ is odd, then $\P^n(\F_q)$
has no additional $2$-resonant elements, but $\mathbb{A}^n(\F_q)$ has one additional
$2$-resonant element, namely the minimal element.  Thus the second inequality of
Corollary \ref{Z inequality} is an equality when $p=2$.  Note, however, that the inclusion of $\mathbb{A}^n(\F_q)$ into $\P^n(\F_q)$
does {\em not} take $2$-resonant elements to $2$-resonant elements.
\end{example}

\begin{example}\label{exotic}
There exists an exotic inclusion of $\mathbb{A}^2(\mathbb{F}_3)$ into 
$\mathbb{P}^2(\mathbb{F}_4)$. 
The lattice $\mathbb{P}^2(\mathbb{F}_4)$ has $21$ $0$-resonant flats, and the same number
of $3$-resonant flats, namely all flats of rank $2$.
The sublattice $\mathbb{A}^2(\mathbb{F}_3)$ has only $12$ $0$-resonant elements, but it has
$21$ $3$-resonant elements.  Thus the inequalities of 
Corollary \ref{Z inequality} are equalities when $p=3$.
\end{example}

\begin{example}\label{Steiner}
Let $\cL$ be the geometric 
lattice associated with a Steiner system $S(2,5,45)$, viewed as a rank $3$ paving matroid on $45$ elements.\footnote{For constructions, see for example \cite{CZ-Designs}. There are at least $1072$ non-isomorphic $S(2,5,45)$ designs.}  Let $\cM = \P^2(\mathbb{F}_{11})$.  
Then $\cL$ has $99$ $0$-resonant elements and $\cM$ has $133$ $0$-resonant elements.
However, $\cL$ has $145$ $2$-resonant elements, and $\cM$ has only $133$ $2$-resonant elements.
Thus the $p=0$ case of the second inequality of Corollary \ref{Z inequality} does not provide an obstruction to
embedding $\cL$ into $\cM$, but the $p=2$ case does.
\end{example}

\subsection{Upper homogeneous geometric lattices}

We call  $\cM$ {\bf upper homogeneous} if, for all $F$ and  $G$ in $\cM$ of the same rank,
there is an isomorphism of posets $\cM_F\cong\cM_G$.  Examples of upper homogeneous
geometric lattices include the projective geometry $\P^n(\F_q)$, the affine geometry
$\mathbb{A}^n(\F_q)$, and the partition lattice $\Pi_n$. 

Recall from Theorem~\ref{pullback of pure} that a rank-preserving inclusion $\varphi\colon \cL\to\cM$ gives the parity module  $\E(\cM;\kk)$ the structure of a pure module over $\H(\cL;\kk)$.
 The following result follows from
Corollary \ref{Z and l} via induction.

\begin{theorem}\label{homogeneous}
If $\cM$ is upper homogeneous, then the isomorphism type of   $\E(\cM;\kk)$ as an $\H(\cL;\kk)$-module is independent of the choice of $\varphi$.
In particular, the local $h$-polynomials for inclusions  of $\cL$ into $\cM$
do not depend on the choice of inclusion. 
\end{theorem}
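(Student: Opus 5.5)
By Theorem~\ref{decomposition} and Krull--Schmidt, the isomorphism type of $\E(\cM;\kk)$ as an $\H(\cL;\kk)$-module is determined by the graded dimensions of the multiplicity spaces $\oL(\varphi_F;\kk)$ for $F\in\cL$. These are graded vector spaces, so they are determined by their Poincar\'e polynomials ${}_p\ell_{\varphi_F}(t)$. It therefore suffices to prove the second statement: for every $F\in\cL$, the polynomial ${}_p\ell_{\varphi_F}(t)$ does not depend on $\varphi$. We would actually prove more, namely that ${}_p\ell_{\psi}(t)$ depends only on the pair of lattices whenever the target is upper homogeneous. We would argue by induction on $\rk\cL$, which equals the rank of the target.

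The induction is driven by Corollary~\ref{Z and l}. Isolate the $F=\hat 0$ term there: $\cL^{\hat 0}$ is a point, so ${}_pZ_{\cL^{\hat 0}}(t)=1$ and $\varphi_{\hat 0}=\varphi$. This gives
\[
{}_p\ell_\varphi(t) = {}_pZ_\cM(t) - \sum_{\hat 0\neq F\in\cL} {}_p\ell_{\varphi_F}(t)\,{}_pZ_{\cL^F}(t).
\]
The term ${}_pZ_\cM(t)$ depends only on $\cM$, and ${}_pZ_{\cL^F}(t)$ depends only on $\cL$ and $F$. For $F\neq\hat 0$, the map $\varphi_F\colon \cL_F\to\cM_{\varphi(F)}$ is a rank-preserving inclusion of geometric lattices of strictly smaller rank. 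Since $\varphi$ is rank-preserving, $\rk\varphi(F)=\rk F$. Upper homogeneity of $\cM$ then gives $\cM_{\varphi(F)}\cong\cM_{G}$ for any fixed $G$ of rank $\rk F$, so the target lattice depends only on $\rk F$, not on $\varphi$.

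Two points need checking before the induction hypothesis applies to $\varphi_F$. First, the composite of $\varphi_F$ with such an identification $\cM_{\varphi(F)}\cong \cM_G$ is again a rank-preserving inclusion $\cL_F\to\cM_G$. The local $h$-polynomial is invariant under composing with a lattice isomorphism of the target, since that just transports the module $\E$ and the annihilator and span constructions. Second, $\cM_G$ is itself upper homogeneous, because its upper intervals are upper intervals of $\cM$ and ranks shift uniformly by $\rk G$. Given both, the induction hypothesis shows that ${}_p\ell_{\varphi_F}(t)$ depends only on $\cL_F$ and $\cM_G$, hence not on $\varphi$. The displayed formula then gives the independence of ${}_p\ell_\varphi(t)$.

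The base case $\rk\cL=0$ is trivial. The main point of care is the bookkeeping in the induction: the hypothesis must be stated uniformly for all pairs consisting of a geometric lattice and an upper homogeneous target of the same rank, so that it applies to $\cL_F\to\cM_G$. Once the polynomials are independent of $\varphi$, Theorem~\ref{decomposition} gives an isomorphism $\E(\cM;\kk)\cong\bigoplus_F \oL(\varphi_F;\kk)\otimes\E(\cL^F;\kk)$ whose right side depends only on these graded dimensions, which proves the first statement.
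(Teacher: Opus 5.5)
Your argument is correct and is the same as the paper's, which says only that the theorem follows from Corollary~\ref{Z and l} by induction; you supply that induction (isolating the $F=\hat 0$ term, using upper homogeneity, which $\cM_G$ inherits, to make the targets $\cM_{\varphi(F)}$ independent of $\varphi$, and then reading off the module from Theorem~\ref{decomposition}). Two small corrections: a rank-preserving inclusion need not satisfy $\rk\cL=\rk\cM$ (this is why $\cM/\cL=\cM_{\varphi(\hat 1)}$ appears), so drop the ``same rank'' restriction from your inductive hypothesis, since nothing in your step uses it; and for a general field $\kk$ you should run the recursion with $\dim_\kk \oL^i(\varphi_F;\kk)$, or invoke Proposition~\ref{prop:base change for IH}, rather than with the $\F_p$-polynomials ${}_p\ell$.
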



The following  example shows that the conclusion of Theorem~\ref{homogeneous} can fail if $\cM$ is not upper homogeneous.

\begin{example}
Let $\Gamma$ be the graph obtained from the complete graph $K_4$ by deleting
a single edge, and let $e$ be the unique edge of $\Gamma$ that is not adjacent
to the deleted edge.  Let $\cM$ be the lattice of flats of the corresponding graphic matroid.
Then $\cM$ is {\em not} upper homogeneous, because the matroid of the graph obtained by contracting
$e$ and simplifying is not isomorphic to the matroid of the graph obtained by contracting a different edge
and simplifying.

Let $\cL$ be the Boolean lattice of rank $3$.  Inclusions of $\cL$ into $\cM$ correspond
to choices of a spanning tree for $\Gamma$, and there are two kinds of spanning trees:
those that contain $e$ and those that do not.  If $\varphi\colon\cL\to\cM$ is an inclusion corresponding
to a spanning tree that contains $e$, then we may use Corollary \ref{Z and l} to
compute that ${}_0\ell_\varphi(t) = t+t^2$.
On the other hand, if $\psi\colon\cL\to\cM$ is an inclusion corresponding
to a spanning tree that does not contain $e$, then ${}_0\ell_\psi(t) = 0$.
\end{example}


For each $\cL$ and $p$, we recursively define
a polynomial $_p \ell_\cL(q,t)\in\Z[q,t]$ as follows:
\begin{enumerate}[(1)]
\item If $\rk\cL = 0$, then $_p \ell_\cL(q,t) = 1$.
\item If $\rk\cL = d>0$, then
$_p \ell_\cL(q,t) = \sum_{k=0}^d \genfrac{[}{]}{0pt}{}{d}{k}_q t^k- \sum_{F\neq\varnothing} \, {}_p\ell_{\cL_F}(q,t) \, {}_pZ_{\cL^F}(t).$
\end{enumerate}
This recursion indeed defines an element of $\mathbb{Z}[q,t]$, since the $q$-binomial coefficients $\genfrac{[}{]}{0pt}{}{d}{k}_q$  are polynomials in $q$ with integer coefficients.

The following proposition is a consequence of Corollary \ref{Z and l}, as well as an illustration
of Theorem~\ref{homogeneous} in the case where $\rk \cL = d$ and $\cM = \P^{d-1}(\F_q)$.

\begin{proposition}\label{qt}
If $\rk\cL = d$ and there is an inclusion $\varphi\colon\cL\to\P^{d-1}(\F_q)$, then
$ {}_p\ell_\cL(q,t)={}_p\ell_\varphi(t)$ for  $p=0$ and for every prime $p$ not dividing $q$.
\end{proposition}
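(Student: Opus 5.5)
We argue by induction on $d=\rk\cL$, comparing the defining recursion for ${}_p\ell_\cL(q,t)$ with the recursion for ${}_p\ell_\varphi(t)$ supplied by Corollary~\ref{Z and l} applied to $\varphi\colon\cL\to\cM=\P^{d-1}(\F_q)$. Since $\varphi$ is rank-preserving, $\rk\cM=d$. When $d=0$ both sides equal $1$. This is because $\cM$ is a point, $\E(\cM;\kk)=\kk$ in degree $0$, $\S(\varphi;\kk)=\kk$, and the span of $\cL\setminus\hz=\emptyset$ is zero, so $\oL(\varphi;\kk)=\kk$.

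For $d>0$, Corollary~\ref{Z and l} gives
\[
{}_pZ_\cM(t)={}_p\ell_{\varphi}(t)\,{}_pZ_{\cL^{\hz}}(t)+\sum_{F\neq\hz}{}_p\ell_{\varphi_F}(t)\,{}_pZ_{\cL^F}(t),
\]
and ${}_pZ_{\cL^\hz}(t)=1$. We then need two inputs.

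The first is that ${}_pZ_{\P^{d-1}(\F_q)}(t)=\sum_{k=0}^d\genfrac{[}{]}{0pt}{}{d}{k}_q t^k$ for $p=0$ or $p\nmid q$. The lattice $\P^{d-1}(\F_q)$ is a projective geometry of order $q$ (or Boolean when $d\le 2$, or a product of rank $\le 1$ pieces), so it is modular. By Theorem~\ref{E=H}(e)$\Rightarrow$(b), $\E(\cM;\kk)\cong\H(\cM;\kk)$ whenever $p\nmid q$, including $p=0$. Therefore ${}_pZ_\cM(t)$ is the rank generating function of $\cM$, and the number of rank $k$ subspaces of $\F_q^d$ is $\genfrac{[}{]}{0pt}{}{d}{k}_q$. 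For $d=1$ the lattice is Boolean of rank $1$, and the formula still holds.

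The second input concerns the upper intervals. For $F\neq\hz$ in $\cL$, the map $\varphi_F\colon\cL_F\to\cM_{\varphi(F)}$ is an inclusion of geometric lattices of rank $d-\rk F<d$. Here $\cM_{\varphi(F)}\cong\P^{d-\rk F-1}(\F_q)$, since the subspaces containing a fixed $\rk F$-dimensional subspace form the projective geometry of the quotient space. It is also rank-preserving, because $\varphi$ is rank-preserving and $\varphi(\ho)=\ho$ (both lattices have rank $d$). The inductive hypothesis therefore gives ${}_p\ell_{\varphi_F}(t)={}_p\ell_{\cL_F}(q,t)$.

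Substituting both inputs and solving for ${}_p\ell_\varphi(t)$ yields exactly the defining recursion (2) for ${}_p\ell_\cL(q,t)$, which closes the induction.

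The main point requiring care is the first input, namely verifying that Theorem~\ref{E=H} applies to $\P^{d-1}(\F_q)$ for all $p\nmid q$. This includes the degenerate small-rank cases, where the lattice is Boolean and the claim is trivial. We must also identify $\cM_{\varphi(F)}$ as a projective space over the same $\F_q$; this is what makes the recursion depend only on $q$, and it is the source of the upper homogeneity used in Theorem~\ref{homogeneous}.
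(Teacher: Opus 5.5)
Your proof is correct and is the argument the paper intends; the paper states only that the proposition is a consequence of Corollary~\ref{Z and l}. You argue by induction on $d$ using that corollary, use Theorem~\ref{E=H} to get ${}_pZ_{\P^{d-1}(\F_q)}(t)=\sum_k\genfrac{[}{]}{0pt}{}{d}{k}_q t^k$ for $p\nmid q$, and identify the upper intervals as $\cM_{\varphi(F)}\cong\P^{d-\rk F-1}(\F_q)$. One harmless slip: for $d=2$ the lattice $\P^1(\F_q)\cong U_{2,q+1}$ is not Boolean but a projective geometry of order $q$, so the hypothesis $p\nmid q$ genuinely matters there; since you invoke Theorem~\ref{E=H} anyway, which covers this case, nothing breaks.
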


It follows that, if $\cL$ is realizable over $\F_q$, then $ {}_p\ell_\cL(q,t) \in \mathbb{N}[t]$ for $p=0$ and for every prime $p$ not dividing $q$.

\begin{example}\label{u24}
Let $U_{2,4}$ be the lattice of flats for the uniform matroid of rank $2$ on $4$ elements.
We may compute ${}_0 \ell_{U_{2,4}}(q,t) = (q-3)t$.  In particular, ${}_0\ell_{U_{2,4}}(2,t) = -t$.
This reflects the fact that $U_{2,4}$ is not realizable over the field $\F_2$.
\end{example}

\subsection{Stalks, costalks, and decompositions}
For any $F\in\cL$, consider the upper sets 
\[
\Sigma_{\geq F} \coloneq \cL_F = \{G\in\cL\mid G\geq F\}\and
\Sigma_{>F} \coloneq \Sigma_{\geq F}\setminus\{F\} = \{G\in\cL\mid G> F\}.
\]
For any pure $\H(\cL;\kk)$-module $\M$ and $F \in \cL$, we define  the {\bf costalk} $\M_{[F]}$ and the {\bf stalk} $\M_F$  by
\[
 \M_{[F]} \coloneq \Ann_{\Sigma_{>\varnothing}}\big(\Span_{\Sigma_{\geq F}}(\M)\big)[\rk F]
 \ \ \text{and} \ \ 
\M_F \coloneq \big(\Span_{\Sigma_{\geq F}}(\M) / \Span_{\Sigma_{>F}}(\M) \big)[\rk F] .
\]
Observe that there is a natural {\bf costalk-stalk homomorphism}
\[
\beta_F(\M)\colon\M_{[F]}\longrightarrow \M_F,
\]
 given by including $\M_{[F]}$ into $y_F\M[\rk F]$ and then projecting to $\M_F$.

\begin{example}\label{beta0}
By Theorem~\ref{pullback of pure}, a rank-preserving inclusion $\varphi\colon\cL\to\cM$ gives  $\E(\cM;\kk)$ the structure of a pure module over $\H(\cL;\kk)$.
Its costalk at $\hat 0$ in $\cL$ is the socle $\S(\varphi;\kk)$, and the image of the costalk-stalk homomorphism at $\hat 0$ is the quotient 
$\oL(\varphi;\kk)$.
\end{example}

Theorem \ref{indecomposables} implies that the $\H(\cL;\kk)$-module $\M$ decomposes as a direct sum of shifts
of the parity modules $\E(\cL^F;\kk)$. Theorem~\ref{general decomposition} below characterizes the multiplicity
spaces in terms of the costalk-stalk homomorphisms, generalizing Theorem~\ref{decomposition}.

\begin{thmx}\label{general decomposition}
For any pure $\H(\cL;\kk)$-module $\M$, there is an isomorphism of $\H(\cL;\kk)$-modules
\[
\M \cong \bigoplus_{F\in\cL} \im(\beta_F(\M)) \otimes\E(\cL^F;\kk).
\]
\end{thmx}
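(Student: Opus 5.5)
We take Theorem~\ref{indecomposables} as given: $\M \cong \bigoplus_{G\in\cL} V_G\otimes \E(\cL^G;\kk)$ for certain graded multiplicity spaces $V_G$. The task is then to show $V_F\cong \im(\beta_F(\M))$ as graded vector spaces for each $F$. The plan has three steps. First, $\beta_F$ is additive under direct sums. Second, we compute $\beta_F$ on each summand $\E(\cL^G;\kk)$ (up to shift). Third, we sum the contributions.

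\emph{Additivity.} $\Span_\Sigma$ and $\Ann_\Sigma$ commute with finite direct sums of modules. Hence the costalk $\M_{[F]}$, the stalk $\M_F$ and the map $\beta_F(\M)$ all decompose as direct sums over the summands. Tensoring with a graded multiplicity space $V_G$ and shifting degrees just tensors these maps with $V_G$. So
\[
\im\beta_F(\M)\cong\bigoplus_G V_G\otimes \im\beta_F(\E(\cL^G;\kk)),
\]
and it suffices to show that $\im\beta_F(\E(\cL^G;\kk))$ vanishes for $G\ne F$ and is one-dimensional in degree $0$ for $G=F$.

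\emph{Local computation.} Put $\E=\E(\cL^G;\kk)$, viewed as an $\H(\cL;\kk)$-module; the $y_H$ with $H\not\le G$ act by zero.

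If $F\not\le G$, then $\Span_{\Sigma_{\ge F}}(\E)=0$. Both the stalk and the costalk vanish, so the image is zero.

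If $F=G$, then $\Span_{\Sigma_{>F}}(\E)=0$ and $\Span_{\Sigma_{\ge F}}(\E)=y_F\E$. By Theorem~\ref{props of E}(a),(b), $y_F\E$ is one-dimensional in degree $\rk F$ and nonzero by Theorem~\ref{def of E}(c). It is annihilated by every $y_H$ with $H>\hat0$, because such products land in degree $>\rk F$, where $\E$ vanishes. Thus the costalk equals the stalk, both are one-dimensional in degree $0$ after the shift, and $\beta_F$ is the identity.

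If $F<G$, we must show that $\beta_F(\E)=0$, i.e.\ $\Ann_{\Sigma_{>\hat0}}(y_F\E)\subseteq\Span_{\Sigma_{>F}}(\E)$. This is the main obstacle. It is essentially a restatement of the indecomposability of the parity module in terms of costalk-to-stalk maps (the content of Remark~\ref{characterization}), and I would prove it as follows. Suppose $\beta_F(\E)\neq 0$. Choose a homogeneous element $y_Fm$ in the costalk whose class in the stalk is nonzero. Since $y_Fm$ is killed by the augmentation ideal, it generates a submodule isomorphic to a shift of $\H(\cL^F;\kk)$ modulo its positive part, namely the one-dimensional module $\kk[-\rk F-\deg m]$.

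Using purity (the sheaf and cosheaf viewpoint of Section~\ref{sec:sheafifying}), I would then build a map $\E(\cL^F;\kk)[-\deg m]\to\E$ sending a generator to $m$, and a map $\E\to\E(\cL^F;\kk)[-\deg m]$ which on stalks at $F$ projects the class of $y_Fm$ to $1$. Building these maps is where flabbiness and coflabbiness are used to extend sections from $\Sigma_{\geq F}$. The composite $\E(\cL^F;\kk)\to\E(\cL^F;\kk)$ is nonzero at the stalk at $F$, and this stalk lies in degree $0$ (after the shift) and is one-dimensional. Hence the composite is an automorphism, because a degree-$0$ endomorphism of an indecomposable with local endomorphism ring that is nonzero on the one-dimensional top is invertible. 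So $\E(\cL^F;\kk)$ is a direct summand of $\E$. Since $F\ne G$, we have $\E(\cL^F;\kk)\not\cong\E$ (for instance, $y_G$ kills one but not the other), which contradicts the indecomposability of $\E$. If the extension step proves difficult, an alternative is to invoke directly the uniqueness in Theorem~\ref{indecomposables}. The idea is that a nonzero $\beta_F$ on a pure indecomposable splits off a summand supported at $F$, which can be checked using the inductive construction of $\E$.

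Combining these cases gives $\im\beta_F(\M)\cong V_F$, and therefore
\[
\M\cong\bigoplus_F \im(\beta_F(\M))\otimes\E(\cL^F;\kk).
\]
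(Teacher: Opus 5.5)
Your proof is correct. Its overall shape is the same as the paper's: decompose via Theorem~\ref{indecomposables}, use additivity of $\beta_F$, and compute $\beta_F$ on each $\E(\cL^G;\kk)$. The difference is in the key case $F<G$.

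\textbf{How the paper handles $F<G$.} It gets $\beta_F(\E(\cL^G;\kk))=0$ for free. The explosion $\Lambda\E(\cL^G;\kk)\cong\eE^G_{\cL,\kk}$ is taut at every $F\neq G$ by property (c) of Theorem~\ref{thm:indecomposables}. Lemma~\ref{lem:tautness and (co)stalks} then says that, over a field, tautness at $F$ is exactly the vanishing of $\beta_F$.

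\textbf{How you handle it.} You derive the vanishing from indecomposability alone, by a splitting argument. This is a module-level version of the ``not taut implies decomposable'' half of Proposition~\ref{prop:extending indecomposables}. Your maps do exist, and the tool you need is Proposition~\ref{prop:extending morphisms}, applied from $\Sigma_{\ge F}$ to $\cL$:
\begin{itemize}
\item On $\Sigma_{\ge F}$ the object $\eE^F$ is just $\kk$ at $F$ and zero above. So, with $\cV=\Lambda\E(\cL^G;\kk)$, a morphism $\eE^F[s]|_{\Sigma_{\ge F}}\to\cV|_{\Sigma_{\ge F}}$ is the same as an element $v$ of the costalk $\cV_{[F]}$.
\item A morphism in the other direction is the same as a functional $\bar g$ on the stalk $\cV_F$.
\item Both extend to $\cL$ because both objects are pure.
\item The composite acts at $F$ by the scalar $\bar g(\beta_F(v))$, which you can arrange to be $1$. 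Locality of the endomorphism ring of the indecomposable $\E(\cL^F;\kk)$ then finishes the argument as you say.
\end{itemize}
One small correction: the first map sends the degree-$0$ generator of $\E(\cL^F;\kk)$ to some $m'$ with $y_Fm'=y_Fm$, not necessarily to $m$ itself.

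\textbf{Comparison.} Your route is independent of the specific inductive construction. It shows directly that a pure indecomposable module can have a nonzero costalk-stalk map at only one element, which is one half of Remark~\ref{characterization}. It uses only purity, morphism extension, and Krull--Schmidt. The paper's route is a two-line argument, because tautness is built into the definition of $\eE^G$.
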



\begin{remark}\label{characterization}
It is typically difficult to check
whether a given module over a ring is indecomposable.  However, Theorem \ref{general decomposition}
provides us with such a procedure for pure modules over the ring $\H(\cL;\kk)$:  a pure module $\M$ is indecomposable
if and only if the costalk-stalk homomorphism $\beta_F(\M)$ is equal to zero for all but one choice of $F\in\cL$, and the one nonvanishing
homomorphism has rank $1$.  In particular, $\E(\cL;\kk)$ is characterized, up to shift, by purity and the fact that
$$\rk \beta_{F}(\E(\cL;\kk)) = \begin{cases} 1 & \text{if $F=\hat 1$,}\\
0 & \text{otherwise.}\end{cases}$$
\end{remark}

\subsection{Structure of the paper}\label{sec:structure}
The technical core of this paper consists of Theorems \ref{def of E}, \ref{props of E}, \ref{topological interpretation}, \ref{indecomposables}, \ref{it goes up},
and \ref{general decomposition}, all of which will be proved by introducing auxiliary categories of sheaves on posets.
Rather than beginning with this somewhat dense material, we will instead begin by assuming these six results,
and using them\footnote{Theorem \ref{topological interpretation} will not be needed
to prove any of the numbered results.} to prove the numbered theorems, propositions, and corollaries from the introduction.

In Section~\ref{sec:filtrations}, we define a filtration of an arbitrary $\H(\cL;\kk)$-module and use it to prove
Theorem \ref{combinatorial recursion}. 
In Section~\ref{H}, we study the Poincar\'e pairing on $\H(\cL;\kk)$ and use it to prove
Theorem~\ref{E=H}. 
In Section~\ref{Section:AugmentedChow},
we study the augmented Chow ring $\CH(\cL;\kk)$ as a module over $\H(\cL;\kk)$, and we use
it to prove Theorems \ref{E=IH} and \ref{E=CH}, as well as Propositions~\ref{resonance linear} and ~\ref{H-E-CH}.
In Section~\ref{sec:inclusion}, we consider inclusions of geometric lattices $\cL \to \cM$ and prove Theorems~ \ref{pullback of pure} , ~\ref{decomposition}, 
and ~\ref{Kahler}.

The remaining sections are devoted to the proofs of Theorems \ref{def of E}, \ref{props of E}, \ref{topological interpretation}, \ref{indecomposables}, \ref{it goes up},
and \ref{general decomposition}.  Section~\ref{sec:sheafifying} gives a way of replacing an $\H(\cL;\kk)$-module $\M$ with an ``exploded" version.  Roughly, this means replacing $\M$ by the graded vector spaces $y_F\M$ for all $F\in \cL$, together with the maps between them induced by factorizations $y_F=y_Gy_H$ in $\H(\cL;\kk)$.  This generalizes to give a category $\cC(\Sigma;\kk)$ of ``exploded modules" for any upwardly closed subset $\Sigma\subseteq\cL$, 
in which only the spaces corresponding to elements $F\in\Sigma$ are retained.
This makes it possible to construct $\H(\cL;\kk)$-modules inductively, by adding elements to $\Sigma$ one at a time.

In Section~\ref{sec:pure modules}, we define the notion of flabbiness, coflabbiness, and purity for objects in $\cC(\Sigma;\kk)$, agreeing with the notions for $\H(\cL;\kk)$-modules when $\Sigma = \cL$.  We also generalize the stalk, costalk, and duality functors to this setting.  
In order to prove Theorem \ref{it goes up}, we need to  allow $\kk$ to be a DVR and define the notion of purity in this setting.

In Section~\ref{sec:existence}, we apply this to give an inductive construction and classification of indecomposable objects in $\cC(\Sigma;\kk)$.  When applied to the case $\Sigma=\cL$, this gives Theorems \ref{def of E}, \ref{indecomposables}, and \ref{general decomposition}.  In Section~\ref{sec:properties}, we analyze the construction in greater detail in order to prove Theorems \ref{props of E} and \ref{it goes up}.
In Section~\ref{sec:hom}, we give a formula for the dimension of the Hom space between two pure $\H(\cL;\kk)$-modules in terms of their stalks and costalks, and use it to characterize parity modules with only scalar endomorphisms.  Finally, in Section \ref{sec:relation with topology}, we prove Theorem \ref{topological interpretation} by showing that the hypercohomology of the parity sheaf $\cE(Y_V;\kk)$ satisfies the conditions of Theorem \ref{def of E}.

\excise{
\begin{example}\label{P2 decomp}
Suppose that $\cL$ is given by a subset of a finite $\mathbb{P}^2$. Then $\H(\mathbb{P}^2)$ is a pure $\H(\cL)$-module. By Theorem \ref{decomposition} 
(we need to understand why), when $\kk$ has characteristic zero,
\begin{multline*}
\H(\mathbb{P}^2,\kk)=\E(\cL,\kk)\ \oplus  \ (\text{semismall summand, one for each line of $\mathbb{P}^2$ containg a unique point from $\cL$})\\
\oplus \ (\text{pair of point summands, one for each line of $\mathbb{P}^2$ containing no point from $\cL$}).
\end{multline*}
We can probably find a formula that works for arbitrary $\kk$.
\end{example}
}

\subsection*{Acknowledgements} 
June Huh was partially supported by the Simons Investigator Grant
MPS-SIM-00825156 and NSF grant DMS-2053308.  Jacob Matherne received support from NSF Grant DMS-2452179 and Simons Foundation Travel Support for Mathematicians Award MPS-TSM00007970.  Nicholas Proudfoot was supported
by NSF grants DMS-1954050, DMS-2053243, and DMS-2344861, and by All Souls College.
Botong Wang was partially supported by the Simons Foundation. 
Part of this work was carried out during the 2024--2025 special year on Algebraic and Geometric Combinatorics at the Institute for Advanced Study. The authors thank Matt Larson for helpful discussions during the preparation of this manuscript.

\section{Filtrations}\label{sec:filtrations}
In this section, we define a filtration of an arbitrary $\H(\cL;\kk)$-module and use it to prove
Theorem \ref{combinatorial recursion}, assuming Theorems \ref{def of E} 
and \ref{general decomposition}.
We begin by proving that the parity module $\E(\cL;\kk)$ satisfies the following version of Poincar\'e duality.

\begin{proposition}\label{EPD}
There is an isomorphism  of $\H(\cL;\kk)$-modules $\E(\cL;\kk)^*[-\rk\cL]\cong\E(\cL;\kk)$.
\end{proposition}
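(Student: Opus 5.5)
We verify that $\M \coloneq \E(\cL;\kk)^*[-\rk\cL]$ satisfies the three conditions of Theorem~\ref{def of E}; the uniqueness part of that theorem then gives $\M\cong\E(\cL;\kk)$. We take the module structure on the dual to be the natural one, $(a\cdot f)(m)=f(a\cdot m)$, which is well defined because $\H(\cL;\kk)$ is commutative. Shifting by $[-\rk\cL]$ moves the degree $-i$ part of $\E^*$ into degree $\rk\cL - i$. Hence $\M^j = \Hom_\kk(\E^{\rk\cL-j}(\cL;\kk),\kk)$.

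\emph{Purity.} The introduction notes that duality interchanges flabbiness and coflabbiness. Since $\E(\cL;\kk)$ is both, so is $\E(\cL;\kk)^*$. It is worth spelling out the mechanism, since this is the only point with real content. Under the perfect pairing $\M\times\M^*\to\kk$ we have $\Ann_\Sigma(\M^*) = \Span_\Sigma(\M)^\perp$, because $f$ kills $y_F\M$ for all $F\in\Sigma$ exactly when $y_F f=0$ for all $F\in\Sigma$. Taking orthogonal complements in a finite-dimensional space converts intersections into sums, so the flabby identity for $\M^*$ is equivalent to the coflabby identity for $\M$, and symmetrically. Degree shifts clearly preserve both conditions.

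\emph{Indecomposability.} A decomposition $\M^*=A\oplus B$ into submodules dualizes to $\M\cong A^*\oplus B^*$, and $(\M^*)^*\cong\M$ by finite-dimensionality. So $\E^*$ is indecomposable because $\E$ is, and the shift preserves this.

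\emph{Condition (c).} This is the only step that uses specific input, namely Theorem~\ref{props of E}(a),(b). We need $x\in\M^0=(\E^{\rk\cL})^*$ with $y_\ho x\neq 0$. By Theorem~\ref{def of E}(c) there is $e\in\E^0$ with $y_\ho e\neq0$, and $y_\ho e$ lies in $\E^{\rk\cL}$, which is one-dimensional by Theorem~\ref{props of E}(b). Let $x$ be a nonzero functional on $\E^{\rk\cL}$. Then $y_\ho x$ is a functional on $\E^0$, and
\[
(y_\ho x)(e)=x(y_\ho e)\neq0,
\]
since $y_\ho e$ spans $\E^{\rk\cL}$. Thus $y_\ho x\neq0$. (Theorem~\ref{props of E}(a) confirms that $\M$ lives in degrees $0$ through $\rk\cL$, matching the degree range of $\E$, although this is not logically needed.)

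With (a), (b), (c) verified, uniqueness in Theorem~\ref{def of E} yields the isomorphism. I expect no real obstacle; the only care needed is the sign and degree bookkeeping for the dual and the shift, so that $x$ genuinely lies in degree $0$.
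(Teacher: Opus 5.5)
Your proof is correct and follows the paper's argument. Purity and indecomposability pass to the dual, and condition (c) is checked by pairing a degree-$0$ functional of $\E(\cL;\kk)^*[-\rk\cL]$ against $y_{\hat 1}e$; the only difference is that you invoke Theorem~\ref{props of E}(b) so that any nonzero functional works, whereas the paper simply chooses a functional not vanishing on $y_{\hat 1}e$ and thereby needs only Theorem~\ref{def of E}.
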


It follows that the polynomial $_pZ_\cL(t)$ is palindromic: $t^{\rk \cL}   {}_pZ_\cL(t^{-1})={}_pZ_\cL(t) $.

\begin{proof}
Flabbiness of $\E(\cL;\kk)$ implies coflabbiness of $\E(\cL;\kk)^*$, and 
coflabbiness of $\E(\cL;\kk)$ implies flabbiness of $\E(\cL;\kk)^*$, so $\E(\cL;\kk)^*$ is pure.
Indecomposability of $\E(\cL;\kk)$ implies indecomposability of $\E(\cL;\kk)^*$.
Let $x\in \E^0(\cL;\kk)$ be an element such that $y_{\hat 1}\cdot x\neq 0$, and let $\varphi\in  \E^{\rk \cL}(\cL;\kk)^*$
be a linear functional that does not vanish on $y_{\hat 1}\cdot x$.  Then $y_{\hat 1}\cdot \varphi$ does not vanish on $x$, and is therefore nonzero.
Thus $\E(\cL;\kk)^*[-\rk\cL]\ \cong\E(\cL;\kk)$ by Theorem \ref{def of E}.
\end{proof}

\begin{remark}
Proposition~\ref{EPD} is equivalent to the existence of an $\H(\cL;\kk)$-invariant nondegenerate bilinear pairing $B$ on $\E(\cL;\kk)$ that satisfies the orthogonality relations
\[
B(\E^i(\cL;\kk),\E^j(\cL;\kk))=0 \ \ \text{unless $i+j=\rk \cL$.}
\]
Proposition \ref{H-E-CH}, which we will prove in Section \ref{Section:AugmentedChow}, says that $\E(\cL;\kk)$ embeds into $\CH(\cL;\kk)$,
and Proposition \ref{true facts}(b) says that the Poincar\'e pairing on $\CH(\cL;\kk)$ is nondegenerate.
Restricting the Poincar\'e pairing from $\CH(\cL;\kk)$ to $\E(\cL;\kk)$
yields a nondegenerate symmetric bilinear pairing on $\E(\cL;\kk)$.\footnote{
Let $x$ be an element of $\E^0(\cL;\kk)$ such that $y_{\hat 1}\cdot x$ is nonzero. Using Fitting's lemma, one can show that an $\H(\cL;\kk)$-module endomorphism $\varphi$ of $\E(\cL;\kk)$ is an automorphism if and only if $\varphi(x) \neq 0$.  It follows that
an $\H(\cL;\kk)$-invariant bilinear pairing $B$ on $\E(\cL;\kk)$ is nondegenerate if and only if $B(x,y_{\hat 1} \cdot x)\neq 0$. Clearly, this condition is satisfied by the restriction of the multiplicative pairing on $\CH(\cL;\kk)$ to its summand $\E(\cL;\kk)$, for any splitting of $\CH(\cL;\kk)$ by $\E(\cL;\kk)$.}
\end{remark}

For any $\H(\cL;\kk)$-module $\M$, the submodule $y_F\M$ is naturally an $\H(\cL_F;\kk)$-module. To see this, we use the surjective
 homomorphism of graded $\kk$-algebras
 \[
 \sigma_F\colon \H(\cL;\kk)\longrightarrow \H(\cL_F;\kk), \qquad y_G \longmapsto y_Fy_G,
 \]
where the product $y_Fy_G$ is computed in $\H(\cL;\kk)$ and then the resulting element, either $y_{F\vee G}$ or zero, is interpreted as an element in $\H(\cL_F;\kk)$.  For instance, $\sigma_F$ sends $y_\hz$ to $y_F$, which is the identity in $\H(\cL_F;\kk)$.
 Then the action of $\H(\cL_F;\kk)$ on $y_F\M$ is given by the formula
 \[
 \sigma_F(z)\cdot y_Fm = zy_Fm \ \ \text{for any $z \in \H(\cL;\kk)$.}
 \]
This is well-defined because the kernel of $\sigma_F$ is the kernel of multiplication by $y_F$.  Put another way, if we  
write $\widetilde y_G$ to denote the basis element of $\H(\cL_F;\kk)$ indexed by $G \in \cL_F$, we have
 \[
 \widetilde y_G\cdot y_Fm = y_Gm.
 \]
 It follows from this that, for any upper set $\Sigma \subseteq \cL_F \subseteq \cL$ and any $\H(\cL;\kk)$-module $\M$, we have
 \begin{equation}\label{eq:spanann}
 \Span_\Sigma y_F\M = \Span_\Sigma \M \;\; \text{and}\;\; \Ann_\Sigma y_F\M = \Ann_\Sigma \M \cap y_F\M,
 \end{equation}
 where the left sides of these equalities use the $\H(\cL_F;\kk)$-module structure and the right sides use the $\H(\cL;\kk)$-module structure.

\begin{lemma}\label{pure and indecomposable}
Let $\M$ be an $\H(\cL;\kk)$-module, and $y_F\M$ the corresponding $\H(\cL_F;\kk)$-module.
\begin{enumerate}[(a)]
\item If $\M$ is a pure $\H(\cL;\kk)$-module, then $y_F\M$ is a pure $\H(\cL_F;\kk)$-module.
\item For all $G\in\cL_F\subs\cL$, we have canonical stalk and costalk isomorphisms
$$\left(y_F\M\right)_G \cong \M_G[-\rk F] \and \left(y_F\M\right)_{[G]} \cong \M_{[G]}[-\rk F] ,$$
along with an identification of the costalk-stalk homomorphisms
$\beta_G(y_F\M) = \beta_G(\M)$.
\item If $\M$ is an indecomposable pure $\H(\cL;\kk)$-module, then $y_F\M$ is either zero or an indecomposable $\H(\cL_F;\kk)$-module.
\end{enumerate}
\end{lemma}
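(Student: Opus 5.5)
For part (a), I will use the identities \eqref{eq:spanann} directly. An upper set $\Sigma\subseteq\cL_F$ of $\cL_F$ is also an upper set of $\cL$, since anything above an element of $\cL_F$ lies in $\cL_F$. For flabbiness of $y_F\M$, take upper sets $\Sigma_1,\Sigma_2\subseteq\cL_F$ and an element $m\in\Ann_{\Sigma_1\cap\Sigma_2}(y_F\M)$. Then $m=m_1+m_2$ with $m_i\in\Ann_{\Sigma_i}(\M)$, but the $m_i$ need not lie in $y_F\M$. To fix this, I will use the upper set $\Sigma_{\geq F}$. We have $y_F\M=\Span_{\Sigma_{\ge F}}(\M)$, because $y_G=y_Fy_{G'}$ for some $G'$ whenever $G\ge F$; one can take $G'$ to be a complement of $F$ in $[\hz,G]$, since geometric lattices are relatively complemented. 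So the goal is
\[
\Ann_{\Sigma_1}(\M)\cap\Span_{\Sigma_{\ge F}}(\M) + \Ann_{\Sigma_2}(\M)\cap\Span_{\Sigma_{\ge F}}(\M) \supseteq \Ann_{\Sigma_1\cap\Sigma_2}(\M)\cap \Span_{\Sigma_{\ge F}}(\M).
\]
This mixed distributivity does not follow formally from flabbiness and coflabbiness alone. This is the step I expect to be the main obstacle, and I would handle it with the decomposition theorem. By Theorem~\ref{indecomposables}, a pure $\M$ is a direct sum of shifts of $\E(\cL^H;\kk)$, and both sides of the display are compatible with direct sums. It therefore suffices to treat $\M=\E(\cL^H;\kk)$. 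In that case $y_F\M=0$ unless $F\le H$, and if $F\le H$ then $y_F\E(\cL^H;\kk)$ is a module over $\H([F,H];\kk)$. So I would reduce to the case $H=\hat 1$.

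Alternatively, and preferably, I would prove the identity directly: $\Ann_\Sigma(y_F\M)=y_F\cdot\Ann_{\Sigma'}(\M)$ for a suitable upper set $\Sigma'$. Using flabbiness of $\M$, $\Ann_\Sigma(\M)\cap y_F\M$ should equal $y_F\Ann_{\Sigma''}(\M)$, where $\Sigma''=\{G : G\vee F\in\Sigma \text{ and } \rk(G\vee F)=\rk G+\rk F\}$ together with its upward closure. I am not sure this works. If it fails, I fall back on the decomposition route above. Coflabbiness is the easier half, because $\Span_\Sigma(y_F\M)=\Span_\Sigma(\M)$ and $\Span_{\Sigma_1}\cap\Span_{\Sigma_2}=\Span_{\Sigma_1\cap\Sigma_2}$ holds in $\M$ already. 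It can also be obtained from the flabby half by duality, since $(y_F\M)^*$ is a quotient of $\M^*$.

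For part (b), let $G\ge F$. By \eqref{eq:spanann},
\[
\Span_{\Sigma_{\ge G}}(y_F\M)=\Span_{\Sigma_{\ge G}}(\M) \and \Span_{\Sigma_{>G}}(y_F\M)=\Span_{\Sigma_{>G}}(\M).
\]
The rank of $G$ in $\cL_F$ is $\rk G-\rk F$, so the stalk isomorphism follows with the stated shift. For the costalk, the minimal element of $\cL_F$ is $F$, and the annihilator is taken over $\Sigma_{>F}$ inside $\Span_{\Sigma_{\ge G}}(\M)$. I then need
\[
\Ann_{\Sigma_{>F}}(\Span_{\Sigma_{\ge G}}\M)=\Ann_{\Sigma_{>\hz}}(\Span_{\Sigma_{\ge G}}\M).
\]
The inclusion $\supseteq$ is clear. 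For $\subseteq$, take $m\in\Span_{\Sigma_{\ge G}}\M\subseteq y_F\M$ and an atom $a$. If $a\le F$, then $y_am\in y_ay_F\M=0$. If $a\not\le F$, then $y_am=\widetilde y_{a\vee F}m$ with $a\vee F>F$, so $y_am=0$ as well. Both costalk-stalk maps are the inclusion into $y_G\M$ followed by projection, so they coincide.

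For part (c), Theorem~\ref{general decomposition} applied to $\M$ gives that $\beta_H(\M)$ is nonzero, of rank one, for exactly one $H$, by Remark~\ref{characterization}. Part (a) makes $y_F\M$ pure, and part (b) identifies its costalk-stalk maps with $\beta_G(\M)$ for $G\ge F$. Hence the maps $\beta_G(y_F\M)$ have total rank at most one. Since $y_F\M$ is pure, Theorem~\ref{general decomposition} over $\cL_F$ then shows that it is either zero or indecomposable.
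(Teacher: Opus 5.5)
The gap is in the flabby half of (a), which you leave unproved; your arguments for (b), (c) and the coflabby half of (a) are fine and follow the paper. The decomposition route stops exactly where the content lies. After reducing to $\M=\E(\cL;\kk)$ you still need purity of $y_F\E(\cL;\kk)$, and the paper obtains that (Proposition~\ref{contraction module}) \emph{from} this very lemma. So the route is circular unless you go through the construction of Section~\ref{sec:existence} (Remark~\ref{rmk:alternate contraction module}). The direct route, with its ad hoc $\Sigma''$, is a guess you do not verify.

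The obstacle comes from computing $\Ann_\Sigma(\M)$ with the $\H(\cL;\kk)$-action and then intersecting with $y_F\M$. For $F\neq\hz$ and $G\in\Sigma\subseteq\cL_F$ we have $y_Gy_F=0$, so $\Ann_\Sigma(\M)\cap y_F\M=y_F\M$. Your displayed ``mixed distributivity'' is therefore trivially true and is not the flabbiness condition for $y_F\M$; the second identity in \eqref{eq:spanann} must be understood as the formula below. Since $\widetilde y_G\cdot y_Fm=y_Gm$, the element $y_Fm$ is killed by every $\widetilde y_G$ with $G\in\Sigma$ exactly when $m\in\Ann_\Sigma(\M)$. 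That is,
\[
\Ann_\Sigma(y_F\M)=y_F\cdot\Ann_\Sigma(\M),
\]
which is your own guess with $\Sigma'=\Sigma$. Flabbiness then takes one line:
\[
\Ann_{\Sigma_1}(y_F\M)+\Ann_{\Sigma_2}(y_F\M)=y_F\bigl(\Ann_{\Sigma_1}(\M)+\Ann_{\Sigma_2}(\M)\bigr)=y_F\Ann_{\Sigma_1\cap\Sigma_2}(\M)=\Ann_{\Sigma_1\cap\Sigma_2}(y_F\M).
\]
Equivalently, $\Lambda(y_F\M[\rk F])=(\Lambda\M)|_{\cL_F}$, and flabbiness of an object of $\cC(\cL;\kk)$ passes to the open set $\cL_F$; Proposition~\ref{prop:criteria for (co)flabbiness} translates this back to modules.

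The same identity, in the form $\widetilde y_{a\vee F}\,x=y_a x$ for $x\in y_F\M$ and an atom $a\not\le F$, is also what justifies the inclusion you call clear in (b). The two annihilators there are taken for different actions, so that inclusion does not follow from $\Sigma_{>F}\subseteq\Sigma_{>\hz}$ alone.
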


\begin{proof}
Parts (a) and (b) follow from Equation~\eqref{eq:spanann}.  For Part (c), suppose that $\M$ is indecomposable.
By Theorem~\ref{general decomposition} and Remark~\ref{characterization},
the homomorphism $\beta_G(\M)$ is equal to zero for all but one element $G$,
and the one nonvanishing homomorphism has rank $1$.  If $G$ lies in $\cL_F$, then the same property
will hold for the $\H(\cL_F;\kk)$-module $y_F\M$, and hence $y_F\M$ is an indecomposable $\H(\cL_F;\kk)$-module.  Otherwise, we have $y_F\M=0$.
\end{proof}

\begin{proposition}\label{contraction module}
There is an isomorphism of $\H(\cL_F;\kk)$-modules $y_F\E(\cL;\kk)[\rk F] \cong \E(\cL_F;\kk)$.
\end{proposition}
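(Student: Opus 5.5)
We verify that $\M \coloneq y_F\E(\cL;\kk)[\rk F]$ satisfies the three conditions of Theorem~\ref{def of E} for the geometric lattice $\cL_F$ (whose rank function is $\rk G-\rk F$, with minimal element $F$ and maximal element $\hat 1$); uniqueness in that theorem then gives the isomorphism.

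\emph{Purity.} By Lemma~\ref{pure and indecomposable}(a), $y_F\E(\cL;\kk)$ is a pure $\H(\cL_F;\kk)$-module, and a degree shift does not affect flabbiness or coflabbiness. So $\M$ is pure.

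\emph{Condition (c).} Take $x\in\E^0(\cL;\kk)$ with $y_{\hat 1}x\neq 0$, and set $m \coloneq y_Fx$. This element lies in degree $\rk F$ of $y_F\E(\cL;\kk)$, hence in degree $0$ of $\M$. We need $\widetilde y_{\hat 1}\cdot m = y_{\hat 1}x$, which is nonzero. By the formula $\widetilde y_G\cdot y_Fm' = y_Gm'$ established above, $\widetilde y_{\hat 1}\cdot y_Fx=y_{\hat 1}x\ne 0$. This holds regardless of whether $y_{\hat 1}=y_Fy_{H}$ for some $H$ in $\H(\cL;\kk)$, because the action was defined through $\sigma_F$. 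In particular $\M\neq 0$.

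\emph{Indecomposability.} By Lemma~\ref{pure and indecomposable}(c), since $\E(\cL;\kk)$ is indecomposable and pure, $y_F\E(\cL;\kk)$ is either zero or indecomposable. We just showed it is nonzero, so it is indecomposable, and so is its shift. As a consistency check, Lemma~\ref{pure and indecomposable}(b) together with Remark~\ref{characterization} shows that the unique nonvanishing costalk-stalk map of $\E(\cL;\kk)$, at $\hat 1\in\cL_F$, survives in $y_F\E(\cL;\kk)$ with rank $1$.

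The only step requiring care is condition (c): the degree bookkeeping under the shift $[\rk F]$, and the fact that $\widetilde y_{\hat 1}$ acts as $y_{\hat 1}$. I do not expect a genuine obstacle, since everything else is a direct application of Lemma~\ref{pure and indecomposable} and Theorem~\ref{def of E}.
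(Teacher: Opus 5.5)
Your proof is correct and matches the paper's argument. You get purity and indecomposability from Lemma~\ref{pure and indecomposable}, check that $\widetilde y_{\hat 1}\cdot(y_Fx)=y_{\hat 1}x\neq 0$ for $x\in\E^0(\cL;\kk)$, and then invoke the uniqueness in Theorem~\ref{def of E}, exactly as the paper does.
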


\begin{proof}
Lemma \ref{pure and indecomposable} implies that  $y_F\E(\cL;\kk)[\rk F]$ is a pure and indecomposable  $\H(\cL_F;\kk)$-module.
If $x$ is an element of $\E^0(\cL;\kk)$ such that $y_{\hat 1}\cdot x$ is nonzero, then $\widetilde y_{\hat 1}\cdot (y_Fx)$ is nonzero in $y_F\E(\cL;\kk)[\rk F]$.
 Therefore, by Theorem \ref{def of E}, we have $y_F\E(\cL;\kk)[\rk F] \cong \E(\cL_F;\kk)$.
\end{proof}

\begin{remark}
	This result also follows directly from our construction of the module $\E(\cL;\kk)$; see Remark \ref{rmk:alternate contraction module}.  Indeed, this isomorphism is fundamental to the inductive construction that we give.
\end{remark}

\begin{lemma}\label{sigma doesn't really matter}
Let  $\Sigma\subs \cL$ be an upper set, and let $F\in \Sigma$ be a minimal element.  
If $\M$ is a coflabby $\H(\cL;\kk)$-module, then the following natural map is an isomorphism:
	\[
	\M_F [-\rk F] = \Span_{\Sigma_{\geq F}}(\M) / \Span_{\Sigma_{>F}}(\M) \longrightarrow \Span_{\Sigma}(\M)/ \Span_{\Sigma\setminus F}(\M).
	\]
\end{lemma}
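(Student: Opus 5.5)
The plan is to use coflabbiness twice: once to show the natural map is injective, and once, together with the minimality of $F$, to show it is surjective. First I check that the map is well defined. Since $F\in\Sigma$ and $\Sigma$ is an upper set, we have $\Sigma_{\geq F}\subs\Sigma$. Also $\Sigma_{>F}\subs\Sigma\setminus F$, and $\Sigma\setminus F$ is itself an upper set, because $F$ is minimal in $\Sigma$: any $G\geq H$ with $H\in\Sigma\setminus F$ lies in $\Sigma$, and $G=F$ would force $H=F$. Hence inclusion $\Span_{\Sigma_{\geq F}}(\M)\to\Span_\Sigma(\M)$ descends to the stated map.

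For surjectivity, note that $\Span_\Sigma(\M)$ is spanned by elements $y_G m$ with $G\in\Sigma$. Every such $G$ either equals $F$ or lies in $\Sigma\setminus F$. So
\[
\Span_\Sigma(\M)=\Span_{\Sigma_{\geq F}}(\M)+\Span_{\Sigma\setminus F}(\M),
\]
which gives surjectivity directly, without needing coflabbiness.

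For injectivity, we must show
\[
\Span_{\Sigma_{\geq F}}(\M)\cap\Span_{\Sigma\setminus F}(\M)\subs\Span_{\Sigma_{>F}}(\M).
\]
This is where coflabbiness enters. Apply it with $\Sigma_1=\Sigma_{\geq F}$ and $\Sigma_2=\Sigma\setminus F$, both upper sets. Their intersection is $\Sigma_{\geq F}\setminus\{F\}=\Sigma_{>F}$, so coflabbiness gives exactly
\[
\Span_{\Sigma_{\geq F}}(\M)\cap\Span_{\Sigma\setminus F}(\M)=\Span_{\Sigma_{>F}}(\M).
\]
Thus the kernel of the map is zero.

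The only point requiring care is verifying that $\Sigma\setminus F$ is an upper set. This is precisely where the minimality of $F$ in $\Sigma$ is used, so I do not expect a genuine obstacle. The identification of the left-hand side with $\M_F[-\rk F]$ is the definition of the stalk, read with the shift undone. Degrees are preserved throughout, since all maps are induced by inclusions of graded subspaces.
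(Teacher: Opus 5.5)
Your proposal is correct and follows the same route as the paper: surjectivity comes from $\Sigma_{\geq F}\cup(\Sigma\setminus F)=\Sigma$, and injectivity from coflabbiness applied to $\Sigma_1=\Sigma_{\geq F}$ and $\Sigma_2=\Sigma\setminus F$, whose intersection is $\Sigma_{>F}$. Your explicit check that $\Sigma\setminus F$ is an upper set, which is where the minimality of $F$ enters, is a detail the paper leaves implicit.
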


\begin{proof}
Surjectivity follows from 
$\Sigma_{\ge F} \cup (\Sigma \setminus F) = \Sigma$.
Injectivity follows from coflabbiness, since 
\[
\Span_{\Sigma_{\geq F}}(\M) \cap \Span_{\Sigma\setminus F}(\M) = \Span_{\Sigma_{\geq F}\cap (\Sigma\setminus F)}(\M)= \Span_{\Sigma_{>F}}(\M). \qedhere
\]
\end{proof}

Let $\mm$ be the unique graded maximal ideal of $\H(\cL;\kk)$, the span of all $y_F$ of positive degree. 
For any $\H(\cL;\kk)$-module $\M$,
we have a canonical filtration $\mm^0\M \sups \mm^1\M \sups \mm^2\M\sups\cdots$ of $\M$. 
The {\bf associated graded} $\gr\M$ is the vector space
\[
\gr\M \coloneq \bigoplus_{k\geq 0}\mm^k\M / \mm^{k+1}\M.
\]
Note that $\gr\M$ is (noncanonically) isomorphic to $\M$ as a graded vector space, where the grading is induced by the original grading on $\M$.  
The proof of the following proposition is adapted from the proof of \cite[Theorem 1.8]{SHCG}.

\begin{proposition}\label{associated graded}
For any coflabby $\H(\cL;\kk)$-module $\M$, there is a canonical isomorphism of graded vector spaces
	\[
	\gr\M \cong \bigoplus_{F\in \cL} \M_F[-\rk F].
	\]
\end{proposition}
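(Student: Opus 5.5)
The plan is to compare the $\mm$-adic filtration with a filtration by spans over upper sets. For each $k\ge 0$ let $\Sigma_k \coloneq \{G\in\cL\mid \rk G\ge k\}$, which is an upper set. The first step is to show
\[
\mm^k\M = \Span_{\Sigma_k}(\M).
\]
The inclusion $\mm^k\M\subseteq \Span_{\Sigma_k}(\M)$ holds because a product $y_{G_1}\cdots y_{G_k}$ of elements of positive rank is either zero or equal to $y_G$ with $\rk G=\sum\rk G_i\ge k$. For the reverse inclusion, take $G$ with $\rk G\ge k$. Since $\cL$ is atomic, we can choose a chain built by adjoining atoms one at a time, which gives $y_G=y_{a_1}\cdots y_{a_r}$ with $r=\rk G$ (each join raises rank by exactly one). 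Hence $y_G\in\mm^r\subseteq\mm^k$. This step does not use coflabbiness.

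Next, I would identify the graded pieces. We have $\mm^k\M/\mm^{k+1}\M = \Span_{\Sigma_k}(\M)/\Span_{\Sigma_{k+1}}(\M)$, and $\Sigma_k\setminus\Sigma_{k+1}$ is the set of rank-$k$ elements. For each rank-$k$ element $F$ there is a natural map
\[
\M_F[-\rk F]=\Span_{\Sigma_{\ge F}}(\M)/\Span_{\Sigma_{>F}}(\M)\to \Span_{\Sigma_k}(\M)/\Span_{\Sigma_{k+1}}(\M),
\]
which is well defined because $\Sigma_{>F}\subseteq\Sigma_{k+1}$. Summing over $F$ gives a canonical map $\bigoplus_{\rk F=k}\M_F[-\rk F]\to\mm^k\M/\mm^{k+1}\M$. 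This map is surjective because $\Sigma_k=\Sigma_{k+1}\cup\bigcup_{\rk F=k}\Sigma_{\ge F}$.

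Injectivity is the main point, and it is where coflabbiness enters. I would order the rank-$k$ elements as $F_1,\dots,F_m$ and interpolate between the two filtration steps using the upper sets $\Sigma^{(j)}\coloneq\Sigma_{k+1}\cup\{F_1,\dots,F_j\}$. Each $\Sigma^{(j)}$ is an upper set, and $F_j$ is a minimal element of $\Sigma^{(j)}$. Lemma~\ref{sigma doesn't really matter} then gives isomorphisms
\[
\M_{F_j}[-\rk F_j]\cong \Span_{\Sigma^{(j)}}(\M)/\Span_{\Sigma^{(j-1)}}(\M).
\]
So $\Span_{\Sigma_k}(\M)\supseteq\cdots\supseteq\Span_{\Sigma_{k+1}}(\M)$ is a finite filtration whose subquotients are the $\M_{F_j}[-\rk F_j]$. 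Consequently the dimensions agree in each degree, and the surjection above is an isomorphism. I would stress that the map is defined independently of the ordering, so the isomorphism is canonical even though the interpolating filtration is not.

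Summing over $k$ yields $\gr\M\cong\bigoplus_{F}\M_F[-\rk F]$. The only delicate point is this injectivity argument via the intermediate upper sets. Finite-dimensionality of $\M$ handles the dimension count, since $\M$ is finitely generated over a finite-dimensional algebra.
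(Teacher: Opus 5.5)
Your proof is correct. It follows the same skeleton as the paper's: identify $\mm^k\M/\mm^{k+1}\M$ with $\Span_{\Sigma_k}(\M)/\Span_{\Sigma_{k+1}}(\M)$, which you justify and the paper merely asserts. Then map the sum of the rank-$k$ stalks onto it, and use coflabbiness for injectivity.

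The two routes differ in that last step. The paper applies Lemma~\ref{sigma doesn't really matter} only to the sets $\Sigma_{k+1}\cup\{F_i\}$, all relative to the same base $\Sigma_{k+1}$. It then proves injectivity of the sum map directly. It takes a kernel element whose set $S$ of components lying outside $\Span_{\Sigma_{k+1}}(\M)$ is nonempty and minimal. It splits $S=S_1\cup S_2$ into disjoint nonempty pieces and applies coflabbiness to the pair of upper sets $\Sigma_{k+1}\cup S_1$ and $\Sigma_{k+1}\cup S_2$ to reach a contradiction.

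You instead apply the lemma along the flag $\Sigma^{(0)}\subseteq\cdots\subseteq\Sigma^{(m)}$, so coflabbiness enters only through the lemma, and you finish by counting dimensions. This is a little cleaner. However, the count uses that $\kk$ is a field and that $\M$ is finite-dimensional (both fine in this section), whereas the paper's argument needs neither.

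You can remove that dependence as well. Your map sends $\bigoplus_{i\le j}\M_{F_i}[-k]$ into $\Span_{\Sigma^{(j)}}(\M)/\Span_{\Sigma_{k+1}}(\M)$, and it induces exactly the isomorphisms of the lemma on successive quotients. So induction on $j$ and the five lemma show it is an isomorphism over any coefficient ring.
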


\begin{proof}
Let $\Pi \coloneq \{F\in\cL\mid \rk F \geq k\}$ and $\Sigma \coloneq \{F\in\cL\mid \rk F > k\}$.
We then have
\[
\mm^k\M / \mm^{k+1}\M = \Span_{\Pi}(\M) / \Span_{\Sigma}(\M).
\]
Write $\cL^k=\{F_1, F_2, \ldots, F_r\}$. Thus, $F_1, \dots, F_r$ are precisely the elements of $\Pi$ that do not belong to $\Sigma$. 
For all $i\in\{1,\ldots,r\}$, let $\Sigma_i \coloneq \Sigma\cup\{F_i\}$.
Then
\[
\bigoplus_{F\in \cL^k} \M_F[-k] = \bigoplus_{i=1}^r \M_{F_i}[-k] \cong \bigoplus_{i=1}^r \Span_{\Sigma_i}(\M) / \Span_{\Sigma}(\M),
\]
where the second isomorphism follows from Lemma \ref{sigma doesn't really matter}.
Consider the natural map 
\[
\rho\colon\bigoplus_{i=1}^r \Span_{\Sigma_i}(\M) / \Span_{\Sigma}(\M) \longrightarrow  \Span_{\Pi}(\M) / \Span_{\Sigma}(\M).
\]
Clearly, the map $\rho$ is surjective.  To see that $\rho$ is injective, consider an element of its kernel 
\[
(m_1,\ldots,m_r)\in \Span_{\Sigma_1}(\M)\oplus\cdots \oplus\Span_{\Sigma_r}(\M).
\]
Let 
$S \coloneq \{F_i\mid m_i\notin \Span_{\Sigma}(\M)\}$,
and assume for the sake of contradiction that $S$ is both nonempty and as small as possible.  Since each component of $\rho$ is injective, we have $|S|\geq 2$.
Let $S = S_1 \cup S_2$ be a nontrivial partition of $S$, and define
\[n_1 \coloneq \sum_{F_i\in S_1} m_i\in\Span_{\Sigma\cup S_1}(\M) \and n_2 \coloneq \sum_{F_i\in S_2} m_i \in \Span_{\Sigma\cup S_2}(\M).\]
By minimality of $S$, neither $n_1$ nor $n_2$ lies in $\Span_{\Sigma}(\M)$.
But $n_1 + n_2\in \Span_{\Sigma}(\M)$, which implies that $n_1$ and $n_2$ both live in 
$\Span_{\Sigma\cup S_1}(\M)\cap \Span_{\Sigma\cup S_2}(\M) = \Span_{\Sigma}(\M)$.
\end{proof}

\begin{proof}[Proof of Theorem \ref{combinatorial recursion}]
Part (a) is immediate from the definitions.  Part (b) follows from Proposition \ref{EPD}.
Propositions \ref{contraction module} and \ref{associated graded} tell us that
$$\gr\E(\cL;\kk)\cong \bigoplus_{F\in\cL}\E(\cL;\kk)_F[-\rk F] \cong \bigoplus_{F\in\cL}\E(\cL_F;\kk)_F[-\rk F].$$
Taking Poincar\'e polynomials of both sides gives us Part (c).
\end{proof}


\section{The Poincar\'e pairing}\label{H}
In this section, we study the Poincar\'e pairing on $\H(\cL;\kk)$ and use it to prove
Theorem~\ref{E=H}, assuming Theorems ~\ref{def of E}, \ref{props of E},  and ~\ref{it goes up}.  
Let $p$ be zero or prime, and let $\kk$ be a field of characteristic $p$.

\begin{lemma}\label{constant term}
For all $\cL$, we have ${}_pP_\cL(0)=1$.
\end{lemma}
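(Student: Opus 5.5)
We must show ${}_pP_\cL(0)=\dim \uE^0(\cL;\kk)=1$. By definition, $\uE^0(\cL;\kk)$ is the degree-zero part of $\E(\cL;\kk)/\Span_{\cL\setminus\hz}\E(\cL;\kk)$. Every $y_F$ with $F\neq\hz$ has positive degree $\rk F$. By Theorem~\ref{props of E}(a), $\E^i(\cL;\kk)=0$ for $i<0$. So the degree-zero part of $\Span_{\cL\setminus\hz}\E(\cL;\kk)$ is spanned by elements $y_F\cdot m$ with $m\in\E^{-\rk F}=0$, and hence vanishes. Therefore
\[
\uE^0(\cL;\kk)=\E^0(\cL;\kk).
\]
By Theorem~\ref{props of E}(b), $\E^0(\cL;\kk)$ is $1$-dimensional, which gives the constant term $1$.

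The same argument can be run through Theorem~\ref{combinatorial recursion}(c). Evaluating ${}_pZ_\cL(t)=\sum_F t^{\rk F}\,{}_pP_{\cL_F}(t)$ at $t=0$ leaves only the $F=\hz$ term, so ${}_pP_\cL(0)={}_pZ_\cL(0)=\dim\E^0(\cL;\kk)=1$.

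The only point needing care is the degree bookkeeping: $\Span_{\cL\setminus\hz}$ raises degree strictly, and there is no negative-degree part of $\E$ to lift from. Both facts are immediate from the definitions and Theorem~\ref{props of E}(a), so there is no real obstacle. The field-independence conventions (Corollary~\ref{cor:field extensions}) ensure the computation over $\kk$ matches the definition of ${}_pP_\cL$ over $\F_p$.
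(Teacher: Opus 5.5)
Your proposal is correct and follows the paper's own argument: Theorem~\ref{props of E}(a) forces the degree-zero part of $\Span_{\cL\setminus\hz}\E(\cL;\kk)$ to vanish, so $\uE^0(\cL;\kk)=\E^0(\cL;\kk)$, and this space is one-dimensional by Theorem~\ref{props of E}(b). Your alternative route through Theorem~\ref{combinatorial recursion}(c) at $t=0$ is also valid and not circular, since that theorem is proved without using this lemma.
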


\begin{proof}
By Parts (a) and (b) of Theorem \ref{props of E}, we have $\uE^0(\cL;\kk)=\E^0(\cL;\kk)\cong\kk$.
\end{proof}

\begin{corollary}\label{lower bound}
The coefficient of $t^i$ in ${}_pZ_\cL(t)$ is greater than or equal to $|\{F\in\cL\mid\rk F = i\}|$.
\end{corollary}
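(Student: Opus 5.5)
We will read the statement off Theorem \ref{combinatorial recursion}(c) combined with Lemma \ref{constant term}. By Theorem \ref{combinatorial recursion}(c),
\[
{}_pZ_\cL(t) = \sum_{F\in \cL} t^{\rk F}\, {}_pP_{\cL_F}(t).
\]
Each ${}_pP_{\cL_F}(t)$ is the Poincar\'e polynomial of the stalk $\uE(\cL_F;\kk)$, so it lies in $\mathbb{N}[t]$. Lemma \ref{constant term}, applied to the geometric lattice $\cL_F$, gives ${}_pP_{\cL_F}(0)=1$. Hence each summand has the form $t^{\rk F}(1 + t\,g_F(t))$ with $g_F\in\mathbb{N}[t]$.

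We then extract the coefficient of $t^i$. The summand indexed by $F$ contributes exactly $1$ to this coefficient when $\rk F = i$, from its constant term. Every other summand contributes a nonnegative integer, since all coefficients are nonnegative. Summing over $F$ shows that the coefficient of $t^i$ in ${}_pZ_\cL(t)$ is at least $|\{F\in\cL\mid \rk F = i\}|$.

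The only point that needs care is nonnegativity of the coefficients of ${}_pP_{\cL_F}$. This is automatic from the definition as a Poincar\'e polynomial, so there is no real obstacle. One should also note that $\cL_F$ is itself a geometric lattice, so both Lemma \ref{constant term} and Theorem \ref{combinatorial recursion} apply to it; this is standard, since $\cL_F$ is the lattice of flats of the contraction.

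An alternative route avoids the recursion: use Proposition \ref{associated graded} together with Proposition \ref{contraction module}. Each stalk $\E(\cL;\kk)_F[-\rk F]$ is nonzero in degree $\rk F$, because $\E^0(\cL_F;\kk)\cong\kk$ is not killed by passing to the stalk (Theorem \ref{props of E}). This gives the same bound.
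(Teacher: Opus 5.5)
Your proposal is correct and is exactly the paper's argument: the paper deduces the bound directly from Theorem~\ref{combinatorial recursion}(c) and Lemma~\ref{constant term}, using that each ${}_pP_{\cL_F}(t)$ has nonnegative coefficients and constant term $1$. Your alternative route via Propositions~\ref{associated graded} and~\ref{contraction module} is just the unwound proof of Theorem~\ref{combinatorial recursion}(c), so it gives nothing new but is also valid.
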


\begin{proof}
This follows from Theorem \ref{combinatorial recursion} and Lemma \ref{constant term}.
\end{proof}

\begin{corollary}\label{one implication}
If ${}_pP_\cL(t)=1$, then $\E(\cL;\kk)\cong \H(\cL;\kk)$.
\end{corollary}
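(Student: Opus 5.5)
The plan is to construct an injective $\H(\cL;\kk)$-module map $\H(\cL;\kk)\to\E(\cL;\kk)$ and then compare graded dimensions. By Theorem~\ref{def of E}(c) there is an element $x\in\E^0(\cL;\kk)$ with $y_{\hat 1}\cdot x\neq 0$. Consider the module homomorphism
\[
\psi\colon \H(\cL;\kk)\to\E(\cL;\kk),\qquad z\mapsto z\cdot x .
\]
It is degree-preserving because $x$ has degree $0$.

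The key step is to show that $\psi$ is injective. Since $\H(\cL;\kk)$ is a graded local ring, every nonzero graded ideal contains an element of the socle. The socle of $\H(\cL;\kk)$ is spanned by $y_{\hat 1}$: if $z=\sum c_F y_F$ is homogeneous of degree less than $\rk\cL$ and nonzero, choose $F$ with $c_F\neq 0$ and a complement $G$ with $F\vee G=\hat 1$ and $\rk F+\rk G=\rk\cL$, which exists by semimodularity and atomicity. Then $y_G z$ has a nonzero coefficient $c_F$ on $y_{\hat 1}$, because any other $F'$ of the same rank with $y_{F'}y_G=y_{\hat 1}$ contributes to the same coefficient. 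This needs a little care: the coefficient of $y_{\hat 1}$ in $y_Gz$ is $\sum_{F'}c_{F'}$ over those $F'$ complementary to $G$, so $G$ must be chosen so that $F$ is the only such $F'$ with $c_{F'}\neq0$. Alternatively, one can induct on degree, multiplying by atoms and using $z\ne0\Rightarrow y_a z\ne0$ for some atom $a$ when $\deg z<\rk\cL$. I expect this step to be the main obstacle.

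Assuming the socle is $\kk\, y_{\hat 1}$, the kernel of $\psi$ is a graded ideal. If it were nonzero, it would contain $y_{\hat 1}$, contradicting $y_{\hat 1}x\neq 0$. Hence $\psi$ is injective.

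It remains to compare dimensions. If ${}_pP_\cL(t)=1$, then Theorem~\ref{combinatorial recursion}(c) gives ${}_pZ_\cL(t)=\sum_{F\in\cL}t^{\rk F}{}_pP_{\cL_F}(t)$. The hypothesis only concerns $\cL$ itself, but each ${}_pP_{\cL_F}$ has constant term $1$ by Lemma~\ref{constant term}, so Corollary~\ref{lower bound} gives only a lower bound on the coefficients of ${}_pZ_\cL(t)$. To get equality I would instead use the stalk at $\hat 0$ directly. By Proposition~\ref{associated graded}, $\gr\E\cong\bigoplus_F\E_F[-\rk F]$. Since $\E_{\hat 0}=\uE(\cL;\kk)$ is one-dimensional in degree $0$, the image of $\H\cdot x$ together with $\mm\E$ spans $\E$. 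By graded Nakayama, $\E$ is generated by the single element $x$, so $\psi$ is surjective. Combined with injectivity, $\psi$ is an isomorphism $\H(\cL;\kk)\cong\E(\cL;\kk)$. This Nakayama argument in fact makes the injectivity step the only real content.
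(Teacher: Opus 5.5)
Your surjectivity step is correct and matches the paper's first step: $\uE(\cL;\kk)=\E/\mm\E$ is $\kk$ in degree $0$, so $x$ generates $\E(\cL;\kk)$ by graded Nakayama. The gap is the injectivity step. The socle of $\H(\cL;\kk)$ is \emph{not} $\kk\, y_{\hat 1}$ in general.

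If it were, $\H(\cL;\kk)$ would be Gorenstein and its Poincar\'e pairing would be nondegenerate. Lemma~\ref{PD} would then give $\H(\cL;\kk)\cong\E(\cL;\kk)$ for every $\cL$, contradicting Theorem~\ref{E=H} for any non-modular lattice. Concretely, take $U_{3,4}$, whose flats are $\hat 0$, the four atoms, the six pairs $ij$, and $E$. Here $y_a y_{ij}=y_E$ exactly when $a\notin\{i,j\}$, and is $0$ otherwise. The element $z=y_{12}+y_{34}-y_{13}-y_{24}$ then satisfies $y_a z=0$ for every atom $a$: for each $a$, the two surviving terms cancel. So neither your complement-choosing argument nor the ``some atom $a$ has $y_a z\neq 0$'' induction can be made to work. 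Also note that your injectivity argument never uses the hypothesis ${}_pP_\cL(t)=1$, which is a sign it is claiming too much.

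The repair is the tool you set aside. Corollary~\ref{lower bound} gives $\dim\E^i(\cL;\kk)\ge|\cL^i|=\dim\H^i(\cL;\kk)$ for all $i$, and a lower bound is exactly what is needed here. A degree-preserving surjection $\H^i(\cL;\kk)\to\E^i(\cL;\kk)$ forces $\dim\E^i\le\dim\H^i$, hence equality, hence bijectivity. This is precisely the paper's proof.

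Injectivity of $z\mapsto z\cdot x$ does in fact hold for every $\cL$ (Proposition~\ref{H-E-CH}, or via the injective map $\rho$ in the proof of Proposition~\ref{associated graded}). However, it comes from coflabbiness of $\E(\cL;\kk)$, not from any property of the socle of $\H(\cL;\kk)$.
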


\begin{proof}
If ${}_pP_\cL(t)=1$, then $\uE(\cL;\kk)$ is $1$-dimensional and concentrated in degree $0$, which means
that $\E(\cL;\kk)$ is generated as an $\H(\cL;\kk)$-module by a single element $x\in\E^0(\cL;\kk)$.
This gives us a surjection $\H(\cL;\kk)\to \E(\cL;\kk)$ sending $y$ to $y\cdot x$.
By Corollary \ref{lower bound}, this must be an isomorphism.
\end{proof}

Consider the trace function $\tau\colon\H(\cL;\kk)\to\kk$ that picks out the coefficient of $y_{\hat 1}$.
We have a symmetric bilinear form on $\H(\cL;\kk)$ given by the formula $\langle\eta,\xi\rangle \coloneq \tau(\eta\xi)$.
We will call this the {\bf Poincar\'e pairing} on $\H(\cL;\kk)$.

\begin{lemma}\label{PD}
We have an isomorphism $\H(\cL;\kk)\cong\E(\cL;\kk)$ if and only if the Poincar\'e pairing
is nondegenerate.
\end{lemma}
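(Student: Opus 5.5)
We prove the two implications separately. The equivalence $\H(\cL;\kk)\cong\E(\cL;\kk)$ versus nondegeneracy of $\langle\ ,\ \rangle$ should come from comparing $\H(\cL;\kk)$ with its dual. The Poincar\'e pairing is $\H(\cL;\kk)$-invariant, since $\langle z\eta,\xi\rangle=\tau(z\eta\xi)=\langle\eta,z\xi\rangle$. It pairs $\H^i$ with $\H^{d-i}$, where $d=\rk\cL$. Hence it induces a module homomorphism
\[
\psi\colon \H(\cL;\kk)\longrightarrow \H(\cL;\kk)^*[-d], \qquad \eta\longmapsto \langle\eta,-\rangle,
\]
and nondegeneracy of the pairing is exactly the statement that $\psi$ is an isomorphism.

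\emph{Nondegenerate implies isomorphic.} Suppose $\psi$ is an isomorphism. We check the three conditions of Theorem~\ref{def of E} for $\M=\H(\cL;\kk)$.

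Condition (c) holds with $x=y_\varnothing$, since $y_{\hat 1}\cdot y_\varnothing=y_{\hat1}\neq0$.

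Condition (b) holds because $\H(\cL;\kk)$ is a graded local ring: its degree-zero part is $\kk$. So $\H(\cL;\kk)$ is indecomposable as a module over itself.

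Condition (a) is purity. Coflabbiness of $\H(\cL;\kk)$ is automatic. Indeed, $\Span_\Sigma(\H)$ is the span of the basis vectors $y_G$ with $G\in\Sigma$, because $y_Fy_H$ is either $0$ or $y_{F\vee H}$ with $F\vee H\in\Sigma$. Spans of subsets of a fixed basis intersect correctly, so $\Span_{\Sigma_1}\cap\Span_{\Sigma_2}=\Span_{\Sigma_1\cap\Sigma_2}$. Flabbiness of $\H$ is then equivalent, via $\psi$, to flabbiness of $\H^*[-d]$. By the duality remark after Definition~\ref{pure module def}, that is equivalent to coflabbiness of $\H$, which we just established. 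Therefore $\H(\cL;\kk)$ is pure, and Theorem~\ref{def of E} gives $\H(\cL;\kk)\cong\E(\cL;\kk)$.

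\emph{Isomorphic implies nondegenerate.} Suppose $\H(\cL;\kk)\cong\E(\cL;\kk)$. By Proposition~\ref{EPD}, $\H(\cL;\kk)\cong\H(\cL;\kk)^*[-d]$ as modules. We need to upgrade this abstract isomorphism to the statement that the specific map $\psi$ is an isomorphism.

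Let $\theta\colon\H\to\H^*[-d]$ be any isomorphism. Since $\H$ is free of rank one on $y_\varnothing$, $\theta$ is determined by the functional $f=\theta(y_\varnothing)\in(\H^d)^*$. Here $\H^d=\kk y_{\hat1}$ is one-dimensional, so $f$ is a scalar multiple of $\tau$, say $f=c\tau$. Then $\theta(\eta)=\eta\cdot f=c\,\tau(\eta\,\cdot)$, so $\theta=c\psi$. Since $\theta$ is an isomorphism, $c\neq0$, and therefore $\psi$ is an isomorphism.

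The point needing care is the identification of the module action on $\H^*[-d]$, so that $\eta\cdot f$ really equals $f(\eta\,\cdot)$. Once that is checked, each step is routine; the only substantive input is the duality step in the flabbiness argument.
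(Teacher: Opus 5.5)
Your proof is correct and follows the paper's argument essentially step for step. For the forward direction, nondegeneracy identifies $\H(\cL;\kk)$ with $\H(\cL;\kk)^*[-\rk\cL]$, so the automatic coflabbiness of $\H(\cL;\kk)$ becomes flabbiness. For the converse, Proposition~\ref{EPD} gives an abstract isomorphism; since it is determined by the image of $y_{\hat 0}$, it is a nonzero multiple of the pairing map. The one difference is that you explicitly check indecomposability and the $y_{\hat 1}$-condition of Theorem~\ref{def of E}, which the paper leaves implicit.
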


The Poincar\'e pairing on $\H(\cL;\kk)$ is analogous to that on the cohomology 
of a singular projective variety over the complex numbers. Lemma \ref{PD} states that, in this case,
 cohomology is isomorphic to intersection cohomology if and only if the Poincar\'e pairing is nondegenerate.

\begin{proof}[Proof of Lemma \ref{PD}]
Let $d=\rk\cL$.
If the Poincar\'e pairing is nondegenerate, then it gives an isomorphism  of $\H(\cL;\kk)$-modules $\H(\cL;\kk)^*[-d]\cong\H(\cL;\kk)$.
The module $\H(\cL;\kk)$ is always coflabby, and hence $\H(\cL;\kk)^*[-d]$ is always flabby.  Consequently, if the Poincar\'e pairing
is nondegenerate, then $\H(\cL;\kk)$ is pure, and is therefore isomorphic to $\E(\cL;\kk)$ by Theorem \ref{def of E}.

Conversely, suppose that $\H(\cL;\kk)\cong\E(\cL;\kk)$.  Proposition~\ref{EPD} implies that 
there is an isomorphism $\rho\colon\H(\cL;\kk)\cong \H(\cL;\kk)^*[-d]$.  Multiplying $\rho$ by a nonzero scalar if necessary,
we may assume that $\rho(y_{\varnothing})(y_{\hat 1}) = 1$.  Then, for 
any $\eta,\xi$ in $\H(\cL;\kk)$, we have
\[
\rho(\eta)(\xi) = \rho(y_{\varnothing})(\eta\xi) = \tau(\eta\xi) = \langle\eta,\xi\rangle.
\]
The fact that $\rho$ is an isomorphism means that the Poincar\'e pairing is nondegenerate.
\end{proof}



\begin{proposition}\label{determinant}
Let $\cL$ be a projective geometry of order $q$, and $\kk$ a field of characteristic $p$.
Then $\H(\cL;\kk)\cong\E(\cL;\kk)$ if and only if $q$ is not a multiple of $p$.
\end{proposition}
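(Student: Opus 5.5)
By Lemma~\ref{PD}, it suffices to determine when the Poincar\'e pairing on $\H(\cL;\kk)$ is nondegenerate. The pairing is graded: it pairs $\H^k$ with $\H^{d-k}$, where $d=\rk\cL$. So we must show that, for each $k$, the matrix $A_k$ with rows indexed by flats $F$ of rank $k$, columns indexed by flats $G$ of rank $d-k$, and entry $1$ if $F\vee G=\hat 1$ and $0$ otherwise, is invertible over $\kk$ exactly when $p\nmid q$. Since $\cL$ is modular, $\rk F+\rk G=\rk(F\vee G)+\rk(F\wedge G)$. Hence for $\rk F=k$ and $\rk G=d-k$, the condition $F\vee G=\hat 1$ is equivalent to $F\wedge G=\hat 0$. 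So $A_k$ is the ``complementarity'' incidence matrix of a projective geometry.

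We first treat the case $d\ge 3$. The Veblen--Young theorem then says $\cL\cong\P^{d-1}(\F_q)$ with $q$ a prime power. For $d=2$, $\cL$ is the lattice of $U_{2,q+1}$. In rank $2$ the only nontrivial block is $k=1$, and $A_1=J-I$ of size $q+1$, where $J$ is the all-ones matrix. Its eigenvalues are $q$ (once) and $-1$ (with multiplicity $q$), so $\det A_1=\pm q$; this is nonzero in $\kk$ iff $p\nmid q$. The blocks $k=0,d$ are $1\times1$ with entry $1$.

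For general $d$, we compute $\det A_k$ using the structure of $\P^{d-1}(\F_q)$. Let $\cV_k$ be the space of functions on $k$-subspaces. The matrix $A_k A_k^{T}$, or $A_k$ composed with a duality identifying rank $d-k$ flats with rank $k$ flats via orthogonal complement, lies in the Bose--Mesner algebra of the Grassmann scheme. Its eigenvalues are known: the entry depends only on $\dim(F\cap G^\perp)$, with value $1$ iff $F=G$ after dualizing. So the matrix becomes the ``$F\cap G'=0$'' matrix with $G'=G^{\perp}$, which is a distance-$k$ matrix of the $q$-Johnson scheme. Its eigenvalues are $(-1)^j q^{\binom{k}{2}+\binom{j}{2}\cdot\text{(something)}}$-type powers of $q$ up to sign. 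Concretely, they are $\pm q^{e_j}$, by the standard formula for the eigenvalues of the disjointness matrix in the $q$-Kneser setting. A more elementary alternative I would try first: order flats compatibly with a Schubert cell decomposition (a fixed flag) and show $A_k$ is, up to permutation, unitriangular times a diagonal of powers of $q$ modulo $q$. It suffices to show two things. First, $\det A_k=\pm q^{N_k}$. Second, $N_k>0$ for some $0<k<d$; the case $k=1$ gives $N_1>0$, since the matrix $A_1$ (points versus hyperplanes not containing them) equals $J-(\text{incidence})$ and has determinant $\pm q^{\text{positive}}$ by the classical computation $M M^T=q^{d-2}I+\cdots$.

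The main obstacle is the determinant computation for general $k$. I would avoid it as follows. For the ``if'' direction ($p\nmid q$), I would argue by counting over $\Q$ versus $\F_p$, using the $q$-analog eigenvalue formula for the Grassmann scheme, whose eigenvalues are all signed powers of $q$. For the ``only if'' direction ($p\mid q$), the single block $k=1$ suffices. For $d\ge 3$: $A_1$ is invertible iff $J-N$ is, where $N$ is the point--hyperplane incidence matrix. We have $N N^T=q^{d-2}I+\lambda J$-type formulas. Using $J$-eigenvectors, every eigenvalue of $J-N$ on the complement of the all-ones vector has absolute value a positive power of $q$ (namely $\pm q^{(d-2)/2}$ in magnitude). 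Therefore $\det(J-N)$ is divisible by $q$ to a positive power. The degenerate-pairing conclusion in characteristic $p\mid q$ then follows.
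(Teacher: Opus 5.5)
Your reduction via Lemma~\ref{PD} to the blocks $A_k$, your rank-$2$ computation, and your ``only if'' direction are sound. From $NN^{T}=q^{d-2}I+\lambda J$ one gets $A_1A_1^{T}=q^{d-2}\bigl(I+(q-1)J\bigr)$, so $\det A_1=\pm q^{m}$ with $m\ge d-1\ge 1$. This uses only symmetric-design parameters, so it holds for every projective geometry. (Veblen--Young gives $\cL\cong\P^{d-1}(\F_q)$ only for $d\ge 4$, since non-Desarguesian planes occur at $d=3$. This is harmless: for $d=3$ the only nontrivial blocks are $A_1$ and $A_2=A_1^{T}$. But you should say so.)

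The genuine gap is the ``if'' direction for $d\ge 4$. There you need $\det A_k\not\equiv 0\pmod p$ for every $1<k<d-1$, and the proposal does not establish it.
\begin{itemize}
\item For $d=2k$, $A_k$ is literally the $q$-Kneser matrix on $k$-spaces, with eigenvalues $(-1)^jq^{\binom{j}{2}+k(k-j)}$, so that case is fine.
\item For $d\ne 2k$, transporting by $G\mapsto G^{\perp}$ turns $F\cap G=0$ into $F\cap H^{\perp}=0$ for $k$-spaces $F,H$. This relation is not $GL_d(\F_q)$-invariant and is not determined by $\dim(F\cap H)$: already for $F=H$ it holds exactly when $F$ is nondegenerate for the form. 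So it is not a class of the Grassmann scheme.
\item Even the actual disjointness matrix on $k$-spaces of $\F_q^{d}$ has eigenvalues $(-1)^jq^{\binom{j}{2}+k(k-j)}\binom{d-k-j}{k-j}_q$. The $q$-binomial factors can be divisible by primes not dividing $q$ (e.g.\ $\binom{3}{2}_2=7$), so the formula you cite would not give powers of $q$ even if it applied.
\item The Schubert-cell idea, stated modulo $q$, cannot detect invertibility modulo a prime $p\nmid q$.
\end{itemize}
Computing the spectrum of $A_kA_k^{T}$, which does lie in the Bose--Mesner algebra, might repair this, but that is exactly the computation that is missing.

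The missing idea is the paper's way of avoiding any block-by-block computation. Work in the ungraded M\"obius algebra $\D(\cL;\kk)$, with $u_Fu_G=u_{F\vee G}$ and trace the coefficient of $u_{\hat 1}$. There $\langle u_F,u_G\rangle_{\D}=0$ whenever $\rk F+\rk G<\rk\cL$. Ordering rows by rank and columns by corank, the Gram matrix is block triangular with diagonal blocks exactly your $A_k$, so its determinant is $\pm\prod_k\det A_k$. On the other hand, the orthogonal idempotents $v_F=\sum_{G\ge F}\mu(F,G)u_G$ are unitriangular in the $u_G$. They satisfy $\langle v_F,v_G\rangle_{\D}=\delta_{FG}\,\mu(F,\hat 1)$, with $\mu(F,\hat 1)=(-1)^{\crk F}q^{\binom{\crk F}{2}}$. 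Hence
\[
\prod_k\det A_k=\pm\prod_{F\in\cL}q^{\binom{\crk F}{2}},
\]
which is $\pm$ a power of $q$ with exponent at least $\binom{d}{2}\ge 1$. So each $\det A_k$ divides a power of $q$, and the product vanishes modulo $p$ exactly when $p\mid q$. This gives both directions at once, for all $k$ and all projective geometries (non-Desarguesian planes included), with no association-scheme input.
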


\begin{proof}
Let $\D(\cL;\kk)$ denote the {\bf ungraded M\"obius algebra} of $\cL$ with coefficients in $\kk$.
This algebra has a $\kk$-basis $\{u_F\mid F\in\cL\}$ and multiplication given by  $u_F u_G = u_{F\vee G}$ for all $F$ and $G$.
It is a filtered algebra whose associated graded is canonically isomorphic to $\H(\cL;\kk)$.

Consider the trace function $\tau_{\D}\colon{\D}(\cL;\kk)\to \kk$ given by taking the coefficient of $u_{\hat 1}$, along with the 
symmetric bilinear form on ${\D}(\cL;\kk)$ given by the formula $\langle \xi,\eta\rangle_{\D} \coloneq \tau_{\D}(\eta\, \xi)$.
If we choose a total order on $\cL$ that refines the partial order, then the matrix representing $\langle -\, ,-\rangle_{\D}$
will be block lower-right triangular, with diagonal blocks given by the restricted pairing between
$\{u_F\mid \rk F = i\}$ and $\{u_F\mid \crk F = i\}$, where $\crk F \coloneq \rk \cL - \rk F$.
Setting the off-diagonal blocks to zero gives the matrix that represents the Poincar\'e pairing 
on $\H(\cL;\kk)$, thus each of the two bilinear forms is nondegenerate if and only if the other is nondegenerate.

Consider an alternate basis $\{v_F\mid F\in\cL\}$ for $\D(\cL;\kk)$,
where $$v_F \coloneq \sum_{F\leq G} \mu(F,G) u_G.$$
This basis has the property that $v_F v_G = \delta_{FG} v_F$ for all $F$ and $G$, and therefore
$$\langle v_F,v_G\rangle_{\D} = \delta_{FG}\, \mu(F,\hat 1).$$
It follows that the determinant of the bilinear form $\langle -\, ,-\rangle_{\D}$ is 
equal to $$\prod_{F\in\cL}\mu(F,\hat 1) = \prod_{F\in \cL}\left((-1)^{\crk F} q^{\binom{\crk F}{2}}\right).$$
This determinant is equal to zero in $\kk$ if and only
if $q$ is a multiple of $p$.  The proposition therefore follows from the conclusion of the previous paragraph
along with Lemma~\ref{PD}.
\end{proof}

\begin{lemma}\label{prod res}
If $\cL = \cL_1\times\cL_2$, then $\cL$ has a $p$-resonant element of corank greater than $1$
if and only if either $\cL_1$ or $\cL_2$ has a $p$-resonant element of corank greater than $1$.
\end{lemma}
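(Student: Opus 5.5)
We argue directly from the definition of $p$-resonance, by describing rank-two intervals in a product lattice. Write an element of $\cL=\cL_1\times\cL_2$ as $F=(F_1,F_2)$. Then $\rk F=\rk F_1+\rk F_2$, and $\crk F=\crk F_1+\crk F_2$, with coranks computed in the respective factors. Each element $G>F$ with $\rk G=\rk F+2$ falls into one of three types:
\begin{enumerate}[(i)]
\item $G=(G_1,F_2)$ with $\rk G_1=\rk F_1+2$; then the open interval $(F,G)$ is isomorphic to the open interval $(F_1,G_1)$ in $\cL_1$.
\item $G=(F_1,G_2)$ with $\rk G_2=\rk F_2+2$; symmetrically, $(F,G)\cong(F_2,G_2)$.
\item $G=(G_1,G_2)$ with $G_1$ covering $F_1$ and $G_2$ covering $F_2$; then $(F,G)$ consists of exactly the two elements $(G_1,F_2)$ and $(F_1,G_2)$.
\end{enumerate}

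The key observation concerns type (iii). An interval with exactly $2$ interior elements has cardinality congruent to $1$ modulo $p$ only when $p$ divides $1$, which never happens for $p$ prime or zero. So if $F$ is $p$-resonant, no type-(iii) element can exist above $F$. Type (iii) elements exist exactly when $F_1\neq\hat 1$ and $F_2\neq\hat 1$. Hence every $p$-resonant $F$ has $F_1=\hat 1$ or $F_2=\hat 1$.

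Suppose $F_2=\hat 1$, so $F=(F_1,\hat 1)$. Only type (i) occurs, so $F$ is $p$-resonant in $\cL$ if and only if $F_1$ is $p$-resonant in $\cL_1$. Here $F\neq\hat 1$ corresponds to $F_1\neq\hat 1$. Moreover $\crk F=\crk F_1$. The case $F_1=\hat 1$ is symmetric.

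Combining these, the $p$-resonant elements of $\cL$ are exactly the elements $(F_1,\hat 1)$ with $F_1$ $p$-resonant in $\cL_1$, together with the elements $(\hat 1,F_2)$ with $F_2$ $p$-resonant in $\cL_2$, and coranks are preserved under this correspondence. The lemma follows immediately in both directions. The only delicate point is the type (iii) analysis, in particular ruling out $p=1$-style degeneracies, and this is settled because $2\not\equiv 1$ modulo any prime and in characteristic zero.
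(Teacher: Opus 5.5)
Your proof is correct and is essentially the paper's argument. The paper likewise notes that $(F_1,\hat 1)$ is $p$-resonant in $\cL$ exactly when $F_1$ is $p$-resonant in $\cL_1$ (and symmetrically), and it rules out elements with both coordinates non-maximal by choosing covers $G_i$ of $F_i$, so that the open interval from $(F_1,F_2)$ to $(G_1,G_2)$ has exactly two elements; your explicit three-type classification of rank-two intervals in the product just fills in details the paper leaves implicit.
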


\begin{proof}
We begin by noting that $F_1$ is $p$-resonant in $\cL_1$ if and only if $(F_1,\hat 1)$ is $p$-resonant in $\cL$,
and similarly $F_2$ is $p$-resonant in $\cL_2$ if and only if $(\hat 1,F_2)$ is $p$-resonant in $\cL$.
This proves the ``if'' direction.  For the other direction, it is sufficient to show that every $p$-resonant element
of $\cL$ has one of these two forms.  Indeed, suppose that $F_1\in\cL_1$ and $F_2\in\cL_2$ with neither one maximal.
Choose $G_i\in\cL_i$ covering $F_i$.  Then $\rk(G_1,G_2)= \rk(F_1,F_2)+2$, and the open interval between $(F_1,F_2)$
and $(G_1,G_2)$ has cardinality $2$, which is not congruent to $1$ modulo $p$.
\end{proof}

\begin{lemma}\label{mod res}
If $\cL$ is Boolean, then an element of $\cL$ is $p$-resonant if and only if it has corank $1$.
If $\cL$ is a projective space of order $q$, then an element of $\cL$ of corank greater than $1$ is $p$-resonant
if and only if $q$ is a multiple of $p$.
\end{lemma}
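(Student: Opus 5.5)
Both claims reduce to counting elements in open intervals of length two. If $F<G$ with $\rk G=\rk F+2$, then the open interval $(F,G)$ is the set of atoms of the rank-$2$ geometric lattice $[F,G]$. I will compute $|(F,G)|$ in each case and compare it with $1$ modulo $p$.

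For the Boolean case, take $\cL$ to be the lattice of subsets of a finite set. If $F\subset G$ with $|G\setminus F|=2$, then $(F,G)$ consists of exactly the two sets $F\cup\{a\}$ and $F\cup\{b\}$, where $G\setminus F=\{a,b\}$. Since $2\not\equiv 1$ modulo $p$ for every prime $p$, and also for $p=0$, an element $F$ is $p$-resonant only when no such $G$ exists. A non-maximal element of a Boolean lattice has no such $G$ exactly when it has corank $1$. Conversely, corank $1$ elements are vacuously $p$-resonant, since they are not $\hat 1$. This is the same computation already used in the proof of Lemma~\ref{prod res}; alternatively, it follows from that lemma together with the rank-one case, writing the Boolean lattice as a product of copies of the two-element lattice.

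For the projective space case, let $\cL$ be a projective geometry of order $q$, so that every rank $2$ flat has $q+1$ points. I will use the standard fact that every interval $[F,G]$ in a projective geometry is again a projective geometry of the same order, or a Boolean lattice of rank at most $1$. Concretely, the contraction of a modular matroid is modular. For a rank-$2$ interval $[F,G]$, the elements of $(F,G)$ correspond to the rank-$(\rk F+1)$ flats between $F$ and $G$. Choose a point $x\in G\setminus F$ and a complement line $\ell\subseteq G$ with $\ell\wedge F=\hat 0$. Modularity then gives a bijection $H\mapsto H\wedge \ell$ from $(F,G)$ to the points of $\ell$, so $|(F,G)|=q+1$.

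This bijection is the one step needing care. For the complement line, take $\ell$ spanned by two points of $G$ that are independent over $F$. Modularity gives $\rk(H\wedge\ell)=\rk H+\rk\ell-\rk G=1$, and the inverse map is $x\mapsto F\vee x$.

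If $F$ has corank greater than $1$, then such a $G$ exists, and every such $G$ gives $|(F,G)|=q+1$. Hence $F$ is $p$-resonant if and only if $q+1\equiv 1$ modulo $p$, that is, if and only if $p$ divides $q$. For $p=0$ this means $q=0$, which never happens, consistent with the claim. I expect the main obstacle to be the modularity argument identifying $|(F,G)|$ with $q+1$; the rest is direct.
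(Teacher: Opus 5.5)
Your proof is correct and follows the paper's argument: both reduce to computing $|(F,G)|$ for $\rk G=\rk F+2$, which is $2$ in the Boolean case and $q+1$ in the projective case. The only addition is that you justify $|(F,G)|=q+1$ via the modular bijection $H\mapsto H\wedge\ell$, which the paper simply asserts; that bijection argument is valid, and the unused point $x$ and the unproved ``standard fact'' about intervals can be dropped.
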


\begin{proof}
Suppose $F<G\in\cL$ with $\rk G = \rk F +2$.
If $\cL$ is Boolean, then the open interval $(F,G)$ has cardinality $2$, which is never congruent to $1$ modulo $p$.
If $\cL$ is a projective space of order $q$, then $(F,G)$ has cardinality $q+1$, which is congruent to $1$
modulo $p$ if and only if $p$ divides $q$.
\end{proof}

\begin{proof}[Proof of Theorem~\ref{E=H}]
If $\H(\cL;\kk)$ is pure, then $\H(\cL;\kk)$ satisfies the three conditions of Theorem \ref{def of E}, thus $\E(\cL;\kk)\cong\H(\cL;\kk)$.
Conversely, if $\E(\cL;\kk)\cong\H(\cL;\kk)$, then $\H(\cL;\kk)$ is pure by definition of $\E(\cL;\kk)$.  Thus condition (a)
is equivalent to condition (b).

If $\E(\cL;\kk)\cong\H(\cL;\kk)$, then $\uE(\cL;\kk)\cong \kk$, thus ${}_pP_\cL(t) = 1$.
Conversely, if ${}_pP_\cL(t) = 1$, then $\E(\cL;\kk)\cong\H(\cL;\kk)$ by Corollary \ref{one implication}.
Thus condition (c) is equivalent to condition (b).

We next prove the equivalence of conditions (d) and (e).  
Birkhoff's theorem states that a geometric lattice $\cL$ is modular if and only if $\cL\cong \cL_0\times\cL_1\times\cdots\times\cL_r$,
where $\cL_0$ is Boolean and $\cL_i$ is a projective space of order $q_i$ for all $i \geq 1$, see \cite[Chapter 12]{Welsh}.
By Lemmas \ref{prod res} and \ref{mod res}, 
such a lattice has a $p$-resonant element of corank greater than $1$ if and only if $q_i$ is a multiple
of $p$ for some $i$.
This establishes the equivalence between (d) and (e).

It remains to prove the equivalence between conditions (b) and (e).
First, we assume that we have an isomorphism $\cL\cong \cL_0\times\cL_1\times\cdots\times\cL_r$,
where $\cL_0$ is Boolean and $\cL_i$ is a projective space of order $q_i$ for some $q_i$ that
is not a multiple of $p$  for each $1 \leq i\leq r$.  By Theorem~\ref{props of E}(c) and Proposition \ref{determinant}, 
we have
$$\H(\cL;\kk)\cong \H(\cL_0;\kk)\otimes\cdots\otimes\H(\cL_r;\kk)
\cong \E(\cL_0;\kk)\otimes\cdots\otimes\E(\cL_r;\kk) \cong \E(\cL;\kk).$$
Thus condition (e) implies condition (b). 
Conversely, suppose that $\H(\cL;\kk)\cong\E(\cL;\kk)$.  This implies that ${}_pP_\cL(t) = 1$, and therefore
that ${}_0P_\cL(t) = 1$ by Theorem \ref{it goes up}.
By \cite[Proposition 2.14]{EPW}, this implies that $\cL$ is modular.
By Birkhoff's theorem, we have an isomorphism $\cL\cong \cL_0\times\cL_1\times\cdots\times\cL_r$,
where $\cL_0$ is Boolean and $\cL_i$ is a projective space of order $q_i$ for all $i \geq 1$.
By Theorem~\ref{props of E}(c), we have $\H(\cL_i;\kk)\cong \E(\cL_i;\kk)$ for all $i \ge 0$,
and by Proposition \ref{determinant},  $q_i$ is not a multiple of $p$ for all  $i \geq 1$.
This proves that condition (b) implies condition (e), and thus completes the proof of the theorem.
\end{proof}

\section{The augmented Chow ring}\label{Section:AugmentedChow}
In this section, we study the augmented Chow ring $\CH(\cL;\kk)$ as a module over $\H(\cL;\kk)$, and we use
it to prove Theorems \ref{E=IH} and \ref{E=CH} as well as Proposition \ref{resonance linear}.

Let $\cL^1$ be the set of atoms of $\cL$, and let  $S_\cL$ be the ring of polynomials in two sets of variables:
\[
S_\cL\coloneq \kk [y_i \mid \text{$i$ is an element of $\cL^1$}] \;\otimes_\kk \; \kk [x_F \mid \text{$F$ is an element of $\cL$}].
\]
The {\bf augmented Chow ring} of $\cL$ with coefficients in $\kk$ is the quotient algebra
\[
\mathrm{CH}(\cL;\kk) \coloneq S_\cL/ (I_\cL+J_\cL),
\]
where $I_\cL$ is the ideal generated by the linear forms
 \[
\sum_{F \in \cL} x_F \ \ \text{and} \ \ y_i - \sum_{i \nleq F} x_F,  \ \  \text{for every element $i$ of $\cL^1$},
\]
and $J_\cL$ is the ideal generated by  the quadratic monomials
\begin{align*}
x_{F_1}x_{F_2}, \ \  &\text{for every pair of incomparable elements $F_1$ and $F_2$ of $\cL$, and}\\
y_i \hspace{0.5mm} x_F,  \ \  &\text{for every element $i$ of $\cL^1$ and every element $F$ of $\cL$ satisfying $i \nleq F$.}
\end{align*}
By \cite[Proposition 2.18]{BHMPW}, the $\kk$-subalgebra of the augmented Chow ring generated by the elements $y_i$ is isomorphic to the graded M\"obius algebra $\mathrm{H}(\cL;\kk)$.
The {\bf Chow ring} of $\cL$ with coefficients in $\kk$ is defined to be  the stalk 
\[
\uCH(\cL;\kk) \coloneq \CH(\cL;\kk)_{\varnothing} = \CH(\cL;\kk)/\langle y_i\mid i \in \cL^1\rangle. 
\]
The Chow ring of a matroid is precisely the Chow ring of its associated Bergman fan, while the augmented Chow ring is the Chow ring of the augmented Bergman fan. For a comprehensive treatment of these constructions and their properties, we refer the reader to \cite{BHMPW}.

 We will be particularly interested in the class
$\b_\cL \coloneq -x_{\varnothing}$ in $\uCH(\cL;\kk)$.

\begin{lemma}\label{bases}
\
\begin{enumerate}[(a)]
\item The space $\CH^1(\cL;\kk)$ has $\kk$-basis $\{x_F\mid \rk F \geq 1\}$.
\item The space $\uCH^1(\cL;\kk)$ has $\kk$-basis $\{x_F\mid \rk F \geq 2\}$.
\end{enumerate}
\end{lemma}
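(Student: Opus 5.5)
We argue directly from the presentation $S_\cL/(I_\cL+J_\cL)$, using only its degree-one part. In degree one, $J_\cL$ contributes nothing, since it is generated by quadrics. Hence $\CH^1(\cL;\kk)$ is the span of the variables $y_i$ ($i\in\cL^1$) and $x_F$ ($F\in\cL$), modulo the linear relations generating $I_\cL$.

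For (a), we use the relation $y_i=\sum_{i\nleq F}x_F$ to eliminate every $y_i$. What remains is the vector space with basis $\{x_F\mid F\in\cL\}$ modulo the single relation $\sum_F x_F=0$. This relation involves $x_{\varnothing}$ with coefficient $1$, so it can be used to eliminate $x_{\varnothing}=-\sum_{\rk F\ge1}x_F$. Eliminating variables one at a time against relations in which they appear with unit coefficient is a change of coordinates. Therefore the quotient is free on the remaining variables $\{x_F\mid \rk F\geq 1\}$, which is (a).

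For (b), the Chow ring is $\CH(\cL;\kk)/\langle y_i\mid i\in\cL^1\rangle$. In degree one, the ideal generated by the $y_i$ is just the span of the $y_i$, since the ideal generated by positive-degree elements contributes nothing new in degree one. So $\uCH^1(\cL;\kk)$ is the quotient of the free space on $\{x_F\mid\rk F\ge1\}$ by the images of the $y_i$. In the basis from (a), the image of $y_i$ is
\[
y_i=\sum_{i\nleq F}x_F=x_{\varnothing}+\sum_{\substack{\rk F\ge1\\ i\nleq F}}x_F=-\sum_{\substack{\rk F\ge 1\\ i\le F}}x_F .
\]
Among flats of rank at least one, the only one containing $i$ with rank exactly one is $F=i$ itself. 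Thus $-y_i=x_i+(\text{terms } x_F \text{ with } \rk F\ge2)$. These relations are unitriangular with respect to the rank-one variables: the relation coming from $y_i$ eliminates $x_i$ and involves no other rank-one variable. Imposing all of them therefore leaves the images of $\{x_F\mid\rk F\ge 2\}$ as a basis, which is (b).

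There is no real obstacle here. The only point needing care is confirming that the quadratic ideal $J_\cL$, and the ideal generated by the $y_i$, contribute nothing beyond linear spans in degree one. This is immediate from gradedness: an ideal generated by quadrics is zero in degree one, and an ideal generated by degree-one elements is, in degree one, exactly their span.
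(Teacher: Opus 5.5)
Your proof is correct and follows the same route as the paper's. For (a), you use the linear relations to eliminate the $y_i$ and then $x_{\hat 0}$. For (b), setting $y_i=0$ gives $x_i=-\sum_{i<F}x_F$. Your remarks that $J_\cL$ and $\langle y_i\rangle$ contribute only linear spans in degree one make explicit what the paper's two-line proof leaves implicit.
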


\begin{proof}
The  relation $\sum_{F \in \cL} x_F$ allows us to express $x_{\varnothing}$ in terms of $x_F$ for $F$ of positive rank in $\CH(\cL;\kk)$.
In the Chow ring, where $y_i=0$ for all $i$, we have
$x_i = -\sum_{i<F} x_F$ for any $i \in \cL^1$.
\end{proof}

For any $G\in \cL\setminus \{\hat{0}\}$, we let $m_G$ be the number of atoms of $\cL^G$.

\begin{lemma}\label{beta}
In the Chow ring $\uCH(\cL;\kk)$, we have the identity
\[
\b_\cL = \sum_{\rk G \geq 2} (1-m_G)x_G.
\]
\end{lemma}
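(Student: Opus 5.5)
The plan is a direct computation in $\uCH^1(\cL;\kk)$ using only the linear relations in $I_\cL$ together with $y_i=0$. First I will use the relation $\sum_{F\in\cL}x_F=0$ to write
\[
\b_\cL=-x_{\varnothing}=\sum_{\rk F\geq 1}x_F=\sum_{i\in\cL^1}x_i+\sum_{\rk G\geq 2}x_G .
\]
It then remains to eliminate the atom classes $x_i$. This is the same mechanism as in the proof of Lemma~\ref{bases}(b).

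For each atom $i$, the relation $y_i-\sum_{i\nleq F}x_F$ becomes $\sum_{i\nleq F}x_F=0$ in $\uCH(\cL;\kk)$, since $y_i=0$ there. Subtracting this from $\sum_{F}x_F=0$ gives $\sum_{F\geq i}x_F=0$. The only element of rank at most $1$ lying above $i$ is $i$ itself, so this reads
\[
x_i=-\sum_{G>i}x_G ,
\]
where every $G$ in the sum has rank at least $2$.

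Next I sum this identity over all atoms $i\in\cL^1$ and exchange the order of summation. A given $G$ of rank at least $2$ appears once for each atom $i\leq G$, that is, $m_G$ times, so
\[
\sum_{i\in\cL^1}x_i=-\sum_{\rk G\geq 2}m_G\,x_G .
\]
Substituting this into the first display gives $\b_\cL=\sum_{\rk G\geq 2}(1-m_G)x_G$, as claimed.

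I expect no genuine obstacle. The only care needed is the bookkeeping in the double sum, namely that each $G$ is counted exactly $m_G$ times, and confirming that no terms of rank $0$ or $1$ survive. Neither $\varnothing$ nor another atom lies strictly above an atom, so none does.
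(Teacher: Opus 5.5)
Your proof is correct and follows essentially the same route as the paper. You derive $\sum_{G\geq i}x_G=0$ for each atom $i$ from the linear relations with $y_i=0$, sum over atoms to obtain the $m_G$-weighted sum, and compare with $-x_{\hat 0}=\sum_{\rk F\geq 1}x_F$. The only cosmetic difference is that the paper keeps the rank-one terms inside the sum, using $m_G=1$ for atoms, rather than isolating $\sum_i x_i$ as you do.
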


\begin{proof}
For any element $F$ of rank $1$, we have
$$\sum_{F\leq G} x_G = \sum_G x_G - \sum_{F\not\leq G}x_G = 0 - 0 = 0.$$
It follows that
$$\sum_{\rk G \geq 1} m_G x_G = \sum_{\substack{F\leq G\\ \rk F = 1}} x_G = \sum_{\rk F = 1}\sum_{F\leq G} x_G = 0.$$
We therefore have 
$$\sum_{\rk G \geq 2} (1-m_G)x_G = \sum_{\rk G \geq 1} (1-m_G)x_G = \sum_{\rk G \geq 1} x_G - \sum_{\rk G \geq 1} m_Gx_G = \sum_{\rk G \geq 1} x_G.$$
Since $\sum_G x_G=0$, this is equal to $-x_{\varnothing} = \b_\cL$.
\end{proof}

\begin{lemma}\label{two is enough}
If $m_G$ is congruent to $1$ modulo $p$ for every element $G\in\cL$ of rank $2$,
then $m_G$ is congruent to $1$ modulo $p$ for every element $G\in\cL$ of rank at least $2$.
\end{lemma}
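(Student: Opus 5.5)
We argue by induction on $\rk G$; the base case $\rk G = 2$ is the hypothesis. Suppose $\rk G = r \geq 3$ and that $m_{G'} \equiv 1 \pmod p$ for every $G'$ of rank $2 \le \rk G' < r$. Since $\cL^G$ is itself a geometric lattice, we may replace $\cL$ by $\cL^G$ and assume $G = \hat 1$, so that $m_{\hat 1}$ is the number of atoms $n$ of $\cL$.

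The plan is a double count over the rank-$2$ flats containing a fixed atom. Fix an atom $a$. Every other atom $b \neq a$ lies in exactly one rank $2$ flat containing $a$, namely $a \vee b$. Moreover, for a rank $2$ flat $H \geq a$, the atoms of $H$ other than $a$ number $m_H - 1$. Hence
\[
n - 1 = \sum_{\substack{\rk H = 2\\ H \geq a}} (m_H - 1).
\]
By hypothesis each summand is divisible by $p$, so $n \equiv 1 \pmod p$.

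Notice that this argument does not use induction or the rank of $G$ at all. It only uses the facts that $\cL^G$ has at least one atom (true since $\rk G \ge 2$) and that any two distinct atoms span a unique rank $2$ flat, which holds in any geometric lattice. So the proof can be given directly for every $G$ of rank at least $2$: apply the count inside $\cL^G$, choosing an atom $a \le G$ and summing over the rank $2$ flats $H$ with $a \le H \le G$.

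The only point needing care is the partition claim. For $b \ne a$, the join $a \vee b$ has rank exactly $2$ by semimodularity, and it lies below $G$ whenever $a, b \le G$. Conversely, the atoms of such an $H$ are exactly the atoms $b \le H$, so the sets of atoms of these $H$, with $a$ removed, partition the atoms of $\cL^G$ other than $a$. This step is routine, so I expect no real obstacle.
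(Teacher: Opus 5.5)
Your proof is correct and is essentially the paper's argument: the paper fixes an atom $F \le G$, sends each other atom $F' \le G$ to $F \vee F'$, and gets $m_G - 1 = \sum_H (m_H - 1)$, where $H$ runs over the rank $2$ elements with $F \le H \le G$, which is exactly your double count. As you observed, the induction on rank is unnecessary, and the paper does without it.
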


\begin{proof}
Let $G$ be given of rank at least $2$, and fix a rank $1$ element $F\leq G$.
There is a surjective map \[\nu\colon(\cL^G)^1 \smallsetminus \{F\}\longrightarrow (\cL^G_F)^1 = \cL^F_G \cap \cL^2\]
that takes $F'$ to $F\vee F'$, and the cardinality of $\nu^{-1}(H)$ is equal to $m_H-1$.
We therefore have
\[m_G-1 = \left|(\cL^G)^1 \smallsetminus \{F\}\right|
= \sum_{H \in (\cL^G_F)^1
} \left(m_H-1\right).\]
Thus, if $m_H$ is congruent to $1$ for all $H$ of rank $2$, then $m_G$ is congruent to $1$, as well.
\end{proof}

Recall that an element $F\in\cL$ is $p$-resonant if and only if $F\neq\hat 1$
and, for all $G>F$ with $\rk(G) = \rk(F) + 2$, the cardinality of the open interval $(F,G)$
is congruent to $1$ modulo $p$.

\begin{corollary}\label{beta resonance}
We have $\b_\cL=0\in\uCH^1(\cL;\kk)$ if and only if  $\varnothing\in\cL$ is $p$-resonant or $\rk\cL=0$.
\end{corollary}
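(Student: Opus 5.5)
The plan is to combine Lemma \ref{beta}, Lemma \ref{bases}(b), and Lemma \ref{two is enough}. The case $\rk\cL=0$ is trivial, since then $\uCH^1(\cL;\kk)=0$. If $\rk\cL=1$, there are no elements of rank at least $2$, so the sum in Lemma \ref{beta} is empty and $\b_\cL=0$. Moreover $\hat 0$ then has corank $1$ and is vacuously $p$-resonant, so both conditions hold. From now on assume $\rk\cL\ge 2$.

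By Lemma \ref{beta}, $\b_\cL=\sum_{\rk G\ge 2}(1-m_G)x_G$. By Lemma \ref{bases}(b), the elements $x_G$ with $\rk G\ge 2$ form a $\kk$-basis of $\uCH^1(\cL;\kk)$. Hence $\b_\cL=0$ if and only if $1-m_G=0$ in $\kk$ for every $G$ of rank at least $2$, that is, if and only if $m_G\equiv 1 \pmod p$ for all such $G$. (When $p=0$, this congruence means equality.)

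It remains to match this with $p$-resonance of $\hat 0$. For $G$ of rank $2$, the open interval $(\hat 0,G)$ consists exactly of the atoms below $G$, so it has cardinality $m_G$. Thus $\hat 0$ is $p$-resonant, which requires $\hat 0\neq\hat 1$ (automatic since $\rk\cL\ge2$), if and only if $m_G\equiv 1 \pmod p$ for every rank $2$ element $G$. One direction is immediate: the condition for all $G$ of rank at least $2$ restricts to rank $2$. The converse is exactly Lemma \ref{two is enough}.

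For $p=0$ the argument still works. The identity $m_G-1=\sum_H(m_H-1)$ in the proof of Lemma \ref{two is enough} shows that if $m_H=1$ for every rank $2$ element $H$, then $m_G=1$ for every $G$ of rank at least $2$. In a geometric lattice, however, $m_H\ge 2$ whenever $\rk H=2$, so for $p=0$ both conditions simply fail when $\rk\cL\ge2$, consistent with the statement.

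The main point to watch is the basis claim: the coefficients of $\b_\cL$ must be read off in the basis of Lemma \ref{bases}(b) with no further relations among those $x_G$. Beyond that, the argument is bookkeeping.
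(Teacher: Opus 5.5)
Your proof is correct and follows the paper's argument. The condition $\hat 0\neq\hat 1$ holds exactly when $\rk\cL>0$, the remaining condition for $p$-resonance of $\hat 0$ is matched with $\b_\cL=0$ by reading off coefficients via Lemmas \ref{bases}(b) and \ref{beta}, and Lemma \ref{two is enough} upgrades the congruence from rank $2$ to all ranks. Your separate treatment of $\rk\cL\le 1$ and of $p=0$ just makes explicit what the paper leaves implicit.
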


\begin{proof}
We have $\hz \neq \ho$ if and only if $\rk\cL > 0$, and the second condition for $p$-resonance of $\hz$ is equivalent to the statement that $\b_\cL=0$
by Lemmas \ref{bases}, \ref{beta}, and \ref{two is enough}.
\end{proof}

The following statements are proved in \cite{BHMPW} and \cite{SHCG} when $\kk=\Q$, and the proofs go through
for arbitrary $\kk$ without modification.

\begin{proposition}\label{true facts}
Let $\CH(\cL;\kk)$ be the augmented Chow ring of $\cL$ with coefficients in a field $\kk$.
\begin{enumerate}[(a)]
\item The $\H(\cL;\kk)$-module homomorphism from $\H(\cL;\kk)$ to $\CH(\cL;\kk)$ sending $1$ to $1$
is injective \cite[Proposition 2.18]{BHMPW}.
\item The Poincar\'e pairing on $\CH(\cL;\kk)$ is nondegenerate \cite[Theorem 1.6(4)]{BHMPW}.
\item 
For all $F\in\cL$, there is a canonical isomorphism 
$$y_F\CH(\cL;\kk)\cong \CH(\cL_F;\kk)[-\rk F]$$
of  $\H(\cL_F;\kk)$-modules \cite[Propositions 2.15 and 2.17]{SHCG}.
\item 
If $\rk\cL>0$, the ideals $\langle x_{\hat 0}\rangle $ and $\langle y_i\mid i \in \cL^1\rangle$ are mutual annihilators
\cite[Lemma 5.3]{SHCG}.
\item 
The $\H(\cL;\kk)$-module $\CH(\cL;\kk)$ is pure \cite[Proposition 5.14]{SHCG}.
\end{enumerate}
\end{proposition}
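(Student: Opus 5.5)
The plan is to audit the cited proofs rather than reprove everything, and to reduce to integral statements. Define $\CH(\cL;\Z)$ by the same presentation over $\Z$. Then $\CH(\cL;\kk)=\CH(\cL;\Z)\otimes_\Z\kk$, and likewise $\H(\cL;\kk)=\H(\cL;\Z)\otimes\kk$. For each of (a)--(e) I would isolate an integral version that is stable under $-\otimes_\Z\kk$ and check that the arguments in \cite{BHMPW} and \cite{SHCG} never divide by an integer. They only use presentations, monomial bases, pullback and pushforward maps, and inductive decompositions.

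\textbf{Step 1: basis, (a) and (c).} Using the Gr\"obner (nested-flag) monomial basis of \cite{BHMPW}, I first show that $\CH(\cL;\Z)$ is a free abelian group, with graded ranks independent of $\kk$. This is the key input, because it gives compatibility with base change. For (a), the elements $y_F$ map to explicit monomials $\prod y_i$ over a basis of $F$. These are independent integral combinations of basis monomials with unit leading coefficients, so injectivity survives reduction mod $p$. For (c), the isomorphism $y_F\CH(\cL)\cong\CH(\cL_F)[-\rk F]$ of \cite[Propositions 2.15, 2.17]{SHCG} is given by explicit maps sending generators to generators. These are defined over $\Z$ and are mutually inverse on integral bases, so they base-change verbatim.

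\textbf{Step 2: Poincar\'e duality, (b).} In \cite{BHMPW}, (b) is proved inductively via the semismall decomposition
$\CH(\cL)=\CH(\cL\setminus i)\oplus\bigoplus_F x_{F\cup i}\CH(\ldots)$, together with the degree map. This is the step I expect to be the main obstacle. Hard Lefschetz and Hodge--Riemann genuinely need $\R$, so I must check that the decomposition and the orthogonality of its summands are proved by direct algebra over $\Z$ and do not pass through the Hodge--Riemann relations. Once that is done, induction shows that the integral pairing is unimodular, meaning its Gram matrix on an integral basis has determinant $\pm1$. Unimodularity survives tensoring with any field, which gives (b). If some orthogonality step in the cited proof does appeal to Hodge--Riemann, my fallback is to verify unimodularity directly. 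I would use the monomial basis and an ordering in which the Gram matrix becomes unitriangular.

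\textbf{Step 3: (d) and (e).} Part (d) in \cite[Lemma 5.3]{SHCG} follows from (b) and (c). One inclusion is the relation $x_{\hat0}y_i=0$. For the reverse, compare dimensions: by Poincar\'e duality the annihilator of an ideal $I$ has dimension equal to the codimension of $I$, and the dimensions of both ideals are computed from the free integral bases, so they are the same over every field. Part (e), in \cite[Proposition 5.14]{SHCG}, is deduced from (b)--(d) together with Poincar\'e duality for the pieces $\CH(\cL_F)$, by induction on upper sets. The intersection and sum identities there are dimension counts plus the explicit decompositions, so they hold over any $\kk$. I would reread that induction to confirm that it uses no hard Lefschetz input, which I expect is the case.
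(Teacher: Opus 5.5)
Your overall strategy, checking that the cited arguments never use the characteristic, is the paper's own. The paper's proof is just the assertion that the arguments of \cite{BHMPW} and \cite{SHCG} go through verbatim over any field. The gap is in the two places where you replace those arguments by a base-change shortcut, and this also puts your steps in the wrong order.

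In (c), the content of the statement is that $\ker(y_F\cdot)$ equals the kernel of the surjective pullback $\varphi^F\colon\CH(\cL;\kk)\to\CH(\cL_F;\kk)$. Equivalently, the pushforward $\psi^F\colon\CH(\cL_F;\kk)[-\rk F]\to\CH(\cL;\kk)$, $\varphi^F(m)\mapsto y_Fm$, is injective. There is no explicit inverse ``on integral bases'' that you can base-change. Knowing that $\psi^F$ is injective over $\Z$ or $\Q$ says nothing mod $p$: multiplication by $p$ on $\Z$ is injective. Only split injections survive $\otimes\,\F_p$. What actually proves injectivity is that $\psi^F$ is adjoint to $\varphi^F$ for the Poincar\'e pairings. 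So injectivity over $\kk$ follows from (b) for $\cL_F$ over $\kk$, and (c) cannot be done before Step 2.

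The same flaw breaks your dimension count in (d). The count $\dim\langle y_i\rangle=\dim\CH-\dim\uCH$ is fine. But $\langle x_\hz\rangle$ is the image of multiplication by $x_\hz$, and the rank of that map can drop mod $p$. The relation $x_\hz y_i=0$ only gives $\dim\langle x_\hz\rangle\le\dim\uCH(\cL;\kk)$. Both annihilator statements need the reverse inequality. That inequality is the injectivity of $\uCH(\cL;\kk)[-1]\to\CH(\cL;\kk)$, $\bar m\mapsto x_\hz m$. It again follows from Poincar\'e duality for $\uCH(\cL;\kk)$, by adjunction with the surjection $\CH\to\uCH$.

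So the order must be as follows.
\begin{enumerate}
\item Prove (b) for both $\CH$ and $\uCH$ over $\kk$ itself, via the joint semi-small induction of \cite{BHMPW}.
\item Deduce (c) and (d) by the two adjunction arguments above.
\item Prove (e), doing every dimension count over $\kk$.
\end{enumerate}
Once you argue over $\kk$ directly, the passage through $\Z$ is superfluous. Unimodularity over $\Z$ is equivalent to nondegeneracy over every $\F_p$, so it is no shortcut. It is, however, exactly the split injectivity you would need if you insisted on base change.

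Finally, your (a) rests on an unchecked claim about leading coefficients. Here is a clean characteristic-free argument.
\begin{itemize}
\item If $G\neq F$ both have rank $k$, a basis of $G$ contains an atom $i\not\le F$, so $y_Gx_F=0$.
\item Take a maximal chain of proper flats $F=F_k<F_{k+1}<\cdots<F_{\rk\cL-1}$ (empty if $F=\ho$). Then $y_F\,x_{F_k}\cdots x_{F_{\rk\cL-1}}$ is the monomial of a maximal cone of the augmented Bergman fan, so it has degree $1$.
\end{itemize}
Hence the pairing matrix between the $y_F$ and these monomials is the identity, which gives injectivity over any field.
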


The following proposition is proved in \cite[Proposition 5.9]{SHCG} without explicit reference to the commutative diagram;
we provide the full proof here for completeness.

\begin{proposition}\label{Chow stalks}
We have $\CH(\cL;\kk)_{\hat 1} \cong \kk \cong \CH(\cL;\kk)_{[\hat 1]}$,
and the costalk-stalk homomorphism $\beta_{\hat 1}(\CH(\cL;\kk))$ is an isomorphism.
For all $F\neq \hat 1$, we have isomorphisms
\[
  \CH(\cL;\kk)_{[F]}\cong \uCH(\cL_F;\kk)[-1] \and  \CH(\cL;\kk)_{F} \cong \uCH(\cL_F;\kk) 
\]
such that the following diagram commutes:
	\[\begin{tikzcd}
	\CH(\cL;\kk)_{[F]} \arrow[rrr, "\beta_{F}(\CH(\cL;\kk))"]\arrow[d, "\cong"]  &&& \CH(\cL;\kk)_{F} \arrow[d, "\cong"] \\
	\uCH(\cL_F;\kk)[-1] \arrow[rrr, "\cdot\b_{\cL_F}"]            &&& \uCH(\cL_F;\kk).               
	\end{tikzcd}\]
\end{proposition}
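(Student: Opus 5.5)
The plan is to reduce the case of general $F$ to the case $F=\varnothing$, using the contraction isomorphism of Proposition~\ref{true facts}(c), and then to compute the stalk and costalk at $\varnothing$ directly from Proposition~\ref{true facts}(d) and (b).

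\emph{Reduction to $F=\varnothing$.} By Equation~\eqref{eq:spanann} and Lemma~\ref{pure and indecomposable}(b), the stalk, costalk, and costalk-stalk map of $\M=\CH(\cL;\kk)$ at $F$ agree, up to the shift by $\rk F$, with those of the $\H(\cL_F;\kk)$-module $y_F\M$ at its minimal element $F$. Proposition~\ref{true facts}(c) identifies $y_F\M$ with $\CH(\cL_F;\kk)[-\rk F]$, compatibly with the action of $\H(\cL_F;\kk)$. The two shifts cancel, so it suffices to treat $F=\varnothing$ for an arbitrary geometric lattice $\cL$, and then to replace $\cL$ by $\cL_F$.

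\emph{The case $F=\varnothing$.} If $\rk\cL=0$, then $\CH=\kk$, both the stalk and the costalk equal $\kk$, and $\beta$ is the identity; this covers $F=\hat 1$ after the reduction. Now suppose $\rk\cL>0$.
\begin{itemize}
\item The stalk is $\M/\Span_{\cL\setminus\varnothing}\M$. The span $\Span_{\cL\setminus\varnothing}\M$ is the ideal generated by all $y_G$ with $G>\varnothing$; since $\H$ is generated by the $y_i$ for $i\in\cL^1$, this ideal is $\langle y_i\rangle$. Hence the stalk is $\uCH(\cL;\kk)$ by definition.
\item The costalk is $\Ann_{\cL\setminus\varnothing}(\M)$, which is the annihilator of $\langle y_i\rangle$. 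By Proposition~\ref{true facts}(d), this annihilator is the ideal $\langle x_\varnothing\rangle=x_\varnothing\CH$.
\end{itemize}
So it remains to identify $x_\varnothing\CH$ with $\uCH[-1]$, compatibly with $\beta$. Consider the surjection
\[
\CH[-1]\to x_\varnothing\CH,\qquad a\mapsto x_\varnothing a.
\]
Its kernel is the annihilator of $x_\varnothing$, which by Proposition~\ref{true facts}(d) again is $\langle y_i\rangle$. It therefore descends to an isomorphism $\uCH[-1]\cong x_\varnothing\CH$.

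Under this identification, $\beta_\varnothing$ is the composite of the inclusion $x_\varnothing\CH\subseteq\CH$ with the projection $\CH\to\uCH$. It sends $\bar a$ to the class of $x_\varnothing a$, which is $-\b_\cL\cdot\bar a$. Rescaling one of the vertical isomorphisms by $-1$ makes the diagram commute exactly as stated.

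\emph{The top element $\hat 1$.} This also follows directly without the reduction. Since $y_{\hat1}\CH\cong\CH(\cL_{\hat1})[-d]=\kk[-d]$, we have $\M_{\hat1}\cong\kk$. The costalk at $\hat 1$ is $\Ann_{\Sigma_{>\varnothing}}(y_{\hat1}\CH)$, which is all of $y_{\hat1}\CH$, since anything of positive degree times $y_{\hat1}$ vanishes. So the costalk is also $\kk$, and $\beta_{\hat 1}$ is the identity because $\Span_{>\hat1}=0$.

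The main obstacle I expect is the compatibility in the reduction step. One has to check that the isomorphism of Proposition~\ref{true facts}(c) really matches the ideal $\Span_{\Sigma_{>F}}$ with the ideal generated by the atoms of $\cL_F$, and matches $\Ann$ correctly. This uses Equation~\eqref{eq:spanann}, which requires $\Sigma\subseteq\cL_F$; here $\Sigma_{>F}$ does lie in $\cL_F$, so the equation applies. One must also confirm that the sign and degree conventions in that isomorphism line up, so that multiplication by $y_F$ composed with $x_\varnothing$ corresponds to $x_\varnothing$ of $\cL_F$.
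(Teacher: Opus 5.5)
Your proposal is correct and follows the paper's proof essentially step for step. The $\hat 1$ case rests on the observation that stalk and costalk both equal $y_{\hat 1}\M[\rk\cL]$. The $\hat 0$ case uses Proposition~\ref{true facts}(d) to identify the costalk with $x_{\hat 0}\CH\cong\uCH[-1]$ via a sign-twisted isomorphism, so that $\beta$ becomes multiplication by $-x_{\hat 0}=\ub_\cL$. General $F$ is reduced to $\hat 0$ via Proposition~\ref{true facts}(c) and Lemma~\ref{pure and indecomposable}(b). Your second use of (d), to compute the kernel of $a\mapsto x_{\hat 0}a$, and your appeal to (c) to see $y_{\hat 1}\CH\cong\kk[-\rk\cL]$, only make explicit what the paper leaves implicit.
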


\begin{proof}
The first statement is very general: for any $\H(\cL;\kk)$-module $\M$,
we have
\[
\M_{\hat 1} = y_{\hat{1}}\M[\rk \cL]  = \M_{[\hat 1]},
\]
and the costalk-stalk map $\beta_{\hat 1}(\M)$ is an isomorphism.

We prove the second statement for $F = \varnothing$. 
By part (d) of Proposition \ref{true facts}, we have
\[
\CH(\cL;\kk)_{[\varnothing]} = x_{\varnothing}\CH(\cL;\kk)\cong \uCH(\cL;\kk)[-1],
\]
where the isomorphism  sends $x_{\varnothing}x_F$ to $-x_F$.
Using this isomorphism, the composition
\[
\begin{tikzcd}\uCH(\cL;\kk)[-1]\cong \CH(\cL;\kk)_{[\varnothing]} \arrow[rr, "\beta_{\varnothing}(\CH(\cL;\kk))"] &&
\CH(\cL;\kk)_{\varnothing}\cong \uCH(\cL;\kk)\end{tikzcd}
\]
is given by multiplication by $-x_{\varnothing} = \b_{\cL}$.
The second statement for $\varnothing < F < \hat 1$ follows from
the statement for $F = \varnothing$ using part (c) of Proposition \ref{true facts} and Lemma \ref{pure and indecomposable}.\end{proof}

\begin{proof}[Proof of Theorem \ref{E=IH}]
By \cite[Theorem 3.21]{SHCG} and \cite[Proposition 6.4]{SHCG}, $\IH(\cL)$ is an
indecomposable direct summand of $\CH(\cL;\Q)$ that contains $\CH^0(\cL;\Q)$, and its endomorphism ring as an $\H(\cL;\Q)$-module consists only of scalars.  
In particular, it is pure by Proposition \ref{true facts}(e), and it has an element $1\in\IH^0(\cL)=\CH^0(\cL;\Q)$ with
the property that $y_{\hat 1}\cdot 1\neq 0$.
Therefore, by Theorem~\ref{def of E}, $\IH(\cL)$ is isomorphic to  $\E(\cL;\Q)$ as $\H(\cL;\Q)$-modules, and this isomorphism is unique up to a nonzero scalar.
\end{proof}

\begin{proof}[Proof of Proposition \ref{resonance linear}]
By Proposition \ref{true facts}(e), Proposition~\ref{Chow stalks}, 
and Theorem \ref{general decomposition}, there is an isomorphism
\begin{equation}\label{Chow decomposition}\CH(\cL;\kk) \cong \E(\cL;\kk) \oplus \bigoplus_{F<\hat 1}\, \b_{\cL_F}\uCH(\cL_F;\kk)\otimes\E(\cL^F;\kk).\end{equation}
By Corollary \ref{beta resonance}, $\b_{\cL_F}=0$ if and only if $F$ is $p$-resonant,
and hence  \begin{equation}\label{degree 1}\CH^1(\cL;\kk) \cong \E^1(\cL;\kk)\oplus \kk^r,\end{equation}
where $r$ is the number of $F<\hat 1$ which are not $p$-resonant.
Similarly, by taking stalks at $\varnothing$ of both sides of Equation \eqref{Chow decomposition}, we obtain the equation
\begin{equation}\label{underline degree 1}\uCH^1(\cL;\kk) \cong \uE^1(\cL;\kk)\oplus \kk^r.\end{equation}
By parts (a) and (b) of Theorem \ref{props of E}, $\E^0(\cL;\kk)$ and $\uE^0(\cL;\kk)$ are both $1$-dimensional.
By Lemma \ref{bases}, the dimension of $\CH^1(\cL;\kk)$ is equal to the cardinality of $\cL$ minus one,
and the dimension of $\uCH^1(\cL;\kk)$ is $|\cL| - |\cL^1| - 1$.
The proposition therefore follows from Equations \eqref{degree 1} and \eqref{underline degree 1}.
\end{proof}

\begin{remark}
In Section \ref{sec:next}, we give an alternative proof of Proposition \ref{resonance linear} that does not use the augmented Chow ring.
\end{remark}

\begin{proof}[Proof of Proposition~\ref{H-E-CH}]
Theorem~\ref{general decomposition} implies that $\E(\cL;\kk)$ is a direct summand of $\CH(\cL;\kk)$, yielding the second inclusion.
Since $\E^0(\cL;\kk)=\CH^0(\cL;\kk)$, the first inclusion follows from  part (a) of Proposition~\ref{true facts}.
\end{proof}

\begin{remark}
The inclusion $\H(\cL;\kk)\subseteq\E(\cL;\kk)$
can alternatively be deduced from the injectivity of the map $\rho$ appearing in the proof of Proposition~\ref{associated graded}.
\end{remark}

\begin{proof}[Proof of Theorem \ref{E=CH}]
By Equation \eqref{Chow decomposition}, $\E(\cL;\kk)$ is isomorphic to a direct summand of $\CH(\cL;\kk)$,
and hence conditions (a), (b), and (c) are equivalent.  By Equation \eqref{Chow decomposition} and Corollary \ref{beta resonance}, 
conditions (b) and (e) are equivalent.
By taking stalks at $\varnothing$, we see that condition (b) implies condition (d).  Finally, condition (d) implies condition (e) by Equation
\eqref{underline degree 1}.
\end{proof}


\section{Inclusions of lattices}\label{sec:inclusion}
In this section, we prove Theorems~ \ref{pullback of pure}, ~\ref{decomposition}, 
and ~\ref{Kahler}.
As in Section \ref{inclusions of lattices}, we consider a rank-preserving inclusion of geometric lattices 
$\varphi\colon \cL\to \cM$.  We begin by explaining why such an inclusion always comes from
the operation of matroid restriction.


We assume that $\varphi\colon \cL\to \cM$ is an inclusion of posets satisfying
\[
\rk_\cM(\varphi(F))=\rk_\cL(F)
\ \ \text{for every $F\in\cL$}.
\]
Then $\varphi$ preserves joins. Indeed, since every element of $\cL$ is a join of atoms, it is enough to show that
$\varphi(F\vee G)=\varphi(F)\vee\varphi(G)$ for any $F\in \cL$ and $G\in \cL^1$. 
 If $G\le F$, both sides are equal to $\varphi(F)$. If $G\not\le F$, then, since $\varphi$ is order-preserving,
$\varphi(F\vee G)\ge \varphi(F)\vee\varphi(G)$,
and
\[
\rk_\cM \varphi(F\vee G)
=\rk_\cL(F\vee G)
=\rk_\cL(F)+1
=\rk_\cM(\varphi(F))+1
=\rk_\cM\bigl(\varphi(F)\vee\varphi(G)\bigr).
\]
It follows that $\varphi(F\vee G)=\varphi(F)\vee\varphi(G)$, as claimed.

Recall that the geometric lattice $\cM$ may be viewed as the lattice of flats of a simple matroid with ground set $\cM^1$ by identifying $F \in \cM$ with the set
\[
a_\cM(F) \coloneq \{G \in \cM^1\mid G \le F\}.
\]
 In the same way, $\cL$ may be viewed as the lattice of flats of a simple matroid on the ground set $\cL^1$.

\begin{lemma}\label{lem:atoms under restriction}
	For any $F \in \cL$, we have $\varphi^{-1}(a_\cM(\varphi(F))) = a_\cL(F)$.
\end{lemma}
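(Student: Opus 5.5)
We must show that for an atom $G\in\cL^1$, we have $\varphi(G)\le\varphi(F)$ in $\cM$ if and only if $G\le F$ in $\cL$. Note first that $\varphi$ sends atoms to atoms, since it is rank-preserving. Consequently $\varphi^{-1}(a_\cM(\varphi(F)))$ consists of the atoms $G$ of $\cL$ with $\varphi(G)\le\varphi(F)$. The inclusion $a_\cL(F)\subseteq\varphi^{-1}(a_\cM(\varphi(F)))$ is immediate, because $\varphi$ is order-preserving. So the content is the reverse inclusion.

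For the reverse inclusion, I will use the join-preservation established just before the lemma. Suppose $G\in\cL^1$ satisfies $\varphi(G)\le\varphi(F)$. Then
\[
\varphi(F\vee G)=\varphi(F)\vee\varphi(G)=\varphi(F).
\]
Since $\varphi$ is injective, this forces $F\vee G=F$, and hence $G\le F$.

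Alternatively, one can argue by rank without invoking injectivity. If $G\not\le F$, then $\rk_\cL(F\vee G)=\rk_\cL(F)+1$. Rank preservation then gives
\[
\rk_\cM\bigl(\varphi(F)\vee\varphi(G)\bigr)=\rk_\cM\varphi(F)+1,
\]
so $\varphi(G)\not\le\varphi(F)$.

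I do not expect any real obstacle here. The only substantive input is the join-preservation argument given in the text, and beyond that the proof only requires care that atoms map to atoms and that the preimage is taken within $\cL^1$.
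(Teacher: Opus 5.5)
Your reduction to atoms and the inclusion $a_\cL(F)\subseteq\varphi^{-1}(a_\cM(\varphi(F)))$ are fine, but the reverse inclusion is circular. Take an atom $G\not\le F$. Then $\varphi(F\vee G)$ has rank $\rk F+1$, while $\varphi(F)\vee\varphi(G)$ has rank $\rk F$ or $\rk F+1$ according as $\varphi(G)\le\varphi(F)$ or not. So join preservation at the pair $(F,G)$ is \emph{equivalent} to $\varphi(G)\not\le\varphi(F)$, which is exactly what the lemma asserts.

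Correspondingly, the text's argument for join preservation uses the equality $\rk_\cM(\varphi(F))+1=\rk_\cM\bigl(\varphi(F)\vee\varphi(G)\bigr)$, which is this same statement. Join preservation therefore rests on the lemma, not the other way around.

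Your alternative argument makes the same silent replacement of $\varphi(F\vee G)$ by $\varphi(F)\vee\varphi(G)$. Your claim that it avoids injectivity shows it cannot be right. Let $\cL$ and $\cM$ be Boolean of rank $2$ with atoms $a,b$ and $c,d$, and let $\varphi$ fix $\hat 0,\hat 1$ and send both $a$ and $b$ to $c$. This map preserves order and rank, yet $\varphi(b)\le\varphi(a)$ while $b\not\le a$. Also $\varphi(a\vee b)=\hat 1\neq c=\varphi(a)\vee\varphi(b)$.

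What is missing is an argument that uses injectivity above the atoms. The paper does this by induction on rank using the partition axiom. The case $\rk F\le 1$ is injectivity on atoms. Suppose the lemma holds for $F$ and $H$ covers $F$, and take an atom $G$ with $\varphi(G)\le\varphi(H)$ and $G\not\le F$. By the inductive hypothesis, $\varphi(G)\not\le\varphi(F)$. Both $\varphi(H)$ and $\varphi(F\vee G)$ cover $\varphi(F)$ and lie above $\varphi(G)$; this needs only that $\varphi$ preserves order and rank. The elements covering $\varphi(F)$ partition the atoms of $\cM$ not below $\varphi(F)$, so $\varphi(F\vee G)=\varphi(H)$. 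Injectivity then gives $F\vee G=H$, hence $G\le H$. Once the lemma is proved this way, the join-preservation argument in the text is complete.
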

\begin{proof}
	This is obvious when $\rk(F)\leq 1$. If the lemma holds for a particular  $F\in \cL$, 
	then, by the partition axiom for flats of a matroid, it also holds for every $G$ that covers $F$.  Thus, by induction on rank, the statement holds for all $F$.
\end{proof}

Define a map $\delta \colon \cM \to \cL$ by letting 
$\delta(F)$ be the join of the elements of $\varphi^{-1}(a_\cM(F))$ for each $F\in \cM$.  
The map $\delta$ is clearly order-preserving, although it need not be injective.
The composition $\delta \circ \varphi$ is the identity on $\cL$:  indeed, the previous lemma shows that $\delta\big( \varphi(F) \big)$ is the join of $a_\cL(F)$, which is equal to $F$.

The composition $\varphi\circ\delta$ need not be the identity. However, since $\varphi$ preserves joins, $\varphi\bigl(\delta(F)\bigr)$ is the join of the elements of
$\varphi\bigl(\varphi^{-1}(a_\cM(F))\bigr)\subseteq a_\cM(F)$. 
It follows that $\varphi\bigl(\delta(F)\bigr)\leq F$ for every $F\in\cM$.

\begin{proposition}\label{prop:rank-preserving}
	The geometric lattice $\cL$ is naturally isomorphic to the lattice of flats of the restriction of $\cM$ to $S\coloneq \varphi(\cL^1)$, or equivalently, to the lattice of flats of the matroid obtained from $\cM$ by deleting all elements of $\cM^1\setminus S$.
\end{proposition}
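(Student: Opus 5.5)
The plan is to show that $\varphi$ itself, composed with the identification of flats with their sets of atoms, is an isomorphism onto the lattice of flats of $\cM|_S$. The flats of the restriction $\cM|_S$ are exactly the sets $a_\cM(F)\cap S$ for $F\in\cM$. Identifying $S$ with $\cL^1$ via $\varphi$, such a set corresponds to $\varphi^{-1}(a_\cM(F))$. So I will consider the map
\[
\psi\colon \cL\longrightarrow \{\text{flats of } \cM|_S\},\qquad F\longmapsto a_\cM(\varphi(F))\cap S .
\]
By Lemma~\ref{lem:atoms under restriction}, $\psi(F)$ corresponds to $a_\cL(F)$. It follows immediately that $\psi$ is injective and order-preserving. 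It also reflects order, because $F\le G$ if and only if $a_\cL(F)\subseteq a_\cL(G)$ in the geometric lattice $\cL$.

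The remaining step is surjectivity. Take a flat $T=a_\cM(F)\cap S$ of $\cM|_S$ with $F\in\cM$, and set $G\coloneq\delta(F)\in\cL$. I claim $\psi(G)=T$. For the inclusion $\psi(G)\supseteq T$: each atom in $\varphi^{-1}(a_\cM(F))$ lies below $\delta(F)=G$, so $T\subseteq a_\cM(\varphi(G))\cap S$. For the reverse inclusion: we showed above that $\varphi(\delta(F))\le F$, hence $a_\cM(\varphi(G))\subseteq a_\cM(F)$, and intersecting with $S$ gives $\psi(G)\subseteq T$. Thus $\psi$ is a bijection that preserves and reflects order, so it is a lattice isomorphism.

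The equivalence with deletion is standard: restricting to $S$ is the same as deleting $\cM^1\setminus S$. I expect the only real subtlety to be keeping track of the identification of $S$ with $\cL^1$. That is exactly what Lemma~\ref{lem:atoms under restriction} together with the inequality $\varphi\circ\delta\le\id$ handles. The isomorphism is natural in that it is induced by $\varphi$ on atoms.
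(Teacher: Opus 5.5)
Your proof is correct and takes essentially the same route as the paper: both identify the flats of $\cM|_S$ with the sets $\varphi^{-1}(a_\cM(F))$, and both use Lemma~\ref{lem:atoms under restriction} together with $\varphi(\delta(F))\le F$ to show these are exactly the sets $a_\cL(G)$ with $G=\delta(F)$. The only difference is packaging: you argue injectivity and surjectivity of the map induced by $\varphi$, while the paper proves the identity $a_\cL(\delta(F))=\varphi^{-1}(a_\cM(F))$ and then invokes surjectivity of $\delta$.
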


\begin{proof}
	Considering every geometric lattice as a matroid, the lattice of flats of the restriction $\cM|S$ is equal to the set of all intersections $a_\cM(F)\cap S$ for $F\in \cM$.  Since $\varphi$ is injective, it induces a bijection between $\cL^1$ and $S$, and so as posets,
	\[
	\cM|S\cong \{\varphi^{-1}(a_\cM(F)) \mid F \in \cM\}.
	\]
Since $\delta$ is surjective, it suffices to prove that
	\[
	a_\cL(\delta(F)) = \varphi^{-1}(a_\cM(F)) \ \ \text{for all $F\in \cM$.}
	\]
	 Since $\delta(F)$ is defined to be the join of the elements on the right hand side, we have 
	 \[
	 \varphi^{-1}(a_\cM(F)) \subseteq a_\cL(\delta(F)).
	 \]
	   On the other hand, using Lemma \ref{lem:atoms under restriction} and the fact that $\varphi\big(\delta(F)\big) \le F$,
we get 
	\[a_\cL(\delta(F)) = \varphi^{-1}\left(a_\cM\left(\varphi\left(\delta(F)\right)\right)\right) \subseteq \varphi^{-1}(a_\cM(F)). \qedhere\]
\end{proof}

We now study how the maps $\varphi$ and $\delta$ interact with upper sets of $\cL$ and $\cM$. For an upper set $\Sigma\subs \cL$, define its \textbf{\boldmath{$\cM$}-saturation}  by
\[
\Sigma^+ \coloneq \{F \in \cM \mid F \ge \varphi(G) \;\text{for some}\; G \in \Sigma\}.
\]
It is the smallest upper set containing $\varphi(\Sigma)$.  For any $\H(\cM)$-module $\M$ and any upper set $\Sigma\subs \cL$, we have
\[
\Ann_\Sigma(\M) = \Ann_{\Sigma^+}(\M) \and \Span_\Sigma(\M) = \Span_{\Sigma^+}(\M),
\]
where on the left-hand sides of the two equations we regard $\M$ as a module over $\H(\cL;\kk)$,
and on the right-hand sides of the two equations we regard $\M$ as a module over $\H(\cM;\kk)$.

\begin{lemma}
	For any upper set $\Sigma\subs \cL$, we have $\Sigma^+ = \delta^{-1}(\Sigma)$.
\end{lemma}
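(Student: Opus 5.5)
We prove the two inclusions $\Sigma^+\subseteq\delta^{-1}(\Sigma)$ and $\delta^{-1}(\Sigma)\subseteq\Sigma^+$ separately. Both directions rest on two facts established above: $\delta\circ\varphi$ is the identity on $\cL$, and $\varphi(\delta(F))\le F$ for every $F\in\cM$. Both $\varphi$ and $\delta$ are order-preserving.

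For the inclusion $\Sigma^+\subseteq\delta^{-1}(\Sigma)$, let $F\in\Sigma^+$. By definition there is some $G\in\Sigma$ with $F\ge\varphi(G)$. Since $\delta$ is order-preserving, we get
\[
\delta(F)\ \ge\ \delta(\varphi(G)) = G.
\]
Because $\Sigma$ is an upper set and $G\in\Sigma$, it follows that $\delta(F)\in\Sigma$. Hence $F\in\delta^{-1}(\Sigma)$.

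For the reverse inclusion, let $F\in\cM$ with $\delta(F)\in\Sigma$. Set $G\coloneq\delta(F)$, so that $G\in\Sigma$. By the fact recorded before Proposition~\ref{prop:rank-preserving}, we have $\varphi(G)=\varphi(\delta(F))\le F$. This exhibits $F$ as lying above $\varphi(G)$ for some $G\in\Sigma$, so $F\in\Sigma^+$.

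Neither step presents a real obstacle. The only point to check is that the inequality $\varphi(\delta(F))\le F$ was justified for all $F\in\cM$, not only for $F$ in the image of $\varphi$. It was: the argument used that $\varphi$ preserves joins and that $\varphi(\varphi^{-1}(a_\cM(F)))\subseteq a_\cM(F)$, and neither of these requires $F$ to lie in the image of $\varphi$. The identity $\delta\circ\varphi=\id$ comes from Lemma~\ref{lem:atoms under restriction}, which is already proved.
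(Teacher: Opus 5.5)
Your proof is correct and matches the paper's argument. Both inclusions follow from three facts: $\delta$ is order-preserving, $\delta\circ\varphi$ is the identity on $\cL$, and $\varphi(\delta(F))\le F$ for all $F\in\cM$. Your check that the last inequality holds for every $F\in\cM$, not only for $F$ in the image of $\varphi$, is accurate.
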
 
\begin{proof}
	Suppose first that $F\in \delta^{-1}(\Sigma)$, so that $\delta(F)\in\Sigma$. Since
$F\geq \varphi\bigl(\delta(F)\bigr)$,
we have  $F\in\Sigma^+$. Conversely, suppose that $F\in\Sigma^+$. Then there is $G\in\Sigma$ such that $F\geq \varphi(G)$. 
 It follows that $\delta(F) \ge \delta(\varphi(G))=  G$, and hence $\delta(F) \in \Sigma$.
\end{proof}

\begin{corollary}
	For any pair of upper sets $\Sigma_1, \Sigma_2\subs\cL$, we have 
	\[(\Sigma_1\cap \Sigma_2)^+ = \Sigma_1^+ \cap \Sigma_2^+.\] 
\end{corollary}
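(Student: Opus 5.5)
This follows directly from the preceding lemma, which identifies the $\cM$-saturation of an upper set $\Sigma\subs\cL$ with a preimage: $\Sigma^+ = \delta^{-1}(\Sigma)$. Preimages under any map of sets commute with intersections, so no lattice-theoretic argument is needed beyond that lemma.

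First, note that $\Sigma_1\cap\Sigma_2$ is an upper set in $\cL$, since an intersection of upwardly closed sets is upwardly closed. The lemma therefore applies to it and gives $(\Sigma_1\cap\Sigma_2)^+ = \delta^{-1}(\Sigma_1\cap\Sigma_2)$.

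Second, use the set-theoretic identity $\delta^{-1}(\Sigma_1\cap\Sigma_2) = \delta^{-1}(\Sigma_1)\cap\delta^{-1}(\Sigma_2)$. Applying the lemma once more to $\Sigma_1$ and to $\Sigma_2$ separately, the right-hand side equals $\Sigma_1^+\cap\Sigma_2^+$, which is the claimed equality.

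There is no real obstacle. The only point to check is that the lemma's hypothesis (that the set be an upper set) holds for $\Sigma_1\cap\Sigma_2$, and this was verified in the first step. If one instead argued from the definition of saturation, the inclusion $\supseteq$ would be the nontrivial direction: given $F\ge\varphi(G_1)$ and $F\ge\varphi(G_2)$, one must find a single element of $\Sigma_1\cap\Sigma_2$ below $F$. The element $\delta(F)$ does this, because $\delta(F)\ge G_1,G_2$ and $\varphi(\delta(F))\le F$. This is exactly the content already packaged in the lemma.
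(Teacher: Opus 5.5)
Your proof is correct and matches the paper, which states this corollary without separate proof immediately after the lemma $\Sigma^+ = \delta^{-1}(\Sigma)$, precisely because preimages under $\delta$ commute with intersections. Checking that $\Sigma_1\cap\Sigma_2$ is again an upper set is the right detail to verify, since the lemma's proof uses upward closure in the direction $\Sigma^+\subseteq\delta^{-1}(\Sigma)$.
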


\begin{proof}[Proof of Theorem~\ref{pullback of pure}]
For any pair of upper sets $\Sigma_1, \Sigma_2\subs\cL$, we have 
\begin{align*}
\Ann_{\Sigma_1}(\M) + \Ann_{\Sigma_2}(\M) & = \Ann_{\Sigma^+_1}(\M) + \Ann_{\Sigma^+_2}(\M) = \Ann_{\Sigma^+_1\cap\Sigma^+_2}(\M) \\
& = \Ann_{(\Sigma_1\cap\Sigma_2)^+}(\M)  = \Ann_{\Sigma_1 \cap \Sigma_2}(\M),	
\end{align*}
and hence $\M$ is flabby as a module over $\H(\cL;\kk)$.  The proof of coflabbiness is similar.
\end{proof}

\begin{proof}[Proof of Theorem \ref{decomposition}]
By Theorem \ref{general decomposition}, there is an $\H(\cL;\kk)$-module isomorphism
\[
\E(\cM;\kk) \cong \bigoplus_{F\in\cL} \im(\beta_F(\E(\cM;\kk))) \otimes\E(\cL^F;\kk).
\]
Thus we need only show that $\im(\beta_F(\E(\cM;\kk)))\cong \oL(\varphi_F;\kk)$ for all $F\in\cL$.
When $F=\varnothing$, this is immediate from the definition of $\oL(\varphi;\kk)$; see Example \ref{beta0}.  For arbitrary $F$,
by Lemma \ref{pure and indecomposable} and Proposition \ref{contraction module}, we have
\[
\beta_F(\E(\cM;\kk))\cong \beta_F(\E(\cM_{\varphi(F)};\kk)).
\]
Under this isomorphism, $F$ is the minimal element of the geometric lattice $\cL_F$, so the latter agrees with  $\oL(\varphi_F;\kk)$.
\end{proof}

\begin{proof}[Proof of Theorem \ref{Kahler}]
The palindromicity of the local $h$-polynomials follows from
Corollary~\ref{Z and l}, given the palindromicity of the Z-polynomials from Theorem~\ref{combinatorial recursion}. To be precise, say that a polynomial $f(t)$ is
$d$-palindromic if $f(t) = t^d f(t^{-1})$.  We claim that the
local $h$-polynomial of $\varphi$ is $d$-palindromic with $d=\rk(\cM)$.

The statement is proved by induction on $c=\rk(\cL)$.  When $c=0$, the local
$h$-polynomial is $Z_{\cM}(t)$, which is $d$-palindromic by Theorem~\ref{combinatorial recursion}.  When
$c>0$, the left-hand side of the equation of Corollary~\ref{Z and l} is $d$-palindromic,
and all of the terms with $F \ne \hat{0}$ are a product of a
$\rk(\M_{\varphi_{F}})=\crk(\varphi(F))$-palindromic polynomial and a
$\rk(\cL^F) = \rk(F)=\rk(\varphi(F))$-palindromic polynomial.  Since
multiplying palindromic polynomials adds the degree of palindromicity,
the result follows.

The proof of the unimodality of ${}_p\ell_\varphi(t)$ when $p=0$ relies on the decomposition theorem for Lefschetz modules in \cite{AHL}. 
We will show in Proposition~\ref{prop:base change for IH} that extending the coefficient field does not change the structure of parity modules. Thus, when $p=0$, we may assume without loss of generality that $\kk=\mathbb{R}$. By \cite[Theorem 1.6]{SHCG}, the module $\E(\cM;\mathbb{R})\cong \IH(\cM;\mathbb{R})$ satisfies Poincar\'e duality, the hard Lefschetz property, and the Hodge--Riemann relations with respect to the interior of the cone generated by $\{y_i\mid i\in\cM^1\}$. It is therefore a Lefschetz module in the sense of \cite{AHL}. 
We apply the decomposition package of \cite{AHL} to the subalgebra of $\H(\cM;\mathbb{R})$ generated by $\{y_i\mid i\in\cL^1\}$, which is naturally identified with $\H(\cL;\mathbb{R})$. In particular, \cite[Corollary 1.8]{AHL} yields the relative hard Lefschetz theorem for $\oL(\varphi;\mathbb{R})$, from which it follows that the polynomial $\ell_\varphi(t)$ is palindromic and unimodal.
\end{proof}

\section{Exploding $\H(\cL;\kk)$-modules}\label{sec:sheafifying}
We now  begin working toward the proofs 
of Theorems \ref{def of E}, \ref{props of E}, \ref{topological interpretation}, \ref{indecomposables}, \ref{it goes up}, and \ref{general decomposition}.
As a first step, we introduce a new category $\cC_{\reg}(\cL;\kk)$ and prove that it is equivalent to the category of
finitely generated graded $\H(\cL;\kk)$-modules.  

Though we have assumed until now that $\kk$ is a field,
we will now relax this assumption and allow $\kk$ to be any local principal ideal domain, meaning that $\kk$
is either a field or a discrete valuation ring.  We will be particularly interested in the case of the $p$-adic integers,
which will provide a framework for comparing parity modules in characteristics $0$ and $p$, leading ultimately to a proof of Theorem \ref{it goes up}.

\subsection{The maps \boldmath{$\varphi$} and \boldmath{$\psi$}}\label{sec:exploded modules}
Let $\M$ be an $\H(\cL;\kk)$-module.  For any $F\in\cL$, let $\M(F) \coloneq y_F\M[\rk F]$. 
We can factor the multiplication by $y_F$ as follows: 
\[
\xymatrix{
	\M  \ar[rr]^{y_F} \ar[dr]_{\varphi^F}&& \M[\rk F] 
	\\
	& \M(F) \ar[ur]_{\psi^F}&
}
\]
Note that $\varphi^F$ is surjective and $\psi^F$ is injective, and  $\M(F)$ is an $\H(\cL_F;\kk)$-module via the construction in Section \ref{sec:filtrations}.  More generally, given $G\leq F\in\cL$, we choose an element
$H\in\cL^{\rk F - \rk G}$ with $G\vee H = F$, and define a commutative diagram 
\[
\xymatrix{
	\M(G)  \ar[rr]^{y_H} \ar[dr]_{\varphi_G^F}&& \M(G)[\rk F-\rk G] 
	\\
	& \M(F) \ar[ur]_{\psi_G^F}&
}
\]
by setting  $\varphi_G^F(y_G m) \coloneq y_Hy_Gm = y_Fm$ and $\psi_G^F(y_Fm) \coloneq y_Fm$ for all $m\in \M$.
When $G=\varnothing$, we have $\varphi^F_{\varnothing} = \varphi^F$ and $\psi^F_{\varnothing} = \psi^F$.
In general, the element $H$ with $G\vee H = F$ is not unique, but these maps do not depend on the choice.



\begin{proposition}\label{prop:psi phi relations} The maps $\{\varphi_G^F\mid G\leq F\}$ and $\{\psi_G^F\mid G\leq F\}$ satisfy the following relations:
	\begin{enumerate}
	\item[(a)] We have $\varphi_F^F = \operatorname{id}_{\M(F)} = \psi^F_F$.
		\item[(b)] If $H \le G \le F$, then 
		$\varphi_G^F \varphi_H^G = \varphi_H^F$ and 
		$ \psi_H^G\psi_G^F = \psi_H^F$.
		\item[(c)] For any elements $F, G, H\in\cL$ with $H \le F \wedge G$, we have 
		\[
\varphi_H^G \psi_H^F
=
\begin{cases}
\psi_G^{F\vee G}\,\varphi_F^{F\vee G}
& \text{if \(F\) and \(G\) form a modular pair and \(H=F\wedge G\),}\\[2pt]
0
& \text{otherwise.}
\end{cases}
\]
	\end{enumerate}
\end{proposition}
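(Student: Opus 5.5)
Parts (a) and (b) are direct computations from the definitions, and part (c) reduces to the multiplication rule in $\H(\cL;\kk)$ together with semimodularity. Throughout I use that each element of $\M(G)$ has the form $y_Gm$ with $m\in\M$. Since $\psi^F_G$ is just the inclusion $y_F\M\subseteq y_G\M$ (up to shift), all the maps are determined by what they do to elements $y_Fm$.

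For (a), when $G=F$ we take $H=\hz$. Then $\varphi^F_F(y_Fm)=y_Fm$, and $\psi^F_F$ is the identity inclusion.

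For (b), let $H\le G\le F$ and take $y_Hm\in\M(H)$. Then
\[
\varphi^F_G\varphi^G_H(y_Hm)=\varphi^F_G(y_Gm)=y_Fm=\varphi^F_H(y_Hm).
\]
The composite of the $\psi$'s is a composite of inclusions $y_F\M\subseteq y_G\M\subseteq y_H\M$, so $\psi^G_H\psi^F_G=\psi^F_H$.

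Before (c) I would first justify the independence of the choice of complement $H$, since (c) relies on it. Suppose $G\vee H=F$ with $\rk H=\rk F-\rk G$. Then $\rk G+\rk H=\rk(G\vee H)$, so $y_Hy_G=y_F$. Hence $y_Hy_Gm=y_Fm$ for every $m$, and this is independent of $H$.

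For (c), let $H\le F\wedge G$ and take $y_Fm\in\M(F)$. The map $\psi^F_H$ regards $y_Fm$ as an element of $y_H\M$. To apply $\varphi^G_H$, choose $K$ with $H\vee K=G$ and $\rk K=\rk G-\rk H$. This gives
\[
\varphi^G_H\psi^F_H(y_Fm)=y_Ky_Fm .
\]
This product equals $y_{F\vee K}m$ if $\rk F+\rk K=\rk(F\vee K)$, and is $0$ otherwise. Since $H\le F$, we have $F\vee K=F\vee H\vee K=F\vee G$. So the composite is nonzero-type, namely $m\mapsto y_{F\vee G}m$, exactly when
\[
\rk F+\rk G-\rk H=\rk(F\vee G).
\]
By semimodularity, $\rk F+\rk G\ge\rk(F\vee G)+\rk(F\wedge G)\ge\rk(F\vee G)+\rk H$. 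Equality throughout holds iff $F,G$ is a modular pair and $\rk H=\rk(F\wedge G)$. Because $H\le F\wedge G$, the rank condition is equivalent to $H=F\wedge G$.

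In the equality case, $y_{F\vee G}m$ is precisely $\psi^{F\vee G}_G\varphi^{F\vee G}_F(y_Fm)$. Here $\varphi^{F\vee G}_F$ sends $y_Fm$ to $y_{F\vee G}m$, and $\psi^{F\vee G}_G$ includes the result into $\M(G)$. In every other case the composite is $0$.

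The main obstacle is the rank bookkeeping in (c): identifying $F\vee K$ with $F\vee G$, and turning the equality case of semimodularity into the condition ``modular pair with $H=F\wedge G$''. I would state the inequality chain explicitly so that both equalities are forced.
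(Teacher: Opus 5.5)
Your proof is correct and follows essentially the same route as the paper's. In (c) you choose $K$ with $H\vee K=G$ and $\rk K=\rk G-\rk H$, compute the composite as $y_Ky_Fm$, use $F\vee K=F\vee G$, and compare $\rk F+\rk G-\rk H$ with $\rk(F\vee G)$. Your explicit semimodularity chain for the ``otherwise'' case fills in a step the paper only asserts.
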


Note that the map $\varphi^G_{H}\psi_{H}^F$ sends $\M(F)$ to $\M(G)[\rk F - \rk H]$, while $\psi_G^{F\vee G}\varphi_F^{F\vee G}$ sends $\M(F)$ to $\M(G)[\rk (F\vee G) - \rk G]$.  The condition that $F$ and $G$ form a modular pair and $H = F\wedge G$ is exactly what is needed for these maps to have the same shift.

\begin{proof}[Proof of Proposition \ref{prop:psi phi relations}]
Statements (a) and (b) are straightforward.  To prove statement (c), 
choose $K\in\cL^{\rk F-\rk H}$ with
$y_F = y_Hy_K$ and $L\in\cL^{\rk G - \rk H}$ with $y_G = y_Hy_L$.  For an element $y_Fm \in \M(F)$, we have 		
		\[\varphi^G_{H}\psi_{H}^F(y_Fm) = \varphi^G_H(y_Fm) = y_Ly_Fm.\] 
	Note that $F \vee L = (H\vee K) \vee L = (H \vee K)\vee (H \vee L) = F\vee G$, and 
		\[\rk F + \rk L = \rk H + \rk K + \rk L= \rk F + \rk G - \rk H.\]
	If $F$ and $G$ form a  modular pair and $H=F\wedge G$, this implies that $\rk F + \rk L = \rk (F\vee G) = \rk(F\vee L)$, and so $y_Ly_F = y_{L\vee F} = y_{F\vee G}$.  This implies that 
		\[\varphi^G_{H}\psi_{H}^F(y_Fm) = y_Ly_Fm = y_{F\vee G}m = \psi_G^{F\vee G}(y_{F\vee G}m) = \psi_G^{F\vee G}\varphi_F^{F\vee G}(y_Fm).\] 
	On the other hand, if $F$ and $G$ do not form a modular pair or $H<F\wedge G$, then  $\rk L + \rk F > \rk(L\vee F)$, and hence $y_Ly_Fm = 0$. 
	%
\end{proof}

\begin{remark}
	To improve readability, we will sometimes use an alternative notation for compositions of the maps $\{\varphi_G^F\mid G\leq F\}$ and 
	$\{\psi_G^F\mid G\leq F\}$ by listing a sequence of elements, from right to left and with each one comparable with the next, separated by $\swa$ or $\nwa$ to indicate the order relation.  For instance, the relation (b) says that
	\[(F \nwa G \nwa H) = (F \nwa H)\;\;\text{and}\;\; (H \swa G \swa F) = (H \swa F),\]
	while relation (c) implies that
	\[(G\nwa (F\wedge G) \swa F) = (G \swa (F\vee G)\nwa F)\]
	whenever $F$ and $G$ form a modular pair.
\end{remark}

\subsection{The category \boldmath{$\cC(\cL;\kk)$}}
Proposition \ref{prop:psi phi relations} motivates the following definition.  Consider the category $\cC(\cL;\kk)$ whose objects are tuples $\cV = \left(\{\cV(F)\}, \{\varphi_G^F\}, \{\psi_G^F\}\right)$ consisting of a graded $\kk$-module $\cV(F)$ 
for each $F\in \cL$ along with graded $\kk$-linear maps $$\varphi_G^F \colon \V(G) \to \V(F)\and  \psi_G^F\colon \V(F) \to \V(G)[\rk F - \rk G]$$ for each $G \le F$, satisfying the three conditions of Proposition \ref{prop:psi phi relations}.  
A morphism from $\cV$ to $\cW$ is a system of linear maps $\{\cV(F) \to \cW(F)\mid F\in\cL\}$ commuting with the maps 
$\{\varphi_G^F\mid G\leq F\}$ and $\{\psi_G^F\mid G\leq F\}$.

Let $\cC_{\reg}(\cL;\kk)$ be the full subcategory of $\cC(\cL;\kk)$ consisting of objects such that the maps $\varphi_G^F$ are all surjective and the maps $\psi_G^F$ are all injective.  We call such objects {\bf regular}.
There is a functor $\Lambda\colon\H(\cL;\kk)\mmod \to \cC_{\reg}(\cL;\kk)$ taking $\M$ to $(\{\M(F)\}, \{\varphi_G^F\}, \{\psi_G^F\})$ as defined in Section \ref{sec:exploded modules}.  We refer to $\Lambda\M$ as the {\bf explosion} of $\M$.

\begin{proposition}\label{thm:main equivalence}
	The explosion functor $\Lambda\colon\H(\cL;\kk)\mmod \to \cC_{\reg}(\cL;\kk)$ is an equivalence of categories.
\end{proposition}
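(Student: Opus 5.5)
We will construct an explicit quasi-inverse $\Gamma\colon\cC_{\reg}(\cL;\kk)\to\H(\cL;\kk)\mmod$ and check that both composites are naturally isomorphic to the identity. Given a regular object $\cV$, set $\Gamma\cV \coloneq \cV(\varnothing)$. For each $F$ let $y_F$ act on $\cV(\varnothing)$ as the composite $\psi^F_{\varnothing}\varphi^F_{\varnothing}\colon \cV(\varnothing)\to\cV(F)\to\cV(\varnothing)[\rk F]$. The inverse functor on morphisms is restriction to the $\varnothing$ component.

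The first step is to verify that this defines an $\H(\cL;\kk)$-module. Since $y_\varnothing$ acts by $\varphi^\varnothing_\varnothing\psi^\varnothing_\varnothing=\id$ by relation (a), it suffices to compute $y_Gy_F$. Write
\[
\psi^G_\varnothing\varphi^G_\varnothing\psi^F_\varnothing\varphi^F_\varnothing
\]
and apply relation (c) to the middle factor $\varphi^G_\varnothing\psi^F_\varnothing$ with $H=\varnothing$. If $F\wedge G=\varnothing$ and $(F,G)$ is a modular pair, which is exactly the case $\rk F+\rk G=\rk(F\vee G)$, the middle factor becomes $\psi^{F\vee G}_G\varphi^{F\vee G}_F$. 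Relation (b) then collapses the whole composite to $\psi^{F\vee G}_\varnothing\varphi^{F\vee G}_\varnothing=y_{F\vee G}$. Otherwise relation (c) gives zero. This matches the multiplication rule, since $\rk F+\rk G=\rk(F\vee G)$ forces $F\wedge G=\varnothing$ by semimodularity. Finite generation and gradedness are inherited from the components. This step does not use regularity.

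The second step shows that $\Gamma\Lambda\M\cong\M$, and this is immediate: $\M(\varnothing)=\M$, and $\psi^F\varphi^F$ is multiplication by $y_F$.

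The third step shows that $\Lambda\Gamma\cV\cong\cV$, and this is where regularity enters. We need canonical isomorphisms $\cV(F)\cong y_F\cV(\varnothing)[\rk F]$. Since $\varphi^F_\varnothing$ is surjective and $\psi^F_\varnothing$ is injective, the image of $y_F=\psi^F_\varnothing\varphi^F_\varnothing$ equals $\psi^F_\varnothing(\cV(F))$, and $\psi^F_\varnothing$ identifies $\cV(F)$ with it. We must then check that under these identifications the structure maps $\varphi^F_G,\psi^F_G$ of $\cV$ agree with those of $\Lambda\Gamma\cV$. For $\psi$ this follows from $\psi^G_\varnothing\psi^F_G=\psi^F_\varnothing$, using relation (b). For $\varphi$ we need $\psi^F_\varnothing\varphi^F_G\,\psi^{G}_\varnothing{}^{-1}$ to equal multiplication by $y_H$ with $G\vee H=F$ and $\rk H=\rk F-\rk G$. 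Precomposing with the surjection $\varphi^G_\varnothing$, this becomes the identity
\[
\psi^F_\varnothing\varphi^F_\varnothing = y_Hy_G .
\]
This holds by the first step, since $y_Hy_G=y_F$ in $\H(\cL;\kk)$.

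Naturality is clear, and full faithfulness follows for the same reasons: a morphism of regular objects is determined by its $\varnothing$ component, because the $\varphi^F_\varnothing$ are surjective, and any module map commutes with all $y_F$ and hence induces maps on the images. The main obstacle, though a mild one, is the bookkeeping in the third step: the identifications must be compatible with all $\varphi^F_G$ and not only those with $G=\varnothing$, which forces the argument through surjectivity of $\varphi^G_\varnothing$. The substantive algebraic input is the application of relation (c) in the first step, including the observation that the modular-pair condition with $H=\varnothing$ is exactly the rank-additivity condition in the definition of $\H(\cL;\kk)$.
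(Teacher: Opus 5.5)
Your proposal is correct and follows the paper's proof essentially step for step. The paper defines the same quasi-inverse (called $\Theta$ there, and defined on all of $\cC(\cL;\kk)$) and verifies the module structure through relations (b) and (c) with $H=\hz$, exactly as you do. It then uses injectivity of $\psi^F_{\hz}$ and surjectivity of $\varphi^F_{\hz}$ to identify $\cV(F)$ with $y_F\cV(\hz)[\rk F]$. Your explicit check that these identifications intertwine the maps $\varphi^F_G$, via $y_Hy_G=y_F$ and surjectivity of $\varphi^G_{\hz}$, spells out a step the paper leaves implicit.
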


\begin{proof}
	Define a functor $\Theta\colon \cC(\cL;\kk) \to \H(\cL;\kk)\mmod$ by sending an object $(\{\cV(F)\}, \{\varphi_G^F\}, \{\psi_G^F\})$ to $\cV(\hz)$, with the $\H(\cL;\kk)$-module structure defined by letting $y_F$ act by $\psi_{\hz}^F\varphi_{\hz}^F$.  Let us check that this  defines an $\H(\cL;\kk)$-module.  Let $F$ and  $G$ be given elements of $\cL$.  If $\rk (F\vee G) = \rk F + \rk G$, then $(F, G)$
is a modular pair and $F\wedge G=\hz$.  In this case, by properties (b) and (c) of Proposition \ref{prop:psi phi relations}, 
for any $a \in \cV(\hz)$, we have
	\begin{align*}
		y_Fy_Ga & = (\hz \swa F\nwa \hz \swa G\nwa \hz)(a) \\	
		& = (\hz \swa F \swa (F \vee G)\nwa G\nwa \hz)(a) \\
		& = (\hz \swa (F\vee G)\nwa \hz)(a) = y_{F\vee G}a.
	\end{align*}
On the other hand, if $\rk(F\vee G) < \rk F + \rk G$, then either $F$ and $G$ do not form a modular pair or
$F\wedge G \neq \hz$, so $(F\nwa \hz \swa G) = 0$.  This implies that $y_Fy_G$ annihilates $\cV(\hz)$, as required.
	
It is clear that, for any $\H(\cL;\kk)$-module $\M$, we have a natural isomorphism $\Theta\Lambda\M \cong \M$.
In the other direction, if $\V = (\{\cV(F)\}, \{\varphi_G^F\}, \{\psi_G^F\})$ is regular and $\M = \Theta\V$, then there is a natural isomorphism
\[y_F\M = \im(\psi_\hz^F\varphi_\hz^F) \cong \cV(F)[-\rk F],\]
since $\psi_\hz^F$ is injective and $\varphi_\hz^F$ is surjective.  Putting these maps together gives a natural isomorphism $\V\cong \Lambda \Theta\V$.
\end{proof}

\begin{example}
Consider the $\H(\cL;\kk)$-module $\CH(\cL;\kk)$.  
For each $F\in\cL$, we have 
$$\Lambda(\CH(\cL;\kk))(F) = y_F\CH(\cL;\kk)[\rk F] \cong \CH(\cL_F;\kk),$$
where the last isomorphism comes from Proposition \ref{true facts}(c).
For any $G\le F$, the $\kk$-module homomorphism $\varphi_G^F\colon \CH(\cL_G;\kk)\to \CH(\cL_F;\kk)$ agrees with the homomorphism
$\varphi^{\cL_G}_{F\setminus G}$ from \cite[Proposition 2.28]{BHMPW}, while
$\psi_G^F$ agrees with the homomorphism $\psi^{\cL_G}_{F\setminus G}$ from \cite[Proposition 2.29]{BHMPW}.
\end{example}

\begin{remark}
The category $\cC(\cL;\kk)$ can also be viewed as the category of finitely generated graded modules for 
the $\kk$-path algebra $C$ of a quiver with relations.  The quiver has vertices given by elements of $\cL$, and arrows
in both directions between $F$ and $G$ whenever $F$ covers $G$.
If $F$ covers $G$, then we have the relation $(F \nwa G \swa F) = 0$.  If $F\neq F'$ both cover $G$ and $H = F\vee F'$, then we have the relations
\[(F' \nwa G \swa F) = (F' \swa H \nwa F)\]
and
\[(H \swa G \swa F) = (H \swa G' \swa F); \;\; (F \nwa G \nwa H) = (F \nwa G' \nwa H).\] 
The algebra $C$ is graded by assigning degree zero to the upward arrows and degree one to the downward arrows.
If $e$ is the idempotent corresponding to the path of length $0$ at the vertex $\hz$, then
$eCe\cong \H(\cL;\kk)$, and the functor $\Theta$ of Proposition \ref{thm:main equivalence} is identified with the functor
from $C\mmod$ to $eCe\mmod$ taking $\M$ to $e\M$.  Note, however, that the explosion functor $\Lambda$ is {\em not} identified 
either with the left adjoint $\N \mapsto Ce\otimes_{eCe} \N$ nor with the right adjoint $\N \mapsto \Hom_{eCe}(eC, \N)$.  Indeed, 
$\Lambda$ is neither left nor right exact.  The nice feature of the functor $\Lambda$ is that it interacts well with duality; see Section \ref{sec:freeness}.
\end{remark}

\subsection{Localization}\label{sec:localization}
One of the main reasons to consider the category $\cC_\reg(\cL;\kk)$ instead of the equivalent category $\H(\cL;\kk)\mmod$ is that we can consider ``local" categories on subsets of $\cL$.  Given an upper set $\Sigma\subseteq \cL$, let $\cC(\Sigma;\kk)$ denote the category whose objects are tuples $(\{\cV(F)\}, \{\varphi_G^F\}, \{\psi_G^F\})$ satisfying the same properties as before, but with $F, G \in \Sigma$.  
Similarly, we let $\cC_{\reg}(\Sigma;\kk)$ be the full subcategory of $\cC(\Sigma;\kk)$ consisting of objects such that the maps $\varphi_G^F$ are all surjective and the maps $\psi_G^F$ are all injective.  
If $\Sigma'\subseteq \Sigma$ is another upper set, there is a forgetful functor $\cC(\Sigma;\kk) \to \cC(\Sigma';\kk)$, which we denote $\V \mapsto \V|_{\Sigma'}$.  

If we consider $\cL$ as a topological space whose open sets are the upper sets, then the categories $\cC(\Sigma;\kk)$ behave like sheaves on $\cL$, in the sense that objects and morphisms glue.  That is, given upper sets $\Sigma_1$ and $\Sigma_2$,
objects $\V_i$ of $\cC(\Sigma_i;\kk)$, and an isomorphism $\V_1|_{\Sigma_1\cap \Sigma_2} \cong \V_2|_{\Sigma_1\cap \Sigma_2}$, they glue to give an object of $\cC(\Sigma_1\cup \Sigma_2)$ that restricts to both $\V_1$ and $\V_2$.  Furthermore, given objects $\V$ and $\W$ of $\cC(\Sigma_1\cup \Sigma_2)$, we have an exact sequence of $\kk$-modules
\begin{multline*}
	0 \longrightarrow\Hom_{\cC(\Sigma_1\cup \Sigma_2;\kk)}(\V,\W) \longrightarrow \Hom_{\cC(\Sigma_1;\kk)}(\V|_{\Sigma_1},\W|_{\Sigma_1})\oplus\Hom_{\cC(\Sigma_2;\kk)}(\V|_{\Sigma_2},\W|_{\Sigma_2})  \\ \longrightarrow  \Hom_{\cC(\Sigma_1\cap \Sigma_2;\kk)}(\V|_{\Sigma_1\cap \Sigma_2},\W|_{\Sigma_1\cap \Sigma_2}).
\end{multline*}
The inclusion $\cL_F = \Sigma_{\ge F}\subseteq \cL$ induces a functor from $\cC(\cL;\kk)$ to $\cC(\cL_F;\kk)$,
which restricts to a functor from $\cC_{\reg}(\cL;\kk)$ to $\cC_{\reg}(\cL_F;\kk)$. 
Under the equivalence given by Proposition \ref{thm:main equivalence}, this corresponds to the functor
\[
 \H(\cL;\kk)\mmod \longrightarrow \H(\cL_F;\kk)\mmod, \qquad
 \M \longmapsto y_F\M[\rk F].
 \]

\begin{remark}\label{lower intervals}
We also have a relation for lower intervals: an object $\V$ of $\cC(\cL^F;\kk)$ can be extended to an object of $\cC(\cL;\kk)$ by letting $\V(G)= 0$ for all $G \not\le F$. This corresponds to pulling back by the quotient homomorphism $\H(\cL;\kk)\to \H(\cL^F;\kk)$ described at the end of Section \ref{sec:pure}.
\end{remark}

\section{Purity, (co)stalks, and duality}\label{sec:pure modules}
In this section, we revisit various notions that we have already discussed for $\H(\cL;\kk)$-modules, such as purity (flabbiness and coflabbiness), 
stalk and costalk functors, and duality.  
For each of these, we study the analogous properties and constructions for objects of the category $\cC(\Sigma;\kk)$.

\subsection{Flabbiness and coflabbiness}
An object $\V = (\{\V(F)\}, \{\varphi_G^F\}, \{\psi_G^F\})$  in $\cC(\Sigma;\kk)$ has a dual nature.  The vector spaces $\V(F)$ together with the maps $\varphi_G^F$ can be viewed as a sheaf on $\Sigma$, viewed as a finite topological space whose open sets are upper sets \cite{Bac}.
Given an upper set $\Sigma'\subseteq \Sigma$, we define the sections on $\Sigma'$ by
\[\Gamma_*(\Sigma', \V) \coloneq \varprojlim_{F \in \Sigma'} \V(F),\]
where the limit is taken using the maps $\varphi_G^F$.  In other words, $\Gamma_*(\Sigma', \V)$ is the submodule of $\prod_{F\in \Sigma'} \V(F)$ given by tuples $(x_F)$ for which $\varphi_G^F(x_G) = x_F$ for all $G < F$ in $\Sigma'$.
There is a restriction map $\Gamma_*(\Sigma', \V) \to \Gamma_*(\Sigma'',\V)$ for any upper sets $\Sigma'' \subseteq \Sigma'\subseteq\Sigma$, and these maps define a sheaf on $\Sigma$.

Dually, we can view $(\{\V(F)\}, \{\psi_G^F\})$ as a cosheaf by defining its cosections on $\Sigma'$ to be
\[\Gamma_!(\Sigma',\V) = \varinjlim_{F\in \Sigma'} \V(F)[-\rk F],\]
with the colimit taken using the maps $\psi_G^F$.  In other words, it is the quotient of $\bigoplus_{F\in \Sigma'} \V(F)[-\rk F]$ by the relations $\psi_G^F(x) - x$ for all $G < F$ in $\Sigma'$ and $x\in \V(F)$.  We have a corestriction map $\Gamma_!(\Sigma'',\V) \to \Gamma_!(\Sigma',\V)$ for any   upper sets $\Sigma''\subseteq \Sigma' \subseteq \Sigma$, and these maps define a cosheaf on $\Sigma$. 

For any element $F \in \Sigma$, the sections and cosections on $\Sigma_{\ge F}$ are particularly simple.  Since 
$\Sigma_{\ge F}$ is the smallest open set containing $F$, we have
\begin{equation}\label{eqn:(co)sections above F}
	\Gamma_*(\Sigma_{\ge F},\V) \cong \V(F) \;\; \mbox{and}\;\;\Gamma_!(\Sigma_{\ge F},\V)\cong \V(F)[-\rk F].
\end{equation}

\begin{definition}
	We say that $\V$ is \textbf{flabby} if the restriction maps $\Gamma_*(\Sigma', \V) \to \Gamma_*(\Sigma'',\V)$ are all surjective.
	We say that $\V$ is \textbf{coflabby} if the corestrictions $\Gamma_!(\Sigma'',\V) \to \Gamma_!(\Sigma',\V)$ are all injective.
\end{definition}
	
\begin{remark}
Based on Definition \ref{pure module def}, the reader might expect us to define $\V$ to be {\bf pure} if it is both flabby and coflabby.
This is correct if $\kk$ is a field, but when $\kk$ is a DVR we will want to impose an additional condition, and similarly for $\H(\cL;\kk)$-modules;
see Definition \ref{new pure}.
\end{remark}

We next show that the explosion functor $\Lambda$ takes flabby (respectively coflabby) $\H(\cL;\kk)$-modules to flabby (respectively coflabby)
objects of $\cC(\cL;\kk)$.  For any $\H(\cL;\kk)$-module $\M$ and any upper set $\Sigma$, we have natural maps
\[\Gamma_!(\Sigma,\Lambda\M) \twoheadrightarrow \Span_\Sigma(\M) \and \M/\Ann_\Sigma(\M) \hookrightarrow \Gamma_*(\Sigma,\Lambda\M)\]
induced by the maps $(\Lambda\M)(F)[-\rk F] = y_F\M \to \Span_\Sigma(\M)$ and $\M/\Ann_\Sigma(\M) \to (\Lambda\M)(F)$ for all $F \in \Sigma$. 
	
	\begin{proposition}\label{prop:criteria for (co)flabbiness}
		Let $\M$ be an $\H(\cL;\kk)$-module.  The following are equivalent:
		\begin{enumerate}
			\item[(a)] $\M$ is coflabby.
			\item[(b)] $\Lambda\M$ is coflabby.
			\item[(c)] The map $\Gamma_!(\Sigma,\Lambda\M) \twoheadrightarrow \Span_\Sigma(\M)$ is an isomorphism for all upper sets $\Sigma\subseteq\cL$.
		\end{enumerate}
		Dually, the following are equivalent: 
		\begin{enumerate}
			\item[(d)] $\M$ is flabby.
			\item[(e)] $\Lambda\M$ is flabby.
			\item[(f)] The map $\M/\Ann_\Sigma(M) \hookrightarrow \Gamma_*(\Sigma,\Lambda\M)$ is an isomorphism for all upper sets $\Sigma\subseteq\cL$.
		\end{enumerate}
	\end{proposition}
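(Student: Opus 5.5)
I would prove the coflabby equivalences first and deduce the flabby ones by the same argument with arrows reversed. The basic tool is an explicit comparison map. By definition, $\Gamma_!(\Sigma,\Lambda\M)$ is the quotient of $\bigoplus_{F\in\Sigma} y_F\M$ by the relations $\psi_G^F(x)=x$ for $x\in y_F\M$. Since each $\psi_G^F$ is just the inclusion $y_F\M\subseteq y_G\M$ (after shifts), the natural map to $\Span_\Sigma(\M)=\sum_{F\in\Sigma}y_F\M$ is the sum map, and it is surjective.

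\textbf{(a)$\Rightarrow$(c).} I would induct on $|\Sigma|$ and peel off a minimal element $F\in\Sigma$, setting $\Sigma'=\Sigma\setminus F$, which is still an upper set. Since $\Sigma=\Sigma_{\ge F}\cup\Sigma'$ is an open cover, the cosheaf property gives a pushout presentation
\[
\Gamma_!(\Sigma,\Lambda\M)=\Gamma_!(\Sigma_{\ge F})\oplus_{\Gamma_!(\Sigma_{>F})}\Gamma_!(\Sigma').
\]
This holds directly from the colimit definition, because every relation lives inside $\Sigma_{\ge F}$ or inside $\Sigma'$. By induction, together with \eqref{eqn:(co)sections above F}, we have $\Gamma_!(\Sigma_{\ge F})=y_F\M$ and $\Gamma_!(\Sigma')=\Span_{\Sigma'}\M$, and also $\Gamma_!(\Sigma_{>F})\cong\Span_{\Sigma_{>F}}\M$. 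The pushout $A\oplus_C B$ of two subspaces $A,B$ of $\M$ maps isomorphically onto $A+B$ exactly when $C\to A\cap B$ is onto. Here that means $y_F\M\cap\Span_{\Sigma'}\M=\Span_{\Sigma_{>F}}\M$, which is precisely coflabbiness for the pair $\Sigma_{\ge F},\Sigma'$.

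\textbf{(c)$\Rightarrow$(b).} Once (c) holds, the corestriction $\Gamma_!(\Sigma'')\to\Gamma_!(\Sigma')$ is identified with the inclusion $\Span_{\Sigma''}\M\subseteq\Span_{\Sigma'}\M$, so it is injective.

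\textbf{(b)$\Rightarrow$(a).} I would first show that (b) implies (c), and then that (c) implies (a).
\begin{itemize}
\item For (b)$\Rightarrow$(c), apply injectivity of corestriction with $\Sigma'=\cL$. Since $\hat0\in\cL$, we have $\Gamma_!(\cL)=\M$, so $\Gamma_!(\Sigma)\to\M$ is injective. This map factors through the surjection onto $\Span_\Sigma\M$, so that surjection is an isomorphism.
\item For (c)$\Rightarrow$(a), apply the Mayer--Vietoris exact sequence of the cosheaf on the cover $\Sigma_1\cup\Sigma_2$:
\[
\Gamma_!(\Sigma_1\cap\Sigma_2)\to\Gamma_!(\Sigma_1)\oplus\Gamma_!(\Sigma_2)\to\Gamma_!(\Sigma_1\cup\Sigma_2)\to0.
\]
Its exactness in the middle comes from the same colimit presentation. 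Under (c) every term becomes a span, and exactness in the middle says exactly that an element of $\Span_{\Sigma_1}\M\cap\Span_{\Sigma_2}\M$ comes from $\Span_{\Sigma_1\cap\Sigma_2}\M$.
\end{itemize}

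The main obstacle is justifying this middle exactness of Mayer--Vietoris for colimits over finite posets with upper-set topology. I would check it by hand from the explicit presentation. An element of the kernel is a pair $(a,-b)$ whose images agree in the quotient by relations. Every relation involves only a pair $G<F$, and both $G$ and $F$ lie in $\Sigma_1$ or both lie in $\Sigma_2$. A relation on a pair in $\Sigma_1\setminus\Sigma_2$ can be absorbed into $a$, a relation on a pair in $\Sigma_2\setminus\Sigma_1$ can be absorbed into $b$, and a relation on a pair in the intersection is absorbed into both. What remains is a lift of the pair to $\Gamma_!(\Sigma_1\cap\Sigma_2)$.

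The flabby statements (d)$\Leftrightarrow$(e)$\Leftrightarrow$(f) follow by the dual argument. One uses limits with the maps $\varphi$, with $\M/\Ann_\Sigma\M$ in place of $\Span_\Sigma\M$ and surjectivity in place of injectivity. Alternatively, one can apply the coflabby case to the dual module $\M^*$, after checking that $\Lambda$ commutes with duality.
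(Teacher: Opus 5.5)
Your proposal is correct and follows the paper's proof essentially step for step: (b)$\Leftrightarrow$(c) via injectivity of the corestriction to $\Gamma_!(\cL,\Lambda\M)=\M$, (c)$\Rightarrow$(a) via the Mayer--Vietoris sequence of the cosheaf, and (a)$\Rightarrow$(c) by induction on $|\Sigma|$ after removing a minimal element, where your pushout criterion simply repackages the paper's five-lemma diagram (and spares you the separate case $\Sigma=\Sigma_{\ge F}$). The one caveat concerns your optional shortcut for (d)--(f) via $\M^*$: in this section $\kk$ may be a DVR, and both the identification $\Lambda(\M^*)\cong(\Lambda\M)^*$ and the interchange of flabby and coflabby under duality are only available when $\kk$ is a field (or $\M$ is coflabby and locally free), so use the direct dual argument with $\Gamma_*$, fiber products and $\M/\Ann_\Sigma(\M)$, which works verbatim.
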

	
	\begin{proof}
		We begin by observing that $\Lambda\M$ is coflabby if and only if, for all upper sets $\Sigma\subseteq\cL$, the corestriction map
		\begin{equation*}
			\Gamma_!(\Sigma,\Lambda\M) \longrightarrow \Gamma_!(\cL,\Lambda\M) = \M
		\end{equation*} is injective.  
		The image of this map is $\Span_\Sigma(\M)$, so (b) and (c) are equivalent.
		
		Suppose that (c) holds, and consider two upper sets $\Sigma_1$ and $\Sigma_2$.  The fact that $\Sigma \mapsto \Gamma_!(\Sigma,\Lambda\M)$ is a cosheaf gives an exact sequence
		\[\Gamma_!(\Sigma_1\cap \Sigma_2,\Lambda\M) \longrightarrow \Gamma_!(\Sigma_1,\Lambda\M)\oplus \Gamma_!(\Sigma_2,\Lambda\M) \longrightarrow \Gamma_!(\Sigma_1\cup \Sigma_2,\Lambda\M).\]
		Applying (c) we obtain an exact sequence
		\[\Span_{\Sigma_1\cap \Sigma_2}(\M) \longrightarrow \Span_{\Sigma_1}(\M) \oplus \Span_{\Sigma_2}(\M) \longrightarrow \Span_{\Sigma_1\cup \Sigma_2}(\M),\]
		where the maps are induced by the natural inclusions, up to a sign.  It follows that we have
		$\Span_{\Sigma_1}(\M) \cap \Span_{\Sigma_2}(\M) = \Span_{\Sigma_1\cap \Sigma_2}(\M)$, so (a) holds.
		
		Conversely, suppose that (a) holds.  We show that $\Gamma_!(\Sigma,\Lambda\M) \to \Span_\Sigma\M$ is an isomorphism by induction on $|\Sigma|$.  If $\Sigma$ is empty, then both sides vanish.  If $|\Sigma|\ge 1$, there are two cases.  If $\Sigma = \Sigma_{\ge F}$ for some $F$, then both sides 
		are equal to $y_F\M$.  Otherwise, choose a 
		minimal element $F$ and let $\Sigma' = \Sigma\setminus \{F\}$, so that $\Sigma = \Sigma' \cup \Sigma_{\ge F}$ and $\Sigma' \cap \Sigma_{\ge F} = \Sigma_{>F}$.  Consider the following diagram:
		\[\begin{tikzcd}
			0 \arrow[r] & \Gamma_!(\Sigma_{>F}, \Lambda\M) \arrow[r] \arrow[d] & \Gamma_!(\Sigma',\Lambda\M) \oplus \Gamma_!(\Sigma_{\ge F},\Lambda\M) \arrow[r]\arrow[d] & \Gamma_!(\Sigma,\Lambda\M) \arrow[r]\arrow[d] & 0 \\
			0 \arrow[r] & \Span_{\Sigma_{>F}}(\M) \arrow[r] & \Span_{\Sigma'}(\M)\oplus \Span_{\Sigma_{\ge F}}(\M) \arrow[r] & \Span_{\Sigma}(\M) \arrow[r] & 0 
		\end{tikzcd}\] 
		Since $\Sigma$ has more than one minimal element, both $\Sigma'$ and $\Sigma_{\ge F}$ have fewer elements than $\Sigma$.  The induction hypothesis implies that the first and second vertical maps are isomorphisms, so the result will follow by the five-lemma if we can show that both rows are exact.
		
		The bottom row is exact on the left because the components of the map are inclusions, it is exact on the right since $\Span_\Sigma(\M) = \Span_{\Sigma'}(\M) + \Span_{\Sigma_{\ge F}}(\M)$, and it is exact in the middle by applying property (a) with $\Sigma_1 = \Sigma'$ and $\Sigma_2 = \Sigma_{\ge F}$.  For the top row, exactness on the left follows from the injectivity of the left vertical map and exactness of the bottom row.  Exactness in the other two places follows from the fact that $\Sigma\mapsto \Gamma_!(\Sigma, \Lambda\M)$ is a cosheaf.  
		
		This completes the proof that (a), (b), (c) are equivalent.
		The proof that (d), (e), (f) are equivalent is analogous, with $\Gamma_!$ replaced by $\Gamma_*$, $\Span_\Sigma\M$ replaced by $\M/\Ann_\Sigma \M$, and all arrows reversed.	
	\end{proof}
	
	\begin{example}	
		The regular $\H(\cL;\kk)$-module $\H(\cL;\kk)$ is coflabby because $\Span_\Sigma \H(\cL;\kk)$ has a basis $\{y_F \mid F \in \Sigma\}$.
		On the other hand, $\H(\cL;\kk)$ is  in general not flabby.    For example, consider
		 the lattice of flats $\cL$ of the rank $3$ uniform matroid on $E = \{1,2,3,4\}$, so that the elements of $\cL$ are $E$
		itself along with all subsets $F \subseteq E$ with $|F| \le 2$.  
		Then $y_3-y_4\in\Ann_{\Sigma_{\ge 12}} \H(\cL;\kk)$, since $y_3y_{12} = y_E = y_4y_{12}$, but $y_3-y_4\notin\Ann_{\Sigma_{\ge 1}} \H(\cL;\kk) + \Ann_{\Sigma_{\ge 2}} \H(\cL;\kk)$.
		Since $\cL$ is not modular, this also follows from Theorem \ref{E=H}.  
	\end{example}
	
	\begin{example}		
		Let $\cL$ be the Boolean lattice on the ground set $\{1,2\}$, so $\H(\cL;\kk)$ is isomorphic to $\kk[y_1,y_2]/\langle y_1^2,y_2^2\rangle$,
		with $y_{12} = y_1y_2$.  Then the maximal ideal $\mathfrak{m}$ is not coflabby: we have $y_1\mathfrak{m} \cap y_2\mathfrak{m} = \kk y_{12}$, but $y_{12}\mathfrak{m} = 0$.  On the other hand, since $\Ann_{\Sigma_{\geq 1}}\mathfrak{m}+\Ann_{\Sigma_{\geq 2}}\mathfrak{m}=\mathfrak{m}=\Ann_{\Sigma_{\geq 1}\cap \Sigma_{\geq 2}}\mathfrak{m}$ and $(\Sigma_{\geq 1}, \Sigma_{\geq 2})$ is the only pair of incomparable upper sets of $\cL$, it follows that $\mathfrak{m}$ is flabby. 
	\end{example}
	
	\subsection{Stalks and costalks}
	We have already discussed the stalk and costalk functors 
	\[
	(\ )_F, (\ )_{[F]}\colon \H(\cL;\kk)\mmod \longrightarrow \kk\mmod.
	\]
	We define analogous functors on the category $\cC(\Sigma;\kk)$ as follows.  For any $F \in \Sigma$, and any object $\V = (\{\V(F)\}, \{\varphi_G^F\},\{\psi_G^F\})$ of $\cC(\Sigma;\kk)$, we define the {\bf stalk}
	\[
	\V_F \coloneq \coker \left(\sum_{G>F} \psi_F^G \colon \bigoplus_{G>F} \V(G)[\rk F - \rk G] \longrightarrow \V(F)\right)
	\]
	and the {\bf costalk}
	\[
	\V_{[F]} \coloneq \ker \left(\sum_{G>F} \varphi_F^G\colon \V(F) \longrightarrow \bigoplus_{G>F} \V(G)\right).
	\]
	These functors can also be described in terms of sections and cosections.  Using 
	the isomorphisms in Equation \eqref{eqn:(co)sections above F}, the restriction and corestriction maps for the inclusion $\Sigma_{>F} \subseteq \Sigma_{\ge F}$ give maps
	\[
\Gamma_!\bigl(\Sigma_{>F},\V\bigr)[\rk F]
\xrightarrow{\ \Psi_F\ }
\V(F)
\xrightarrow{\ \Phi_F\ }
\Gamma_*\bigl(\Sigma_{>F},\V\bigr).
\]
	Then, we have $\V_F = \coker \Psi_F$ and $\V_{[F]} = \ker \Phi_F$.  
		The following lemma is an immediate consequence of the definitions.
	
	\begin{lemma}\label{lem:sheaf and module stalks}
		For an $\H(\cL;\kk)$-module $\M$, we have natural isomorphisms
		\[(\Lambda\M)_F \cong \M_F\and(\Lambda\M)_{[F]} \cong \M_{[F]}.\]
	\end{lemma}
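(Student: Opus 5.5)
The plan is to unwind both sides directly from the definitions, using the explicit descriptions of $\Lambda\M$ in Section~\ref{sec:exploded modules}. Fix $F\in\cL$ and write $\V=\Lambda\M$, so that $\V(G)=y_G\M[\rk G]$. The maps $\psi_F^G\colon \V(G)[\rk F-\rk G]\to \V(F)$ are the inclusions $y_G\M[\rk F]\hookrightarrow y_F\M[\rk F]$. Likewise, the maps $\varphi_F^G\colon\V(F)\to\V(G)$ send $y_Fm\mapsto y_Gm$; equivalently, up to the shift, this is multiplication by $y_H$ for any $H$ of rank $\rk G-\rk F$ with $F\vee H=G$.

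For the stalk, the image of $\sum_{G>F}\psi_F^G$ is $\sum_{G>F} y_G\M[\rk F]$, which is $\Span_{\Sigma_{>F}}(\M)[\rk F]$. Also $\V(F)=y_F\M[\rk F]=\Span_{\Sigma_{\ge F}}(\M)[\rk F]$, since $y_G\M\subseteq y_F\M$ for $G\ge F$. The cokernel is therefore exactly $\M_F$, and the identification is natural in $\M$ because everything is built from inclusions of submodules.

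For the costalk, $\V_{[F]}$ consists of those $y_Fm\in y_F\M$ with $y_Gm=0$ for all $G>F$. I must show this equals $\Ann_{\Sigma_{>\varnothing}}(y_F\M)$, i.e.\ the set of $x\in y_F\M$ with $y_Kx=0$ for every atom or positive-rank $K$. This is the one step needing a small argument. Take $x=y_Fm$ and a positive-rank $K$. If $\rk(F\vee K)<\rk F+\rk K$, then $y_Ky_F=0$ and so $y_Kx=0$ automatically. Otherwise $y_Kx=y_{F\vee K}m$ with $F\vee K>F$, so the kernel condition gives $y_Kx=0$. Conversely, for any $G>F$, choose $H$ with $F\vee H=G$ and $\rk H=\rk G-\rk F>0$; then $y_Gm=y_Hy_Fm=y_Hx$, so annihilation by $\Sigma_{>\varnothing}$ forces $y_Gm=0$. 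Hence the two subspaces coincide, including the shift $[\rk F]$, and the identification is again natural since it is an equality of subspaces of $y_F\M[\rk F]$.

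One expects a routine compatibility check that these identifications carry the costalk-stalk map $\beta_F(\M)$ to the map $\V_{[F]}\to\V(F)\to\V_F$. This holds since both maps are the inclusion into $y_F\M[\rk F]$ followed by the projection.
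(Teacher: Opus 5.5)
Your proposal is correct and is essentially the paper's argument. The paper records this lemma as an immediate consequence of the definitions, and your unwinding supplies exactly the omitted details. You identify $\Span_{\Sigma_{\ge F}}(\M)=y_F\M$. You then show that, for $x=y_Fm$, annihilation by all positive-rank $y_K$ is equivalent to $y_Gm=0$ for all $G>F$, using that $y_Ky_F$ is either $0$ or $y_{F\vee K}$ and the factorizations $y_G=y_Hy_F$.
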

	
\begin{remark}
Lemma \ref{lem:sheaf and module stalks} illustrates why we have chosen to use the words
``stalk'' and ``costalk'' for $\V_F$ and $\V_{[F]}$.  However, the reader is warned that $\V_F$
is {\em not} isomorphic to the stalk of $\V$ in the sense of sheaf theory.  If we regard $\V$ as a sheaf on 
the finite topological space $\Sigma$, then $\Sigma_{\geq F}$ is the minimal open
set containing $F$, so the stalk of $\V$ at $F$ is equal to $\Gamma_*(\Sigma_{\geq F},\V) \cong \V(F)$, which is typically
not isomorphic to $\V_F$. 
\end{remark}

The following lemma is tautological when $\Sigma' = \Sigma_{\ge F}$, but we will need it for arbitrary $\Sigma'$.
	
\begin{lemma}\label{lem:boundaries and (co)stalks}
Suppose we have upper sets $\Sigma' \subseteq \Sigma$, and let $\Sigma'' \coloneq \Sigma' \setminus \{F\}$, where $F$ is a minimal element in $\Sigma'$.
Let $\V$ be an object of $\cC(\Sigma;\kk)$.
		\begin{enumerate}[(a)]
			\item There is an exact sequence
			\[
			0 \longrightarrow \V_{[F]} \longrightarrow  \Gamma_*(\Sigma',\V) \longrightarrow \Gamma_*(\Sigma'',\V),
			\]
			and the last map is surjective if $\V$ is flabby. 
			\item 	There is an exact sequence
			\[
			\Gamma_!(\Sigma'',\V)[\rk F] \longrightarrow  \Gamma_!(\Sigma',\V)[\rk F] \longrightarrow \V_F \longrightarrow 0,
			\]
			and the first map is injective if $\V$ is coflabby.
		\end{enumerate}
	\end{lemma}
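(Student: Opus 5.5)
The plan is to prove both parts directly from the explicit descriptions of $\Gamma_*$ and $\Gamma_!$ as a limit and a colimit. Since $F$ is minimal in $\Sigma'$, no element of $\Sigma''$ lies below $F$, and every element above $F$ lies in $\Sigma''$; hence $\Sigma_{>F}\subseteq\Sigma''$ and $\Sigma' = \Sigma''\cup\Sigma_{\ge F}$ with $\Sigma''\cap\Sigma_{\ge F} = \Sigma_{>F}$. The only compatibility conditions in $\Gamma_*(\Sigma',\V)$ that involve the coordinate $x_F$ are $\varphi_F^G(x_F) = x_G$ for $G>F$. Part (b) is the dual statement of part (a), and I will prove it by reversing all arrows.

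For (a), let $\V_{[F]}\to\Gamma_*(\Sigma',\V)$ send $v$ to the tuple with $x_F = v$ and $x_G = 0$ for $G\ne F$. This tuple is a section precisely because $\varphi_F^G(v)=0$ for all $G>F$, which is the definition of the costalk. The map to $\Gamma_*(\Sigma'',\V)$ is restriction. The composite is zero, and the first map is injective. For exactness in the middle, a section whose restriction to $\Sigma''$ vanishes has $x_G=0$ for all $G>F$, so $x_F\in\ker\Phi_F=\V_{[F]}$.

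The surjectivity claim for flabby $\V$ is immediate, because flabbiness is defined as surjectivity of all restriction maps $\Gamma_*(\Sigma',\V)\to\Gamma_*(\Sigma'',\V)$ between upper sets.

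For (b), write $\Gamma_!(\Sigma',\V)$ as $\bigoplus_{G\in\Sigma'}\V(G)[-\rk G]$ modulo the relations $\psi_H^G(x)-x$. Its quotient by the image of $\Gamma_!(\Sigma'',\V)$ is generated by $\V(F)[-\rk F]$, and the only surviving relations are those of the form $\psi_F^G(x)-x$ with $G>F$, since $x$ then lies in the killed summand. After the shift by $\rk F$, this quotient is $\V(F)$ modulo $\sum_{G>F}\operatorname{im}\psi_F^G$, which is exactly $\V_F=\coker\Psi_F$. Injectivity of the first map when $\V$ is coflabby is again just the definition of coflabbiness applied to $\Sigma''\subseteq\Sigma'$.

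I expect no real obstacle here. The only care needed is the bookkeeping of grading shifts in (b), and checking that minimality of $F$ ensures that no relation $\psi_H^F$ with $H<F$ enters in $\Sigma'$.
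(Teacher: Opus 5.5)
Your proof is correct and is essentially the paper's argument. The paper identifies the kernel of the restriction $\Gamma_*(\Sigma',\V)\to\Gamma_*(\Sigma'',\V)$ with $\ker\Phi_F=\V_{[F]}$, and dually the cokernel of the corestriction with $\coker\Psi_F=\V_F$, by invoking the sheaf (resp.\ cosheaf) gluing sequence for the cover $\Sigma'=\Sigma''\cup\Sigma_{\ge F}$ with $\Sigma''\cap\Sigma_{\ge F}=\Sigma_{>F}$; you verify the same identifications coordinatewise from the explicit limit and colimit presentations.
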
	
	\begin{proof}
		The fact that $\V$ is a sheaf on $\Sigma$ implies that we have an exact sequence
		\[
		0 \longrightarrow \Gamma_*(\Sigma',\V) \longrightarrow \Gamma_*(\Sigma'',\V) \oplus \Gamma_*(\Sigma_{\ge F},\V) \longrightarrow \Gamma_*(\Sigma_{>F},\V).
		\]  
		Thus, the kernel of the restriction map
		$\Gamma_*(\Sigma',\V) \to \Gamma_*(\Sigma'',\V)$ is isomorphic to the kernel of the map $\Phi_F\colon \V(F) = \Gamma_*(\Sigma_{\ge F},\V) \to \Gamma_*(\Sigma_{>F},\V)$.  This gives the first exact sequence, and flabbiness of $\V$ implies the surjectivity of the last map.
		The proof of (b) is analogous.
	\end{proof}
	
	\begin{lemma}\label{lem:(co)flabby extensions}  
		An object $\cV$ of $\cC(\Sigma;\kk)$ is flabby if and only if $\Phi_F\colon \V(F) \to \Gamma_*\bigl(\Sigma_{>F},\V\bigr)$ is surjective for all $F\in \Sigma$, and coflabby if and only if $\Psi_F\colon \Gamma_!\bigl(\Sigma_{>F},\V\bigr)[\rk F] \to\V(F)$ is injective for all $F \in \Sigma$.
		
	\end{lemma}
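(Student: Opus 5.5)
The plan is to prove both directions for the flabby case; the coflabby case is formally dual and follows by reversing arrows, with $\Gamma_!$, $\Psi_F$ and injectivity in place of $\Gamma_*$, $\Phi_F$ and surjectivity. Recall that $\Phi_F\colon \V(F)=\Gamma_*(\Sigma_{\ge F},\V)\to\Gamma_*(\Sigma_{>F},\V)$ is just the restriction map for the inclusion of upper sets $\Sigma_{>F}\subseteq\Sigma_{\ge F}$, using Equation \eqref{eqn:(co)sections above F}.

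\emph{Necessity.} If $\V$ is flabby, then every restriction map is surjective, in particular the one for $\Sigma_{>F}\subseteq \Sigma_{\ge F}$. So $\Phi_F$ is surjective for every $F\in\Sigma$. Dually, if $\V$ is coflabby, then $\Psi_F$ is injective, being the corestriction for the same inclusion.

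\emph{Sufficiency.} Assume every $\Phi_F$ is surjective, and let $\Sigma''\subseteq\Sigma'\subseteq\Sigma$ be upper sets. Since $\Sigma'\setminus\Sigma''$ is finite, I would choose a chain of upper sets $\Sigma''=\Sigma_0\subset\Sigma_1\subset\cdots\subset\Sigma_n=\Sigma'$ with $\Sigma_{i}=\Sigma_{i-1}\cup\{F_i\}$. This is possible by repeatedly adding a \emph{maximal} element of the remaining complement: adding a maximal element of $\Sigma'\setminus\Sigma_{i-1}$ keeps the set upwardly closed, and that element is then minimal in $\Sigma_i$. It therefore suffices to show that each single-step restriction $\Gamma_*(\Sigma_i,\V)\to\Gamma_*(\Sigma_{i-1},\V)$ is surjective, since compositions of surjections are surjective.

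For one step, write $\Sigma'=\Sigma''\cup\{F\}$ with $F$ minimal in $\Sigma'$. Then $\Sigma'=\Sigma''\cup\Sigma_{\ge F}$ and $\Sigma''\cap\Sigma_{\ge F}=\Sigma_{>F}$, and the sheaf gluing sequence from the proof of Lemma \ref{lem:boundaries and (co)stalks} identifies $\Gamma_*(\Sigma',\V)$ with the fiber product of $\Gamma_*(\Sigma'',\V)$ and $\V(F)$ over $\Gamma_*(\Sigma_{>F},\V)$. Given a section $s\in\Gamma_*(\Sigma'',\V)$, restrict it to $\Sigma_{>F}$. Surjectivity of $\Phi_F$ provides $x\in\V(F)$ with the same image, and the pair $(s,x)$ glues to a section on $\Sigma'$ restricting to $s$.

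Dually, for coflabbiness, the cosheaf sequence gives that $\Gamma_!(\Sigma',\V)$ is the pushout of $\Gamma_!(\Sigma'',\V)$ and $\V(F)[-\rk F]$ along $\Gamma_!(\Sigma_{>F},\V)$. A pushout of an injection is injective, so injectivity of $\Psi_F$ gives injectivity of the single-step corestriction, and composing the steps again finishes the argument. The only mildly delicate point is confirming the fiber product and pushout descriptions, that is, exactness of the gluing sequences for this two-set cover. These are already used in Lemma \ref{lem:boundaries and (co)stalks} and Proposition \ref{prop:criteria for (co)flabbiness}, so I expect no real obstacle.
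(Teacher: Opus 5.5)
Your proposal is correct and follows essentially the same argument as the paper: necessity holds because $\Phi_F$ and $\Psi_F$ are the restriction and corestriction for $\Sigma_{>F}\subseteq\Sigma_{\ge F}$, and sufficiency comes from reducing to adding a single minimal element and gluing $s$ with a lift of $s|_{\Sigma_{>F}}$ through $\Phi_F$. Your explicit chain construction (adding maximal elements of $\Sigma'\setminus\Sigma''$) and your pushout-of-a-monomorphism argument for the coflabby case simply spell out details that the paper leaves as ``by induction'' and ``similar.''
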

	
\begin{proof}
We prove the first statement; the proof of the second statement is similar.  The map $\Phi_F$ is by definition the restriction of 
sections from $\Sigma_{\ge F}$ to $\Sigma_{>F}$, so it is surjective when $\V$ is flabby.  Now assume that $\Phi_F$ is surjective for all $F\in\Sigma$,
and let $\Sigma''\subseteq\Sigma'\subseteq\Sigma$ be given.  We wish to show that  
		the restriction $\Gamma_*(\Sigma',\cV) \to \Gamma_*(\Sigma'',\cV)$ is a surjection.  
		
		By induction on $|\Sigma' \setminus \Sigma''|$, it suffices to prove surjectivity when $\Sigma' = \Sigma'' \cup \{F\}$
		for some $F\in\Sigma$. 
		Given a section $s \in \Gamma_*(\Sigma'',\V)$, the restriction $s|_{\Sigma_{>F}}$ extends to a section $t\in \Gamma_*(\Sigma_{\ge F},\V) = \V(F)$, since $\Phi_F$ is surjective.  The sections $s$ and $t$ agree on $\Sigma''\cap \Sigma_{\ge F} = \Sigma_{>F}$, so they glue to give a section on  $\Sigma'$ whose restriction to $\Sigma''$ is $s$.	
	\end{proof}

\subsection{Freeness and Duality}\label{sec:freeness}
When $\kk$ is a field, the endofunctor of $\H(\cL;\kk)\mmod$ that sends $\M$ to $\M^* \coloneq \Hom_\kk(\M,\kk)$ is exact, and the natural map $\M \to (\M^*)^*$ is an isomorphism.  (Recall that all of our modules are finitely generated, and that any finitely generated $\H(\cL;\kk)$-module is 
also finitely generated as a $\kk$-module.)  
	
When $\kk$ is a DVR, however, we only have $V \cong (V^*)^*$ when $V$ is a free $\kk$-module, and the dual of a \emph{split} exact sequence is exact, but the duals of exact sequences are only left exact in general.  Because of this, we need to add an additional freeness hypothesis when $\kk$ is a DVR.  
The case of a DVR is needed for the proof of Theorem \ref{it goes up}, which appears in Section \ref{sec:base change}.
For all other results in the paper, nothing is lost by assuming that $\kk$ is a field.
	
	\begin{definition}\label{new pure}
		An $\H(\cL;\kk)$-module $\M$ is called \textbf{locally free} if the stalk $\M_F$ is a free $\kk$-module for all $F \in \cL$.
		An object $\cV$ of $\cC(\Sigma;\kk)$ is called {\bf locally free} if the stalk $\cV_F$ is a free $\kk$-module for all $F\in \Sigma$.
		An object of either $\H(\cL;\kk)\mmod$ or $\cC(\Sigma;\kk)$ is called {\bf pure} if it is flabby, coflabby, and locally free.   
	\end{definition}
\begin{remark}\label{purepure}
Lemma \ref{lem:sheaf and module stalks} and Proposition \ref{prop:criteria for (co)flabbiness}
	imply that $\M$ is locally free if and only if $\Lambda\M$ is locally free, and $\M$ is pure if and only if $\Lambda\M$ is pure.
	\end{remark}
	
\begin{lemma}\label{pure implies regular}
Let $\V$ be an object of $\cC(\Sigma;\kk)$.  If $\V$ is pure, then it is regular.
\end{lemma}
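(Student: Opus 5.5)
The plan is to deduce regularity straight from the (co)sheaf descriptions of flabbiness and coflabbiness, by factoring each structure map through sections or cosections on $\Sigma_{>G}$. Local freeness should not be needed; only flabbiness and coflabbiness enter. Fix $G<F$ in $\Sigma$; the case $G=F$ is trivial by property (a) of Proposition~\ref{prop:psi phi relations}. I need to show that $\varphi_G^F$ is surjective and $\psi_G^F$ is injective.

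For surjectivity, use the identification $\Gamma_*(\Sigma_{\ge H},\V)\cong \V(H)$ from Equation~\eqref{eqn:(co)sections above F}, applied with $H=G$ and $H=F$. Since $F>G$, we have upper sets $\Sigma_{\ge F}\subseteq \Sigma_{>G}\subseteq \Sigma_{\ge G}$. The restriction $\Gamma_*(\Sigma_{\ge G},\V)\to\Gamma_*(\Sigma_{\ge F},\V)$ therefore factors as
\[
\V(G)\xrightarrow{\ \Phi_G\ }\Gamma_*(\Sigma_{>G},\V)\longrightarrow \Gamma_*(\Sigma_{\ge F},\V)=\V(F).
\]
I will check that this composite is $\varphi_G^F$. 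A section on $\Sigma_{\ge G}$ is a compatible tuple $(x_H)$ with $x_H=\varphi_G^H(x_G)$, and restricting it to $\Sigma_{\ge F}$ and reading off the $F$-component gives $\varphi_G^F(x_G)$. Both maps in the factorization are surjective: the first by Lemma~\ref{lem:(co)flabby extensions}, the second directly by flabbiness. Hence $\varphi_G^F$ is surjective.

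For injectivity, dualize the argument using cosections and the identification $\Gamma_!(\Sigma_{\ge H},\V)\cong\V(H)[-\rk H]$. The corestriction along $\Sigma_{\ge F}\subseteq\Sigma_{>G}\subseteq\Sigma_{\ge G}$, shifted by $[\rk G]$, reads
\[
\V(F)[\rk G-\rk F]=\Gamma_!(\Sigma_{\ge F},\V)[\rk G]\longrightarrow \Gamma_!(\Sigma_{>G},\V)[\rk G]\xrightarrow{\ \Psi_G\ }\V(G).
\]
The first map is injective by coflabbiness, and $\Psi_G$ is injective by Lemma~\ref{lem:(co)flabby extensions}. The composite sends $x\in\V(F)$ to the class of $x$ in the colimit, and then to $\psi_G^F(x)$, because $\Psi_G$ is induced by the maps $\psi_G^H$. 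So the composite is $\psi_G^F$ up to the indicated shift, and $\psi_G^F$ is injective.

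The only real care needed is in identifying these composites with $\varphi_G^F$ and $\psi_G^F$, including the grading shifts in the cosection case. That is a direct unwinding of the limit and colimit definitions together with the composition rule (b) of Proposition~\ref{prop:psi phi relations}.
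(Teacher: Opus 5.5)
Your proof is correct and is essentially the paper's argument: identify $\varphi_G^F$ (respectively $\psi_G^F$) with the restriction (respectively corestriction) between the principal open sets $\Sigma_{\ge F}\subseteq\Sigma_{\ge G}$, then invoke flabbiness (respectively coflabbiness). The detour through $\Sigma_{>G}$ and Lemma~\ref{lem:(co)flabby extensions} is harmless but unnecessary, because the definitions of flabby and coflabby already apply directly to the pair $\Sigma_{\ge F}\subseteq\Sigma_{\ge G}$.
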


\begin{proof}
We have $\Gamma_*(\Sigma_{\ge F},\V) \cong \V(F)$, and for any $G \leq F$, the restriction map 
from $\Gamma_*(\Sigma_{\ge G}, \V)$ to $\Gamma_*(\Sigma_{\ge F}, \V)$ is given by $\varphi_G^F$.  Thus flabbiness implies these maps are surjective.  A dual argument shows that coflabbiness implies the maps $\psi_G^F$ are injective.
	\end{proof}

	\begin{proposition}\label{prop:(local) freeness}
		If an object $\cV$ of $\cC(\Sigma;\kk)$ is coflabby and locally free, the following hold:
		\begin{enumerate}[(a)]
			\item For any upper set $\Sigma'\subseteq \Sigma$, the space $\Gamma_!(\Sigma',\V)$ of cosections is a free $\kk$-module.
			\item The corestriction homomorphisms $\Gamma_!(\Sigma'',\V) \to \Gamma_!(\Sigma',\V)$ are split injections.
			\item Every $\V(F)$ is a free $\kk$-module, and each map $\psi_G^F$ is a split injection.
			\item For any upper set $\Sigma'\subseteq \Sigma$, the space $\Gamma_*(\Sigma',\V)$ of sections is a free $\kk$-module.
		\end{enumerate}
	\end{proposition}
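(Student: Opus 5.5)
Parts (a) and (b) come first, by a single induction on $|\Sigma'|$, which is the number of elements of the upper set. If $\Sigma'$ is empty, then $\Gamma_!(\Sigma',\V)=0$ and there is nothing to prove. Otherwise choose a minimal element $F$ of $\Sigma'$ and set $\Sigma''=\Sigma'\setminus\{F\}$. Lemma~\ref{lem:boundaries and (co)stalks}(b) and coflabbiness give a short exact sequence
\[
0\longrightarrow \Gamma_!(\Sigma'',\V)\longrightarrow \Gamma_!(\Sigma',\V)\longrightarrow \V_F[-\rk F]\longrightarrow 0 .
\]
By local freeness $\V_F$ is free, so the sequence splits. This yields freeness of $\Gamma_!(\Sigma',\V)$ from freeness of $\Gamma_!(\Sigma'',\V)$, which holds by induction; $\kk$ is a PID and all modules are finitely generated. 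It also shows that the corestriction from $\Sigma''$ to $\Sigma'$ is split injective. A general corestriction $\Gamma_!(\Sigma'',\V)\to\Gamma_!(\Sigma',\V)$ for upper sets $\Sigma''\subseteq\Sigma'$ factors as a composite of such one-step corestrictions: remove minimal elements of $\Sigma'\setminus\Sigma''$ one at a time, noting that a minimal element of $\Sigma'\setminus\Sigma''$ is minimal in $\Sigma'$ because $\Sigma''$ is upward closed. A composite of split injections is split injective, which gives (b).

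For (c), apply (a) to $\Sigma_{\ge F}$. Equation~\eqref{eqn:(co)sections above F} identifies $\Gamma_!(\Sigma_{\ge F},\V)$ with $\V(F)[-\rk F]$, so $\V(F)$ is free. For $G\le F$, the map $\psi_G^F$ is identified, up to shift, with the corestriction $\Gamma_!(\Sigma_{\ge F},\V)\to\Gamma_!(\Sigma_{\ge G},\V)$. I would verify this directly from the colimit description: the class of $x\in\V(F)$ equals the class of $\psi_G^F(x)\in\V(G)$. It is therefore a split injection by (b).

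For (d), $\Gamma_*(\Sigma',\V)$ is by definition a submodule of $\prod_{F\in\Sigma'}\V(F)$. This product is a finite product of free $\kk$-modules by (c), and all modules are finitely generated. Since $\kk$ is a PID, submodules of finitely generated free modules are free, which gives (d). Note that no flabbiness hypothesis is needed here.

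The only delicate point is the bookkeeping in (c). One must check that under the isomorphisms of Equation~\eqref{eqn:(co)sections above F} the corestriction really is $\psi_G^F$, with the correct grading shift. I would confirm this directly from the colimit relations $\psi_G^F(x)-x$ and relation (b) of Proposition~\ref{prop:psi phi relations}, which guarantee that $\V(G)[-\rk G]$ is the colimit over $\Sigma_{\ge G}$.
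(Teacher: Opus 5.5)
Your proposal is correct and takes essentially the same route as the paper. For (a) and (b) you use the split short exact sequence from Lemma \ref{lem:boundaries and (co)stalks}(b), with induction on $|\Sigma'|$ and then a factorization of a general corestriction into one-step corestrictions. You obtain (c) as the special case $\Sigma_{\ge F}$, and (d) from the fact that a submodule of a finitely generated free module over $\kk$ is free. Your extra checks, that a minimal element of $\Sigma'\setminus\Sigma''$ is minimal in $\Sigma'$ and that the shifts identify $\psi_G^F$ with a corestriction, are details the paper leaves implicit.
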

	\begin{proof}
		Consider upper sets $\Sigma'' \subseteq \Sigma' \subseteq \Sigma$, and suppose that $\Sigma'' = \Sigma' \setminus \{F\}$ for a minimal element $F \in \Sigma'$.  Lemma \ref{lem:boundaries and (co)stalks} gives an exact sequence of $\kk$-modules
		\[
		0 \longrightarrow \Gamma_!(\Sigma'',\V) \longrightarrow \Gamma_!(\Sigma',\V)\longrightarrow \V_F[-\rk F] \longrightarrow 0.
		\]
		Since $\V_F$ is free, the sequence splits, and if $\Gamma_!(\Sigma'',\V)$ is free, then so is
		$\Gamma_!(\Sigma',\V)$.  Induction on the quantity $|\Sigma'|$ shows that $\Gamma_!(\Sigma',\V)$ is free, and induction on 
		the quantity $|\Sigma'\setminus \Sigma''|$ shows that the corestriction map $\Gamma_!(\Sigma'',\V) \to \Gamma_!(\Sigma',\V)$ is a composition of split injections, hence also a split injection.  The statement (c) is a special case of (a) and (b), since $\cV(F) = \Gamma_!(\Sigma_{\ge F},\V)[\rk F]$ and the maps $\psi_G^F$ are corestrictions.
		
		To prove (d), we note that $\Gamma_*(\Sigma',\V)$ is a $\kk$-submodule of the free module $\bigoplus_{F \in \Sigma'} \V(F)$, hence it is torsion-free, and a finitely generated torsion-free module over a DVR is free. 
	\end{proof}
	
We next define the duality functor on $\cC(\Sigma;\kk)$.
For an object $\V = (\{\V(F)\}, \{\varphi_G^F\}, \{\psi_G^F\})$, let
	\[\W(F) = (\V(F))^*[\rk F].\]
	For any $G \le F\in\Sigma$, let $\bar\varphi_G^F\colon \W(G) \to \W(F)$ be the dual of the $\kk$-module homomorphism
	\[
	\psi_G^F[-\rk F]\colon \V(F)[-\rk F] \longrightarrow \V(G)[-\rk G].
	\]
	Similarly, let $\bar\psi_G^F\colon \W(F)\to \W(G)[\rk F - \rk G]$ be the dual of the homomorphism
	\[
	\varphi_G^F[-\rk F]\colon\V(G)[-\rk F] \longrightarrow \V(F)[-\rk F].
	\]
	  Then we define $\V^*$ to be the triple $(\{\W(F)\}, \{\bar\varphi_G^F\}, \{\bar\psi_G^F\})$.
	
\begin{lemma}\label{lem:sheafification and duality}
Let $\M$ be an $\H(\cL;\kk)$-module.  If $\kk$ is a field or if $\M$ is coflabby and locally free, then there is a natural isomorphism $\Lambda(\M^*) \cong (\Lambda \M)^*$.
\end{lemma}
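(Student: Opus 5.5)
The plan is to compare, for each $F\in\cL$, the space $\Lambda(\M^*)(F)=y_F\M^*[\rk F]$ with $(\Lambda\M)^*(F)=(y_F\M[\rk F])^*[\rk F]=(y_F\M)^*$. The key observation is that $y_F$ acting on $\M^*$ is the transpose of $y_F$ acting on $\M$. So $y_F\M^*$ is the image of the dual map $(y_F)^*\colon \M^*\to \M^*[\rk F]$. Factor $y_F=\psi^F\varphi^F$ on $\M$ with $\varphi^F\colon \M\twoheadrightarrow y_F\M[\rk F]$ surjective and $\psi^F\colon y_F\M[\rk F]\hookrightarrow \M[\rk F]$ injective. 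Then $(y_F)^*=(\varphi^F)^*(\psi^F)^*$. Here $(\varphi^F)^*$ is injective, and $(\psi^F)^*$ is surjective provided $\psi^F$ is a split injection (automatic over a field). Hence $\im (y_F)^*\cong (y_F\M)^*$ up to the appropriate shift, which gives the isomorphism on each object $\cW(F)$. I would record this as a canonical identification
\[
\iota_F\colon (y_F\M)^* \xrightarrow{\ \sim\ } y_F\M^*,
\]
sending a functional $\lambda$ on $y_F\M$ to $\lambda\circ(\text{multiplication by } y_F)$, and check the degrees match the shift convention $\cW(F)=\V(F)^*[\rk F]$.

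The second step is compatibility with the structure maps. Using the definition of $\bar\varphi_G^F$ as the dual of $\psi_G^F$ and $\bar\psi_G^F$ as the dual of $\varphi_G^F$, I would check on elements that $\iota$ intertwines them with the maps $\varphi_G^F,\psi_G^F$ of $\Lambda(\M^*)$. On $\Lambda(\M^*)$, $\varphi_G^F$ is multiplication by $y_H$ (with $G\vee H=F$) from $y_G\M^*$ to $y_F\M^*$, and $\psi_G^F$ is the inclusion $y_F\M^*\subseteq y_G\M^*$. Take $\lambda\in(y_G\M)^*$, so $\iota_G(\lambda)=\lambda\circ y_G$. Then $y_H\cdot\iota_G(\lambda)=\lambda\circ y_G\circ y_H=\lambda\circ y_F$ restricted appropriately. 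Dually, the restriction of $\lambda$ along $\psi_G^F\colon y_F\M\hookrightarrow y_G\M$ yields a functional $\lambda'$ on $y_F\M$ with $\iota_F(\lambda')=\lambda|_{y_F\M}\circ y_F$. These two agree, since $y_F m=y_H(y_G m)$ lies in $y_F\M\subseteq y_G\M$. The $\psi$ compatibility is similar: $\iota_F(\mu)=\mu\circ y_F=(\mu\circ\varphi_G^F)\circ y_G=\iota_G((\varphi_G^F)^*\mu)$. Naturality in $\M$ is clear from the formula for $\iota$.

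The main obstacle is the DVR case, namely the surjectivity of $(\psi^F)^*$ and the fact that dualizing does not create torsion issues. Here I would invoke Proposition~\ref{prop:(local) freeness}. By Remark~\ref{purepure} and Proposition~\ref{prop:criteria for (co)flabbiness}, $\Lambda\M$ is coflabby and locally free. Part (c) of Proposition~\ref{prop:(local) freeness} then says each $y_F\M[\rk F]=\Lambda\M(F)$ is free and each $\psi_G^F$, in particular $\psi^F=\psi^F_{\hz}$, is a split injection. Consequently $(\psi^F)^*$ is a split surjection, and $\M=\Lambda\M(\hz)$ is free, so $\M^*$ behaves well. With this the image computation of the first step goes through verbatim, completing the proof.
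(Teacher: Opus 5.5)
Your proof is correct and takes the same approach as the paper. Both factor $y_F=\psi^F\varphi^F$, get split injectivity of $\psi^F$ from Proposition~\ref{prop:(local) freeness} (you justify the transfer of hypotheses from $\M$ to $\Lambda\M$ via Remark~\ref{purepure} and Proposition~\ref{prop:criteria for (co)flabbiness}), and dualize to identify $y_F\M^*[\rk F]$ with $\M(F)^*[\rk F]$. Your explicit check that the identification intertwines $\bar\varphi_G^F$ and $\bar\psi_G^F$ with the structure maps of $\Lambda(\M^*)$ fills in what the paper dismisses as ``handled similarly.''
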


\begin{proof}
By Proposition \ref{prop:(local) freeness}, $\M$ and $\M(F)$ are free $\kk$-modules, and $\psi^F=\psi^F_\hz \colon \M(F) \to \M[\rk F]$ is a split injection.  Dualizing and shifting gives a factorization of multiplication by $y_F$ on $\M^*$:
		\[
		\xymatrix{
			\M^*  \ar[rr]^{y_F} \ar[dr]_{(\psi^F)^*}&& \M^*[\rk F] 
			\\
			& \M(F)^*[\rk F] \ar[ur]_{(\varphi^F)^*}&
		}
		\]
Since $(\psi^F)^*$ is a surjection and $(\varphi^F)^*$ is an injection, we have an isomorphism $\M^*(F) \cong \M(F)^*[\rk F]$.  Dualizing the maps $\varphi_G^F$ and $\psi_G^F$ is handled similarly.
\end{proof}
	
\begin{lemma}
Let $\Sigma\subseteq \cL$ be an upper set and $\cV$ an object of $\cC_\reg(\Sigma)$.
If $\kk$ is a field or if $\cV$ is coflabby and locally free, then $\V^*$ is regular.
\end{lemma}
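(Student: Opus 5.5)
We need to show that if $\V$ is regular (every $\varphi_G^F$ surjective and every $\psi_G^F$ injective), then in $\V^*$ every $\bar\varphi_G^F$ is surjective and every $\bar\psi_G^F$ is injective.

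By construction, $\bar\varphi_G^F$ is the $\kk$-dual of $\psi_G^F$, up to shift. Likewise, $\bar\psi_G^F$ is the $\kk$-dual of $\varphi_G^F$, up to shift. So the statement reduces to two facts about duality of $\kk$-modules: the dual of an injection is a surjection, and the dual of a surjection is an injection. We treat the two hypotheses separately.

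\emph{Case $\kk$ a field.} The functor $\Hom_\kk(-,\kk)$ is exact on graded vector spaces, degree by degree. Since $\psi_G^F\colon \V(F)\to\V(G)[\rk F-\rk G]$ is injective, its dual $\bar\varphi_G^F$ is surjective. Since $\varphi_G^F$ is surjective, its dual $\bar\psi_G^F$ is injective. We also need that $\V^*$ is an object of $\cC(\Sigma;\kk)$ at all, i.e.\ that it satisfies the relations of Proposition~\ref{prop:psi phi relations}. This is immediate: dualizing reverses compositions, so relation (b) for the $\bar\varphi$'s is the dual of relation (b) for the $\psi$'s, and vice versa. Relation (c), after dualizing, is the same relation with the roles of $F$ and $G$ interchanged. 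The only thing to check there is that the shifts match, which is routine bookkeeping.

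\emph{Case $\V$ coflabby and locally free.} Here $\kk$ is a DVR. By Proposition~\ref{prop:(local) freeness}(c), every $\V(F)$ is a free $\kk$-module and every $\psi_G^F$ is a split injection. The dual of a split injection is a split surjection, so each $\bar\varphi_G^F$ is surjective.

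For $\bar\psi_G^F$, note that the dual of any surjection $A\twoheadrightarrow B$ is injective. This holds over any ring, because $\Hom_\kk(-,\kk)$ is left exact: if $f\colon B\to\kk$ vanishes on the image of $A$, and $A\to B$ is onto, then $f=0$. So regularity of $\V$ gives injectivity of each $\bar\psi_G^F$ with no further hypothesis.

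The main point needing care is the surjectivity of $\bar\varphi_G^F$ over a DVR. The dual of a mere injection can fail to be surjective there; for example, the dual of multiplication by a uniformizer on $\kk$ is again multiplication by the uniformizer, which is not onto. This is exactly why we invoke the split-injection statement of Proposition~\ref{prop:(local) freeness}(c), and I expect no further obstacle.
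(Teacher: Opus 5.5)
Your argument is correct, but it takes a different route from the paper's.

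\textbf{The paper's route.} It first reduces to a principal upper set $\Sigma_{\ge F}=\cL_F$, hence to $\Sigma=\cL$, using that coflabbiness, local freeness and regularity are preserved under restriction. It then writes $\cV\cong\Lambda\M$ with $\M=\Theta\cV$ (Proposition~\ref{thm:main equivalence}). Next it invokes Lemma~\ref{lem:sheafification and duality} to get $\cV^*\cong(\Lambda\M)^*\cong\Lambda(\M^*)$. It concludes because explosions of modules are automatically regular.

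\textbf{Your route.} You check regularity componentwise. Since $\bar\varphi_G^F$ is the dual of $\psi_G^F$ and $\bar\psi_G^F$ is the dual of $\varphi_G^F$, you only need two facts: duals of surjections are injective (left exactness of $\Hom_\kk(-,\kk)$), and duals of split injections are surjective. The splitting comes from Proposition~\ref{prop:(local) freeness}(c).

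\textbf{Comparison.} The essential input is the same in both proofs. In the coflabby, locally free case, the paper's Lemma~\ref{lem:sheafification and duality} rests on exactly this splitting of $\psi^F$. Your version avoids the reduction to $\cL$ and the equivalence of categories, and it works verbatim for any upper set. It also checks explicitly that $\cV^*$ satisfies the relations of Proposition~\ref{prop:psi phi relations}, which the paper's definition of $\cV^*$ leaves implicit. Your dualization of relations (a)--(c) does not depend on the coefficient ring, so it covers the DVR case too, even though you state it only under the field hypothesis. What the paper's route gives in addition is the identification $\cV^*\cong\Lambda(\M^*)$ itself. That identification relates the duality on $\cC(\cL;\kk)$ to the naive duality of $\H(\cL;\kk)$-modules.
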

	
\begin{proof}
Since coflabbiness, local freeness, and regularity are all preserved under restriction from $\cC(\Sigma;\kk)$ to $\cC(\Sigma';\kk)$, it is enough to prove the assertion when $\Sigma=\Sigma_{\ge F}$ is a principal open set.  But $\Sigma_{\ge F} = \cL_F$ is a geometric lattice, so it is enough to prove the statement when $\Sigma = \cL$.  Let $\M \coloneq \Theta\V$; then Proposition \ref{thm:main equivalence} tells us that $\cV \cong \Lambda\M$.  
Since $\kk$ is a field or $\M$ is coflabby and locally free, Lemma \ref{lem:sheafification and duality} says that
		\[\cV^* \cong (\Lambda\M)^* \cong \Lambda(\M^*),\]
which is regular by Proposition \ref{thm:main equivalence}.
	\end{proof}
	
	When $\kk$ is a field, the duality functor on $\cC(\Sigma;\kk)$ interchanges the section and cosection functors.  This also holds when $\kk$ is a DVR, but additional hypotheses are needed.
	\begin{proposition}\label{prop:(co)sections and duality}  	
		Let $\V$ be an object of $\cC(\Sigma;\kk)$.
		\begin{enumerate}[(a)]
			\item There is a natural isomorphism $\Gamma_*(\Sigma,\V^*) \cong \Gamma_!(\Sigma,\V)^*$.
			For any $\Sigma'\subseteq\Sigma$,
			this isomorphism intertwines restriction to $\Sigma'$ for $\V^*$ with the dual of corestriction for $\V$.
			\item If $\kk$ is a field or $\V$ is pure, then there is a natural isomorphism
			$\Gamma_!(\Sigma,\V^*) \cong \Gamma_*(\Sigma,\V)^*$.
			For any $\Sigma'\subseteq \Sigma$, this isomorphism intertwines corestriction for $\V^*$ with the dual of restriction for $\V$.
		\end{enumerate}
	\end{proposition}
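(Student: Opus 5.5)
Part (a) is a formal computation with limits and colimits. By definition, $\Gamma_!(\Sigma,\V)$ is the cokernel of
\[
\bigoplus_{G<F}\V(F)[-\rk F]\longrightarrow\bigoplus_{F\in\Sigma}\V(F)[-\rk F],\qquad x\mapsto \psi_G^F(x)-x.
\]
Apply $\Hom_\kk(-,\kk)$. This functor turns cokernels into kernels over any ring, so no freeness is needed. The dual is therefore the kernel of
\[
\prod_F \V(F)^*[\rk F]\longrightarrow\prod_{G<F}\V(F)^*[\rk F].
\]
By the definition of $\V^*$, we have $\W(F)=\V(F)^*[\rk F]$. The dual of $\psi_G^F[-\rk F]$ is $\bar\varphi_G^F$, so the condition defining this kernel reads $\bar\varphi_G^F(\xi_G)=\xi_F$. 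That is exactly the limit defining $\Gamma_*(\Sigma,\V^*)$. The identification is natural in $\Sigma$. Restriction to $\Sigma'$ is projection onto the factors indexed by $\Sigma'$, and this is visibly dual to the inclusion of the summands indexed by $\Sigma'$, which induces corestriction. This gives the intertwining.

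For (b), I dualize the other way. $\Gamma_*(\Sigma,\V)$ is the kernel of
\[
d\colon \bigoplus_F\V(F)\longrightarrow\bigoplus_{G<F}\V(F),\qquad (x_F)\mapsto \varphi_G^F(x_G)-x_F.
\]
Dualizing, $\Gamma_!(\Sigma,\V^*)$ is by the same bookkeeping the cokernel of $d^*$. There is always a natural map $\coker(d^*)\to(\ker d)^*$, given by restricting functionals. When $\kk$ is a field, dualization is exact, so this map is an isomorphism. Intertwining of corestriction with the dual of restriction follows exactly as in (a).

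The real content, and the main obstacle, is the DVR case with $\V$ pure. Exactness of $\Hom(-,\kk)$ fails there, so I would not work with the global presentation. Instead I would induct along the filtration by upper sets. By Lemma~\ref{lem:boundaries and (co)stalks}(a) and flabbiness, for $\Sigma''=\Sigma'\setminus\{F\}$ with $F$ minimal in $\Sigma'$, there is a short exact sequence
\[
0\to\V_{[F]}\to\Gamma_*(\Sigma',\V)\to\Gamma_*(\Sigma'',\V)\to0.
\]
By Proposition~\ref{prop:(local) freeness}(d) all section modules are free, so this sequence splits, and its dual is exact. On the other side, Lemma~\ref{lem:boundaries and (co)stalks}(b) applied to $\V^*$ gives a right exact sequence
\[
\Gamma_!(\Sigma'',\V^*)[\rk F]\to\Gamma_!(\Sigma',\V^*)[\rk F]\to(\V^*)_F\to0.
\]
I would identify $(\V^*)_F\cong(\V_{[F]})^*[\rk F]$. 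This is the cokernel of the dual of $\Phi_F$. Since $\Phi_F$ is surjective (Lemma~\ref{lem:(co)flabby extensions}) with free target, it is split, so dualizing its kernel sequence is exact.

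I then compare the two sequences via the natural maps $\coker d^*\to(\ker d)^*$, which commute with corestriction and dualized restriction. I induct on $|\Sigma'|$, with base case $\Sigma'=\Sigma_{\ge F}$, where both sides are $\V(F)^*$. With the outer maps isomorphisms, the five lemma gives surjectivity of the middle map. For injectivity, I use that the map on the left is injective after composing, because the dual sequence is exact on the left. Concretely, a diagram chase in which the composite $\Gamma_!(\Sigma'',\V^*)\to\Gamma_*(\Sigma'',\V)^*\hookrightarrow\Gamma_*(\Sigma',\V)^*$ is injective forces the top-left map to be injective, and the five lemma then applies.

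Finally, I pass from arbitrary $\Sigma'$ down to $\Sigma$ by adding one element at a time. The inductive step uses the Mayer--Vietoris presentation $\Sigma'=\Sigma''\cup\Sigma_{\ge F}$ with intersection $\Sigma_{>F}$. Restrictions of a pure object remain pure, so all inductive hypotheses apply.
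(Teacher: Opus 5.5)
Your proposal is correct and essentially follows the paper. Part (a) is the same dualization of the cokernel presentation of $\Gamma_!$. In the DVR case of (b) you likewise induct on the size of the upper set, use flabbiness and freeness of sections to split a short exact sequence, dualize, and compare with the right-exact cosection sequence for $\V^*$. The only difference is that you use the costalk sequence of Lemma~\ref{lem:boundaries and (co)stalks} together with the five lemma, where the paper uses the Mayer--Vietoris sequence for $\Sigma=\Sigma'\cup\Sigma_{\ge F}$ directly.

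State explicitly that identifying $(\V^*)_F$ with $(\V_{[F]})^*[\rk F]$ already uses the inductive hypothesis on $\Sigma_{>F}$. The reason is that $\Psi_{F,\V^*}$ agrees with the dual of $\Phi_F$ only once $\Gamma_!(\Sigma_{>F},\V^*)\cong\Gamma_*(\Sigma_{>F},\V)^*$ is known.
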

	
	\begin{proof}
		By the definition of the cosection functor, we have an exact sequence
		\[
		\bigoplus_{G \ge F \in \Sigma} \V(G)[-\rk G] \longrightarrow \bigoplus_{F \in \Sigma} \V(F)[-\rk F] \longrightarrow \Gamma_!(\Sigma,\V) \to 0.
		\]
		Dualizing gives an exact sequence
		\[
		0 \to \Gamma_!(\Sigma,\V)^* \longrightarrow \bigoplus_{F \in \Sigma} \V^*(F) \longrightarrow \bigoplus_{G \ge F \in \Sigma} \V^*(G).
		\]
		This is the sequence that defines the sections $\Gamma_*(\Sigma,\V^*)$, which proves (a).  
		
		When $\kk$ is a field, dualizing is an exact functor, and (b) can be proved in the same way.  When $\kk$ is a DVR and $\V$ is pure, 
		we proceed by induction on
		$|\Sigma|$.  If $\Sigma = \Sigma_{\ge F}$, the result follows by the identification of sections of $\V$ with $\V(F)$ and cosections of $\V^*$ with $\V^*(F)[-\rk F] = \V(F)^*$.  In particular, the result holds when $|\Sigma| = 1$.  When $|\Sigma|>1$, choose a minimal element $F\in \Sigma$ and let $\Sigma' = \Sigma \setminus \{F\}$.  Consider the sequence
		\[
		0 \longrightarrow \Gamma_*(\Sigma,\V) \longrightarrow \Gamma_*(\Sigma',\V) \oplus \V(F) \longrightarrow 
		\Gamma_*(\Sigma_{>F},\V) \longrightarrow 0.
		\]
		This sequence is exact on the left and in the middle by the sheaf property of $\Gamma_*$, and it is exact on the right because $\V$ is flabby.  Furthermore, $\Gamma_*(\Sigma_{>F},\V)$ is free by Proposition \ref{prop:(local) freeness}.  Thus the sequence splits, so dualizing it and using the inductive hypothesis gives an exact sequence
		\[
		0 \longrightarrow \Gamma_!(\Sigma_{>F},\V^*) \longrightarrow \Gamma_!(\Sigma',\V^*) \oplus \V^*(F)[-\rk F] \longrightarrow \Gamma_*(\Sigma,\V)^* \to 0,
		\]
		where the first map is induced by the corestrictions.  The isomorphism $\Gamma_!(\Sigma,\V^*) \cong \Gamma_*(\Sigma,\V)^*$ follows.
	\end{proof}
	
\begin{example}
This example illustrates the need for the purity hypothesis in Proposition \ref{prop:(co)sections and duality}(b).
Let $\kk=\Z_2$ be the ring of $2$-adic integers, and let $\cL$ be a Boolean lattice of rank $2$.  Let $\M = \H(\cL;\kk)$, 
endowed not with the structure of the regular module, but rather with the
module structure given by the map $\H(\cL;\kk)\to \H(\cL;\kk)$ that sends $y_i$ to $2y_i$.  Let $\cW \coloneq \Lambda\M$, 
which is flabby but not coflabby and not locally free.  
Let $\V \coloneq \cW^*$, and note that $\V^* \cong \cW$.  Thus $\Gamma_!(\cL\setminus \{\hz\},\V^*) = \Gamma_!(\cL\setminus \{\hz\},\W)$ has torsion, and therefore 
cannot be isomorphic to the dual of a $\kk$-module.
\end{example}
	
	\begin{proposition}\label{prop:duality preserves purity}
		Suppose that $\V$ is a pure object of $\cC(\Sigma;\kk)$.  Then the dual $\V^*$ is pure.
		For any $F \in \Sigma$, we have natural isomorphisms
		\[(\V^*)_F \cong (\V_{[F]})^*[\rk F]\and (\V^*)_{[F]} \cong (\V_F)^*[\rk F].\]
	\end{proposition}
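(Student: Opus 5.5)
The plan is to reduce everything to the local maps $\Phi_F$ and $\Psi_F$ and the duality statements of Proposition~\ref{prop:(co)sections and duality}. Fix $F\in\Sigma$. By Proposition~\ref{prop:(co)sections and duality}(a), applied on the upper sets $\Sigma_{>F}\subseteq\Sigma_{\ge F}$, we get $\Gamma_*(\Sigma_{>F},\V^*)\cong\Gamma_!(\Sigma_{>F},\V)^*$. Combined with the identification $\V^*(F)=\V(F)^*[\rk F]$, this shows that the map $\Phi_F(\V^*)$ is the dual, shifted by $[\rk F]$, of $\Psi_F(\V)\colon\Gamma_!(\Sigma_{>F},\V)[\rk F]\to\V(F)$. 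Since $\V$ is pure, Proposition~\ref{prop:(co)sections and duality}(b) applies and gives $\Gamma_!(\Sigma_{>F},\V^*)\cong\Gamma_*(\Sigma_{>F},\V)^*$. In the same way, $\Psi_F(\V^*)$ is then identified with the shifted dual of $\Phi_F(\V)$. The naturality clauses in Proposition~\ref{prop:(co)sections and duality} are what guarantee that these identifications match the restriction and corestriction maps, and not merely the underlying modules.

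Next I use the splitting facts. Purity of $\V$ gives the following:
\begin{itemize}
\item by coflabbiness and Lemma~\ref{lem:(co)flabby extensions}, $\Psi_F(\V)$ is injective;
\item by Proposition~\ref{prop:(local) freeness}(b), this injection is split, since it is a corestriction $\Gamma_!(\Sigma_{>F},\V)\to\Gamma_!(\Sigma_{\ge F},\V)$;
\item its cokernel $\V_F$ is free, by local freeness;
\item by flabbiness, $\Phi_F(\V)$ is surjective onto $\Gamma_*(\Sigma_{>F},\V)$, which is free by Proposition~\ref{prop:(local) freeness}(d), so $\Phi_F(\V)$ is also split with kernel $\V_{[F]}$.
\end{itemize}
We therefore have two split short exact sequences
\[0\to\Gamma_!(\Sigma_{>F},\V)[\rk F]\to\V(F)\to\V_F\to0\quad\text{and}\quad 0\to\V_{[F]}\to\V(F)\to\Gamma_*(\Sigma_{>F},\V)\to0.\]
Dualizing a split sequence keeps it split exact. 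The first sequence dualizes to show that $\Phi_F(\V^*)$ is surjective with kernel $(\V_F)^*[\rk F]$. This gives flabbiness of $\V^*$ via Lemma~\ref{lem:(co)flabby extensions}, and also $(\V^*)_{[F]}\cong(\V_F)^*[\rk F]$. The second sequence dualizes to show that $\Psi_F(\V^*)$ is injective with cokernel $(\V_{[F]})^*[\rk F]$. This gives coflabbiness of $\V^*$, and also $(\V^*)_F\cong(\V_{[F]})^*[\rk F]$.

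Local freeness of $\V^*$ follows because $\V_{[F]}$ is a direct summand of the free module $\V(F)$ (Proposition~\ref{prop:(local) freeness}(c)), hence free, so its dual is free. Naturality of all the isomorphisms comes from the naturality of the identifications in Proposition~\ref{prop:(co)sections and duality}.

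The main obstacle is the bookkeeping of grading shifts together with the verification that the dual of $\Psi_F(\V)$ really is $\Phi_F(\V^*)$, rather than just a map between isomorphic modules. In the DVR case one must also make sure that part (b) of Proposition~\ref{prop:(co)sections and duality} is applied only on upper sets where its hypotheses hold; restricting $\V$ to $\Sigma_{>F}$ preserves purity, so this is fine. I would carry out the identification first for $\Sigma_{\ge F}$, where everything is $\V(F)$, and then use the intertwining clauses for the inclusion $\Sigma_{>F}\subseteq\Sigma_{\ge F}$.
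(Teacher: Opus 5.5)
Your proof is correct and follows essentially the same route as the paper. Both arguments dualize the split short exact sequences of Lemma~\ref{lem:boundaries and (co)stalks}, which are split by Proposition~\ref{prop:(local) freeness}, and then identify the dualized maps with $\Phi_F(\V^*)$ and $\Psi_F(\V^*)$ via Proposition~\ref{prop:(co)sections and duality}. The one small difference is that the paper deduces flabbiness and coflabbiness of $\V^*$ from the global intertwining statements for all restriction and corestriction maps, whereas you get them from the same local sequences through Lemma~\ref{lem:(co)flabby extensions}, so a single computation gives both purity and the (co)stalk isomorphisms.
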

	\begin{proof}
		By Proposition \ref{prop:(local) freeness}, the corestriction homomorphisms of $\V$ are split injections.  Proposition \ref{prop:(co)sections and duality}(a) then implies that the restriction homomorphisms for $\V^*$ are surjections, which means that $\V^*$ is flabby.  Similarly, the flabbiness of $\V$ together with Proposition \ref{prop:(co)sections and duality}(b) shows that $\V^*$ is coflabby.
		
		Lemma \ref{lem:boundaries and (co)stalks}(a) gives an exact sequence
		\[
		0 \to \V_{[F]} \longrightarrow \Gamma_*(\Sigma_{\ge F},\V) \longrightarrow \Gamma_*(\Sigma_{>F},\V) \longrightarrow 0,
		\]
		which is split since Proposition \ref{prop:(local) freeness} implies that $\Gamma_*(\Sigma_{>F},\V)$ is free.  Dualizing this sequence and applying Proposition \ref{prop:(co)sections and duality} gives the exact sequence
		\[
		0 \longrightarrow \Gamma_!(\Sigma_{>F},\V^*) \longrightarrow \Gamma_!(\Sigma_{\ge F},\V^*) \longrightarrow (\V_{[F]})^* \to 0.
		\]
		Then Lemma \ref{lem:boundaries and (co)stalks}(b) gives the isomorphism $(\V^*)_F \cong (\V_{[F]})^*[\rk F]$.  Since duals are always free, this shows that $\V^*$ is locally free.  The isomorphism $(\V^*)_{[F]} \cong (\V_F)^*[\rk F]$ is proved similarly.	
	\end{proof}

	\section{Construction and decomposition of pure objects}\label{sec:existence}
	In this section we construct and classify indecomposable pure objects in $\cC_\reg(\Sigma;\kk)$.  Specializing our results to $\Sigma=\cL$ 
	will allow us to prove Theorems \ref{def of E}, \ref{indecomposables}, and \ref{general decomposition} from the introduction.
	
	\subsection{Extending objects}\label{sec:extending objects}	
	We start by describing an inductive schema for building objects in $\cC(\Sigma;\kk)$.  	
	
	Consider an upper set $\Sigma \subseteq \cL$, and suppose that $\Sigma_{>F}\subseteq \Sigma$. 
	Let $\cV$ be an object of $\cC(\Sigma;\kk)$.  If $F \in \Sigma$, we have the maps
	\[
\Gamma_!\bigl(\Sigma_{>F},\V\bigr)[\rk F]
\xrightarrow{\ \Psi_F\ }
\V(F)
\xrightarrow{\ \Phi_F\ }
\Gamma_*\bigl(\Sigma_{>F},\V\bigr).
\]
We will see in Lemma \ref{composition} that the composition $\Phi_F\Psi_F$ depends only on the restriction of $\V$ to $\Sigma_{>F}$.	

	Consider the system of maps $\Xi_F(G,H)\colon \V(G)[\rk F-\rk G] \to \V(H)$ for all elements $G, H \in\Sigma_{>F}$ defined 
	by the formula
	\[\Xi_F(G,H) = \begin{cases}
		\psi_{H}^{G\vee H}\varphi_{G}^{G\vee H} & \mbox{if $F = G \wedge H$ and $G$ and $H$ form a modular pair,} \\
		0 & \mbox{otherwise}.
	\end{cases}\]
	If $F\in \Sigma$, then Proposition \ref{prop:psi phi relations}(c) tells us that $\Xi_F(G,H) = \varphi^H_F\psi^G_F$, but the map is defined even if $F \notin \Sigma$.

	\begin{lemma}\label{composition}
		The maps $\Xi_F(G,H)$ 
		induce a map $\Xi_F\colon\Gamma_!(\Sigma_{>F},\V)[\rk F] \to \Gamma_*(\Sigma_{>F},\V)$.
		If $F \in \Sigma$, then $\Xi_F = \Phi_F\Psi_F$.
	\end{lemma}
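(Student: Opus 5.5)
The plan is to build $\Xi_F$ in two stages: first as a map out of the direct sum $\bigoplus_{G\in\Sigma_{>F}}\V(G)[\rk F-\rk G]$, whose quotient is $\Gamma_!(\Sigma_{>F},\V)[\rk F]$, and second as a map into the product $\prod_{H\in\Sigma_{>F}}\V(H)$, which contains $\Gamma_*(\Sigma_{>F},\V)$. Let $\widetilde\Xi_F$ be the matrix with entries $\Xi_F(G,H)$. Two things must then be checked.

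First, the image of $\widetilde\Xi_F$ lies in the sections. Fix $G$ and $H\le H'$ in $\Sigma_{>F}$; we need $\varphi_H^{H'}\Xi_F(G,H)=\Xi_F(G,H')$. Second, $\widetilde\Xi_F$ kills the cosection relations. For $G\le G'$ in $\Sigma_{>F}$ we need $\Xi_F(G,H)\psi_G^{G'}=\Xi_F(G',H)$.

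To prove these identities, I would not manipulate modular-pair conditions directly, since that is messy. The case $F\in\Sigma$ is easy: by Proposition~\ref{prop:psi phi relations}(c), $\Xi_F(G,H)=\varphi_F^H\psi_F^G$. The two identities then follow at once from the composition rules of Proposition~\ref{prop:psi phi relations}(b):
\[
\varphi_H^{H'}\varphi_F^H\psi_F^G=\varphi_F^{H'}\psi_F^G
\qquad\text{and}\qquad
\varphi_F^H\psi_F^G\psi_G^{G'}=\varphi_F^H\psi_F^{G'}.
\]
In the same case, $\Xi_F=\Phi_F\Psi_F$ holds by definition. Indeed, $\Psi_F$ is induced by the corestrictions $\psi_F^G$ and $\Phi_F$ by the restrictions $\varphi_F^H$, so the $(G,H)$ component of $\Phi_F\Psi_F$ is $\varphi_F^H\psi_F^G$.

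The main obstacle is the case $F\notin\Sigma$, where $\V(F)$ is unavailable. Here I would verify the identities directly with lattice arithmetic. For the first identity, suppose $G\wedge H=F$ and $(G,H)$ is modular. I would show the two sides agree using Proposition~\ref{prop:psi phi relations}(b),(c) applied inside $\Sigma_{\ge H}$ and $\Sigma_{\ge G}$. The key step is to rewrite $\varphi_H^{H'}\psi_H^{G\vee H}$ via (c) with base $H$. This gives either $\psi_{H'}^{H'\vee G}\varphi^{H'\vee G}_{G\vee H}$ or zero, according to whether $(G\vee H,H')$ is a modular pair with meet $H$. The task is then to check a semimodular rank computation: this condition, combined with modularity of $(G,H)$ and $G\wedge H=F$, is equivalent to $(G,H')$ being modular with meet $F$. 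The rank inequalities $\rk(G\vee H')\le\rk G+\rk H'-\rk F$, together with the constraint that the meet is at least $F$, force equality in both situations simultaneously.

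Outside that equivalence both sides must vanish. I would argue this by rank-counting the shift: a nonzero composite must have degree shift matching $\rk G+\rk H'-\rk(G\vee H')$. The second identity is dual and handled symmetrically, with roles of $\varphi$ and $\psi$ swapped. An alternative fallback, if the lattice bookkeeping becomes unwieldy, is to extend $\V|_{\Sigma_{>F}}$ formally to $\Sigma_{\ge F}$ by declaring $\V(F)$ to be zero. This does not satisfy relation (c), however, so I expect the direct computation to be necessary.
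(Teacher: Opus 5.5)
Your plan is correct and is essentially the paper's proof: verify the two compatibility identities by applying relation (c) of Proposition~\ref{prop:psi phi relations} at the intermediate element ($H$ for your section identity; the paper writes out the dual cosection identity, using $G$), and then use that the rank defect $\rk G+\rk H'-\rk(G\vee H')-\rk F$ splits as a sum of two nonnegative defects, exactly as in \eqref{eqn:rank equation}. This computation never uses $\V(F)$, so your separate treatment of $F\in\Sigma$ is redundant though harmless, and the vanishing in the leftover cases is just this additivity of defects (e.g.\ $(G,H')$ modular with meet $F$ forces $(G,H)$ modular with meet $F$) rather than a degree-shift principle, since both sides already have the same source and target.
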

	\begin{proof}
		To see that the maps $\Xi_F(G,H)$ pass to the limit, we need to show that 
		\begin{equation}\label{eqn:Xi limits}
		    \Xi_F(G,H)\psi_G^{G'} = \Xi_F(G',H)  \ \ \text{and} \ \  \varphi_G^{G'}\Xi_F(H,G) = \Xi_F(H,G')	
		\end{equation}
		for any $G, G', H \in \Sigma_{>F}$ with $G' > G$.  We have $G' \vee (G\vee H) = G' \vee H$, and 
\begin{multline}\label{eqn:rank equation}
\rk G' + \rk H - \rk(G' \vee H) - \rk F   \\
=(\rk G' + \rk(G\vee H) - \rk(G' \vee H) - \rk G) + (\rk G + \rk H - \rk(G\vee H) -  \rk F).
\end{multline}
 If the left side of \eqref{eqn:rank equation} vanishes, then $(G',H)$ is a modular pair and $F = G' \wedge H$.  Since both summands on the right side of \eqref{eqn:rank equation} are nonnegative, they must also vanish, and so $(G', G\vee H)$ and $(G, H)$ are modular pairs and $G = G'\wedge (G\vee H)$, $F = G\wedge H$.  Then we have 
		\begin{align*}
			\Xi_F(G,H)\psi_G^{G'} & = \psi_H^{G\vee H}\varphi_G^{G\vee H}\psi_G^{G'}\\
			& = (H \swa G\vee H\nwa G \swa G')\\ & = (H \swa G \vee H \swa G' \vee H\nwa G') \\
			& = (H \swa G' \vee H\nwa G') =   
			\psi_H^{G'\vee H}\varphi_{G'}^{G'\vee H} \\ & = \Xi_F(G', H).	
		\end{align*}
		On the other hand, if the left side of \eqref{eqn:rank equation} is nonzero, then $\Xi_F(G',H) = 0$.  At least one of the summands on the right side must be positive.  If the first one is nonzero, then 
\[
\varphi^{G\vee H}_G\psi^{G'}_G = \Xi_G(G',G\vee H) = 0,
\]
while if the second is nonzero, then $\Xi_F(G,H)=0$.  In either case we get $\Xi_F(G,H)\psi^{G'}_G=0$, proving the first part of \eqref{eqn:Xi limits}.  The proof of the second part is similar.
		
		Finally, if $F \in \Sigma$, we have $\Xi_F(G,H)= \varphi_F^H\psi_F^G$ for all $G, H > F$, and it follows immediately that $\Xi_F = \Phi_F\Psi_F$. 
	\end{proof}
	
	Consider a pair of upper sets $\Sigma'\subseteq \Sigma$ with the property that $F$ is a minimal element in $\Sigma$ and $\Sigma' = \Sigma \setminus \{F\}$.  We wish to understand how to extend an object $\V'$ of $\cC(\Sigma';\kk)$ to $\Sigma$.  From the preceding discussion, giving an object $\V$ of $\cC(\Sigma;\kk)$ and an isomorphism $\V|_{\Sigma'} \cong \V'$ is equivalent to giving a graded $\kk$-module $\V(F)$ and maps $\Psi_F$, $\Phi_F$
	making the triangle
	\begin{equation}\label{eqn:extension triangle}
		\begin{tikzcd}
			\Gamma_!(\Sigma_{>F},\V')[\rk F] \arrow[rr, "\Xi_F"] \arrow[rd, "\Psi_F"'] &                           & \Gamma_*(\Sigma_{>F},\V') \\
			& \V(F) \arrow[ru, "\Phi_F"'] &                
		\end{tikzcd}
	\end{equation}
	commute. Indeed, given such a diagram we let $\V(G) = \V'(G)$ for $G \ne F$, and for $G > F$, we let $\psi_F^G$ be the composition 
	\[
\V(G)
=
\Gamma_!\bigl(\Sigma_{\geq G},\V'\bigr)[\rk G]
\longrightarrow
\Gamma_!\bigl(\Sigma_{>F},\V'\bigr)[\rk G]
\xrightarrow{\ \Psi_F\ }
\V(F)[\rk G-\rk F].
\]
Similarly, let $\varphi_F^G$ be the composition 
	of $\Phi_F$ with the restriction $\Gamma_*(\Sigma_{>F},\V') \to \Gamma_*(\Sigma_{\ge G},\V') = \V(G)$.  This defines an object of $\cC(\Sigma;\kk)$ with the required properties.

	\begin{lemma}
		If $\cV$ is the extension of $\cV'$ determined by the diagram \eqref{eqn:extension triangle}, then 
		\begin{enumerate}[(a)]
			\item $\cV$ is flabby if and only if $\cV'$ is flabby and $\Phi_F$ is surjective.
			\item $\cV$ is coflabby if and only if $\cV'$ is coflabby and $\Psi_F$ is injective.
			\item $\cV$ is coflabby and locally free if and only if $\cV'$ is coflabby and locally free, $\cV(F)$ is a free $\kk$-module, and $\Psi_F$ is a split injection.
			\item $\cV$ is pure if and only if $\cV'$ is pure, $\cV(F)$ is a free $\kk$-module, $\Phi_F$ is surjective, and $\Psi_F$ is a split injection.  
		\end{enumerate}
	\end{lemma}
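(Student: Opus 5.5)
The plan is to use Lemma \ref{lem:(co)flabby extensions} to reduce each global condition on $\cV$ to a local surjectivity or injectivity condition at each element of $\Sigma$, and then to observe that all these local conditions except the one at $F$ depend only on $\cV'$. The key observation is that for $G\in\Sigma'$, the upper set $\Sigma_{>G}$ lies inside $\Sigma'$, because $F$ is minimal in $\Sigma$ and hence is not strictly above any $G \neq F$. Therefore $\Gamma_*(\Sigma_{>G},\cV)=\Gamma_*(\Sigma'_{>G},\cV')$ and $\Gamma_!(\Sigma_{>G},\cV)=\Gamma_!(\Sigma'_{>G},\cV')$, and the maps $\Phi_G,\Psi_G$ for $\cV$ coincide with those for $\cV'$. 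At $F$ itself, the maps $\Phi_F,\Psi_F$ are exactly the legs of the triangle \eqref{eqn:extension triangle}, using $\Sigma_{>F}\subseteq\Sigma'$.

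Parts (a) and (b) then follow at once. By Lemma \ref{lem:(co)flabby extensions}, $\cV$ is flabby if and only if $\Phi_G$ is surjective for every $G\in\Sigma$. Applying the same lemma to $\cV'$, this is equivalent to $\cV'$ being flabby together with $\Phi_F$ being surjective. The coflabby case is the same argument with $\Psi$ and injectivity in place of $\Phi$ and surjectivity.

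For (c), I first note that stalks at $G\neq F$ agree: $\cV_G=\coker\Psi_G=\cV'_G$. So local freeness of $\cV$ is equivalent to local freeness of $\cV'$ together with freeness of $\cV_F=\coker\Psi_F$.
\begin{itemize}
\item[$(\Rightarrow)$] Assume $\cV$ is coflabby and locally free. Then $\cV'$ is coflabby by (b), and locally free by the remark above. Proposition \ref{prop:(local) freeness}(c) gives that $\cV(F)$ is free. Since $\Psi_F$ is injective and $\coker\Psi_F=\cV_F$ is free, $\Psi_F$ is split.
\item[$(\Leftarrow)$] Assume $\cV'$ is coflabby and locally free, $\cV(F)$ is free, and $\Psi_F$ is a split injection. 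Then $\cV$ is coflabby by (b). Moreover, $\coker\Psi_F$ is a direct summand of the free module $\cV(F)$, so it is projective, hence free over the local PID $\kk$ (it is finitely generated). Thus $\cV$ is locally free.
\end{itemize}
The only point needing care is that over a DVR a split injection has a free cokernel as soon as the target is free. This holds because a finitely generated direct summand of a free module is torsion-free and therefore free, and I expect this to be the main, though mild, subtlety.

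Finally, (d) combines (a) and (c), since pure means flabby, coflabby and locally free.
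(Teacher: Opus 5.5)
Your proposal is correct and follows essentially the same route as the paper. Parts (a) and (b) come from Lemma~\ref{lem:(co)flabby extensions}, (d) comes from (a) and (c), and (c) comes from Proposition~\ref{prop:(local) freeness} together with the fact that a split injection into a free module has free cokernel. The only minor variations are these: in the forward direction of (c) you get splitting of $\Psi_F$ from injectivity plus freeness of $\cV_F=\coker\Psi_F$, whereas the paper cites Proposition~\ref{prop:(local) freeness}(b), and you spell out why the conditions at $G\neq F$ depend only on $\cV'$, which the paper leaves implicit.
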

	\begin{proof}
		Statements (a) and (b) follow immediately from Lemma \ref{lem:(co)flabby extensions}.  The statement (d) follows by combining statements (a) and (c), so it is enough to prove (c).
		
		If $\cV$ is coflabby and locally free, then its restriction $\cV'$ to $\Sigma'$ is again coflabby and locally free, and Proposition \ref{prop:(local) freeness} parts (b) and (c) show that $\cV(F)$ is locally free and $\Psi_F$ is a split injection.
		
	    Conversely, suppose that $\cV'$ is coflabby and locally free, $\cV(F)$ is free, and $\Psi_F$ is a split injection.  Then 
	    Lemma \ref{lem:(co)flabby extensions} shows that $\cV$ is coflabby, 
	    so we just need to check that $\cV$ is locally free.  For $G \in\Sigma'$, 
	    the stalk $\cV_G = \cV'_G$ is a free $\kk$-module because $\cV'$ is locally free, and since $\Psi_F$ is a split injection, its cokernel $\cV_F$ is isomorphic to a direct summand of the free $\kk$-module $\cV(F)$, and is therefore free. This completes the proof.
	\end{proof}
	
	\begin{example}\label{ex:extension example}
		Let $\cL$ be the lattice of flats of the rank $2$ uniform matroid on $E=\{1,2,3\}$, so $\cL=\{\hz,\{1\},\{2\},\{3\},E\}$.  Let $\kk = \Z_2$, the $2$-adic integers.  If we let $\cV = \Lambda \H(\cL;\kk)$, then the restriction $\cV|_{\Sigma_{>\hat{0}}}$ is pure, but $\cV$ itself is not pure.  Indeed, $\H(\cL;\kk)$ is coflabby, and $\Gamma_!(\Sigma_{>\hz},\V)$ is isomorphic to the graded maximal ideal $\mm \subseteq \H(\cL;\kk)$.  In degree 
		$1$, the diagram \eqref{eqn:extension triangle} becomes
		\begin{equation*}
			\begin{tikzcd}
				\H^1(\cL;\kk)  \arrow[rr, "\Xi_{\hz}"] \arrow[rd, 	"\Psi_{\hz} = \id"'] &                           & \Gamma_*\left(\Sigma_{>\hz},\V\right)^1 = \displaystyle\bigoplus_{F\in \cL^1} \H^2(\cL;\kk) \cong \kk^3,\\
					& \H^1(\cL;\kk) \arrow[ru, "\Phi_{\hz}"',end anchor={[xshift=-64pt,yshift=-11pt]center}] &                
			\end{tikzcd}
		\end{equation*}
		where the component of $\Xi_{\hz}$ corresponding to $F\in \cL^1$ is given by multiplication by $y_F$.  So with the usual bases of the left and right sides the matrix representing $\Xi_{\hz} = \Phi_{\hz}$ is
		\[\begin{pmatrix}
			0 & 1 & 1 \\
			1 & 0 & 1 \\
			1 & 1 & 0
		\end{pmatrix},\]
		which has determinant $2$.  This map is not surjective, thus $\cV$ is not flabby.  
	\end{example}
	
	\begin{example}	
Let $\cL$ and $\kk$ be as in Example \ref{ex:extension example}, but this time consider the object		
$$\cW := \cV^* = \Lambda(\H(\cL;\kk)^*).$$  For the purposes of this example, we will decorate our maps with a subscript of $\cW$
to distinguish them from the maps in Example \ref{ex:extension example}.
The restrictions of $\cW$ and $\cV$ to $\Sigma_{>\hz}$ are isomorphic up to a shift, but     	
		the map $\Phi_{\hz,\cW}$ is the identity, thus $\cW$ is flabby, while $\Psi_{\hz,\cW} = \Xi_{\hz,\cW}$ is given by the same matrix above.  So $\Psi_{\hz,\cW}$ is injective, thus $\cW$ is coflabby, but its cokernel has torsion, thus $\cW$ is not locally free.  On the other hand, 
		if we took $\kk$ to be a field of characteristic $2$, then $\Psi_{\hz,\cW}$ would not be injective, and $\cW$ would not be coflabby.  
	\end{example}
	
	\subsection{Extending morphisms}\label{sec:extending morphisms}
	We now study how morphisms in our category extend when a new element is added.	
	We continue with the notation of the previous section: $\Sigma$ and $\Sigma'$ are upper sets with $\Sigma = \Sigma'\cup \{F\}$.  Given objects $\V, \V'$ of $\cC(\Sigma;\kk)$, a homomorphism $f\colon \V \to \V'$ is given uniquely by its restrictions $f|_{\Sigma'}$ and $f|_{\Sigma_{\ge F}}$, subject to the condition that they agree when restricted to $\Sigma_{>F}$ (in other words, homomorphisms form a sheaf on $\cL$ considered as a topological space whose open sets are upper sets).  So, to extend a morphism from $\Sigma'$ to $\Sigma$, it is enough to consider the case 
	where $\Sigma = \Sigma_{\ge F}$ and $\Sigma' = \Sigma_{>F}$.
	
	Our main results will be consequences of the following simple linear algebra lemma.  Let $\olkk$ denote the residue field of $\kk$.  
	If $A$ is a $\kk$-module, we put $\bar{A} = A \otimes_\kk \olkk$, and if $f\colon A \to B$ is a map of $\kk$-modules, we let $\bar{f}\colon \bar{A} \to \bar{B}$ be the induced map of $\overline{\kk}$-modules.  If $A$ and $B$ are free $\kk$-modules, then $\bar{f}$ is surjective if and only if $f$ is surjective, and $\bar{f}$ is injective if and only if $f$ is split injective.

	\begin{lemma}\label{lem:extension lemma}
		Suppose that we are given a commutative rectangle of free $\kk$-modules and maps of the following form:
		\[
\begin{tikzcd}[column sep=large, row sep=large]
A_1 \arrow[r, "\Psi_1"] \arrow[d, "f_A"']
  & B_1 \arrow[r, "\Phi_1"]
  & C_1 \arrow[d, "f_C"] \\
A_2 \arrow[r, "\Psi_2"']
  & B_2 \arrow[r, "\Phi_2"']
  & C_2
\end{tikzcd}
\]
		Then the following two statements hold:
		
		\begin{enumerate}
			\item[(a)] If $\Psi_1$ is split injective and $\Phi_2$ is surjective, there exists a map $f_B\colon B_1
			\to B_2$ so that both squares commute.
			\item[(b)] If in addition $\Psi_2$ is split injective, $\Phi_1$ is surjective, $\ker \bar\Phi_1 \subseteq \im \bar\Psi_1$, $\ker \bar\Phi_2 \subseteq \im \bar\Psi_2$, and $f_A$ and $f_C$ are isomorphisms, then any map $f_B$ as in (a) must be an isomorphism.
		\end{enumerate}
	\end{lemma}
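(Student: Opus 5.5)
For (a), I will define $f_B$ by a two-step lift. Because $\Psi_1$ is split injective, choose a splitting $B_1 = \Psi_1(A_1)\oplus K$ with a retraction $r\colon B_1\to A_1$ satisfying $r\Psi_1=\id$. Because $\Phi_2$ is surjective onto the free module $C_2$, choose a section $s\colon C_2\to B_2$ with $\Phi_2 s=\id$. I then set
\[
f_B \coloneq \Psi_2 f_A r + s\bigl(f_C\Phi_1 - \Phi_2\Psi_2 f_A r\bigr).
\]
To check the two squares:
\begin{itemize}
\item Applying $\Phi_2$ gives $\Phi_2 f_B = \Phi_2\Psi_2 f_A r + f_C\Phi_1 - \Phi_2\Psi_2 f_A r = f_C\Phi_1$, so the right square commutes.
\item Precomposing with $\Psi_1$ gives $f_B\Psi_1 = \Psi_2 f_A + s(f_C\Phi_1\Psi_1 - \Phi_2\Psi_2 f_A)$. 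The bracket vanishes by commutativity of the rectangle, $f_C\Phi_1\Psi_1=\Phi_2\Psi_2 f_A$. So the left square commutes.
\end{itemize}
This is routine; the main content is in (b).

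For (b), all modules are free and finitely generated over the local ring $\kk$, so it suffices to show that $\bar f_B$ is an isomorphism. Indeed, a map of free modules that is surjective mod the maximal ideal is surjective by Nakayama. We then need $B_1$ and $B_2$ to have equal rank, after which a surjection between free modules of equal finite rank is an isomorphism. So I reduce everything mod the maximal ideal, working with $\bar\Psi_i,\bar\Phi_i$ over $\olkk$. These hypotheses survive reduction: $\bar\Psi_i$ is injective (split injectivity), $\bar\Phi_i$ is surjective, $\ker\bar\Phi_i\subseteq\im\bar\Psi_i$, and $\bar f_A,\bar f_C$ are isomorphisms.

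The key step is injectivity of $\bar f_B$. Suppose $\bar f_B(b)=0$. Then $\bar f_C\bar\Phi_1(b)=\bar\Phi_2\bar f_B(b)=0$, so $\bar\Phi_1(b)=0$. By $\ker\bar\Phi_1\subseteq\im\bar\Psi_1$ we can write $b=\bar\Psi_1(a)$. Then $0=\bar f_B\bar\Psi_1(a)=\bar\Psi_2\bar f_A(a)$, and injectivity of $\bar\Psi_2$ and $\bar f_A$ gives $a=0$, hence $b=0$.

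For surjectivity, and equivalently equality of dimensions, I use a dimension count. For each $i$, the sequence $\bar A_i\to\bar B_i\to\bar C_i$ has $\bar\Psi_i$ injective and $\bar\Phi_i$ surjective. Its kernel condition gives
\[
\dim\bar B_i=\dim\ker\bar\Phi_i+\dim\bar C_i \le \dim\bar A_i+\dim\bar C_i .
\]
The inclusion $\ker\bar\Phi_i\subseteq\im\bar\Psi_i$ alone gives only this inequality. However, the argument is symmetric: injectivity of $\bar f_B$ gives $\dim\bar B_1\le\dim\bar B_2$.

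The main obstacle is obtaining the reverse inequality. My plan is to directly show surjectivity by a symmetric diagram chase on the target. Given $b_2\in\bar B_2$, use that $\bar f_C\bar\Phi_1$ is surjective to find $b_1$ with $\bar\Phi_2(b_2-\bar f_B b_1)=0$. Then $b_2-\bar f_B b_1\in\ker\bar\Phi_2\subseteq\im\bar\Psi_2=\bar\Psi_2\bar f_A(\bar A_1)=\bar f_B\bar\Psi_1(\bar A_1)$. So $b_2$ lies in the image of $\bar f_B$, and $\bar f_B$ is bijective.

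Ranks over $\kk$ equal dimensions over $\olkk$ for free modules, so $B_1$ and $B_2$ have equal rank and $f_B$ is surjective by Nakayama. Therefore $f_B$ is an isomorphism.
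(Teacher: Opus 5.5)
Your proof is correct and follows the paper's argument. In (a) you use a retraction of $\Psi_1$ and a section of $\Phi_2$ to write an explicit formula, where the paper defines $f_B$ on $\Psi_1(A_1)$ and lifts through $\Phi_2$ on a free complement. In (b) you make the same reduction to $\olkk$ and the same two diagram chases as the paper, and your dimension-count detour is unnecessary, since the direct surjectivity chase already finishes the argument.
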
	
	\begin{proof}
		Let us prove the statement (a).  Since $\Psi_1$ is split injective, we can find a complement $B_0$ to $\Psi_1(A_1)$ in $B_1$, and there is a unique way to define the restriction $f_B|_{\Psi_1(A_1)}$ so that the left square commutes, by letting
		\[f_B(\Psi_1(a_1)) = \Psi_2(f_A(a_1))\]
		for $a_1 \in A_1$. This also makes the right square commute on $\Psi_1(A_1)$, since
		\[\Phi_2f_B(\Psi_1(a_1)) = \Phi_2\Psi_2(f_A(a_1)) = f_C(\Phi_1(\Psi_1(a_1)))\]
		for all $a_1 \in A_1$.  Finally, since $\Phi_2$ is surjective and $B_0$ is free we can find $f_B|_{B_0}$ so that the right square commutes on $B_0$.
		
		
		To prove (b), first note that $f_B$ is an isomorphism if and only if $\bar{f}_B$ is an isomorphism.  Thus it is enough to consider the case where $\kk$ is a field.
		
		Suppose that $b_1$ is in the kernel of $f_B$.  Since $f_C$ is an isomorphism, we have $b_1 \in \ker \Phi_1$, so there exists $a_1 \in A_1$ with $\Psi_1(a_1) = b_1$.  Then $\Psi_2 f_A(a_1) = f_B\Psi_1(a_1) = f_B(b_1) = 0$, and since $\Psi_2$ is injective we have $a_1 = 0$ and thus $b_1 = 0$. 
		
		Next, take $b_2 \in B_2$.  Since $\Phi_1$ and $f_C$ are surjective, there exists $b_1 \in B_1$ with $f_C\Phi_1(b_1) = \Phi_2(b_2)$.  Then $b_2 - f_B(b_1)$ is in $\ker \Phi_2 \subseteq \im \Psi_2$.  Since $f_A$ is surjective, there exists $a_1 \in A_1$ with $\Psi_2f_A(a_1) = b_2 - f_B(b_1)$.  Then we have
		\[f_B(b_1 + \Psi_1(a_1)) = f_B(b_1) + \Psi_2f_A(a_1) = f_B(b_1) + b_2 - f_B(b_1) = b_2,\]
		so $f_B$ is surjective.
	\end{proof}
	
	\begin{proposition}\label{prop:extending morphisms}
		Given upper sets $\Sigma' \subseteq \Sigma$ and objects $\V,\W$ in $\cC(\Sigma;\kk)_\pure$, any morphism $\V|_{\Sigma'} \to \W|_{\Sigma'}$ extends to a morphism $\V\to \W$.
	\end{proposition}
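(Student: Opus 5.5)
We argue by induction on $|\Sigma\setminus\Sigma'|$, adding one element at a time. If $\Sigma'' \subseteq \Sigma$ is an upper set containing $\Sigma'$, pick a maximal element $F$ of $\Sigma\setminus\Sigma'$. Then $\Sigma'\cup\{F\}$ is again an upper set, because every $G>F$ in $\Sigma$ already lies in $\Sigma'$. Restrictions of pure objects to smaller upper sets remain pure, since flabbiness, coflabbiness and local freeness are conditions checked at individual elements via Lemma~\ref{lem:(co)flabby extensions} and the definition of stalks. It therefore suffices to treat the case $\Sigma = \Sigma'\cup\{F\}$ with $F$ minimal in $\Sigma$.

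By the sheaf property of morphisms recalled at the start of Section~\ref{sec:extending morphisms}, a morphism on $\Sigma$ is the same as a morphism on $\Sigma'$ together with a morphism on $\Sigma_{\ge F}$ that agree on $\Sigma_{>F}$. So let $f'\colon \V|_{\Sigma'}\to\W|_{\Sigma'}$ be given, and write $g = f'|_{\Sigma_{>F}}$. It remains to produce a map $f_F\colon \V(F)\to\W(F)$ which, together with $g$, defines a morphism on $\Sigma_{\ge F}$. By the discussion of diagram~\eqref{eqn:extension triangle}, the maps $\psi_F^G$ and $\varphi_F^G$ are determined by $\Psi_F$ and $\Phi_F$. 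Hence compatibility of $f_F$ with all structure maps is equivalent to the commutativity of
\[
\begin{tikzcd}
\Gamma_!(\Sigma_{>F},\V)[\rk F] \arrow[r,"\Psi_F"] \arrow[d,"\Gamma_!(g)"'] & \V(F) \arrow[r,"\Phi_F"] \arrow[d,"f_F"] & \Gamma_*(\Sigma_{>F},\V) \arrow[d,"\Gamma_*(g)"] \\
\Gamma_!(\Sigma_{>F},\W)[\rk F] \arrow[r,"\Psi_F"'] & \W(F) \arrow[r,"\Phi_F"'] & \Gamma_*(\Sigma_{>F},\W).
\end{tikzcd}
\]

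The outer rectangle commutes by Lemma~\ref{composition}: the composites $\Phi_F\Psi_F=\Xi_F$ are built from the structure maps on $\Sigma_{>F}$, and $g$ commutes with those, hence with $\Xi_F$. This is the step needing the most care. One must check that $g$ intertwines each component $\Xi_F(G,H)=\psi_H^{G\vee H}\varphi_G^{G\vee H}$, where $G\vee H\in\Sigma_{>F}$, and that the induced maps $\Gamma_!(g)$ and $\Gamma_*(g)$ are compatible with passage to the colimit and limit.

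All the modules involved are free by Proposition~\ref{prop:(local) freeness}, since $\V$ and $\W$ are pure. The map $\Psi_F$ for $\V$ is a split injection by Proposition~\ref{prop:(local) freeness}(b), being a corestriction $\Gamma_!(\Sigma_{>F})\to\Gamma_!(\Sigma_{\ge F})$. The map $\Phi_F$ for $\W$ is surjective by flabbiness (Lemma~\ref{lem:(co)flabby extensions}). Lemma~\ref{lem:extension lemma}(a) then supplies the required $f_F$, which completes the induction.
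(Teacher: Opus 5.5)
Your proposal is correct and follows the paper's proof essentially verbatim: reduce by induction on $|\Sigma\setminus\Sigma'|$ to adding a single minimal element $F$, then apply Lemma~\ref{lem:extension lemma}(a) to the rectangle formed by $\Psi_F$ and $\Phi_F$ for $\V$ and $\W$, using that $\Psi_F$ for $\V$ is split injective and $\Phi_F$ for $\W$ is surjective. Your argument that the outer rectangle commutes (the composite $\Phi_F\Psi_F=\Xi_F$ from Lemma~\ref{composition} is built only from structure maps over $\Sigma_{>F}$) fills in a point the paper leaves implicit; the stray clause about $\Sigma''$ at the start of your first paragraph is a leftover and should be deleted.
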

	\begin{proof}
		By induction on $|\Sigma\setminus\Sigma'|$, it is enough to consider the case $\Sigma' = \Sigma\setminus \{F\}$.  A morphism $f\colon \V|_{\Sigma'} \to \W|_{\Sigma'}$ gives rise to the following commutative diagram, where we include $\cV$ and $\cW$ in the subscripts
		to distinguish between maps associated with different objects:	
		\[
\begin{tikzcd}[column sep=large, row sep=large]
\Gamma_!\bigl(\Sigma_{>F},\V\bigr)[\rk F]
  \arrow[r, "\Psi_{F,\V}"]
  \arrow[d]
&
\V(F)
  \arrow[r, "\Phi_{F,\V}"]
&
\Gamma_*\bigl(\Sigma_{>F},\V\bigr)
  \arrow[d]
\\
\Gamma_!\bigl(\Sigma_{>F},\W\bigr)[\rk F]
  \arrow[r, "\Psi_{F,\W}"']
&
\W(F)
  \arrow[r, "\Phi_{F,\W}"']
&
\Gamma_*\bigl(\Sigma_{>F},\W\bigr)
\end{tikzcd}
\]
Lemma \ref{lem:(co)flabby extensions} and Proposition \ref{prop:(local) freeness} imply that all of these $\kk$-modules are free, $\Psi_{F,\V}$ is split injective, and $\Phi_{F,\W}$ is surjective, so Lemma \ref{lem:extension lemma} says that there is a map $\V(F) \to \W(F)$ making the squares commute.  This defines the required extension of $f$ to $\Sigma$.
	\end{proof}
	
	\subsection{Indecomposable pure objects}	
	The results of Sections \ref{sec:extending objects} and \ref{sec:extending morphisms} allow us to construct pure objects in $\cC(\Sigma;\kk)$ and morphisms between them inductively by adding one element at a time to $\Sigma$.  Our next objective is to give a similar inductive description of indecomposability.  The key definition is the following.
	\begin{definition}
		For an object $\cV$ of $\cC(\Sigma;\kk)$ with $F\in \Sigma$, we say that $\V$ is \textbf{taut at} $F$ if $\ker \overline{\Phi_F} \subseteq \im \overline{\Psi_F}$.
	\end{definition}
	
	\begin{lemma}\label{lem:tautness and (co)stalks}
Let $\M$ be a pure $\H(\cL;\kk)$-module.  Then $\Lambda\M$ is taut at $F$ if and only if the $\olkk$-reduction $\overline{\beta_F(\M)}$ of the costalk-stalk homomorphism $\beta_F(\M) \colon \M_{[F]} \to \M_F$ vanishes.
	\end{lemma}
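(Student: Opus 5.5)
We translate tautness into a statement about the composite $\Phi_F\Psi_F$ and then identify the costalk-stalk map. Set $\V = \Lambda\M$, which is pure by Remark \ref{purepure}. By Lemma \ref{lem:sheaf and module stalks}, $\V_F\cong \M_F$ and $\V_{[F]}\cong\M_{[F]}$. Under these identifications, $\beta_F(\M)$ is the composite
\[
\V_{[F]}=\ker\Phi_F\hookrightarrow \V(F)\twoheadrightarrow \coker\Psi_F = \V_F .
\]
To check this, note that $\M_{[F]}$ sits inside $y_F\M[\rk F]=\V(F)$ as the annihilator of $\Sigma_{>\varnothing}$ acting on that space. Since $\psi$ is injective, this is the kernel of all $\varphi_F^G$ with $G>F$, i.e.\ $\ker\Phi_F$. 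Likewise, $\Span_{\Sigma_{>F}}(\M)[\rk F]\cap \V(F)$ is the image of $\Psi_F$, using Proposition \ref{prop:criteria for (co)flabbiness}(c) applied to the upper set $\Sigma_{>F}$. So $\beta_F(\M)$ vanishes exactly when $\ker\Phi_F\subseteq\im\Psi_F$. When $\kk$ is a field this is literally tautness, and we are done.

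For a DVR $\kk$ we must pass to the residue field, and the question is whether reduction commutes with taking kernels and cokernels here. By Proposition \ref{prop:(local) freeness}, every module involved is free: $\Gamma_!(\Sigma_{>F},\V)$, $\V(F)$, and $\Gamma_*(\Sigma_{>F},\V)$. Moreover $\Psi_F$ is split injective by Lemma \ref{lem:(co)flabby extensions} together with part (b) of that proposition, and $\Phi_F$ is surjective by flabbiness. The sequence $0\to\ker\Phi_F\to\V(F)\to\Gamma_*(\Sigma_{>F},\V)\to0$ therefore splits, because its target is free. It follows that $\overline{\ker\Phi_F}=\ker\overline{\Phi_F}$. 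Similarly, $\overline{\coker\Psi_F}=\coker\overline{\Psi_F}$ and $\overline{\im\Psi_F}=\im\overline{\Psi_F}$, since $\Psi_F$ is split.

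Tensoring the composite above with $\olkk$ gives
\[
\overline{\beta_F(\M)}\colon \ker\overline{\Phi_F}\hookrightarrow\overline{\V(F)}\twoheadrightarrow\coker\overline{\Psi_F}.
\]
This map vanishes if and only if $\ker\overline{\Phi_F}\subseteq\im\overline{\Psi_F}$, which is exactly tautness at $F$.

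The step most likely to cause trouble is the careful identification of $\M_{[F]}$ and $\M_F$ with $\ker\Phi_F$ and $\coker\Psi_F$ compatibly with $\beta_F$. This is essentially the content of Lemma \ref{lem:sheaf and module stalks}, but we must check that its isomorphisms intertwine $\beta_F(\M)$ with the inclusion-then-projection through $\V(F)$. Both $\M_{[F]}$ and $\M_F$ are defined through $y_F\M[\rk F]=\V(F)$, so this compatibility should be immediate.
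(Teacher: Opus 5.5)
Your proposal is correct and follows essentially the paper's argument: the exact sequences of Lemma~\ref{lem:boundaries and (co)stalks} for $\Sigma_{>F}\subseteq\Sigma_{\ge F}$ split by Proposition~\ref{prop:(local) freeness}. Hence reduction identifies $\overline{\M_{[F]}}$ with $\ker\overline{\Phi_F}$ and $\overline{\M_F}$ with $\coker\overline{\Psi_F}$, and tautness at $F$ becomes exactly the vanishing of $\overline{\beta_F(\M)}$. Your explicit check that $\beta_F(\M)$ is the inclusion-then-projection through $\V(F)$ is a detail the paper leaves implicit; note also that $\im\Psi_F=\Span_{\Sigma_{>F}}(\M)[\rk F]$ follows directly from the definitions, without Proposition~\ref{prop:criteria for (co)flabbiness}(c).
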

	\begin{proof}
	The exact sequences in Lemma \ref{lem:boundaries and (co)stalks} are split by Proposition \ref{prop:(local) freeness}.  So after tensoring with $\olkk$ they remain exact, and so we have 
     $\ker \overline{\Phi_F}=\overline{\M_{[F]}}$ and $\coker\overline{\Psi_F}=\overline{\M_{F}}$. Hence, $\ker \overline{\Phi_F} \subseteq \im \overline{\Psi_F}$ if and only if $\overline{\beta_F(\M)}=0$.
	\end{proof}

	\begin{proposition}\label{prop:max indecomp extension}
		Given an object $\V'$ of $\cC(\Sigma';\kk)_{\pure}$, there exists an object $\V$ of $\cC(\Sigma;\kk)_\pure$ with $\V|_{\Sigma'} \cong \V'$ that is taut at $F$.  It is unique up to a (non-unique) isomorphism.
	\end{proposition}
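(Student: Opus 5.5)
Here $\Sigma=\Sigma'\cup\{F\}$ with $F$ minimal in $\Sigma$, and I work with the extension triangle \eqref{eqn:extension triangle}. By the lemma following that diagram, the task is to find a free graded $\kk$-module $\V(F)$ and a factorization $\Xi_F=\Phi_F\Psi_F$ with $\Psi_F$ split injective, $\Phi_F$ surjective, and $\ker\overline{\Phi_F}\subseteq\im\overline{\Psi_F}$. Write $A=\Gamma_!(\Sigma_{>F},\V')[\rk F]$ and $C=\Gamma_*(\Sigma_{>F},\V')$. Both are free by Proposition \ref{prop:(local) freeness}, applied to the pure object $\V'|_{\Sigma_{>F}}$.

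\textbf{Existence.} Work one degree at a time, since everything is graded. Choose a free submodule $K\subseteq A$ that lifts a complement to $\ker\overline{\Xi_F}$ in $\bar A$, so $\bar K\cap\ker\overline{\Xi_F}=0$ and $\bar K+\ker\overline{\Xi_F}=\bar A$. Then $\overline{\Xi_F}|_{\bar K}$ is injective, so $\Xi_F|_K$ is split injective. Choose a free complement $D$ to $\Xi_F(K)$ in $C$. Set
\[
\V(F)=A\oplus D,\qquad \Psi_F(a)=(a,0),\qquad \Phi_F(a,d)=\Xi_F(a)+d .
\]
Then $\Phi_F\Psi_F=\Xi_F$, $\Psi_F$ is split injective, and $\Phi_F$ is surjective because $\Xi_F(K)+D=C$. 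For tautness, take $(\bar a,\bar d)\in\ker\overline{\Phi_F}$, so $\overline{\Xi_F}(\bar a)=-\bar d$. Decompose $\bar a=\bar k+\bar z$ with $\bar k\in\bar K$ and $\bar z\in\ker\overline{\Xi_F}$. Then $\overline{\Xi_F}(\bar k)=-\bar d$ lies in $\overline{\Xi_F}(\bar K)\cap\bar D=0$. Hence $\bar d=0$, so the element lies in $\im\overline{\Psi_F}$. This is a minimal factorization: a split injection followed by a surjection whose reduction introduces no extra kernel.

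\textbf{Uniqueness.} Suppose $\V_1$ and $\V_2$ are two such extensions, each with an isomorphism to $\V'$ on $\Sigma'$. Compose these to get an isomorphism $f\colon\V_1|_{\Sigma'}\to\V_2|_{\Sigma'}$. It induces isomorphisms $f_A$ and $f_C$ on the outer terms of the rectangle in Lemma \ref{lem:extension lemma}, and the outer square commutes because $\Xi_F$ is natural in the restriction to $\Sigma_{>F}$ (Lemma \ref{composition}). Lemma \ref{lem:extension lemma}(a) produces $f_B\colon\V_1(F)\to\V_2(F)$, which is the content of Proposition \ref{prop:extending morphisms}. Purity gives split injectivity of both $\Psi$'s and surjectivity of both $\Phi$'s, and tautness gives the two kernel containments. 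So Lemma \ref{lem:extension lemma}(b) shows $f_B$ is an isomorphism. By the gluing of morphisms over $\Sigma'$ and $\Sigma_{\ge F}$, this yields an isomorphism $\V_1\cong\V_2$ extending $f$. It is non-unique because $f_B$ is not unique.

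The main obstacle is the existence step when $\kk$ is a DVR rather than a field. The choice of $K$ must make $\Xi_F|_K$ split injective over $\kk$, not merely injective over $\olkk$. I would handle this by the reduction criterion: a map between free modules is split injective if and only if its reduction is injective. Apply this to $\Xi_F|_K$ and to the inclusion of $D$, then reduce the tautness check to linear algebra over $\olkk$ as above.
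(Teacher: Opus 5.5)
Your proof is correct and follows essentially the same route as the paper. The paper sets $\V(F)=\Gamma_!(\Sigma_{>F},\V')[\rk F]\oplus W$, with $\Psi_F$ the inclusion and $\Phi_F$ equal to $\Xi_F$ on the first summand and the inclusion on $W$, where $\overline{W}$ is a complement to $\im\overline{\Xi_F}$; it then gets uniqueness from Lemma \ref{lem:extension lemma}. Your auxiliary $K$ only makes $D$ an honest complement of $\Xi_F(K)$ over $\kk$, so surjectivity of $\Phi_F$ is immediate rather than via Nakayama, and $\overline{D}$ is exactly such a complement to $\im\overline{\Xi_F}$.
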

	\begin{proof}
		By the results of Section \ref{sec:extending objects}, to construct $\V$ we need to define $\V(F)$ along with maps $\Psi_F$, $\Phi_F$ factoring $\Xi_F$ as in \eqref{eqn:extension triangle}, so that 
		$\Psi_F$ is a split injection,
		$\Phi_F$ is surjective, and
		$\ker \overline{\Phi_F} \subseteq \im \overline{\Psi_F}$.  To do this, choose a complement $\overline{W} \subseteq \overline{\Gamma_*(\Sigma_{>F},\V')}$ to the image of $\bar\Xi_F$.  Since $\Gamma_*(\Sigma_{>F},\V')$ is a free $\kk$-module by Proposition \ref{prop:(local) freeness}(d), we can lift $\overline{W}$ to a direct summand $W$ of $\Gamma_*(\Sigma_{>F},\V')$.  Define $\V(F) = \Gamma_!(\Sigma_{>F},\V')[\rk F] \oplus W$, let $\Psi_F$ be the inclusion of $\Gamma_!(\Sigma_{>F},\V')[\rk F]$, and let $\Phi_F$ equal $\Xi_F$ on the first summand and the inclusion on the second.  The required properties are now easy to verify.
		
		Lemma \ref{lem:extension lemma} implies that, for any two such extensions, the identity map on their restrictions to $\Sigma'$ extends to an isomorphism, so the extension $\cV$ is unique up to isomorphism.
	\end{proof}

	\begin{example}\label{rank two example}
		Continuing with the notation of Example \ref{ex:extension example}, we illustrate the result of applying Proposition \ref{prop:max indecomp extension} to the pure object $\cV' = (\Lambda\H(\cL;\kk))|_{\Sigma_{>\hat{0}}}$ with coefficient ring $\kk=\Z_2$. The degree $1$ part of the map 
		$\overline{\Xi_{\hz,\cV'}}$ is a rank $2$ map $(\F_2)^3 \to (\F_2)^3$, and a complement to its image is given by the subspace $W = \kk\cdot(1,1,1)\subseteq (\F_2)^3$.  If $\cV$ is the object constructed in Proposition \ref{prop:max indecomp extension}, then the degree one part of $\cV(\hz)$ is a free $\kk$-module of rank four, and one can check that $\cV$ is isomorphic to $\Lambda(\CH(\cL;\kk))$.  Note that this agrees with the statement of Theorem \ref{E=CH}.
	\end{example}
	
	\begin{proposition}\label{prop:extending indecomposables}
		Let $\V \in \cC(\Sigma;\kk)_{\pure}$ be an object with $\V|_{\Sigma'}\neq 0$.
		Then $\V$ is indecomposable if and only if $\V|_{\Sigma'}$ is indecomposable and $\V$ is taut at $F$.
	\end{proposition}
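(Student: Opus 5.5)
We argue both directions through endomorphism rings, using that for pure objects over a local PID, indecomposability is equivalent to the endomorphism ring being local. This holds because $\End(\V)$ is a finitely generated $\kk$-algebra, free as a $\kk$-module, so idempotents lift. Throughout, $\Sigma=\Sigma'\cup\{F\}$ with $F$ minimal.

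\emph{Indecomposable implies taut and indecomposable restriction.} First, suppose $\V$ is not taut at $F$. Proposition~\ref{prop:max indecomp extension} gives a taut extension $\V^t$ of $\V|_{\Sigma'}$. By Proposition~\ref{prop:extending morphisms}, the identity of $\V|_{\Sigma'}$ extends to maps $\V^t\to\V$ and $\V\to\V^t$. In the construction of Proposition~\ref{prop:max indecomp extension}, $\V(F)$ can be written as $\Gamma_!(\Sigma_{>F},\V)[\rk F]\oplus W\oplus U$, where $\Phi_F$ restricted to $\Gamma_!\oplus W$ is already surjective. By adjusting on the complement $U$ (after subtracting a lift through $\Gamma_!\oplus W$), we can make $U$ lie in $\ker\Phi_F$ and meet $\im\Psi_F$ trivially. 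Then $U$, viewed as an object supported only at $F$, is a nonzero direct summand, and it is a proper summand because $\V|_{\Sigma'}\neq0$. Concretely, $\V\cong\V^t\oplus \V_U$ with $\V_U$ supported at $\{F\}$, so $\V$ is decomposable.

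Second, suppose $\V$ is taut but $\V|_{\Sigma'}=A\oplus B$ with both summands nonzero. Take the taut extensions $\tilde A$ and $\tilde B$. Their direct sum is taut, since $\Phi$, $\Psi$ and the reductions are all compatible with direct sums. By the uniqueness part of Proposition~\ref{prop:max indecomp extension}, $\V\cong\tilde A\oplus\tilde B$, which is decomposable.

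\emph{Taut and indecomposable restriction implies indecomposable.} This is the harder direction. Let $e\in\End(\V)$ be an idempotent. Its restriction $e|_{\Sigma'}$ is an idempotent of the indecomposable $\V|_{\Sigma'}$, hence equals $0$ or $1$. Replacing $e$ by $1-e$ if needed, assume $e|_{\Sigma'}=0$. Then $e$ acts on $\V(F)$ killing $\im\Psi_F$ and landing in $\ker\Phi_F$, so it induces a map $\V_F\to\V_{[F]}$. Reducing mod the maximal ideal, the component $\bar e_F$ has image in $\ker\bar\Phi_F\subseteq\im\bar\Psi_F$ and kills $\im\bar\Psi_F$. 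Hence $\bar e_F^2=0$, and since $e$ is idempotent, $\bar e_F=0$. Because $\V(F)$ is free, $e_F$ is then idempotent and lies in the maximal ideal times $\End$, so it is topologically nilpotent and therefore $0$. Thus $e=0$.

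The argument relies on Lemma~\ref{lem:extension lemma}(b): a morphism that is an isomorphism on $\Sigma'$ is an isomorphism when both objects are taut. This lemma is what makes the uniqueness and decomposition steps go through. The main obstacle I expect is the first step, namely cleanly splitting off the part supported at $F$ when tautness fails. I would handle it by choosing a lift $U$ of a complement to $\ker\bar\Phi_F\cap\im\bar\Psi_F$ inside $\ker\bar\Phi_F$, and checking that the resulting decomposition is compatible with all the $\varphi$ and $\psi$ maps.
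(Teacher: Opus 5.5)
Your proof is correct and is essentially the paper's argument. In the non-taut case you split off a free piece of $\V(F)$ contained in $\ker\Phi_F$ and complementary to a submodule containing $\im\Psi_F$. The decomposable-restriction case uses uniqueness of taut extensions exactly as the paper does. Your idempotent argument for the converse is the paper's observation that a summand vanishing on $\Sigma'$ would contribute a nonzero subspace of $\ker\overline{\Phi_F}$ missing $\im\overline{\Psi_F}$, rephrased through the projection onto that summand.

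Two small points. First, it is the choice in your last paragraph that makes $U\neq 0$ evident: lift a complement of $\ker\overline{\Phi_F}\cap\im\overline{\Psi_F}$ inside $\ker\overline{\Phi_F}$. Your middle-paragraph description would instead need $W$ chosen minimally, plus a rank count. Second, your opening claim that idempotents lift because $\End(\V)$ is finitely generated and free over $\kk$ fails over a non-complete DVR such as $\Z_{(p)}$. It is harmless, since you only use that an object of the abelian category $\cC(\Sigma;\kk)$ is indecomposable exactly when its only idempotent endomorphisms are $0$ and $1$.
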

	\begin{proof}
%
		First suppose that $\V$ is not taut at $F$, so $\ker \overline{\Phi_F}$ is not contained in $\im \overline{\Psi_F}$.  We will show that $\cV$ is decomposable.  Since $\Psi_F$ is a split injection by Proposition \ref{prop:(local) freeness}, tensoring the quotient map $\pi\colon \V(F) \to \V(F)/\im \Psi_F$ with $\bar\kk$ gives the quotient
		\[\bar\pi\colon \overline{\V(F)} \to \overline{\V(F)/\im \Psi_F} \cong \overline{\V(F)}/\im \overline{\Psi_F}.\]
		By lifting an element of $\ker \overline{\Phi_F}$ that is not in $\im \overline{\Psi_F}$, we obtain $v \in \ker \Phi_F$ whose image $$\bar{v}\in\overline{\ker \Phi_F} = \ker \overline{\Phi_F} \subseteq \overline{\V(F)}$$ is not in the kernel of $\bar\pi$.  This implies that $\pi(\kk\cdot v)$ is a direct summand of the free $\kk$-module $\V(F)/\im \Psi_F = \V_F$.  Choose a complement $W \subseteq \V(F)/\im \Psi_F$ to $\pi(\kk\cdot v)$.
		
		We define a nontrivial direct sum decomposition $\V = \V_1 \oplus \V_2$ as follows:
		\begin{enumerate}[(1)]
			\item  For $G \ne F$, let $\V_1(G) = \V(G)$ and $\V_2(G) = 0$.
			\item Let $\V_1(F) = \pi^{-1}(W)$, $\V_2(F) = \kk\cdot v$.
		\end{enumerate}
		Because $\im \Psi_F \subseteq \V_1(F)$ and $\V_2(F) \subseteq \ker \Phi_F$, restricting the $\varphi$ and $\psi$ maps of $\V$ gives objects $\V_1$ and $\V_2$ of $\cC(\Sigma;\kk)$.  Since $\V(G) = \V_1(G) \oplus \V_2(G)$ for all $G\in \Sigma$, we have constructed the required direct sum decomposition; it is nontrivial since $\V_1|_{\Sigma'} = \V|_{\Sigma'}$ is nonzero and $\V_2(F)$ is nonzero. 
		
		Next, suppose that $\V$ is indecomposable but there is a non-trivial direct sum decomposition $\V|_{\Sigma'} = \V'_1 \oplus \V'_2$.  By Proposition \ref{prop:max indecomp extension}, there are pure extensions $\V_1$ and $\V_2$ of $\V'_1$ and $\V'_2$ that are taut at $F$.  But we have just seen that $\V$ must be taut at $F$, and $\V_1 \oplus \V_2$ is also taut at $F$, so the
		uniqueness part of Proposition \ref{prop:max indecomp extension} implies that $\V \cong \V_1 \oplus \V_2$, contradicting the indecomposability of $\V$.
		
		Finally, suppose that we are given a non-trivial decomposition $\V = \V_1 \oplus \V_2$ but 
		$\V|_{\Sigma'}$ is indecomposable. Then one of $\V_1|_{\Sigma'}$ or $\V_2|_{\Sigma'}$ must vanish;  suppose without loss of generality that $\V_2|_{\Sigma'} = 0$.  Then we must have $\V_2(F) \ne 0$, and so $\overline{\V_2(F)} \ne 0$.   The vanishing of $\V_2|_{\Sigma'}$ implies that $\overline{\V_2(F)} \subseteq \ker \overline{\Phi_F}$ and $\im \overline{\Psi_F} \subseteq \overline{\V_1(F)}$, so it follows that $\V$ is not taut at $F$. 
	\end{proof}

	\begin{theorem}\label{thm:indecomposables}
		For each element $F \in \Sigma$, there is an object $\eE^F = \eE^F_{\Sigma,\kk}$ of $\cC(\Sigma;\kk)_\pure$ for which
		the following properties hold: 
		\begin{enumerate}[(a)]
			\item $\eE^F(G) = 0$ unless $G \le F$,
			\item $\eE^F(F) \cong \kk$, concentrated in degree $0$,
			\item $\eE^F$ is taut at all elements in $\Sigma \setminus \{F\}$.
		\end{enumerate}
		This object is indecomposable and unique up to isomorphism.  The collection of objects $\eE^F[k]$ over all $F\in \Sigma$ and all $k\in \Z$ gives a complete set of indecomposable objects in $\cC(\Sigma;\kk)_\pure$.	
	\end{theorem}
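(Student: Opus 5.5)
We prove existence, uniqueness, indecomposability and completeness together by induction on $|\Sigma|$, adding one minimal element at a time. To construct $\eE^F$, choose a chain of upper sets $\Sigma_{\geq F}\cup(\Sigma\setminus \cL^F) = \Sigma_0\subset\Sigma_1\subset\cdots\subset\Sigma_r=\Sigma$, each obtained from the previous one by adjoining a minimal element. More simply, order the elements of $\Sigma$ so that each initial segment (from the top) is an upper set, and start with the object on the upper set $\Sigma_{\geq F}$ or on any upper set containing $F$ as a minimal element. Concretely, begin with the object on $\Sigma_{\geq F}$ that is $\kk$ in degree $0$ at $F$ and zero elsewhere; it is pure, since all sections and cosections on $\Sigma_{>F}$ vanish. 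Extend it to the upper set $\Sigma_{\geq F}\cup\{G\in\Sigma : G\not\leq F\}$ by zero. This extension is still pure: at a new element $G\not\le F$, both $\Gamma_!(\Sigma_{>G},\cdot)$ and $\Gamma_*(\Sigma_{>G},\cdot)$ vanish because every element above $G$ is $\not\le F$. It is taut there trivially. Then add the remaining elements $G<F$ one at a time using Proposition~\ref{prop:max indecomp extension}, which produces a pure extension taut at each new element. Properties (a)--(c) hold by construction.

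For indecomposability, apply Proposition~\ref{prop:extending indecomposables} inductively along the same chain. The restriction to the first nonzero stage is $\kk$ at $F$, which is indecomposable, and at each later step the restriction is nonzero and the object is taut at the new element. Uniqueness also follows by induction. Any object satisfying (a)--(c) restricts on $\Sigma_{\ge F}\cup\{G\not\le F\}$ to the object described above, by (a) and (b), together with tautness and purity. Here a pure object taut at $G\not\le F$ with vanishing $\Gamma_!$ and $\Gamma_*$ on $\Sigma_{>G}$ must vanish at $G$, since then $\ker\overline{\Phi_G}=\overline{\V(G)}\subseteq \im\overline{\Psi_G}=0$. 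Then the uniqueness part of Proposition~\ref{prop:max indecomp extension} gives an isomorphism at each step.

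For completeness, let $\V$ be an indecomposable pure object, and induct on $|\Sigma|$. Pick a minimal $F_0$ with $\Sigma'=\Sigma\setminus\{F_0\}$. If $\V|_{\Sigma'}=0$, then $\V$ is supported at $F_0$ alone, where it is a free graded $\kk$-module; indecomposability forces $\V\cong\kk[k]$, which is $\eE^{F_0}[k]$. Otherwise Proposition~\ref{prop:extending indecomposables} shows that $\V|_{\Sigma'}$ is indecomposable and $\V$ is taut at $F_0$. By induction $\V|_{\Sigma'}\cong \eE^F_{\Sigma'}[k]$ for some $F\in\Sigma'$, and by uniqueness of taut extensions $\V\cong \eE^F_\Sigma[k]$, since $\eE^F_\Sigma$ restricts to $\eE^F_{\Sigma'}$, which can be checked from (a)--(c). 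Distinct pairs $(F,k)$ give non-isomorphic objects, because $F$ is recovered as the unique element at which tautness fails (equivalently, the maximal element of the support), and $k$ is recovered as the degree of $\eE^F(F)$.

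The main obstacle is the bookkeeping needed to check that the starting object and the zero extension are pure and taut in the DVR setting. Above all, one must check that tautness at $G\not\le F$ forces vanishing, and that restriction to $\Sigma'$ preserves properties (a)--(c). It also remains to argue that every object of $\cC(\Sigma;\kk)_\pure$ decomposes into indecomposables at all; this holds by finite rank, since pure objects are free of finite rank by Proposition~\ref{prop:(local) freeness}.
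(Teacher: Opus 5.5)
Your proof is correct and follows essentially the same route as the paper. Existence and indecomposability come from the taut extensions of Proposition~\ref{prop:max indecomp extension} together with the criterion of Proposition~\ref{prop:extending indecomposables}, and completeness uses the same induction on $|\Sigma|$ through a minimal element; the differences are organizational. You build each $\eE^F$ along its own chain (skyscraper at $F$, zero extension to elements not below $F$, then taut extension downward, which is the algorithm the paper records in Section~\ref{sec:computations}) and you spell out uniqueness and irredundancy, whereas the paper extends all $\eE^G_{\Sigma'}$ at once each time it adjoins a minimal element.
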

	\begin{proof}
		We proceed by induction on $|\Sigma|$.  Since the dependence on $\Sigma$ is important in this proof but the dependence on $\kk$ is not, for the remainder of the proof we will denote the indecomposables described in the theorem by $\eE^F_\Sigma$.
		
		When $|\Sigma| = 1$, $\cC(\Sigma;\kk)_\pure$ is equivalent to the category of free graded $\kk$-modules, and the result is immediate.  Now assume that $|\Sigma|>1$ and the result is known for all upper sets that are properly contained in $\Sigma$.  Let $F$ be a minimal element in $\Sigma$ and let $\Sigma' = \Sigma\setminus\{F\}$.  By the inductive hypothesis, we have modules $\eE_{\Sigma'}^G$ for $G \in \Sigma'$ representing all isomorphism classes of indecomposable objects in $\cC(\Sigma';\kk)_\pure$ up to shift.  By Proposition \ref{prop:max indecomp extension}, each object $\eE^G_{\Sigma'}$ has an extension $\eE_\Sigma^G$ that is pure and taut at $F$, and Proposition \ref{prop:extending indecomposables} implies that this extension is indecomposable.  We have one more indecomposable object $\eE_\Sigma^F$ in $\cC(\Sigma;\kk)_\pure$ given by setting $\eE^F_\Sigma(F) = \kk$ in degree $0$ and $\eE^F_\Sigma(G) = 0$ for all $G\ne F$.  It is easy to see that these objects satisfy the 
		required conditions.
		
		Now let $\V$ be any nonzero indecomposable object of $\cC(\Sigma;\kk)_\pure$.  If $\V|_{\Sigma'}$ is nonzero, then Proposition \ref{prop:extending indecomposables} implies that $\V|_{\Sigma'}$ is indecomposable and $\V$ is taut at $F$. Thus we must have $\V|_{\Sigma'}\cong \eE^G_{\Sigma'}[k]$ for some $G\ne F$ and some $k \in \Z$.  But then $\V \cong \eE_\Sigma^G[k]$ by the uniqueness part of Proposition \ref{prop:max indecomp extension}.  On the other hand, if $\V|_{\Sigma'} = 0$, then $\V(F)$ must be a free $\kk$-module of rank $1$, which means that  $\V \cong \eE_\Sigma^F[k]$ for some $k \in \Z$.	
	\end{proof}

\subsection{Parity modules}
In this section, we prove Theorems \ref{def of E}, \ref{indecomposables}, and \ref{general decomposition}.

\begin{definition}
	Let $\E(\cL;\kk) \coloneq \Theta\eE^{\hat{1}}_{\cL,\kk}$, where $\Theta\colon \cC \to \H(\cL;\kk)\mmod$ is the functor introduced in the proof
	of Proposition \ref{thm:main equivalence} and $\eE^{\hat{1}}_{\cL,\kk}$ is the indecomposable pure object from Theorem \ref{thm:indecomposables}.  
\end{definition}

Recall from the introduction that, for any $F\in\cL$, we have a natural surjective homomorphism $\H(\cL;\kk)\to\H(\cL^F;\kk)$,
and this means that $\E(\cL^F;\kk)$ is naturally a module over $\H(\cL;\kk)$.  The following lemma follows from Remark \ref{lower intervals}.

\begin{lemma}\label{unexploding indecomposables}
For any $F\in\cL$, we have $\E(\cL^F;\kk) \cong \Theta\eE^{F}_{\cL,\kk}$.
\end{lemma}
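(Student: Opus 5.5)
The plan is to compare the two objects $\Lambda\E(\cL^F;\kk)$ and $\eE^F_{\cL,\kk}$ in $\cC_\reg(\cL;\kk)$ and then apply $\Theta$. By definition $\E(\cL^F;\kk) = \Theta\eE^{F}_{\cL^F,\kk}$, where $\eE^F_{\cL^F,\kk}$ is the indecomposable pure object of $\cC(\cL^F;\kk)$ attached to the maximal element $F$ of the geometric lattice $\cL^F$. Remark \ref{lower intervals} tells us how to extend an object $\V$ of $\cC(\cL^F;\kk)$ to an object $\widetilde\V$ of $\cC(\cL;\kk)$: set $\widetilde\V(G)=0$ for $G\not\le F$. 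It also tells us that this extension corresponds to pulling back along $\H(\cL;\kk)\to\H(\cL^F;\kk)$. One caveat: $\cL^F$ is a lower set rather than an upper set, so the relevant structure maps must be checked directly. The maps $\varphi_G^H,\psi_G^H$ with $H\not\le F$ are all zero. For $G,H\le F$, a modular pair $(G,H)$ in $\cL^F$ is a modular pair in $\cL$ with the same join and meet, so relation (c) of Proposition \ref{prop:psi phi relations} holds; relations (a) and (b) hold trivially. Moreover $\Theta\widetilde\V = \V(\hz)$ with $y_G$ acting by $\psi^G_\hz\varphi^G_\hz$, which vanishes for $G\not\le F$. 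This is exactly the pulled-back module structure, so $\Theta\widetilde{\eE^F_{\cL^F}} \cong \E(\cL^F;\kk)$ as $\H(\cL;\kk)$-modules.

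It therefore suffices to show $\widetilde{\eE^F_{\cL^F}}\cong\eE^F_{\cL}$, and for this we verify the characterizing properties (a)--(c) of Theorem \ref{thm:indecomposables} together with purity, then invoke the uniqueness statement there. Properties (a) and (b) are immediate from the construction. For purity and tautness, the key observation is that for $G\le F$ we have $\Gamma_*(\Sigma_{>G},\widetilde\V)=\Gamma_*((\cL^F)_{>G},\V)$ and $\Gamma_!(\Sigma_{>G},\widetilde\V)=\Gamma_!((\cL^F)_{>G},\V)$, because the components at elements not below $F$ are zero. Consequently the maps $\Phi_G$ and $\Psi_G$ and the stalks at $G$ agree with those computed in $\cL^F$. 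At every $G\not\le F$ we have $\widetilde\V(G)=0$, so $\Phi_G$ is surjective onto $\Gamma_*(\Sigma_{>G},\widetilde\V)$. That target is zero: any section over $\Sigma_{>G}$ would have components at elements $H\le F$ with $H>G$, which is impossible since then $G<H\le F$. Similarly $\Psi_G$ is injective, the stalk is $0$, and tautness is trivial because $\ker\overline{\Phi_G}=0$.

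Combining these with Lemma \ref{lem:(co)flabby extensions} shows that $\widetilde{\eE^F_{\cL^F}}$ is flabby, coflabby, locally free, and taut away from $F$. Theorem \ref{thm:indecomposables} then gives the isomorphism with $\eE^F_{\cL,\kk}$, and applying $\Theta$ finishes the proof. The only point requiring care is the bookkeeping of sections and cosections over $\Sigma_{>G}$ when $G$ lies outside $\cL^F$. I expect this to go through easily as sketched, since every component involved there vanishes.
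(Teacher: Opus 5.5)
Your proposal is correct and follows the paper's route. The paper's entire proof is the remark that the lemma follows from Remark~\ref{lower intervals}: extension by zero from $\cL^F$ corresponds to pullback along $\H(\cL;\kk)\to\H(\cL^F;\kk)$. You supply the omitted details, checking that the extended object is pure and taut away from $F$, so that the uniqueness in Theorem~\ref{thm:indecomposables} identifies it with $\eE^F_{\cL,\kk}$.
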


\begin{proof}[Proof of Theorem \ref{indecomposables}]
By Proposition \ref{thm:main equivalence}, Remark \ref{purepure},
and Lemma \ref{pure implies regular}, a complete set of indecomposable pure $\H(\cL;\kk)$-modules (up to shift and isomorphism)
is obtained from a complete set of indecomposable pure objects of $\cC(\cL;\kk)$ (up to shift and isomorphism) via the functor $\Theta$.
By Theorem \ref{thm:indecomposables} and Lemma \ref{unexploding indecomposables}, one obtains the modules $\E(\cL^F;\kk)$ for all $F\in\cL$.
\end{proof}

\begin{proof}[Proof of Theorem \ref{def of E}]
By Theorem \ref{indecomposables}, $\E(\cL;\kk)$ is pure and indecomposable.  Furthermore,
we have \[y_{\hat{1}}\E(\cL;\kk) = \eE^{\hat{1}}_{\cL,\kk}(\hat{1})[-\rk \cL] \cong \kk[-\rk \cL],\] which implies the existence of an
element $x\in \E^0(\cL;\kk)$ such that $y_{\hat 1}\cdot x$ is nonzero.

Conversely, let $\M$ be an indecomposable pure module and $x\in\M^0$ an element with $y_{\hat{1}}x\neq 0$.
By Theorem \ref{indecomposables}, there exists an element $F \in \cL$ and an integer $k\in \Z$ such that $\M\cong\E(\cL^F;\kk)[k]$.  
By Lemma \ref{unexploding indecomposables} and parts (a) and (b) of Theorem \ref{thm:indecomposables}, we must have $F = \ho$ and $k=0$.
\end{proof}

\begin{remark}\label{rmk:alternate contraction module}
Our construction of parity modules also gives an alternative proof of Proposition \ref{contraction module}.  To see this, note that the restriction of $\eE^\ho_{\cL,\kk}$ to the upper set $\cL_F$ is clearly isomorphic to $\eE^\ho_{\cL_F,\kk}$.  Applying the functor $\Theta_F\colon \cC(\cL_F;\kk) \to \H(\cL_F;\kk)\mmod$ to this isomorphism gives an isomorphism
\[y_F\E(\cL;\kk)[\rk F] \cong \E(\cL_F;\kk).\]	
\end{remark}

In \cite{SHCG}, a graded $\H(\cL;\Q)$-module is called \textbf{pure} if it is isomorphic to a direct sum of direct summands of 
modules of the form $\CH(\cL^F;\Q)[k]$, where $F \in \cL$ and $k \in \Z$.
The next result says that this definition agrees with our definition of pure modules.  
\begin{proposition}\label{prop:pure equals pure}
Let $\kk$ be a field.  An $\H(\cL;\kk)$-module is pure if and only if it is isomorphic to 
a direct sum of direct summands of modules of the form $\CH(\cL^F;\kk)[k]$, where $F \in \cL$ and $k \in \Z$.
\end{proposition}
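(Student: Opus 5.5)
The plan is to prove both directions using Theorem \ref{indecomposables}, Proposition \ref{true facts}(e), and the Krull--Schmidt property of finite-dimensional graded modules.

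For the ``if'' direction, I first show that purity passes to direct summands and finite direct sums. Both $\Ann_\Sigma$ and $\Span_\Sigma$ commute with finite direct sums: for $\M=\M_1\oplus\M_2$ we have $\Ann_\Sigma(\M)=\Ann_\Sigma(\M_1)\oplus\Ann_\Sigma(\M_2)$, and similarly for $\Span_\Sigma$. Hence the equalities defining flabbiness and coflabbiness hold for $\M$ if and only if they hold for each summand, and the same is true after degree shifts. It then suffices to check that $\CH(\cL^F;\kk)$, viewed as an $\H(\cL;\kk)$-module through the surjection $\H(\cL;\kk)\to\H(\cL^F;\kk)$, is pure. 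By Proposition \ref{true facts}(e) it is pure over $\H(\cL^F;\kk)$. To transfer this to $\H(\cL;\kk)$, note that $y_G$ acts by zero whenever $G\not\le F$. So for any upper set $\Sigma\subseteq\cL$ we get
\[
\Ann_\Sigma(\M)=\Ann_{\Sigma\cap\cL^F}(\M)\quad\text{and}\quad\Span_\Sigma(\M)=\Span_{\Sigma\cap\cL^F}(\M),
\]
and $\Sigma\cap\cL^F$ is an upper set of $\cL^F$. Since intersecting with $\cL^F$ commutes with intersecting two upper sets, the purity conditions over $\cL$ reduce to those over $\cL^F$. This is the module-level version of Remark \ref{lower intervals}. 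Alternatively, I could cite that remark together with Remark \ref{purepure}.

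For the ``only if'' direction, let $\M$ be pure. By Theorem \ref{indecomposables}, $\M$ is a direct sum of shifts $\E(\cL^F;\kk)[k]$. It is therefore enough to show that each $\E(\cL^F;\kk)$ is a direct summand of $\CH(\cL^F;\kk)$ as an $\H(\cL;\kk)$-module. Proposition \ref{H-E-CH}, or more precisely the decomposition \eqref{Chow decomposition} applied to the geometric lattice $\cL^F$, shows that $\E(\cL^F;\kk)$ is a direct summand of $\CH(\cL^F;\kk)$ as an $\H(\cL^F;\kk)$-module. Any $\H(\cL^F;\kk)$-module splitting is also an $\H(\cL;\kk)$-module splitting, because the $\H(\cL;\kk)$-action factors through $\H(\cL^F;\kk)$.

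The main thing to double-check is that \eqref{Chow decomposition} is established with no circularity. It comes from Theorem \ref{general decomposition} applied to the pure module $\CH(\cL^F;\kk)$, together with Proposition \ref{Chow stalks}, which shows that $\beta_{\hat 1}$ has rank one. That rank-one statement is exactly what guarantees that $\E(\cL^F;\kk)$ appears with multiplicity one. Everything else is formal.
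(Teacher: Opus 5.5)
Your proposal is correct and follows the paper's proof. The ``if'' direction uses Proposition~\ref{true facts}(e) and the fact that purity is preserved by shifts, finite direct sums and direct summands; the ``only if'' direction uses Theorem~\ref{indecomposables} together with the fact that $\E(\cL^F;\kk)$ is a direct summand of $\CH(\cL^F;\kk)$. The one difference is how that last fact is justified: the paper reads it off directly from $y_F\CH(\cL^F;\kk)\cong\kk[-\rk F]$ and $y_G\CH(\cL^F;\kk)=0$ for $G\not\le F$, whereas you use \eqref{Chow decomposition}, which rests on Theorem~\ref{general decomposition}; that theorem is proved just after this proposition but does not depend on it, so there is no circularity.
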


\begin{proof}
	By Proposition \ref{true facts}(e), $\CH(\cL^F;\kk)[k]$ is pure, and since purity is clearly preserved under taking direct sums and direct summands, the ``if" statement holds. 
	
	Considering $\CH(\cL^F;\kk)$ as an $\H(\cL;\kk)$-module via the homomorphism $\H(\cL;\kk) \to \H(\cL^F;\kk)$, we have 
	$y_F\CH(\cL^F;\kk) \cong \kk[-\rk F]$ and $y_G\CH(\cL^F;\kk)=0$ for $G \not\le F$.  It follows that 
	$\E(\cL^F;\kk)$ is isomorphic to a direct summand of $\CH(\cL^F;\kk)$. Thus, by Theorem \ref{indecomposables},
	every indecomposable pure $\H(\cL;\kk)$-module appears as a shift of a direct summand of $\CH(\cL^F;\kk)$ for some $F\in\cL$.
	This proves the ``only if" direction.
\end{proof}

Theorem \ref{indecomposables} implies that any pure $\H(\cL;\kk)$-module $\M$ can be decomposed as a direct sum 
\begin{equation}\label{eqn:pure decomposition}
	\M \cong \bigoplus_{F\in \cL} W(\M,F)\otimes \E(\cL^F;\kk)
\end{equation}
for some finite-dimensional graded vector spaces $W(\M,F)$.  The next result, which proves Theorem \ref{general decomposition}, shows how to compute these multiplicity spaces. 

\begin{proof}[Proof of Theorem~\ref{general decomposition}]
	As with all of the results stated in the introduction, we are assuming that $\kk$ is a field.  We must show that $W(\M,F)$ is isomorphic to the image of the costalk-stalk homomorphism $\beta_F(\M) \colon \M_{[F]}\to \M_F$.  This follows by applying $\im\left(\beta_G(-)\right)$ to both sides of \eqref{eqn:pure decomposition}. Indeed, if $F\ne G$, then $\eE^F_{\cL,\kk} \cong\Lambda\E(\cL^F;\kk)$ is taut at $G$.  By Lemma \ref{lem:tautness and (co)stalks}, 
this implies that $\beta_G(\E(\cL^F;\kk)) = 0$, and therefore the $F$ summand of the right-hand side vanishes. On the other hand, the maps
	\[
	\kk\cong \E(\cL^G;\kk)_{[G]} \longrightarrow y_G\E(\cL^G;\kk)[\rk G] \longrightarrow \E(\cL^G;\kk)_G \cong \kk
	\]
are isomorphisms, therefore the $G$ summand of the right-hand side is isomorphic to $W(\M,G)$.  Thus we obtain the equation
$\im\left(\beta_G(\M)\right)\cong W(\M,G)$.
\end{proof}

\begin{remark}
	Theorem \ref{general decomposition} does not hold when $\kk$ is a DVR.  This is because  it is possible for a pure module $\M$ to be taut at $F$ even though $\beta_F(\M)$ is not zero.  The object in Example \ref{rank two example} is an example.  Instead, the multiplicities in the decomposition of a pure $\H(\cL;\kk)$-module are given by the graded dimensions of the images of the reduced maps $\overline{\beta_F(\M)}$.
\end{remark}

\section{Properties of parity sheaves}\label{sec:properties}
The purpose of this section is to prove Theorems \ref{props of E} and \ref{it goes up}.
	
	\subsection{Top and bottom degrees}\label{sec:computations}
	In addition to giving a classification of indecomposable pure objects, Theorem \ref{thm:indecomposables} (or rather its proof) also gives an algorithm to construct them.  Namely, to construct $\eE^F = \eE^F_{\cL,\kk}$, start with $\eE^F(G) = 0$ for $G \not \le F$, $\eE^F(F) = \kk$, and then use Proposition \ref{prop:max indecomp extension} to define $\eE^F(G)$ for $G <F$ in decreasing order of rank.
The following proposition is an application of this algorithm.
	
	\begin{proposition}\label{prop:E in small and large degree}
		The indecomposable pure object $\eE^F = \eE^F_{\cL,\kk}$ satisfies the following properties. 
		\begin{enumerate}
			\item[(a)] For any $G \le F$, we have $\eE^F(G)^i = 0$ if $i < 0$ or $i > \rk F - \rk G$.
			\item[(b)] For any $G \le F$, we have $\eE^F(G)^0 \cong \eE^F(G)^{\rk F - \rk G} \cong \kk$.
			\item[(c)] For any $G \le H \le F$, the restriction map
			\[\varphi_G^H\colon \eE^F(G)^0 \longrightarrow \eE^F(H)^0\]
			and the corestriction map
			\[\psi_G^H\colon \eE^F(H)^{\rk F -\rk H} \longrightarrow \eE^F(G)^{\rk F - \rk G}\]
			are both isomorphisms.
		\end{enumerate}
	\end{proposition}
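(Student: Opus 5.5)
I will argue by downward induction on $\rk G$ for $G\le F$, following the construction in Proposition~\ref{prop:max indecomp extension}. For $G=F$, parts (a) and (b) hold because $\eE^F(F)=\kk$ in degree $0$, and (c) is trivial. By Remark~\ref{lower intervals} we may assume $F=\ho$, so $\Sigma=\cL$. Suppose the statements hold for every $H>G$. By construction,
\[\eE^F(G)=\Gamma_!(\Sigma_{>G},\eE^F)[\rk G]\oplus W,\]
where $\overline W$ is a complement to the image of $\overline{\Xi_G}$ in $\overline{\Gamma_*(\Sigma_{>G},\eE^F)}$. So I will control the degrees of the cosections $\Gamma_!$ and the sections $\Gamma_*$ on $\Sigma_{>G}$.

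For the sections, $\Gamma_*(\Sigma_{>G},\eE^F)$ is a submodule of $\prod_{H>G}\eE^F(H)$. By induction each $\eE^F(H)$ lives in degrees $[0,\rk F-\rk H]\subseteq[0,\rk F-\rk G-1]$. Hence $W$ lives in degrees $0$ through $\rk F-\rk G-1$.

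For the cosections, $\Gamma_!(\Sigma_{>G},\eE^F)[\rk G]$ is a quotient of $\bigoplus_{H>G}\eE^F(H)[\rk G-\rk H]$. The summand for $H$ lives in degrees $[\rk H-\rk G,\rk F-\rk G]\subseteq[1,\rk F-\rk G]$. Combining the two estimates gives (a).

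For (b) in degree $0$, only $W$ contributes. Degree $0$ sections are compatible families in $\eE^F(H)^0\cong\kk$, and the maps $\varphi$ between these are isomorphisms by induction (c). Since $\Sigma_{>G}$ is nonempty and connected through $F$, these families form a copy of $\kk$, with each projection an isomorphism. The image of $\Xi_G$ in degree $0$ vanishes, so $W^0\cong\kk$. Moreover $\Phi_G$ is the identity on $W$, so composing with projection to $H$ shows that $\varphi_G^H$ is an isomorphism in degree $0$.

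The top degree $\rk F-\rk G$ is the dual statement. Only cosections contribute there, and the top-degree cosection group is the colimit of the copies $\eE^F(H)^{\rk F-\rk H}\cong\kk$ under the isomorphisms $\psi$. It is therefore $\kk$, and $\psi_G^H$ maps isomorphically onto it. Alternatively, I would use Proposition~\ref{prop:duality preserves purity}: $(\eE^F)^*$ is pure, indecomposable, and satisfies the defining properties of $\eE^F$ up to shift, and the duality exchanges $\varphi$ with $\psi$ and degree $0$ with the top degree.

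The main obstacle is the connectivity and colimit computation. One must check that the compatible system of $\kk$'s indexed by the poset $\Sigma_{>G}$ has limit (respectively colimit) exactly $\kk$, rather than a larger or torsion object, and also that $\Xi_G$ does not hit degree $0$. This should follow because $\Sigma_{>G}$ contains $F$ as a maximum, so every $H$ is linked to $F$ and all transition maps are isomorphisms.
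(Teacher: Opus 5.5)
Your proof is correct and follows essentially the same route as the paper. You induct on $\rk F-\rk G$, bound the degrees of $\Gamma_*(\Sigma_{>G},\eE^F)$ and $\Gamma_!(\Sigma_{>G},\eE^F)[\rk G]$, and compute the degree-$0$ limit and top-degree colimit of copies of $\kk$ joined by isomorphisms. The differences are cosmetic: you use the explicit model $\Gamma_!(\Sigma_{>G},\eE^F)[\rk G]\oplus W$ from Proposition~\ref{prop:max indecomp extension} where the paper reads the same conclusions off $\Psi_G,\Phi_G$ via purity and tautness, and your reduction to $F=\ho$ is unnecessary because $\eE^F(H)=0$ for $H\not\le F$ already.
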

	\begin{proof}
We will proceed by induction on the difference $\rk F - \rk G$.  When $G = F$, the statements (a) and (b) hold because $\eE^F(F)$ is equal to $\kk$ concentrated in degree $0$.  The only case of (c) to check is $G = H = F$, which holds trivially since $\varphi_F^F$ and $\psi_F^F$ are both the identity map. 
		
		Now suppose that we are given $G_0 < F$ such that the proposition holds for all $G$ with $G_0 < G\leq F$.
		We apply these statements to the sections and cosections of $\eE^F$ on $\Sigma_{>G_0}$.  Looking at sections
		\[\Gamma_*(\Sigma_{>G_0},\eE^F) = \varprojlim_{G_0< G} \eE^F(G)= \varprojlim_{G_0< G\leq F} \eE^F(G),\]
		the fact that $\eE^F(G)^i$ is zero when $i < 0$ or $i > \rk F - \rk G$ implies that $\Gamma_*(\Sigma_{>G_0},\eE^F)^i = 0$ for $i < 0$ and $i \ge \rk F - \rk G_0$.  Furthermore, statement (c) implies that
		all of the maps in the limit between elements $G_0 < G \le F$ are isomorphisms in degree $0$, and so 
		$\Gamma_*(\Sigma_{>G_0},\eE^F)^0 \cong \kk$ and the maps $\Gamma_*(\Sigma_{>G_0},\eE^F)^0 \to \eE^F(G)^0$ are isomorphisms for all $G_0 < G \le F$.  
		
		Dually, when we look at cosections
		\begin{equation*}\label{eqn:cosections above G_0}
			\Gamma_!(\Sigma_{>G_0},\eE^F) = \varinjlim_{G_0 < G} \eE^F(G)[-\rk G] = \varinjlim_{G_0 < G\leq F} \eE^F(G)[-\rk G],	
		\end{equation*} 
		the statement (a) implies that $\eE^F(G)[-\rk G]$ vanishes in all degrees $i < \rk G$ and $i> \rk F$, so the limit 
		vanishes for all degrees $i \le \rk G_0$ and $i > \rk F$.
		Statement (c) implies that all of the maps in the limit 
		between elements $G_0 < G \le F$ are isomorphisms in degree $\rk F$, so we have $\Gamma_!(\Sigma_{>G_0},\eE^F)^{\rk F} \cong \kk$ and the maps $\eE^F(G)[-\rk G]^{\rk F}\to \Gamma_!(\Sigma_{>G_0},\eE^F)^{\rk F}$ are isomorphisms for all $G_0 < G \le F$.  
		
		We now consider the maps
		\[
\Gamma_!\bigl(\Sigma_{>G_0},\eE^F\bigr)[\rk G_0]
\xrightarrow{\ \Psi_{G_0}\ }
\eE^F(G_0)
\xrightarrow{\ \Phi_{G_0}\ }
\Gamma_*\bigl(\Sigma_{>G_0},\eE^F\bigr).
\]
		In degrees $i < 0$ and $i>\rk F - \rk G_0$, both the sections and cosections vanish, so $\eE^F(G_0)^i = 0$, giving the statement (a).  In degree $i = 0$, the cosections vanish while the sections are $\kk$, so we have $\eE^F(G_0)^0 \cong \kk$.  In degree $\rk F - \rk G_0$, the sections vanish while the cosections are $\kk$,  giving $\eE^F(G_0)^{\rk F-\rk G_0} \cong \kk$, so we have proved the statement (b) for $G_0$.  It also follows that the  statement (c) holds for $G_0$.	 
	\end{proof}

\begin{remark}
Applying the functor $\Theta$ to $\eE^{\ho}$, Proposition \ref{prop:E in small and large degree} immediately gives us the first two statements
of Theorem \ref{props of E}.  We will tackle the third statement in Section \ref{sec:Kunneth}.
\end{remark}
	
\subsection{Next degrees in}\label{sec:next}
Assume in this section that $\kk$ is a field of characteristic $p$ (allowing the possibility that $p=0$).  
Our next result gives a simple description of the spaces $\eE^F(G)$ in the 
next degrees ``in from the outside", that is, in degrees $1$ and $\rk F-\rk G -1$. 
This gives an alternative proof of Proposition \ref{resonance linear} that does not involve the embedding $\E(\cL;\kk)$ in the augmented Chow ring 
$\CH(\cL;\kk)$.  A reader who is only interested in the proofs of Theorems \ref{props of E} and \ref{it goes up} may skip this section.
	
To simplify notation, we will restrict ourselves to the case where $F$ is the maximal element $\ho$.  Let $\eE = \eE^\ho$.
For an upper set $\Sigma\subseteq\cL$, we let $\Sigma(p)$ denote the set of $p$-resonant elements in $\Sigma$, where $p$ is the characteristic of $\kk$. 
	
	\begin{theorem}\label{thm:linear term}
		For any $G < \ho$, there are isomorphisms
		\[
		\eE(G)^1 \cong \kk^{\cL_G(p)} \cong\eE(G)^{\crk G - 1} 
		\]
		with the property that, for any $G < F <\ho$, the maps
		\[
		\psi_G^F\colon \eE(F)^{\crk F-1} \longrightarrow \eE(G)^{\crk G-1} \and  \varphi_G^F\colon \eE(G)^1 \longrightarrow \eE(F)^1
		\]
		are given by the coordinate inclusion $\kk^{\cL_F(p)} \to \kk^{\cL_G(p)}$ and projection  $\kk^{\cL_G(p)} \to \kk^{\cL_F(p)}$, respectively.
		 If in addition $F$ is a cover of $G$, so that $\crk F = \crk G - 1$, the map 
		\[
		\psi_G^F\colon \eE(F)^{0} \longrightarrow \eE(G)^{1}
		\]
		is the map $\kk \to \kk^{\cL_G(p)}$ sending $1$ to the characteristic function of $\cL_G(p) \smallsetminus \cL_{F\vee G}(p)$,		
		and the map $\varphi_G^F\colon \eE(G)^{\crk G - 1} \to \eE(F)^{\crk F} \cong \kk$ is its transpose.
	\end{theorem}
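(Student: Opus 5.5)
The plan is to prove the statement about degree $1$ by induction on $\crk G$, following the pattern of Proposition~\ref{prop:E in small and large degree}. The statement about degree $\crk G-1$ will then follow by duality. For that step I would use Proposition~\ref{prop:duality preserves purity} together with the characterization in Theorem~\ref{thm:indecomposables}: the dual of $\eE$, shifted, is again pure, taut away from $\ho$, and equal to $\kk$ at $\ho$, so it is isomorphic to $\eE$. This isomorphism exchanges the $\varphi$ and $\psi$ maps, so it carries coordinate projections to coordinate inclusions and the cover map to its transpose.

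Base case: $G$ a coatom. Then $\eE(G)^1$ is the top degree, which is $\kk$ by Proposition~\ref{prop:E in small and large degree}(b), and $\cL_G(p)=\{G\}$ because the resonance condition is vacuous. The map $\psi_G^{\ho}$ is the isomorphism of Proposition~\ref{prop:E in small and large degree}(c), and it sends $1$ to the indicator of $\cL_G(p)\smallsetminus\cL_{\ho}(p)=\{G\}$.

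Inductive step: fix $G$ and assume the statement for all $F>G$. I would compute the degree-$1$ part of the triangle \eqref{eqn:extension triangle} at $G$.
\begin{enumerate}[(1)]
\item Cosections: in $\Gamma_!(\Sigma_{>G},\eE)[\rk G]^1$ only covers $F$ of $G$ contribute, each through $\eE(F)^0\cong\kk$. Covers of $G$ are pairwise incomparable, so there are no relations in this degree, and the cosections are $\kk^{\{\text{covers of }G\}}$.
\item Sections: by induction, $\Gamma_*(\Sigma_{>G},\eE)^1$ is the limit of the $\kk^{\cL_F(p)}$ under coordinate projections. For each resonant $H>G$, the set of $F$ with $G<F\le H$ has top element $H$, hence is connected. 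So the limit is $\kk^{\cL_{>G}(p)}$, where $\cL_{>G}(p)=\bigcup_{F>G}\cL_F(p)$.
\item The map $\Xi_G$: by Lemma~\ref{composition} and the inductive description of the cover maps, $\Xi_G$ sends the basis vector $e_F$ of a cover $F$ to the indicator of $\{H\in\cL_{>G}(p)\mid H\not\ge F\}$.
\end{enumerate}

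Since $\kk$ is a field, the taut extension of Proposition~\ref{prop:max indecomp extension} gives $\eE(G)^1=\Gamma_!^1\oplus W$, with $\Phi_G$ surjective and $\ker\Phi_G^1=\ker\Xi_G^1$. Hence
\[
\dim\eE(G)^1=|\cL_{>G}(p)|+\dim\ker\Xi_G^1 .
\]
The required isomorphism $\eE(G)^1\cong\kk^{\cL_G(p)}$ therefore reduces to a combinatorial lemma, and this is the step I expect to be the main obstacle.

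\emph{Lemma.} The kernel of $\Xi_G^1$ is zero if $G$ is not $p$-resonant, and is spanned by $\sum_F e_F$ if $G$ is $p$-resonant.

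\emph{Plan for the lemma.} The all-ones vector lies in the kernel exactly when, for every resonant $H>G$, the number of covers of $G$ not below $H$ vanishes mod $p$. Coatoms $H$ are always resonant, so this in particular forces the number of atoms of $\cL_G$ below $\ho$ minus the number below each coatom to vanish mod $p$. I would relate this to the resonance condition on rank-$2$ intervals above $G$ by the counting argument of Lemma~\ref{two is enough}, applied inside $\cL_G$. For the converse, and for the absence of other kernel vectors, I would test a kernel vector against resonant $H$ of low rank over $G$ and against coatoms. The aim is to show that all coefficients $c_F$ are equal, using the rank-$2$ intervals $(G,F\vee F')$, and that this common value is zero unless $G$ is resonant. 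If the direct argument gets stuck, I would try descending induction on the rank of $H$ over $G$, starting from the coatoms.

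Once the lemma holds, I would choose the basis of $\eE(G)^1$ as follows. The splitting $W$ together with the resonant coordinates of $\Gamma_*^1$ gives the coordinates indexed by $\cL_{>G}(p)$. When $G$ is resonant, the kernel vector $\sum_F e_F$ supplies the extra coordinate indexed by $G$. With this basis, $\varphi_G^F$ becomes the coordinate projection, because $\Phi_G$ followed by restriction to $\Sigma_{\ge F}$ is projection. Then $\psi_G^F(1)=\Psi_G(e_F)$ has image $\Xi_G(e_F)$, the indicator of the $H\not\ge F$ in $\cL_{>G}(p)$, plus a $G$-coordinate equal to $1$ when $G$ is resonant. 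Since $F\vee G=F$ for a cover $F$, this is exactly the indicator of $\cL_G(p)\smallsetminus\cL_{F\vee G}(p)$, completing the induction.
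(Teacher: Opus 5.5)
Your overall architecture is the paper's, and your reduction to the kernel lemma is correct. The shared ingredients are:
\begin{itemize}
\item induction on $\crk G$;
\item the degree-$1$ part of the triangle \eqref{eqn:extension triangle};
\item the formula for $\Xi_G(e_F)$ as the indicator of $\{H\in\Sigma_{>G}(p)\mid H\not\ge F\}$, which is the complementary Radon transform $C_{e_F}$ of $\cL_G$ restricted to $\Sigma_{>G}(p)$;
\item duality for the degree $\crk G-1$ statement.
\end{itemize}
The gap is that this lemma is the entire combinatorial content of the theorem: it is Lemma~\ref{lem:Radon} applied to $\cL_G$, and your plan for it does not work as stated. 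A kernel vector $c$ only gives you the equations $C_c(H)=0$ for $p$-resonant $H>G$. The elements you propose to test against are the covers $F$ of $G$ and the joins $F\vee F'$ of rank $2$ over $G$. These are in general \emph{not} $p$-resonant, since their resonance is governed by the rank-$2$ intervals above them. So the equations you want to use there are simply not available.

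Your fallback, descending induction from the coatoms, has the right shape, but it needs the following idea to run. First propagate the vanishing of $C_c$ from $\Sigma_{>G}(p)$ to all of $\Sigma_{>G}$ by descending induction on corank. Let $K>G$ be non-resonant with $\crk K\ge 2$. Pick $K'>K$ with $\rk K'=\rk K+2$ and $|(K,K')|=m$, where $m-1$ is invertible in $\kk$. Every atom of $\cL_G$ below $K'$ but not below $K$ lies below exactly one element of $(K,K')$, so
\[
\sum_{K''\in(K,K')}C_c(K'')=(m-1)\,C_c(K)+C_c(K'),
\]
and the inductive hypothesis forces $C_c(K)=0$. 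Once $C_c$ vanishes on all of $\Sigma_{>G}$, put $s=\sum_{F'}c_{F'}$.
\begin{itemize}
\item Evaluating at each cover $F$ gives $c_F=s$.
\item Evaluating at any $K$ of rank $2$ over $G$ then gives $(|(G,K)|-1)\,s=0$.
\end{itemize}
Hence $c=0$ unless $G$ is $p$-resonant. In that case $c\in\kk\cdot\mathbf{1}$, and $\mathbf{1}$ does lie in the kernel by Lemma~\ref{two is enough}.

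A smaller point concerns your choice of basis. With an arbitrary complement to $\ker\Phi_G$, the $G$-coordinates of the vectors $\Psi_G(e_F)$ need not all be $1$. You should take the $G$-coordinate on $\Gamma_!^1\oplus W$ to be $(a,w)\mapsto\sum_F a_F$. This is nonzero on $\ker\Phi_G=\kk\,\Psi_G(\mathbf{1})$ because the number of covers of a resonant $G$ is $\equiv 1\pmod p$.

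The paper sidesteps both the dimension count and this bookkeeping. It writes down the extension directly: $\eE(G)^1=\kk^{\cL_G(p)}$, with $\Psi_G(a)=C_a|_{\cL_G(p)}$ and $\Phi_G$ the restriction. It checks purity and tautness via Lemma~\ref{lem:Radon}, and then invokes the uniqueness in Proposition~\ref{prop:max indecomp extension}.
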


Before proving Theorem \ref{thm:linear term}, we formulate and prove a lemma.
Given a map $a\colon \cL^1 \to \kk$, its \textbf{complementary Radon transform} is the map $C_a\colon \cL \to \kk$ given by  
	\[ 
	C_a(F) = \sum_{D\in \cL^1 \,  \text{and}  \, D \not\le F} a(D).
	\]
The following lemma is proved in \cite[Section 3]{KungRadonI} under the additional assumption that $p=0$; we prove it below
for arbitrary fields.

	\begin{lemma}\label{lem:Radon}
		The map 
		$a \mapsto C_a|_{\cL(p)}$
		is injective.  If $\hz$ is $p$-resonant, then there exists an $a$ such that, for any $F \in \cL(p)$, we have 
		$C_a(F) = \delta_{\hz F}$.
	\end{lemma}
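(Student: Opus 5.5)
The plan is to handle the existence statement with the constant function, and to prove injectivity by induction on $\rk\cL$, passing to upper intervals $\cL_E$ over atoms $E$. Both parts rest on Lemma~\ref{two is enough}: if $m_G\equiv 1 \pmod p$ for every rank-$2$ element $G$, then $m_G\equiv 1\pmod p$ for every $G$ of rank at least $2$.

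\emph{Existence.} Suppose $\hz$ is $p$-resonant and $\rk\cL\ge 2$. Then every rank-$2$ element $G$ satisfies $|(\hz,G)|=m_G\equiv 1\pmod p$, so Lemma~\ref{two is enough} gives $m_G\equiv 1$ for all $G$ of rank at least $2$. Trivially $m_G=1$ when $\rk G=1$. Take $a\equiv 1$ and write $n=|\cL^1|=m_{\ho}$. Then
\[
C_a(F)=n-m_F \ \text{ for } F\neq\hz, \qquad C_a(\hz)=n .
\]
Since $n\equiv 1$ and $m_F\equiv 1$ in $\kk$, we get $C_a(F)=0$ for $F\ne\hz$ and $C_a(\hz)=1$. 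If $\rk\cL=1$, then $n=1$ and the same choice works. So in this part resonance of $F$ is not even needed; the claim holds for every $F\in\cL$.

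\emph{Contraction compatibility.} Fix $E\in\cL$ and define $b$ on the atoms of $\cL_E$ (the covers $G$ of $E$) by
\[
b(G)=\sum_{D\in\cL^1,\ D\le G,\ D\not\le E} a(D).
\]
For $H\ge E$, every atom $D\not\le H$ satisfies $D\not\le E$, and $D$ lies in exactly one cover $E\vee D$ of $E$. Hence $C_a(H)=C^{\cL_E}_b(H)$ for all $H\ge E$. Moreover, $H$ is $p$-resonant in $\cL_E$ if and only if it is $p$-resonant in $\cL$, since both conditions involve only rank-$2$ intervals above $H$.

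\emph{Injectivity.} Assume $C_a$ vanishes on $\cL(p)$ and induct on $\rk\cL$.
\begin{enumerate}[(1)]
\item If $\rk\cL\le 1$, then $\hz$ is either $\ho$ (no atoms, nothing to prove) or a hyperplane, and $C_a(\hz)=a$ forces $a=0$.
\item If $\rk\cL\ge 2$, apply the induction hypothesis to $\cL_E$ for each atom $E$. This gives $b=0$, which says: for every rank-$2$ element $G\ge E$,
\[
S(G)\coloneq\sum_{D\le G,\ D\in\cL^1} a(D)=a(E).
\]
Thus $a$ is constant on the atoms below each rank-$2$ flat. Any two atoms lie in a common rank-$2$ flat, so $a\equiv c$ for a single constant $c$.
\item Comparing, $m_Gc=S(G)=c$, so $(m_G-1)c=0$ for every rank-$2$ element $G$. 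If some $m_G\not\equiv 1\pmod p$, then $c=0$.
\item Otherwise $\hz$ is $p$-resonant, so $0=C_a(\hz)=nc$. By Lemma~\ref{two is enough} applied to $G=\ho$, we have $n\equiv 1\pmod p$, so $c=0$.
\end{enumerate}

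The main point requiring care is the bookkeeping in the contraction step: checking that atoms $D\le E$ never contribute to $C_a(H)$ for $H\ge E$, and that resonance is inherited by upper intervals. Once that is in place, the rest is the short constancy argument.
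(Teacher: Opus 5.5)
Your proof is correct. The existence half matches the paper's: take $a\equiv 1$ and invoke Lemma~\ref{two is enough}. Your injectivity argument, however, takes a genuinely different route.

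The paper stays inside $\cL$. It shows that $C_a$ vanishes on all of $\cL$ by descending induction on corank. A non-resonant $F$ is handled via a rank-$2$ interval $(F,H)$ with $m=|(F,H)|$ and $m-1$ invertible, together with the identity $\sum_{G\in(F,H)}C_a(G)=(m-1)C_a(F)+C_a(H)$. Then $a(D)=C_a(\hz)-C_a(D)$ gives $a=0$.

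You instead induct on rank through the contractions $\cL_E$ at atoms. This requires the bookkeeping $C_a|_{\cL_E}=C^{\cL_E}_b$ and the inheritance of resonance, both of which you verify correctly. It reduces each stage to killing a constant function, using only the rank-$2$ flats over the bottom element. In the resonant case you need Lemma~\ref{two is enough} to get $n\equiv 1$, which the paper's injectivity argument never uses.

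The paper's route is shorter and needs no auxiliary function or change of lattice. Yours has the merit of linking the two halves of the lemma. It shows that the only obstruction at each stage is the constant function, which is exactly the function whose transform is supported at $\hz$ when $\hz$ is resonant.
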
 
	\begin{proof}
		First we prove injectivity.  Suppose that $C_a$ vanishes on $\cL(p)$.  If we can show that $C_a$ vanishes on all of $\cL$, it will follow that $a = 0$, since for any 
		$D \in \cL^1$ we have
		\[a(D) = C_a(\hz) - C_a(D).\]
		
		We prove that $C_a(F)=0$ for all $F\in\cL$ by induction on the corank of $F$.  When $F = \ho$ the result is obvious, and when $F$ has corank $1$ it is $p$-resonant, so $C_a(F) = 0$ by assumption.  
		Now take $F\in \cL$ of corank at least $2$, and assume that $C_a$ vanishes on all elements of smaller corank.
		If $F$ is $p$-resonant, then $C_a(F) = 0$ by assumption.  Otherwise, there is an element $H > F$ with $\rk H = \rk F + 2$ such that the 
		open interval $(F,H)$
		has cardinality $m$ with $m-1$ invertible in $\kk$.  
	 	An atom $D\in\cL^1$ with $D \le H$ and $D \not\le F$ lies below exactly one $G \in (F,H)$, and hence we have
		\[\sum_{G\in (F,H)} C_a(G) = (m-1)C_a(F) + C_a(H).\]
		Our inductive hypothesis says that the left sum and $C_a(H)$ both vanish, so $C_a(F) = 0$, as desired.  
		
		For the second statement, suppose that $\hz$ is $p$-resonant.  
		Let $a(D) = 1$ for all $D \in \cL^1$, so that \[C_a(F) = |\cL^1| - |(\cL^F)^1|.\]
		Lemma \ref{two is enough} implies that, for any $F>\hz$, $|(\cL^F)^1|$ is congruent to $1$ modulo $p$.
		Thus $C_a(F) = 1-1 = 0$ for all $F\neq\hz$, while $C_a(\hz) = 1-0 = 1$.
	\end{proof}

	\begin{proof}[Proof of Theorem \ref{thm:linear term}]
		We will prove the statements about $\eE(G)^1$; the statements about $\eE(G)^{\crk G -1}$ follow from duality.  We proceed by induction on $\crk G$.  When $\crk G = 1$, $G$ is the only $p$-resonant element of $\cL_G$,
and Proposition \ref{prop:E in small and large degree} says that
		\[\eE(G)^1 = \eE(G)^{\crk G} \cong \kk \cong \kk^{\cL_G(p)}.\]
The statements about $\psi_G^F$ and $\varphi_G^F$ are trivial in this case.
		
		Now suppose that $\crk G > 1$, and that the statements hold for all elements of smaller corank.
		We need to compute the degree $1$ part of the map $\Xi_G\colon \Gamma_!(\cL_{>G},\eE)[\rk G] \to \Gamma_*(\cL_{>G},\eE)$.  The source of this map is
\[
			\Gamma_!(\cL_{>G},\eE)[\rk G]^1  = \Gamma_!(\cL_{>G},\eE)^{\rk G +1}
			 = \varinjlim_{F > G} \eE(F)[-\rk F]^{\rk G + 1}
			 = \varinjlim_{F > G} \eE(F)^{\rk G - \rk F + 1}
			 \cong \kk^{(\cL_G)^1}.
\]
		The last isomorphism follows from Proposition \ref{prop:E in small and large degree} because, for $F>G$, the quantity $\rk G - \rk F + 1$ is negative unless $F$ covers $G$, in which case it is zero.  We represent elements of this space as functions $a\colon (\cL_G)^1 \to \kk$.
		
		For any $F'$ with $G<F'\leq F$, our inductive hypothesis tells us that the restriction map $\varphi^F_{F'}$
		is a coordinate projection, thus the degree one part of the target of $\Xi_G$ is 
		\[\Gamma_*(\cL_{>G},\eE)^1 = \varprojlim_{F>G} \kk^{\cL_F(p)} = \kk^{\Sigma_{>G}(p)}.\]
		If we represent elements of this space as functions $\Sigma_{>G}(p) \to \kk$, we claim that 
		\[\Xi_G(a) = C_a|_{\Sigma_{>G}(p)},\]
		where the complementary Radon transform $C_a$ is computed in the lattice $\cL_G = \Sigma_{\geq G}$.  
		
		To see this, let $a_F$ be the basis element in $\kk^{(\cL_G)^1}$ corresponding to a cover $F$ of $G$.  It is the image of $1 \in \eE(F)^0$ in $\Gamma_!(\cL_{>G},\eE)[\rk G]^1$.  Then $\Xi_G(a_F)$ is determined by its image in $\eE(H)^1$ over all $H > G$.  If $F \le H$, then the image is zero, since $F \wedge H = F \ne G$.  This agrees with the claim, since $C_{a^{}_F}$ is zero on $\cL_H$ when $H \ge F$.  On the other hand, if $F \not\le H$, then $F\vee H$ covers $H$, and the image of $\Xi_G(a_F)$ in $\eE(H)^1$ is the image of $1$ under the map
		\[
\kk
=
\eE(F)^0
\xrightarrow{\ \varphi_F^{F\vee H}\ }
\eE(F\vee H)^0
\xrightarrow{\ \psi_H^{F\vee H}\ }
\eE(H)^1
\cong
\kk^{\cL_H(p)}.
\]
	    By the inductive assumption, $\varphi_F^{F\vee H}$ is the identity and 
	    $\psi_H^{F\vee H}$ sends $1$ to the characteristic function of $\cL_H(p) \smallsetminus \cL_{F\vee H}(p)$, which is the restriction of $C_{a^{}_F}$ to $\cL_{H}(p)$.  The claim follows.
	    
	    Now consider the diagram 
	    	\begin{equation*}
	    	\begin{tikzcd}
	    		\kk^{(\cL_G)^1} \arrow[rr, "\Xi_G"] \arrow[rd, "\Psi_G"'] &                           & \kk^{\Sigma_{>G}(p)} \\
	    		&  \kk^{\cL_G(p)}\arrow[ru, "\Phi_G"'] &                
	    	\end{tikzcd}
	    \end{equation*}
	    where $\Psi_G$ is the map $a \mapsto C_a|_{\cL_G(p)}$ and $\Phi_G$ is the restriction to $\Sigma_{>G}(p)$.  The triangle commutes, so this diagram describes the degree $1$ part of an extension of $\eE|_{\Sigma_{>G}}$ to $\Sigma_{\geq G} = \cL_G$.  This extension is pure, because 	    
	    $\Phi_G$ is surjective and $\Psi_G$ is injective, by Lemma \ref{lem:Radon}.  Finally, we must check that the extension is taut at $G$.  If $G$ is not $p$-resonant, then $\Phi_G$ is an isomorphism, and tautness is automatic.  If $G$ is $p$-resonant, then the kernel of $\Phi_G$ is spanned by the characteristic function of $G$, which is in the image of $\Psi_G$ by the second part of Lemma \ref{lem:Radon}.  
	\end{proof}
	
	\subsection{K\"unneth isomorphism}\label{sec:Kunneth}
In this section we prove the last part of Theorem \ref{props of E}, 
which is a K\"unneth isomorphism for parity modules.  
Let $\cL_1$ and $\cL_2$ be geometric lattices.  Then $\cL_1 \times \cL_2$ is a geometric lattice, with rank function $\rk(F_1,F_2) = \rk F_1 + \rk F_2$.  We have an isomorphism of rings
	\[
	\H(\cL_1;\kk)\otimes \H(\cL_2;\kk) \cong \H(\cL_1 \times \cL_2;\kk), \quad y^{}_{F_1} \otimes y^{}_{F_2} \longmapsto y_{(F_1, F_2)}.
	\]


	
	\begin{lemma}\label{pure tensor}
		Let $\M_i$ be a graded $\H(\cL_i)$-module for $i=1,2$.  If $\M_1$ and $\M_2$ are both pure, then so is 
		their external tensor product $\M_1\otimes \M_2$.
	\end{lemma}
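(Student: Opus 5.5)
We plan to prove flabbiness and coflabbiness of $\M_1\otimes\M_2$ separately, and to handle local freeness at the end. Coflabbiness for the tensor product should follow from flabbiness by duality, since $(\M_1\otimes\M_2)^*\cong\M_1^*\otimes\M_2^*$ and duality exchanges the two conditions for pure modules (Proposition~\ref{prop:duality preserves purity} together with Lemma~\ref{lem:sheafification and duality}). So the core task is flabbiness.

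We work with the sheaf-theoretic criterion of Proposition~\ref{prop:criteria for (co)flabbiness} and Lemma~\ref{lem:(co)flabby extensions}. Set $\V_i=\Lambda\M_i$ and $\V=\Lambda(\M_1\otimes\M_2)$. The first step is to check that
\[
\V(F_1,F_2)=y_{(F_1,F_2)}(\M_1\otimes\M_2)[\rk F_1+\rk F_2]=\V_1(F_1)\otimes\V_2(F_2).
\]
This holds because $y_{(F_1,F_2)}$ acts as $y_{F_1}\otimes y_{F_2}$, and the image of a tensor product of maps between free modules (Proposition~\ref{prop:(local) freeness}) is the tensor product of the images. The maps $\varphi$ and $\psi$ are likewise tensor products. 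Flabbiness then reduces to surjectivity of $\Phi_{(F_1,F_2)}$ for every $(F_1,F_2)$.

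Next we describe the punctured neighborhood:
\[
\Sigma_{>(F_1,F_2)}=\bigl(\Sigma_{>F_1}\times\Sigma_{\ge F_2}\bigr)\cup\bigl(\Sigma_{\ge F_1}\times\Sigma_{>F_2}\bigr),
\]
with intersection $\Sigma_{>F_1}\times\Sigma_{>F_2}$. The key claim is a K\"unneth formula for sections on product opens, $\Gamma_*(U_1\times U_2,\V)\cong\Gamma_*(U_1,\V_1)\otimes\Gamma_*(U_2,\V_2)$, which holds because inverse limits over a product poset of free modules commute with $\otimes$ over a field (or over a DVR once everything is free). Applying the Mayer--Vietoris sequence and writing $A_i=\V_i(F_i)$, $B_i=\Gamma_*(\Sigma_{>F_i},\V_i)$, the map $\Phi_{(F_1,F_2)}$ becomes
\[
A_1\otimes A_2\to \ker\bigl(B_1\otimes A_2\oplus A_1\otimes B_2\to B_1\otimes B_2\bigr).
\]
The maps $\Phi_{F_i}\colon A_i\to B_i$ are surjective and split, with kernels $K_i=\V_{i,[F_i]}$. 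Choosing splittings $A_i=K_i\oplus B_i$, the kernel on the right is $B_1\otimes B_2\oplus K_1\otimes B_2\oplus B_1\otimes K_2$, which visibly lies in the image of $A_1\otimes A_2$. This gives flabbiness, and the complementary summand $K_1\otimes K_2$ identifies the costalk as $\V_{[(F_1,F_2)]}\cong\V_{1,[F_1]}\otimes\V_{2,[F_2]}$.

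The dual computation gives stalks $\V_{(F_1,F_2)}\cong\V_{1,F_1}\otimes\V_{2,F_2}$. These are free, which yields local freeness. We expect the main obstacle to be the K\"unneth claim for sections over arbitrary product opens in the Mayer--Vietoris step. We would prove it by induction, removing minimal elements one at a time, using the split exact sequences of Lemma~\ref{lem:boundaries and (co)stalks} for each factor and tensoring them.
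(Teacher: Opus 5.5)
This is essentially the paper's argument with sections and cosections interchanged: the paper uses the same cover $\Sigma_{>(F_1,F_2)}=(\Sigma_{>F_1}\times\Sigma_{\ge F_2})\cup(\Sigma_{\ge F_1}\times\Sigma_{>F_2})$, the cosheaf exact sequence, and cofinality to write the cosections of each product piece as a tensor product. It then shows that $\Gamma_!(\Sigma_{>(F_1,F_2)},\V)\to y_{F_1}\M_1\otimes y_{F_2}\M_2$ is a split injection, which gives coflabbiness and local freeness at once, and it says flabbiness is similar.

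Your duality step for coflabbiness is fine over a field. Over a DVR it is circular: Proposition~\ref{prop:duality preserves purity} presupposes purity, and surjectivity of the dual of a corestriction gives injectivity only when the cosections are already known to be torsion-free. There you should instead run the cosheaf version of your splitting computation, which is exactly the paper's written proof.

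Your K\"unneth claim for sections is not a real obstacle and needs no induction. Limits over a product poset are iterated limits, and tensoring with the free modules $\V_1(G_1)$ and $\Gamma_*(U_2,\V_2)$ preserves finite limits.
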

	\begin{proof}
Let $\M = \M_1\otimes M_2$.
To show that $\M$ is flabby and coflabby, we will use the criterion of Lemma~\ref{lem:(co)flabby extensions}.  Let $\V = \Lambda\M$, and take $(F_1,F_2) \in \cL_\times\cL_2$.  We wish to show that the corestriction 
		\[\Gamma_!(\Sigma_{>(F_1,F_2)},\cV) \longrightarrow \Gamma_!(\Sigma_{\ge(F_1,F_2)},\cV) = y_{(F_1,F_2)}\M = y_{F_1}\M_1 \otimes y_{F_2}\M_2\]
		is injective and the restriction
		\[\left(y_{F_1}\M_1 \otimes y_{F_2}\M_2\right)[\rk F_1 + \rk F_2] = \Gamma_*(\Sigma_{\ge (F_1,F_2)},\cV) \longrightarrow \Gamma_*(\Sigma_{>(F_1,F_2)},\cV)\]
		is surjective.
		
		Using the cover 
		\[\Sigma_{>(F_1,F_2)} = (\Sigma_{>F_1} \times \Sigma_{\ge F_2}) \cup (\Sigma_{\ge F_1}\times \Sigma_{>F_2})\]
		and the fact that $\Gamma_!(-,\cV)$ is a cosheaf gives us an exact sequence
		\[
		\Gamma_!(\Sigma_{>F_1} \times \Sigma_{>F_2},\V) \longrightarrow \Gamma_!(\Sigma_{>F_1} \times \Sigma_{\ge F_2},\V) \oplus \Gamma_!(\Sigma_{\ge F_1} \times \Sigma_{>F_2},\V) \longrightarrow \Gamma_!(\Sigma_{>(F_1,F_2)},\V) \longrightarrow 0.
		\]
		Since the subset $\Sigma_{>F_1}\times \{F_2\}\subseteq \Sigma_{>F_1} \times \Sigma_{\ge F_2}$ is cofinal, we have
\[
		     \Gamma_!(\Sigma_{> F_1} \times \Sigma_{\ge F_2},\V)  = \varinjlim_{\substack{G_1 > F_1\\G_2 \ge F_2}} y_{G_1}\M_1 \otimes y_{G_2}\M_2 
		      \cong \varinjlim_{G_1 > F_1} y_{G_1}\M_1 \otimes y_{F_2}\M_2 
		      \cong \left(\varinjlim_{G_1 > F_1} y_{G_1}\M_1\right) \otimes y_{F_2}\M_2,  	
		     \]
		where the last isomorphism holds because $y_{F_2}\M_2$ is a free $\kk$-module.  Since $\M_1$ is coflabby and locally free, the map from this space of cosections to $\M_1 \otimes \M_2$ is a split injection, with image
		\[\left( \Span_{\Sigma_{>F_1}} \M_1\right)\otimes y_{F_2}\M_2 = \Span_{\Sigma_{>F_1} \times \Sigma_{\geq F_2}} \M.\]
Similarly, $\Gamma_!(\Sigma_{\geq F_1} \times \Sigma_{> F_2},\V)$ maps by a split injection into $\M_1 \otimes \M_2$ with image
		\[y_{F_1}\M_1 \otimes \left( \Span_{\Sigma_{>F_2}} \M_2\right)  = \Span_{\Sigma_{\geq F_1} \times \Sigma_{>F_2}} \M.\]
		The intersection of these images is 
		\[\left( \Span_{\Sigma_{>F_1}} \M_1\right)\otimes \left( \Span_{\Sigma_{>F_2}} \M_2\right) = \Span_{\Sigma_{>F_1} \times \Sigma_{>F_2}} \M,\]
		which is the image of $\Gamma_!(\Sigma_{>F_1} \times \Sigma_{>F_2},\V)$ in $\M_1\otimes \M_2$.  
		It follows that the map
		\[\Gamma_!(\Sigma_{>(F_1,F_2)},\V) \to \Gamma_!(\Sigma_{\ge(F_1,F_2)},\V) = y_{F_1}\M_1\otimes y_{F_2}\M_2\]
		is a split injection, which proves that $\M$ is coflabby and locally free.
		
		The proof that $\M$ is flabby is similar, using the sheaf property of $\Gamma_*(-,\V)$.
	\end{proof}
	
\begin{proof}[Proof of Theorem~\ref{props of E}]
Parts (a) and (b) follow from Proposition \ref{prop:E in small and large degree}, so it will suffice to prove part (c).
We need to show that 
$\E(\cL_1) \otimes \E(\cL_2)$ satisfies the conditions of Theorem \ref{def of E}. 
 Indecomposability is clear, and purity follows from Lemma \ref{pure tensor}.  Finally,
 we have
		\[y_{(\hat{1},\hat{1})}\big(\E(\cL_1) \otimes \E(\cL_2)\big) = y_{\hat 1}\E(\cL_1) \otimes y_{\hat 1}\E(\cL_2) \cong \kk[-\rk \cL_1] \otimes \kk[-\rk \cL_2] \cong \kk[-\rk(\cL_1\times \cL_2)].\qedhere\]
\end{proof}

\subsection{Purity and base change}\label{sec:base change}
In this section, we consider how pure modules behave when the coefficient ring $\kk$ changes.
Our ultimate goal is to prove Theorem \ref{it goes up}, along with the statement that parity modules
behave nicely under field extensions.

Suppose that $\kk \to \kk'$ is a ring homomorphism, where $\kk$ is either a field or a DVR and $\kk'$ is a field.  We will mainly be interested in the cases (1) $\kk'$ is an extension field of $\kk$, (2) $\kk$ is a DVR and $\kk' = \olkk$ is its residue field, and (3) $\kk$ is a DVR and $\kk' = \mathbb{K}$ is its fraction field.
	Given an upper set $\Sigma$, we have a functor $- \otimes_\kk \kk'\colon \cC(\Sigma;\kk)\to \cC(\Sigma;\kk')$. 
	
	\begin{theorem}\label{thm:base change preserves purity}
		Let $\V$ be an object of $\cC(\Sigma;\kk)$.
		\begin{enumerate}[(a)]
			\item For any $\Sigma'\subseteq \Sigma$, the natural map
			$\Gamma_!(\Sigma',\V)\otimes_\kk\kk' \to \Gamma_!(\Sigma',\V\otimes_\kk\kk')$
			is an isomorphism.
			\item If $\V$ is pure, then the natural map
			$\Gamma_*(\Sigma',\V)\otimes_\kk\kk' \to \Gamma_*(\Sigma',\V\otimes_\kk\kk')$
			is an isomorphism.
			\item If $\V$ is pure, then so is $\V \otimes_\kk\kk'$. 
		\end{enumerate}
	\end{theorem}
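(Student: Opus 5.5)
For part (a), I will use that $\Gamma_!(\Sigma',\V)$ is defined as a cokernel: it is the cokernel of the map
\[
\bigoplus_{G\le F\in \Sigma'} \V(F)[-\rk F]\longrightarrow \bigoplus_{F\in\Sigma'}\V(F)[-\rk F]
\]
given by $x\mapsto \psi_G^F(x)-x$. Since $-\otimes_\kk\kk'$ is right exact and commutes with finite direct sums, tensoring this presentation gives the presentation of $\Gamma_!(\Sigma',\V\otimes_\kk\kk')$. This yields the isomorphism in (a) with no hypotheses, and it is natural in $\Sigma'$. In particular it intertwines corestrictions for $\V$ and for $\V\otimes_\kk\kk'$.

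For part (b), I will use duality rather than a direct argument, because $\Gamma_*$ is a kernel and tensoring is not left exact in general. Since $\V$ is pure, Proposition~\ref{prop:duality preserves purity} shows that $\V^*$ is pure. Proposition~\ref{prop:(co)sections and duality}(b) then gives $\Gamma_*(\Sigma',\V)\cong\Gamma_!(\Sigma',\V^*)^*$, using $(\V^*)^*\cong\V$, which holds because every $\V(F)$ is free by Proposition~\ref{prop:(local) freeness}.

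Next I apply (a) to $\V^*$. Since $\V^*(F)$ is free, $\V^*\otimes\kk'\cong(\V\otimes\kk')^*$ (the dual over $\kk'$), and $\Gamma_!(\Sigma',\V^*)$ is free by Proposition~\ref{prop:(local) freeness}(a). Dualizing therefore commutes with base change:
\[
\Gamma_*(\Sigma',\V)\otimes\kk' \cong \Gamma_!(\Sigma',\V^*)^*\otimes\kk' \cong \bigl(\Gamma_!(\Sigma',\V^*)\otimes\kk'\bigr)^* \cong \Gamma_!\bigl(\Sigma',(\V\otimes\kk')^*\bigr)^*.
\]
Since $\kk'$ is a field, Proposition~\ref{prop:(co)sections and duality}(a) applied to $\V\otimes\kk'$ identifies the last term with $\Gamma_*(\Sigma',\V\otimes\kk')$. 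The main obstacle is checking that the composite is the natural map. I will verify this by naturality of each identification with respect to the inclusion into $\prod_F\V(F)$. Alternatively, I can argue directly: by flabbiness the sequences of Lemma~\ref{lem:boundaries and (co)stalks}(a) are short exact with free terms, hence split, so they survive tensoring. An induction on $|\Sigma'|$, as in the proof of Proposition~\ref{prop:(co)sections and duality}(b), using the sheaf sequence together with surjectivity of $\Phi_F$, then gives the isomorphism. I would present this inductive version if the naturality bookkeeping becomes messy.

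For part (c), I will use Lemma~\ref{lem:(co)flabby extensions}. By (a) and (b), and their naturality, the maps $\Psi_F$ and $\Phi_F$ for $\V\otimes\kk'$ are $\Psi_F\otimes\kk'$ and $\Phi_F\otimes\kk'$. By Proposition~\ref{prop:(local) freeness} and purity, $\Psi_F$ is a split injection and $\Phi_F$ is surjective, and both properties survive tensoring. So $\V\otimes\kk'$ is coflabby and flabby. Local freeness is automatic since $\kk'$ is a field; more precisely, its stalk is $\coker(\Psi_F)\otimes\kk'$, which is free.
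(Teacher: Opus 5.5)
Your proof is correct. Part (a) matches the paper, but your main argument for (b) takes a genuinely different route. You dualize, using three facts: $\V^*$ is pure; $\Gamma_*(\Sigma',\V)\cong\Gamma_!(\Sigma',\V^*)^*$; and dualizing a free module commutes with base change. The middle fact is Proposition~\ref{prop:(co)sections and duality}(a) applied to $\V^*$ together with $\V^{**}\cong\V$, not part (b) as you cite. With these, (b) becomes a formal consequence of the right exactness behind (a).

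The paper instead inducts on $|\Sigma'|$ directly. By flabbiness and freeness, the costalk sequence of Lemma~\ref{lem:boundaries and (co)stalks}(a) is split short exact, so it survives $-\otimes_\kk\kk'$. Two five-lemma arguments then finish: first for $\Sigma_{>F}\subseteq\Sigma_{\ge F}$, which identifies costalks, then for $\Sigma''\subseteq\Sigma'$. Your Mayer--Vietoris fallback is essentially this argument, done with one diagram.

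The paper's route is self-contained, works with the natural map throughout, and gives $\V_{[F]}\otimes_\kk\kk'\cong(\V\otimes_\kk\kk')_{[F]}$ as a byproduct. Yours explains (b) as the dual of (a), but it is not more elementary: Propositions~\ref{prop:(co)sections and duality}(b) and~\ref{prop:duality preserves purity} are themselves proved by the same split-sequence induction. It also leaves the naturality bookkeeping you flagged.

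You can bypass that bookkeeping. The quotient $\prod_F\V(F)/\Gamma_*(\Sigma',\V)$ embeds in the free module $\prod_{G<F}\V(F)$, so it is free. Hence $\Gamma_*(\Sigma',\V)$ is a direct summand of $\prod_F\V(F)$, and the natural map $\Gamma_*(\Sigma',\V)\otimes_\kk\kk'\to\Gamma_*(\Sigma',\V\otimes_\kk\kk')$ is injective. Your abstract isomorphism of degreewise finite-dimensional $\kk'$-spaces then forces it to be bijective.

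Your (c), via the local criterion of Lemma~\ref{lem:(co)flabby extensions}, is equivalent to the paper's check on all restrictions and corestrictions.
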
 
	\begin{proof}
		Tensoring with $\kk'$ is right exact, which means that it commutes with finite colimits, and in particular it commutes with taking cosections.  This proves (a).
		
		The proof of (b) is by induction on $|\Sigma'|$.  When $|\Sigma'|=1$, or more generally when 
		$\Sigma' = \Sigma_{\ge F}$ for an element $F$, the statement is true by definition.  So assume $|\Sigma'|>1$, $\Sigma'$ has more than one minimal element, and the statement is true for all smaller upper sets.  Let $F$ be a minimal element of $\Sigma'$ and let $\Sigma'' = \Sigma' \setminus \{F\}$.  
		
Since $\cV$ is flabby, Lemma \ref{lem:boundaries and (co)stalks}(a) gives us an exact sequence
			\[
			0 \longrightarrow \V_{[F]} \longrightarrow  \Gamma_*(\Sigma',\V) \longrightarrow \Gamma_*(\Sigma'',\V) \longrightarrow 0.
			\]
Proposition \ref{prop:(local) freeness}(d) implies that $\Gamma_*(\Sigma'',\V)$ is free, thus this exact sequence splits.
It follows that tensoring with $\kk'$ preserves exactness.
		Consider the following diagram:
		\[\xymatrix{
			0 \ar[r] & \V_{[F]} \otimes_\kk\kk' \ar[d]\ar[r] & \Gamma_*(\Sigma',\V)\otimes_\kk\kk' \ar[d]\ar[r] & \Gamma_*(\Sigma'',\V)\otimes_\kk\kk'	\ar[r]\ar[d] & 0\\
			0 \ar[r] & (\V \otimes_\kk\kk')_{[F]}  \ar[r] & \Gamma_*(\Sigma',\V\otimes_\kk\kk') \ar[r] & \Gamma_*(\Sigma'',\V\otimes_\kk\kk')	\ar[r] & 0
		}\]
We have already argued that the top row is exact.
Left-exactness of the bottom row follows from Lemma \ref{lem:boundaries and (co)stalks}.  The right vertical map is an isomorphism by our induction hypothesis, which implies that the bottom row is also right exact.
		
We want to prove the middle vertical map is an isomorphism, and by the Five Lemma, it is sufficient to show 
that the left vertical map is an isomorphism.  To do this, we consider the same diagram with $\Sigma'$
replaced by $\Sigma_{\ge F}$ and $\Sigma''$ replaced by $\Sigma_{>F}$.
Now the second and third vertical maps are isomorphisms by the induction hypothesis, which shows that $\V_{[F]} \otimes_\kk\kk' \to (\V \otimes_\kk\kk')_{[F]}$ is an isomorphism.  This completes the proof of (b).
		
		Finally, we prove (c).  If $\V$ is pure, then its restriction homomorphisms are surjections, and (b) implies that the restriction homomorphisms of $\V\otimes_\kk\kk'$ are surjections.  Thus $\V\otimes_\kk\kk'$ is flabby.  Similarly, the corestrictions of $\V$ are split injections by Proposition \ref{prop:(local) freeness}, so (a) implies that $\V\otimes_\kk\kk'$ is coflabby.  Since we are assuming $\kk'$ is a field, local freeness is automatic.  Thus $\V\otimes_\kk\kk'$ is pure.
	\end{proof}
	
	\begin{proposition}\label{prop:base change for IH}
		If $\kk\to \kk'$ is a field extension, or if $\kk$ is a DVR and $\kk' = \olkk$ is its residue field, then
		there is an isomorphism
		\[\E(\cL;\kk') \cong \E(\cL;\kk)\otimes_\kk\kk'.\]
	\end{proposition}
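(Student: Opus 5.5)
Since $\E(\cL;\kk)=\Theta\eE^{\hat1}_{\cL,\kk}$ and $\Theta$ commutes with $-\otimes_\kk\kk'$ (both just take the component at $\hz$), it suffices to show that $\cV\coloneq\eE^{\hat1}_{\cL,\kk}\otimes_\kk\kk'$ is isomorphic to $\eE^{\hat1}_{\cL,\kk'}$ in $\cC(\cL;\kk')$. By Theorem \ref{thm:base change preserves purity}(c), $\cV$ is pure. We then check the three characterizing properties of Theorem \ref{thm:indecomposables} and invoke its uniqueness statement.

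Properties (a) and (b) are immediate. Indeed, $\cV(G)=\eE^{\hat1}(G)\otimes\kk'$ vanishes unless $G\le\hat1$, which holds for every $G$. Likewise $\cV(\hat1)=\kk\otimes\kk'=\kk'$, sitting in degree $0$.

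The real content is (c): $\cV$ must be taut at every $G\neq\hat1$. Tautness of $\eE^{\hat1}_{\cL,\kk}$ at $G$ says $\ker\overline{\Phi_G}\subseteq\im\overline{\Psi_G}$ after reducing to the residue field $\olkk$. We need the same inclusion for $\cV$ over the residue field of $\kk'$, which is $\kk'$ itself. Theorem \ref{thm:base change preserves purity}(a),(b) identify $\Gamma_!(\Sigma_{>G},\cV)$ with $\Gamma_!(\Sigma_{>G},\eE^{\hat1})\otimes\kk'$ and $\Gamma_*(\Sigma_{>G},\cV)$ with $\Gamma_*(\Sigma_{>G},\eE^{\hat1})\otimes\kk'$. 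Under these identifications the maps $\Psi_G,\Phi_G$ for $\cV$ are the base changes of those for $\eE^{\hat1}$. All modules here are free by Proposition \ref{prop:(local) freeness}, so the maps are given by matrices.

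We split into the two cases of the statement.

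\emph{Field extension $\kk\to\kk'$.} Here $\olkk=\kk$. Base change along a field extension is exact, so kernels and images of $\Phi_G\otimes\kk'$ and $\Psi_G\otimes\kk'$ are the base changes of the kernels and images over $\kk$. The inclusion $\ker\Phi_G\subseteq\im\Psi_G$ is therefore preserved.

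\emph{DVR $\kk$ with $\kk'=\olkk$.} The maps for $\cV$ are literally $\overline{\Phi_G}$ and $\overline{\Psi_G}$, so tautness of $\cV$ at $G$ is exactly tautness of $\eE^{\hat1}_{\cL,\kk}$ at $G$.

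In both cases $\cV$ satisfies (a)--(c), and the uniqueness in Theorem \ref{thm:indecomposables} gives $\cV\cong\eE^{\hat1}_{\cL,\kk'}$. Applying $\Theta$ yields the claimed isomorphism $\E(\cL;\kk')\cong\E(\cL;\kk)\otimes_\kk\kk'$. The only delicate step is the compatibility of $\Phi_G,\Psi_G$ with base change. That compatibility is what parts (a) and (b) of Theorem \ref{thm:base change preserves purity} provide, and it is where purity is needed.
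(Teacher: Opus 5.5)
Your proof is correct and follows the paper's argument. Both get purity of the base change from Theorem~\ref{thm:base change preserves purity}, then tautness at every $F\neq\hat 1$: by exactness of $-\otimes_\kk\kk'$ for a field extension, and by identifying the $\Phi,\Psi$ maps of $\V\otimes_\kk\olkk$ with $\overline{\Phi_F},\overline{\Psi_F}$ in the DVR case, concluding via Theorem~\ref{thm:indecomposables}. The only cosmetic difference is that you check conditions (a) and (b) of Theorem~\ref{thm:indecomposables} directly, whereas the paper notes that $y_{\hat 1}$ does not annihilate the module and appeals to the characterization in Theorem~\ref{def of E}.
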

	\begin{proof}
		By Theorem \ref{thm:base change preserves purity},  $\E(\cL;\kk)\otimes_\kk\kk'$ is pure, and it is clearly not annihilated by $y_{\hat{1}}$, so it only remains to show that it is indecomposable.  By Theorem \ref{thm:indecomposables}, it is enough to show that it is taut at all elements $F \ne \hat{1}$.
		
		Consider first the case of a field extension.  Let $\V = \Lambda\E(\cL;\kk)$, and take an element $F\ne \hat{1}$.  Since $\kk$ is a field, $\V$ is taut at $F$ if and only if $\ker\Phi_F \subseteq \im \Psi_F$.  Since tensoring with $\kk'$ is exact, we have
		\[\ker (\Phi_F \otimes_\kk\kk') = (\ker \Phi_F)\otimes_\kk\kk' \subseteq (\im\Psi_F)\otimes_\kk\kk' = \im (\Psi_F \otimes_\kk\kk').\]
		By Theorem \ref{thm:base change preserves purity}, $\Phi_F \otimes_\kk\kk'$ and $\Psi_F \otimes_\kk\kk'$ are the $\Phi$ and $\Psi$ maps for $\V\otimes_\kk\kk'$.  So we have shown that 
		$\V\otimes_\kk\kk'$ is taut at $F$, as desired.
		
		The case $\kk \mapsto \olkk$ is similar.   Since $\V = \Lambda\E(\cL;\kk)$ is taut at $F$, we have $\ker \overline{\Phi_F} \subseteq \im \overline{\Psi_F}$.  But Theorem \ref{thm:base change preserves purity} implies that $\overline{\Phi_F}$ and $\overline{\Psi_F}$ are the $\Phi$ and $\Psi$ maps for $\bar\V = \V\otimes_\kk\olkk$.  Thus we see that $\E(\cL;\kk)\otimes_\kk\kk'$ is taut at $F$ for all $F\ne \hat{1}$, so it is indecomposable. 
	\end{proof}

The field extension case of Proposition \ref{prop:base change for IH} has the following immediate corollary.

\begin{corollary}\label{cor:field extensions}
Let $\kk$ be a field of characteristic $p$.
We have 
\[
{}_pZ_\cL(t) = \sum_{i\geq 0}t^i \dim \E^i(\cL;\kk) \and _pP_\cL(t) \coloneq \sum_{i\geq 0}t^i \dim \uE^i(\cL;\kk). 
\]
\end{corollary}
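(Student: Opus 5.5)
The corollary asserts that the dimensions of $\E^i(\cL;\kk)$ and $\uE^i(\cL;\kk)$ depend only on the characteristic $p$ of $\kk$. The plan is to compare $\kk$ with its prime field $\F_p$ (with $\F_0=\Q$), of which $\kk$ is a field extension, and apply the field-extension case of Proposition~\ref{prop:base change for IH}. That proposition gives an isomorphism $\E(\cL;\kk)\cong \E(\cL;\F_p)\otimes_{\F_p}\kk$ of $\H(\cL;\kk)$-modules. Since tensoring a graded vector space over a field with an extension field preserves the dimension of each graded piece, $\dim_\kk \E^i(\cL;\kk)=\dim_{\F_p}\E^i(\cL;\F_p)$ for all $i$. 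This gives the formula for ${}_pZ_\cL(t)$ directly from its definition in \eqref{PZp}.

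For the KL-polynomial, recall that $\uE(\cL;\kk)$ is the cokernel of the natural map $\bigoplus_{F\neq\hz} y_F\E(\cL;\kk)\to \E(\cL;\kk)$; equivalently, it is $\E(\cL;\kk)/\mm\,\E(\cL;\kk)$. Tensoring over a field is exact, so it commutes with cokernels. Also, under the identification $\H(\cL;\kk)=\H(\cL;\F_p)\otimes\kk$, multiplication by $y_F$ on $\E(\cL;\F_p)\otimes\kk$ is just $y_F\otimes \id$. Hence $\uE(\cL;\kk)\cong \uE(\cL;\F_p)\otimes_{\F_p}\kk$ as graded vector spaces, and the graded dimensions agree.

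Equivalently, one could use Theorem~\ref{thm:base change preserves purity}(a) to see that stalks (which are cokernels of corestriction maps) commute with base change. Either route is routine. The only point needing care is that the isomorphism from Proposition~\ref{prop:base change for IH} is compatible with the $\H$-action, so that it carries $\Span_{\cL\setminus\hz}$ to $\Span_{\cL\setminus\hz}$. This compatibility holds because the isomorphism is one of $\H(\cL;\kk)$-modules.
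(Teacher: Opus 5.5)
Your proposal is correct and is essentially the paper's argument: the paper states the corollary as an immediate consequence of the field-extension case of Proposition~\ref{prop:base change for IH}, applied to the prime field $\F_p\subseteq\kk$ (with $\F_0=\Q$). Your remark that the isomorphism is $\H(\cL;\kk)$-linear is the routine detail the paper leaves implicit. It ensures that $\mm\,\E$ is carried to $\mm\,\E$, so the stalk at $\hz$ also commutes with base change.
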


\excise{
\item[(b)]  We have the coefficientwise inequalities
			\[
			\Poin(\E(\cL;\kk),t) \ge \Poin(\E(\cL,\Q),t)\;\;\text{and}\;\;\Poin(\E(\cL;\kk)_\hz,t) \ge \Poin(\E(\cL,\Q)_\hz,t).
			\]
		\end{enumerate}
		}
		
\begin{proof}[Proof of Theorem \ref{it goes up}]
We use base change by the homomorphisms $\F_p \leftarrow \Z_p \to \Q_p$.
Proposition \ref{prop:base change for IH} tells us that
\[{}_pZ_\cL(t) = \sum_{i\geq 0}t^i \dim_{\F_p} \E^i(\cL;\F_p) = \sum_{i\geq 0}t^i \rk_{\Z_p} \E^i(\cL;\Z_p)
= \sum_{i\geq 0}t^i \dim_{\Q_p} \E^i(\cL;\Z_p) \otimes_{\,\Z_p}\!\Q_p.\]
Since $\E(\cL;\Z_p) \otimes_{\,\Z_p}\!\Q_p$ is pure and is not annihilated by $y_{\hat{1}}$, it has $\E(\cL;\Q_p)$ as a direct summand.  The first inequality follows.  The proof of the inequality on stalks is similar.
		\end{proof}

	\section{Dimension of the space of homomorphisms}\label{sec:hom}
	In this section we assume that $\kk$ is a field.  It is possible to extend these results to the case where $\kk$ is a DVR, but we do not need it.
	
	Let $\Sigma$ be an upper set, and consider objects $\V, \W$ in $\cC(\Sigma;\kk)$.  Let $\hom_{\cC(\Sigma;\kk)}(\V,\W)$ be the {\bf graded hom space} whose $i^{\text{th}}$ graded piece is $\Hom_{\cC(\Sigma;\kk)}(\V,\W[i])$.  The graded hom space between two graded $\kk$-vector spaces is defined in the same way.
	\begin{theorem}\label{thm:Soergel hom formula}
		If $\V$ and $\W$ are both pure,
		then there is a non-canonical isomorphism of graded vector spaces
		\[\hom_{\cC(\Sigma;\kk)}(\V,\W) \cong \bigoplus_{F\in \Sigma} \hom(\V_F,\W_{[F]}).\]
	\end{theorem}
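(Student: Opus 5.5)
We will prove the formula by induction on $|\Sigma|$, adding one minimal element at a time, in the same way that pure objects and morphisms were built in Section~\ref{sec:existence}. When $|\Sigma|=1$, say $\Sigma=\{F\}$, an object is just a graded vector space with $\V_F=\V(F)=\V_{[F]}$, and the statement is a tautology.

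For the inductive step, let $F$ be minimal in $\Sigma$ and set $\Sigma'=\Sigma\setminus\{F\}$. By the sheaf property of homomorphisms, a morphism $\V\to\W$ is the same as a pair consisting of a morphism $f'\colon\V|_{\Sigma'}\to\W|_{\Sigma'}$ and a map $f_F\colon \V(F)\to\W(F)$ that is compatible with $f'$ through the two squares involving $\Psi_F$ and $\Phi_F$. Proposition~\ref{prop:extending morphisms} says that the restriction map
\[
\hom_{\cC(\Sigma;\kk)}(\V,\W)\to\hom_{\cC(\Sigma';\kk)}(\V|_{\Sigma'},\W|_{\Sigma'})
\]
is surjective. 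Its kernel consists of maps $f_F\colon\V(F)\to\W(F)[i]$ with $f_F\Psi_{F,\V}=0$ and $\Phi_{F,\W}f_F=0$. Such a map factors through $\coker\Psi_{F,\V}=\V_F$ and lands in $\ker\Phi_{F,\W}=\W_{[F]}$. Conversely, any graded map $\V_F\to\W_{[F]}[i]$ gives such an $f_F$ by composing the projection $\V(F)\to\V_F$, the map itself, and the inclusion $\W_{[F]}\hookrightarrow\W(F)$. These constructions are inverse to each other, so the kernel is exactly $\hom(\V_F,\W_{[F]})$.

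Since $\kk$ is a field, this gives a short exact sequence of graded vector spaces
\[
0\to\hom(\V_F,\W_{[F]})\to\hom_{\cC(\Sigma;\kk)}(\V,\W)\to\hom_{\cC(\Sigma';\kk)}(\V|_{\Sigma'},\W|_{\Sigma'})\to0 .
\]
The sequence splits noncanonically. The restrictions $\V|_{\Sigma'}$ and $\W|_{\Sigma'}$ are still pure, and their stalks and costalks at $G\in\Sigma'$ agree with those of $\V$ and $\W$, because these depend only on $\Sigma_{\ge G}\subseteq\Sigma'$. The induction hypothesis then applies to the right-hand term and gives the formula.

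The one point that needs care is the kernel computation: one has to check that the compatibility conditions for an extension of the zero morphism are exactly $f_F\Psi_F=0$ and $\Phi_Ff_F=0$. This holds because the maps $\psi^G_F$ and $\varphi^G_F$ for $G>F$ are by construction the components of $\Psi_F$ and $\Phi_F$ through the cosections and sections on $\Sigma_{>F}$. Surjectivity of restriction, which is the substantive input, is already supplied by Proposition~\ref{prop:extending morphisms}.
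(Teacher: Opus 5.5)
Your proposal is correct and follows the paper's proof essentially step for step. Both arguments induct on $|\Sigma|$ by removing a minimal element $F$, and both use the short exact sequence $0\to\hom(\V_F,\W_{[F]})\to\hom(\V,\W)\to\hom(\V|_{\Sigma'},\W|_{\Sigma'})\to 0$. Surjectivity comes from Proposition~\ref{prop:extending morphisms}, and the kernel is identified by factoring through $\coker\Psi_{F,\V}$ and into $\ker\Phi_{F,\W}$. Your explicit checks of that kernel identification and of the compatibility of stalks, costalks and purity with restriction to $\Sigma'$ fill in details the paper leaves implicit.
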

	\begin{proof}
		We proceed by induction on $|\Sigma|$.  If $|\Sigma|=1$, then $\Sigma = \{\ho\}$ and the category $\cC(\Sigma;\kk)$ is equivalent to the category of graded $\kk$-vector spaces,  so the result is trivial.
		
		Suppose that $|\Sigma| > 1$, and let $F$ be a minimal element in $\Sigma$, so that $\Sigma' = \Sigma\setminus\{F\}$ is an upper set.  Then we have an exact sequence of graded vector spaces
		\[
		0  \longrightarrow \hom(\V_F,\W_{[F]}) \longrightarrow \hom_{\cC(\Sigma;\kk)}(\V,\W) \longrightarrow \hom_{\cC(\Sigma';\kk)}(\V|_{\Sigma'},\W|_{\Sigma'}) \longrightarrow 0.
		\]
The second map is given by restriction, and it is surjective by Proposition \ref{prop:extending morphisms}.  The first map is given by sending $\alpha\colon \coker\Psi_{\V,F} \to\ker\Phi_{\W,F}$ to a map that restricts to zero on $\Sigma'$, and whose component at $F$ is the composition
		\[
\V(F)
\twoheadrightarrow
\V_F
\xrightarrow{\ \alpha\ }
\W_{[F]}
\hookrightarrow
\W(F).
\]
		Applying the inductive hypothesis to morphisms $\V|_{\Sigma'} \to \W|_{\Sigma'}$ gives the result.   
	\end{proof}

Proposition \ref{thm:main equivalence}, Lemma \ref{lem:sheaf and module stalks}, and Theorem \ref{thm:Soergel hom formula} together imply the following
corollary.
	
	\begin{corollary}
		If $\M$ and $\M'$ are pure graded $\H(\cL;\kk)$-modules, then there is a non-canonical isomorphism of graded vector spaces
		\[\hom_{\H(\cL;\kk)}(\M,\M')\cong \bigoplus_{F\in \cL}\hom(\M_{F},\M'_{[F]}).\]
	\end{corollary}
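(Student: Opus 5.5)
The statement to prove is the last corollary: for pure graded $\H(\cL;\kk)$-modules $\M$ and $\M'$, the graded hom space $\hom_{\H(\cL;\kk)}(\M,\M')$ is non-canonically isomorphic to $\bigoplus_{F\in\cL}\hom(\M_F,\M'_{[F]})$. The plan is to transport the question to the exploded category and then apply Theorem \ref{thm:Soergel hom formula} with $\Sigma=\cL$.

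First, Proposition \ref{thm:main equivalence} says that $\Lambda$ is an equivalence between $\H(\cL;\kk)\mmod$ and $\cC_{\reg}(\cL;\kk)$. Since $\cC_\reg(\cL;\kk)$ is a full subcategory of $\cC(\cL;\kk)$, we get
\[
\Hom_{\H(\cL;\kk)}(\M,\M'[i])\cong \Hom_{\cC(\cL;\kk)}(\Lambda\M,\Lambda(\M'[i])).
\]
Here $\Lambda$ commutes with shifts, since each $\M(F)=y_F\M[\rk F]$ shifts compatibly. Summing over $i$ gives $\hom_{\H(\cL;\kk)}(\M,\M')\cong\hom_{\cC(\cL;\kk)}(\Lambda\M,\Lambda\M')$.

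Second, Remark \ref{purepure}, which rests on Proposition \ref{prop:criteria for (co)flabbiness} and Lemma \ref{lem:sheaf and module stalks}, shows that $\Lambda\M$ and $\Lambda\M'$ are pure objects of $\cC(\cL;\kk)$. Over a field, local freeness is automatic. Theorem \ref{thm:Soergel hom formula} with $\Sigma=\cL$ then gives
\[
\hom_{\cC(\cL;\kk)}(\Lambda\M,\Lambda\M')\cong\bigoplus_{F\in\cL}\hom\bigl((\Lambda\M)_F,(\Lambda\M')_{[F]}\bigr).
\]

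Finally, Lemma \ref{lem:sheaf and module stalks} supplies natural isomorphisms $(\Lambda\M)_F\cong\M_F$ and $(\Lambda\M')_{[F]}\cong\M'_{[F]}$. Substituting these into the previous display yields the claimed isomorphism.

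The only point that needs care is that the stalk and costalk conventions match. The module costalk $\M_{[F]}$ carries the shift $[\rk F]$, and it must agree with the costalk $\ker\Phi_F$ taken inside $\M(F)=y_F\M[\rk F]$; similarly for stalks. Lemma \ref{lem:sheaf and module stalks} asserts exactly this agreement. So I expect no real obstacle: the argument is a direct chaining of the three cited results.
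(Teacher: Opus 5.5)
Your proposal is correct and is essentially the paper's argument: the paper obtains this corollary directly by combining Proposition~\ref{thm:main equivalence}, Lemma~\ref{lem:sheaf and module stalks}, and Theorem~\ref{thm:Soergel hom formula}. You chain the same three results, and you add explicitly the points the paper leaves implicit: purity transfers via Remark~\ref{purepure}, $\cC_{\reg}(\cL;\kk)$ is a full subcategory of $\cC(\cL;\kk)$, and $\Lambda$ commutes with shifts.
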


Taking both $\M$ and $\M'$ to be the parity module $\E(\cL;\kk)$, we obtain the following result.

	\begin{corollary}
	We have $\End_{\H(\cL;\kk)} \E(\cL;\kk) \cong \kk$ if and only if, for every $F \neq \ho$ and every degree $k$,
	either the stalk $\E(\cL;\kk)_F$ or the costalk $\E(\cL;\kk)_{[F]}$ vanishes in degree $k$.
	\end{corollary}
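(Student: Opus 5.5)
We apply the previous corollary with $\M=\M'=\E(\cL;\kk)$. It gives a non-canonical isomorphism of graded vector spaces
\[
\hom_{\H(\cL;\kk)}(\E(\cL;\kk),\E(\cL;\kk))\cong\bigoplus_{F\in\cL}\hom\bigl(\E(\cL;\kk)_F,\E(\cL;\kk)_{[F]}\bigr).
\]
The endomorphism ring in the statement is the degree-zero piece of the left side, because module homomorphisms are degree-preserving. So the task is to compute the degree-zero piece of each summand on the right and compare the total dimension with $1$.

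First we handle the summand $F=\ho$. By the first part of Proposition~\ref{Chow stalks} (the general observation there), for any module we have $\M_{\ho}=y_{\ho}\M[\rk\cL]=\M_{[\ho]}$. By Theorem~\ref{props of E}(b), $y_{\ho}\E(\cL;\kk)$ is one-dimensional, concentrated in degree $\rk\cL$. After the shift, both $\E(\cL;\kk)_{\ho}$ and $\E(\cL;\kk)_{[\ho]}$ are copies of $\kk$ in degree $0$. So the degree-zero part of $\hom(\E_{\ho},\E_{[\ho]})$ is $1$-dimensional. This accounts for the scalars: the identity endomorphism is nonzero, so $\End\E\supseteq\kk$, and the dimension count is consistent with this.

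For $F\ne\ho$, the degree-zero part of $\hom(\E_F,\E_{[F]})$ is $\bigoplus_k\Hom_\kk(\E_F^k,\E_{[F]}^k)$. Its dimension is $\sum_k\dim\E_F^k\cdot\dim\E_{[F]}^k$, and this vanishes exactly when, in every degree $k$, one of the two spaces is zero. Adding up, $\dim\End\E(\cL;\kk)=1+\sum_{F\ne\ho}\sum_k\dim\E_F^k\dim\E_{[F]}^k$. Hence $\End\E(\cL;\kk)\cong\kk$, which for a ring containing the scalars is equivalent to being $1$-dimensional, holds if and only if every term with $F\ne\ho$ vanishes. This is the stated criterion.

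The only point needing care is the degree bookkeeping for the stalk and costalk at $\ho$, namely the shift $[\rk\cL]$ in the definitions. The rest is a direct dimension count.
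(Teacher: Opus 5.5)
Your proposal is correct and follows the paper's own argument: specialize the Hom-formula corollary to $\M=\M'=\E(\cL;\kk)$, take degree-zero pieces, and observe that the $F=\ho$ summand contributes exactly the scalars, a step the paper leaves implicit in its one-line proof. One small citation refinement: the fact that $y_{\ho}\E(\cL;\kk)$ is one-dimensional and concentrated in degree $\rk\cL$ uses parts (a) and (b) of Theorem~\ref{props of E} together with condition (c) of Theorem~\ref{def of E}, not part (b) alone.
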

	
	In \cite[Proposition 6.4]{SHCG}, we showed that the only endomorphisms of $\IH(\cL)  = \E(\cL;\Q)$ are scalar multiplication. 
	The proof relied on \cite[Proposition 6.3]{SHCG}, which states that the stalk
	 $\IH(\cL)_F$ vanishes in degrees greater than or equal to   $(\crk F)/2$ and  the costalk $\IH(\cL)_{[F]}$ vanishes in degrees less than or equal to  $ (\crk F)/2$.  
	Although the argument there was expressed in somewhat different language, it follows the same outline given here.
	
	\begin{example}
	Let $\cL = U_{2,n}$ be the lattice of flats for the uniform matroid of rank $2$ on $n$ elements, and let $\kk$ be a field of characteristic $p$.
	If $n\equiv 1$ modulo $p$, then $\E(\cL;\kk) \cong \CH(\cL;\kk)$ by Theorem~\ref{E=CH}. 
	The degree $1$ parts of the stalk 
	$\CH(\cL;\kk)_{\hat 0}$ and the costalk $\CH(\cL;\kk)_{[\hat 0]}$ are both isomorphic to $\kk$, and we have $\End_{\H(\cL;\Bbbk)} \E(\cL;\Bbbk) \cong \Bbbk[\varepsilon]/\langle \varepsilon^2\rangle$.
	\end{example}
	
\section{Relation with topology}\label{sec:relation with topology}

In this section, we assume that $\cL$ is the lattice of flats of a matroid,
and that this matroid is realizable by a linear subspace $V\subseteq \C^{\cL^1}$,
and we write $Y = Y_V \coloneq \overline{V} \subseteq (\PP^1)^{\cL^1}$ to denote the associated arrangement Schubert variety.
Given $p\in Y$ and $G\in\cL^1$, we will write $p_G\in\PP^1 = \C\cup\{\infty\}$ to denote the corresponding coordinate.
For any $F\in\cL$, let
\[S_F \coloneq \{p\in Y \mid p_G = \infty \iff G \not\leq F\}.\] 
By \cite[Lemma 7.6]{PXY}, we have $S_F\cong\C^{\rk F}$ and the stratification
\[Y = \bigsqcup_{F\in\cL} S_F.\]
The strata $\cS = \{S_F\mid F\in\cL\}$ are the orbits of the action of the additive group $(V,+)$ on $Y$. This implies that they form an algebraic stratification in the sense of Chriss--Ginzburg \cite[Definition 3.2.23]{Chriss-Ginzburg} and a good stratification in the sense of Achar \cite[Definition 2.3.20]{Achar}.  It follows that for any inclusion $f$ of locally closed unions of strata, 
the functors $f_*, f^*, f_!, f^!$ all preserve $\cS$-constructibility.


Let $\kk$ be either a field of arbitrary characteristic or a complete DVR of characteristic 0.
Let $D^b_\cS(Y;\kk)$ denote the derived category of sheaves of $\kk$-modules on $Y$ 
that are constructible with respect to $\cS$.  
For each element $F\in \cL$, let $i_F\colon S_F\to Y$ denote the inclusion.  An object $\cF$ in $D^b_\cS(Y;\kk)$ is called \textbf{$*$-even} (resp.\ \textbf{$!$-even}) if, for every $F\in \cL$, the cohomology sheaves
$\cH^k(i_F^*\cF)$ (resp.\ $\cH^k(i_F^!\cF)$) vanish for all odd $k\in\Z$ and are free $\kk$-local systems for all even $k\in\Z$.  It is called \textbf{even} if it is both $*$-even and $!$-even.  Juteau, Mautner, and Williamson defined a class of objects in $D^b_\cS(Y;\kk)$ called \textbf{parity sheaves}, described by the following theorem \cite{JMW14}.

\begin{theorem}
There exists a collection $\{\cE(F;\kk)\mid F\in\cL\}$ of objects of $D^b_\cS(Y;\kk)$
satisfying the following properties:
	\begin{enumerate}
		\item $\cE(F;\kk)$ is even and indecomposable.
		\item The support of $\cE(F;\kk)$ is equal to $\overline{S_F}$.
		\item The pullback $i_F^*\cE(F;\kk)$ to $S_F$ is a rank one constant $\kk$-local system in degree zero.
	\end{enumerate}
This collection is unique up to isomorphism.  
Every even object in $D^b_\cS(Y;\kk)$ is isomorphic to a direct sum of objects of the form $\cE(F;\kk)[2k]$ for some $k\in \Z$.
\end{theorem}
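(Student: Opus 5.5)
This is the Juteau--Mautner--Williamson existence and uniqueness theorem specialized to the stratification $\cS$, so the plan is to check that $(Y,\cS)$ satisfies the hypotheses of \cite{JMW14} and then apply their general results. The hypotheses are two. First, each stratum must be simply connected with vanishing odd cohomology, so that the only $\cS$-constructible local systems on $S_F$ are constant and $\H^\bullet(S_F;\kk)$ is free and even. This holds because $S_F\cong\C^{\rk F}$ by \cite[Lemma 7.6]{PXY}. Second, we need the ``parity vanishing'' condition: for every pair of strata, $\Hom^{n}$ between constant sheaves on strata, and between even objects built from them, vanishes for odd $n$. Equivalently, for $G\le F$ and the inclusion $i_G$, the stalks and costalks of $i_{F*}\underline{\kk}_{S_F}$ and $i_{F!}\underline{\kk}_{S_F}$ along $S_G$ should be even and free. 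I would verify this using the $(V,+)$-action together with a contracting $\C^\times$-action. Scaling $V$ extends to $Y$, since scaling on $\PP^1$ fixes $0$ and $\infty$. A transversal slice to $S_G$ at a point of $S_G$ is then a cone with a contracting $\C^\times$-action. It is locally isomorphic to the arrangement Schubert variety of the contraction, an affine chart of $Y_{V^G}$ whose strata are again affine spaces.

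With these hypotheses in hand, I would follow the standard JMW construction, inducting on strata in decreasing order of dimension, to produce $\cE(F;\kk)$. Start from $\underline{\kk}_{S_F}$ and extend across each smaller stratum $S_G$ by a minimal even extension. At each step this amounts to choosing a free even cover, mirroring the tautness construction of Proposition~\ref{prop:max indecomp extension}. Properties (1)--(3) then follow directly from the construction: evenness is built in, the support is $\overline{S_F}$, and the restriction to $S_F$ is $\underline{\kk}$ in degree zero.

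Uniqueness follows from the usual argument. The parity vanishing gives $\Hom^{\mathrm{odd}}=0$ between $*$-even and $!$-even objects. Hence any morphism defined on an open union of strata extends, exactly as in Lemma~\ref{lem:extension lemma}, and an extension that is an isomorphism on $S_F$ becomes an isomorphism by an argument local to each stratum. For indecomposability, $\End$ of $\cE(F;\kk)$ is local: over a field or a complete DVR, $\End$ is a finite, hence Krull--Schmidt, algebra, and any nontrivial idempotent would split off a summand whose restriction to $S_F$ vanishes, contradicting minimality. The decomposition statement is then obtained by peeling off copies of $\cE(F;\kk)[2k]$ along a maximal stratum of the support, using the even $\Hom$-extension property.

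The main obstacle will be the parity vanishing for the slices. If the $\C^\times$-contraction argument proves awkward, I would instead exhibit an even resolution: use the Bott--Samelson-type resolution of $Y$ by an iterated $\PP^1$-bundle or wonderful-type model (the augmented wonderful variety). This is a smooth variety paved by affine spaces with even fibers over each stratum, and the fibers are again paved. Pushing forward $\underline{\kk}$ then yields even objects, as in \cite[Section 2.4]{JMW14}, which gives existence directly and so sidesteps the need for general parity vanishing.
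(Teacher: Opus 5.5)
Your frame (check the hypotheses of \cite{JMW14} and apply its results) matches the paper, and your fallback paragraph is essentially the paper's proof. The main line, however, contains a step that fails.

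\textbf{The ``second hypothesis'' is false.} You require the stalks and costalks of $i_{F*}\ukk_{S_F}$ and $i_{F!}\ukk_{S_F}$ along $S_G$ to be even and free. This fails already when $\rk\cL=1$. Then $Y=\PP^1$, $S_{\hat 1}=\C$ and $S_{\hat 0}=\{\infty\}$, and $i_{\hat 0}^*i_{\hat 1*}\ukk_{\C}\cong\kk\oplus\kk[-1]$ is the cohomology of a punctured disc. In general, the stalk of $i_{\hat 1*}\ukk_{S_{\hat 1}}$ at the point stratum is the cohomology of the arrangement complement $V\cap(\C^\times)^{\cL^1}$, which has nonzero $\H^1$. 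Consistently with this, the strata of a conic slice through $S_G$ are arrangement complements, not affine spaces. So the proposed $\C^\times$-contraction verification cannot succeed.

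\textbf{The condition is not needed.} The two facts you use from it only require a stratumwise condition. These are the vanishing $\Hom(\cF,\cG[n])=0$ for $\cF$ $*$-even, $\cG$ $!$-even and $n$ odd, and the uniqueness theorem \cite[Theorem 2.12]{JMW14}. The stratumwise condition is $\Hom(\ukk_{S_F},\ukk_{S_F}[n])=\H^n(S_F;\kk)=0$ for $n\neq 0$, together with simple connectivity. Both hold because $S_F\cong\C^{\rk F}$. Once the Hom-vanishing is derived this way, your uniqueness, indecomposability and decomposition arguments go through.

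\textbf{Existence is where your primary route is incomplete.} The stratumwise condition does not give existence. Suppose $\cF'$ is even on the open union above $S_G$, and $\cF$ is any even extension across $S_G$. The triangle $i_G^!\cF\to i_G^*\cF\to i_G^*j_*\cF'$ forces the odd cohomology of $i_G^*j_*\cF'$ to embed in the free module $\HH^{\text{even}}(i_G^!\cF)$. So your ``free even cover'' exists only if that odd cohomology is torsion-free.
\begin{itemize}
\item Over a field this is automatic: complexes with constant cohomology on the contractible stratum $S_G$ split, so you can take the even part as $i_G^*\cF$ and the shifted odd part as $i_G^!\cF$.
\item The statement also covers complete DVRs. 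There nothing in your proposal gives the needed torsion-freeness.
\end{itemize}

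\textbf{How the paper gets existence.} The paper uses exactly your fallback: \cite[Corollary 2.35]{JMW14}, applied to a $(V,+)$-equivariant resolution of each $\overline{S_F}$ whose fibers have no odd cohomology. The augmented wonderful variety you mention is a natural candidate. Three points you should make explicit:
\begin{itemize}
\item You need such a resolution for every $\overline{S_F}$, not just for $Y$. Each $\overline{S_F}$ is itself an arrangement Schubert variety, for the restriction to $F$.
\item The $(V,+)$-equivariance makes the pushforward of the constant sheaf constant along each orbit $S_G$. Evenness then reduces to the free, even cohomology of the fibers.
\item $!$-evenness follows from Verdier self-duality of that pushforward, since the resolving space is smooth and the map is proper.
\end{itemize}
Make this your main existence argument and drop the false hypothesis.
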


\begin{proof}
	The uniqueness and support condition are \cite[Theorem 2.12]{JMW14}.  Existence of the complexes $\cE(F;\kk)$ follows from \cite[Corollary 2.35]{JMW14}, since the closures of strata $\overline{S_F}$, which are smaller arrangement Schubert varieties, have $(V,+)$-equivariant resolutions whose fibers have no odd cohomology.
\end{proof}

For any object $\cF$ in $D^b_\cS(Y;\kk)$, the hypercohomology $\HH^\bullet(\cF)$ is naturally a graded module over $\H^\bullet(Y;\kk)$.  
The next result, which is the main result of this section, identifies the hypercohomology of the parity sheaves on $Y$ with the parity modules of $\cL$.  The special case where $F = \ho$ and $\kk$ is a field
gives us Theorem \ref{topological interpretation} from the introduction.

\begin{theorem}\label{thm:parity sheaf to parity module}
	For any $F\in \cL$, there is a degree-doubling isomorphism
	\[\E(\cL^F;\kk)\cong \HH^\bullet(\cE(F;\kk))\]
	of modules over $\H(\cL;\kk)\cong\H^\bullet(Y;\kk)$.
\end{theorem}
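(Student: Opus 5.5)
The plan is to show that $\M \coloneq \HH^\bullet(\cE(F;\kk))$, with degrees halved, is a pure $\H(\cL;\kk)$-module whose costalk-stalk homomorphisms match those of $\E(\cL^F;\kk)$. Then Theorem~\ref{thm:indecomposables} (equivalently Theorems~\ref{def of E} and~\ref{indecomposables}) identifies it. The bridge is a dictionary between upper sets and open unions of strata. Since $\overline{S_G}=\bigsqcup_{H\le G}S_H$, an upper set $\Sigma\subseteq\cL$ corresponds to a closed union of strata $Z_\Sigma\coloneq\bigsqcup_{G\in\Sigma}S_G$, and $\cL\setminus\Sigma$ corresponds to the open complement $U_\Sigma$. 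We also use the fact that $\H^\bullet(Y;\kk)\to\H^\bullet(\overline{S_G};\kk)$ realizes the quotient $\H(\cL;\kk)\to\H(\cL^G;\kk)$, so that $y_G$ is (up to the identification of \cite{HW}) the class of the closed stratum closure corresponding to the point $S_{\hz}$ direction; concretely, $y_G$ is the fundamental class of $\overline{S_{G'}}$ with $\rk G'=\rk\cL-\rk G$, realized via the classes of the coordinate $\infty$-divisors.

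The first step is to compute everything by the standard parity-sheaf arguments. Evenness of $\cF=\cE(F;\kk)$ together with the affine-space strata $S_G\cong\C^{\rk G}$ makes every attaching long exact sequence split into short exact sequences. In particular $\HH^\bullet(Z,i^!\cF)$ and $\HH^\bullet(Z,i^*\cF)$ are free and even for every closed union of strata $Z$, and they are filtered with graded pieces $\H^\bullet(i_G^!\cF)$, respectively $\H^\bullet_c(S_G,i_G^*\cF)$. I will define the explosion topologically by setting $\cV(G)\coloneq\HH^\bullet(\overline{S_{G}}\text{-type local piece})$. More precisely, I want to realize $y_G\M$ as the image of $\HH_{Z}(\cF)\to\HH(\cF)$ for the appropriate closed $Z$ (a "$\Span$") and $\Ann_\Sigma\M$ as the kernel of restriction to an open set. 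Flabbiness then becomes surjectivity of restriction maps $\HH(U,\cF)\to\HH(U',\cF)$, and coflabbiness becomes injectivity of $\HH_{Z'}(\cF)\to\HH_Z(\cF)$. Both follow from the parity vanishing, which makes all these long exact sequences short exact. The Mayer--Vietoris gluing then gives the conditions for pairs $\Sigma_1,\Sigma_2$, as in Proposition~\ref{prop:criteria for (co)flabbiness}.

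The main obstacle is exactly this identification. I have to show that $\Span_\Sigma(\M)$ equals the image of compactly supported hypercohomology on the union of strata in $\Sigma$ (or of $!$-restriction to the closed part), and that $\Ann_\Sigma(\M)$ equals the kernel of the dual restriction. This requires knowing that multiplication by $y_G$ on $\HH(\cF)$ factors as restriction to a normal slice through $S_G$ followed by a Gysin map. My first attempt will use the $(V,+)$-action: $S_G$ has a $V$-equivariant neighborhood that is a product $S_G\times N_G$, with $N_G$ the arrangement Schubert variety of the contraction $\cL_G$. I would then prove $y_G\M\cong\HH(N_G,\cF|_{N_G})[-\rk G]$ by induction on $\cL$, mirroring Proposition~\ref{contraction module}. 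This gives the regular object $\Lambda\M$ directly in terms of local hypercohomologies, and it makes $\Phi_G,\Psi_G$ the restriction and corestriction maps of the slice.

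Granting that, the costalk and stalk at $G$ become $\H^\bullet(i_G^!\cF)$ and $\H^\bullet(i_G^*\cF)$ (suitably shifted), and $\beta_G$ is the natural map $i_G^!\to i_G^*$ on the contractible stratum. For the indecomposable parity sheaf supported on $\overline{S_F}$, the reduction of this map vanishes for $G\neq F$. The reason is the JMW criterion: a nonzero map $i_G^!\cF\to i_G^*\cF$ mod the maximal ideal would split off a summand $\cE(G)$, contradicting indecomposability. At $G=F$ it is an isomorphism $\kk\to\kk$. So $\Lambda\M$ is pure, is taut away from $F$, is supported on $\le F$, and has $\cV(F)=\kk$ in degree $0$. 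By the uniqueness in Theorem~\ref{thm:indecomposables} and Lemma~\ref{unexploding indecomposables}, $\M\cong\Theta\eE^F_{\cL,\kk}=\E(\cL^F;\kk)$. This also handles the DVR case, using Lemma~\ref{lem:tautness and (co)stalks}.
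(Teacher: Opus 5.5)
Your overall strategy is the paper's. Parity vanishing makes $\Sigma\mapsto\HH^\bullet(j_\Sigma^*\cF)$ a flabby sheaf and $\Sigma\mapsto\HH^\bullet_c(j_\Sigma^*\cF)$ a coflabby cosheaf. A factorization of $y_G$ identifies $\Lambda\M$ with these. Stalks and costalks become $\HH^\bullet(i_G^*\cF)$ and $\HH^\bullet(i_G^!\cF)$, with $\beta_G$ the natural map. The JMW criterion gives tautness, and uniqueness in Theorem~\ref{thm:indecomposables} finishes.

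\textbf{The open/closed dictionary is reversed.} Since $\overline{S_G}=\bigsqcup_{H\le G}S_H$, with $S_{\ho}=V$ open dense and $S_{\hz}$ a point, the union $U_\Sigma=\bigcup_{G\in\Sigma}S_G$ over an \emph{upper} set is \emph{open}. It is lower sets that give closed unions. So the correct matchings are:
\begin{itemize}
\item $\Gamma_*(\Sigma,\Lambda\M)$ with $\HH^\bullet(U_\Sigma,\cF)$, and $\Gamma_!(\Sigma,\Lambda\M)$ with $\HH^\bullet_c(U_\Sigma,\cF)$;
\item $\Span_\Sigma(\M)$ with the image of $\HH^\bullet_c(U_\Sigma,\cF)\to\HH^\bullet(\cF)$;
\item $\Ann_\Sigma(\M)$ with the kernel of restriction to $U_\Sigma$, i.e.\ the image of cohomology supported on the closed complement.
\end{itemize}
Your ``image of $\HH_Z(\cF)$ for closed $Z$'' and the ``$!$-restriction to the closed part'' variant describe $\Ann$, not $\Span$.

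\textbf{The central step is assumed, not proved.} You grant that multiplication by $y_G$ on $\HH^\bullet(\cF)$ is restriction to $U_G$, then an isomorphism $\HH^\bullet(j_G^*\cF)\cong\HH^\bullet_c(j_G^*\cF)[2\rk G]$, then corestriction. This is Proposition~\ref{prop:action of yf}. Without it you cannot identify $\ker y_G$, $\im y_G$, or the maps $\varphi,\psi$ topologically. So neither the purity of $\M$ nor the identification of $\beta_G(\M)$ with $\HH^\bullet(i_G^!\cF)\to\HH^\bullet(i_G^*\cF)$ is established.

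Your suggested ingredients do not supply this step:
\begin{itemize}
\item $y_G$ is not in general the class of a single stratum closure $\overline{S_{G'}}$.
\item ``Induction on $\cL$ mirroring Proposition~\ref{contraction module}'' gives no mechanism. That proposition is a statement about $\E$, so invoking it presupposes the identification you are proving.
\item An abstract isomorphism $y_G\M\cong\HH^\bullet(N_G,\cF|_{N_G})[-\rk G]$ would not identify kernels and images of $y_G$ with topological ones.
\end{itemize}

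The missing idea runs as follows.
\begin{enumerate}
\item Choose a basis $B$ of $G$. Then $y_G=\prod_{b\in B}y_b=p^*\alpha$, where $p\colon Y\to(\PP^1)^B$ is the coordinate projection and $\alpha$ is the point class.
\item Write $\alpha$ as $\ukk\to i_*i^*\ukk\cong i_!i^!\ukk[2\rk G]\to\ukk[2\rk G]$, with $i$ the inclusion of $0\in\C^B$.
\item Proper base change and the projection formula then factor $y_G$ acting on $\cF$ through $f_*f^*\cF$, where $f$ is the inclusion of the slice $p^{-1}(0)$.
\item Since $p^{-1}(\C^B)=U_G\cong\C^B\times p^{-1}(0)$ carries a $\C^\times$-action contracting it onto the slice, the contracting lemma \cite{Achar} makes both maps in $\HH^\bullet(j_G^*\cF)\to\HH^\bullet(f^*\cF)\to\HH^\bullet_c(j_G^*\cF)[2\rk G]$ isomorphisms.
\end{enumerate}

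A smaller point on tautness: $S_G$ is only locally closed in $Y$. The JMW criterion should be applied to the indecomposable restriction $j_G^*\cF$ on $U_G$, in which $S_G$ is closed \cite{JMW14}.
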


The proof of this theorem will proceed in several stages. We begin by associating a sheaf and a cosheaf to an even object in $D^b_\cS(Y;\kk)$.
For an upper set $\Sigma\subseteq \cL$, let 
\[U_\Sigma \coloneq \bigcup_{F\in \Sigma} S_F\]
be the corresponding open union of strata, and let $j_\Sigma\colon U_\Sigma\to Y$ be the inclusion.  Let $\cF \in D^b_\cS(Y;\kk)$ be even.

\begin{proposition}\label{prop:even gives sheaf and cosheaf}
	  The assignment
	\[\Sigma \longmapsto \HH^\bullet(j_\Sigma^*\cF)\]
	is a flabby sheaf on $\cL$, and the assignment
	\[\Sigma \longmapsto \HH_c^\bullet(j_\Sigma^*\cF)\]
	is a coflabby cosheaf on $\cL$.  For any upper set $\Sigma$, both of these graded $\kk$-modules vanish in odd degrees.
\end{proposition}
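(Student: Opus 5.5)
The plan is to get the (co)sheaf structure from Mayer--Vietoris and the vanishing in odd degrees from a long-exact-sequence induction over the strata. The (co)sheaf properties are formal. For upper sets $\Sigma_1,\Sigma_2$ we have $U_{\Sigma_1\cap\Sigma_2}=U_{\Sigma_1}\cap U_{\Sigma_2}$ and $U_{\Sigma_1\cup\Sigma_2}=U_{\Sigma_1}\cup U_{\Sigma_2}$, and these are open subsets. The Mayer--Vietoris triangles for $\HH^\bullet$ and for $\HH^\bullet_c$ of $\cF$ restricted to an open cover then give the gluing sequence for sections and the cogluing sequence for cosections. Restriction maps are pullbacks along open inclusions, and corestriction maps are extension by zero.

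The core of the argument is an induction on $|\Sigma|$, as in Lemma~\ref{lem:boundaries and (co)stalks}. Choose a minimal $F\in\Sigma$ and set $\Sigma'=\Sigma\setminus\{F\}$. Then $S_F$ is closed in $U_\Sigma$ with open complement $U_{\Sigma'}$. For hypercohomology there is the distinguished triangle
\[
(i_F^!\cF\text{ on }S_F)\to j_\Sigma^*\cF\to (j_{\Sigma'}^*\cF\text{ pushed forward}),
\]
which gives the long exact sequence
\[
\cdots\to \HH^k(S_F,i_F^!\cF)\to \HH^k(U_\Sigma,\cF)\to \HH^k(U_{\Sigma'},\cF)\to \HH^{k+1}(S_F,i_F^!\cF)\to\cdots.
\]
Since $\cF$ is $!$-even and $S_F\cong\C^{\rk F}$ is contractible, each local system $\cH^k(i_F^!\cF)$ is constant and free. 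The hypercohomology spectral sequence therefore degenerates, and $\HH^\bullet(S_F,i_F^!\cF)$ is free and concentrated in even degrees. By induction $\HH^\bullet(U_{\Sigma'},\cF)$ vanishes in odd degrees (the base case $\Sigma=\emptyset$ is trivial). The connecting maps go from even to odd degrees or from odd to even, so they vanish. This yields short exact sequences
\[
0\to \HH^\bullet(S_F,i_F^!\cF)\to\HH^\bullet(U_\Sigma,\cF)\to\HH^\bullet(U_{\Sigma'},\cF)\to 0.
\]
Hence $\HH^\bullet(U_\Sigma,\cF)$ is even (and free), and restriction to $U_{\Sigma'}$ is surjective. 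Any inclusion of upper sets $\Sigma''\subseteq\Sigma'$ factors as a chain of such one-element removals, so all restriction maps are surjective. This gives flabbiness, by the same reduction used in Lemma~\ref{lem:(co)flabby extensions}.

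The cosheaf is handled dually, using compactly supported cohomology and $*$-evenness. The triangle $j_!j^*\to \mathrm{id}\to i_*i^*$ on $U_\Sigma$ gives
\[
\cdots\to\HH^k_c(U_{\Sigma'},\cF)\to\HH^k_c(U_\Sigma,\cF)\to\HH^k_c(S_F,i_F^*\cF)\to\cdots.
\]
The term $\HH^\bullet_c(S_F,i_F^*\cF)$ is a constant free complex shifted by $2\rk F$, because $\HH^\bullet_c(\C^{r})$ is $\kk$ in degree $2r$. It is therefore even. The same parity argument shows that the corestriction $\HH_c^\bullet(U_{\Sigma'})\to\HH_c^\bullet(U_\Sigma)$ is injective and that everything is even. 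Composing these injections handles arbitrary $\Sigma''\subseteq\Sigma'$.

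The main technical point is justifying that $\HH^\bullet(S_F,i_F^!\cF)$ and $\HH^\bullet_c(S_F,i_F^*\cF)$ are even and free. This requires the local systems on the contractible stratum $S_F\cong\C^{\rk F}$ to be constant, and the spectral sequence computing their hypercohomology to degenerate. Constancy holds because $S_F$ is simply connected. For the degeneration, every cohomology sheaf is a constant free sheaf, and $\H^\bullet(\C^r;\kk)=\kk$ in degree $0$ and $\H_c^\bullet(\C^r;\kk)=\kk$ in degree $2r$, so only even rows contribute and the differentials, which change total degree by one, vanish for parity reasons. Freeness over a DVR then follows because the short exact sequences have free outer terms.
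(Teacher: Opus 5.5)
Your induction over a minimal element $F$, the parity argument that kills the connecting maps in the $i_F^!$ and $i_F^*$ sequences, and the resulting flabbiness and coflabbiness all match the paper's proof. The gap is your opening claim that the sheaf and cosheaf properties are a formal consequence of Mayer--Vietoris. They are not.

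Write $U_i=U_{\Sigma_i}$. The Mayer--Vietoris triangle only gives a long exact sequence
\[
\cdots\to\HH^{k-1}(U_1\cap U_2,\cF)\xrightarrow{\ \delta\ }\HH^k(U_1\cup U_2,\cF)\to\HH^k(U_1,\cF)\oplus\HH^k(U_2,\cF)\to\HH^k(U_1\cap U_2,\cF)\to\cdots.
\]
Exactness in the middle is automatic. The gluing axiom also requires injectivity of $\HH^k(U_1\cup U_2,\cF)\to\HH^k(U_1,\cF)\oplus\HH^k(U_2,\cF)$, which is exactly the condition $\delta=0$. Dually, the cosheaf axiom requires the connecting map $\HH^k_c(U_1\cup U_2,\cF)\to\HH^{k+1}_c(U_1\cap U_2,\cF)$ to vanish.

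This fails for general $\cF\in D^b_\cS(Y;\kk)$, even in this geometry. Take $\cL$ Boolean of rank $2$ with atoms $1,2$, so $Y=\P^1\times\P^1$, and let $\cF$ be the extension by zero of $\ukk$ from the open stratum $\C^2$. Then $U_{\Sigma_{\ge 1}}=\C\times\P^1$ and $\HH^\bullet(U_{\Sigma_{\ge1}},\cF)\cong H^\bullet(\C)\otimes H^\bullet_c(\C)$ is concentrated in degree $2$, and likewise for $U_{\Sigma_{\ge 2}}$. However, $\HH^1(U_{\Sigma_{\ge1}}\cup U_{\Sigma_{\ge2}},\cF)\cong\kk$. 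This is the relative $H^1$ of $\P^1\times\P^1\setminus\{(\infty,\infty)\}\simeq S^2\vee S^2$ modulo two disjoint copies of $\C$. So the gluing axiom genuinely depends on evenness.

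The repair uses only what you have already proved, but it has to come after your induction rather than before it. Once you know that $\HH^\bullet(j_\Sigma^*\cF)$ and $\HH^\bullet_c(j_\Sigma^*\cF)$ vanish in odd degrees for every upper set $\Sigma$, this applies in particular to $\Sigma_1\cap\Sigma_2$ and $\Sigma_1\cup\Sigma_2$. The Mayer--Vietoris connecting maps shift degree by one, so they vanish, and both sequences break into short exact sequences. That is precisely the paper's final step, and it is what makes $\Sigma\mapsto\HH^\bullet(j_\Sigma^*\cF)$ a sheaf and $\Sigma\mapsto\HH^\bullet_c(j_\Sigma^*\cF)$ a cosheaf.
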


\begin{proof}
	We first prove the parity vanishing statements, using induction on the cardinality of $\Sigma$.  The statements are trivial when $\Sigma$ is empty. Assume it is nonempty, and choose a minimal element $F \in \Sigma$, so that $\Sigma' \coloneq \Sigma \setminus \{F\}$ is an upper set.  By induction, both $\HH^\bullet(j_{\Sigma'}^*\cF)$ and $\HH_c^\bullet(j_{\Sigma'}^*\cF)$ vanish in odd degrees. 
	Applying hypercohomology to the distinguished triangles of \cite[Theorem 1.2.10]{Achar} gives exact sequences
	\begin{equation}\label{eqn:costalk exact sequence} 
		\cdots \longrightarrow \HH^\bullet(i_F^!\cF) \longrightarrow \HH^\bullet(j_\Sigma^*\cF) \longrightarrow \HH^\bullet(j_{\Sigma'}^*\cF) \stackrel{[1]}{\longrightarrow} \cdots
	\end{equation}
	and
	\begin{equation}\label{eqn:stalk exact sequence} 
		\cdots \longrightarrow \HH^\bullet_c(j_{\Sigma'}^*\cF) \longrightarrow \HH^\bullet_c(j_{\Sigma}^*\cF) \longrightarrow \HH^\bullet_c(i_F^*\cF)  \stackrel{[1]}{\longrightarrow} \cdots.
	\end{equation}
	Since $i^*_F\cF$ and $i^!_F\cF$ are both isomorphic to direct sums of constant free $\kk$-local systems with only even shifts, it follows that  $\HH^\bullet(i_F^!\cF)$ and $\HH^\bullet_c(i_F^*\cF)$ both vanish in odd degrees and are free in even degrees.  Since the outer terms of the exact sequences vanish in odd degrees, the same is true of the middle terms, completing the induction.
	
	This argument also shows that the connecting homomorphisms in these exact sequences vanish, so $\HH^\bullet(j_\Sigma^*\cF) \to \HH^\bullet(j_{\Sigma'}^*\cF)$ is surjective and $\HH^\bullet_c(j_{\Sigma'}^*\cF) \to \HH^\bullet_c(j_{\Sigma}^*\cF)$ is 
	split injective for any inclusion $\Sigma' \subseteq \Sigma$ of upper sets with $|\Sigma'| = |\Sigma| - 1$.  By induction, the same holds for any inclusion of upper sets, which proves flabbiness and coflabbiness.
	
	Finally, the parity vanishing proved above implies that, for any upper sets $\Sigma_1,\Sigma_2\subseteq \cL$, the Mayer--Vietoris sequences
	\[\cdots \longrightarrow \HH^\bullet(j_{\Sigma_1\cup\Sigma_2}^*\cF) \longrightarrow \HH^\bullet(j_{\Sigma_1}^*\cF) \oplus \HH^\bullet(j_{\Sigma_2}^*\cF)  \longrightarrow \HH^\bullet(j_{\Sigma_1\cap\Sigma_2}^*\cF) \stackrel{[1]}{\longrightarrow} \cdots\]
	and
	\[\cdots \longrightarrow \HH^\bullet_c(j_{\Sigma_1\cap\Sigma_2}^*\cF) \longrightarrow  \HH^\bullet_c(j_{\Sigma_1}^*\cF) \oplus \HH^\bullet_c(j_{\Sigma_2}^*\cF)  \longrightarrow \HH^\bullet_c(j_{\Sigma_1\cup\Sigma_2}^*\cF) \stackrel{[1]}{\longrightarrow} \cdots\]
	split into short exact sequences.  The first exact sequence says that $\Sigma \mapsto \HH^\bullet(j_\Sigma^*\cF)$ is a sheaf, and the second says that $\Sigma \mapsto \HH^\bullet_c(j_\Sigma^*\cF)$ is a cosheaf.
\end{proof}

Proposition \ref{prop:even gives sheaf and cosheaf} holds for even objects on any variety stratified by affine spaces; 
for example, it also holds for the Schubert stratification of a flag variety.  
But the exploded modules introduced in Section \ref{sec:exploded modules} have additional structure, 
namely an identification (up to shift) of the sections of the sheaf with the cosections of the cosheaf on principal open sets.
In order to prove Theorem \ref{thm:parity sheaf to parity module}, 
we need this additional structure.  It is easy to see that the sheaf and cosheaf of Proposition \ref{prop:even gives sheaf and cosheaf} agree on the maximal open set $U_\cL = Y$, since $Y$ is compact and therefore \[\HH^\bullet(\cF)=\HH^\bullet_c(\cF)\]
for any $\cF\in D^b_\cS(Y;\kk)$.  But to relate the sheaf and cosheaf beyond this extreme case we need the following proposition, which is particular to the geometry of the arrangement Schubert variety $Y$.  

For all $F\in \cL$, let $U_F = U_{\Sigma_{\geq F}}$ and 
$j_F = j_{\Sigma_{\geq F}}$.  

\begin{proposition}\label{prop:action of yf}
	For any $\cF \in D^b_\cS(Y;\kk)$ and any $F \in \cL$, there is a natural isomorphism
	\[\HH^\bullet(j_F^*\cF) \stackrel{\sim}{\longrightarrow} \HH^\bullet_c(j_F^*\cF)[2\rk F]\]
	of functors from $D^b_\cS(Y;\kk)$ to graded $\kk$-modules
	with the property that the composition 
	\[\HH^\bullet(\cF) \longrightarrow \HH^\bullet(j_F^*\cF) \longrightarrow \HH^\bullet_c(j_F^*\cF)[2 \rk F] \longrightarrow \HH^\bullet_c(\cF)[2 \rk F]=\HH^\bullet(\cF)[2 \rk F]\]
	with the restriction and corestriction maps coincides with the action of $y_F \in \H(\cL;\kk)$.
\end{proposition}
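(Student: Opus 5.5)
We will build the isomorphism geometrically from the structure of $U_F$. The open set $U_F=\bigcup_{G\ge F}S_G$ is the set of points $p\in Y$ with $p_G\neq\infty$ for all atoms $G\le F$. This suggests a product decomposition $U_F\cong S_F\times Y_F$, where $S_F\cong\C^{\rk F}$ and $Y_F$ is the arrangement Schubert variety of the contraction $\cL_F$. The intended mechanism is this. The additive group $V$ acts on $Y$, and the subgroup $V_F\subseteq V$ projecting isomorphically onto $S_F\cong\C^{\rk F}$ acts freely on $U_F$. Projecting to the coordinates indexed by atoms $G\le F$ gives a $V_F$-equivariant map $\pi\colon U_F\to\C^{\rk F}$, so $U_F\cong\C^{\rk F}\times\pi^{-1}(0)$, and $\pi^{-1}(0)$ is compact.

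\textbf{Step 1 (the isomorphism).} Every $\cS$-constructible $\cF$ restricted to $U_F$ is constant along the $\C^{\rk F}$ factor, since it is constructible with respect to a stratification invariant under the connected group $V_F$. Concretely, $j_F^*\cF\cong \pi'^*\cG$ for $\cG=\cF|_{\pi^{-1}(0)}$, where $\pi'$ is the projection to $\pi^{-1}(0)$. By homotopy invariance, $\HH^\bullet(j_F^*\cF)\cong\HH^\bullet(\cG)$. By the Künneth formula with compact supports and compactness of $\pi^{-1}(0)$, $\HH_c^\bullet(j_F^*\cF)\cong \H_c^\bullet(\C^{\rk F})\otimes\HH^\bullet(\cG)=\HH^\bullet(\cG)[-2\rk F]$. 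Composing these gives the required natural isomorphism. Equivalently, it is the inverse of the Thom/Gysin isomorphism for the trivial bundle $\pi'$, i.e.\ cup product with the fundamental class of $\C^{\rk F}$ with compact supports. I would phrase it via $\pi'_!\pi'^*\cong[-2\rk F]$ on the compact base, so that naturality in $\cF$ is automatic.

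\textbf{Step 2 (compatibility with $y_F$).} Every map in the composite is a map of $\H^\bullet(Y)$-modules, by the projection formula. So the composite $\HH^\bullet(\cF)\to\HH^\bullet(\cF)[2\rk F]$ equals cup product with the image of $1$ under the same composite applied to the constant sheaf $\kk_Y$. That element is the pushforward to $Y$ of the class $[\C^{\rk F}]_c\otimes 1$ on $U_F$. Its closure is the class of $\overline{\pi^{-1}(\mathrm{pt})}$: the Poincaré dual class of the closure of a fiber of $\pi$, which is the locus where the coordinates for atoms $G\le F$ are fixed generic finite values. It remains to identify this class with $y_F$ under the Huh--Wang isomorphism $\H(\cL;\kk)\cong\H^\bullet(Y;\kk)$. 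There $y_F$ is the class of the subvariety $\overline{S_{F^c}}$-type cycle, namely the pullback of the point class under the projection $Y\to(\PP^1)^{\rk F}$ to a basis of atoms of $F$. A fiber of that projection over a generic finite point is exactly our fiber closure, so the two classes agree.

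\textbf{Main obstacle.} I expect the main obstacle to be this last identification, together with the global version of Step 1: checking that the $V_F$-action genuinely trivializes $U_F$ as a product whose second factor is compact. The point to verify is that $\pi^{-1}(0)\cap U_F$ is closed in $Y$, since $p_G\neq\infty$ for $G\le F$ is preserved under limits within the fiber. If needed, I would instead use integrality: compare both classes in $\H^{2\rk F}(Y;\Z)$, which is free with basis indexed by rank-$\rk F$ flats, and reduce the comparison to computing intersection numbers with the dual classes $\overline{S_G}$, $\rk G=\rk F$. Transversally, $\overline{S_G}$ meets the fiber in one point if $G=F$ and is disjoint otherwise.
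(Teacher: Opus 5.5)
Your strategy is sound and differs from the paper's, but the key inference in Step~2 is not valid as written. It is easily repaired from your own Step~1.

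\textbf{Comparison with the paper.} The paper never uses a K\"unneth decomposition. It writes the point class $\alpha$ of $X=(\PP^1)^B$ as the sheaf map $\ukk_X\to i_*i^*\ukk_X\cong i_!i^!\ukk_X[2\rk F]\to\ukk_X[2\rk F]$, pulls it back along $p$, and tensors with $\cF$. It then factors the resulting map $\cF\to f_*f^*\cF\to\cF[2\rk F]$ through
$(j_F)_*j_F^*\cF\xrightarrow{\zeta}f_*f^*\cF\xrightarrow{\eta}(j_F)_!j_F^*\cF[2\rk F]$.
Compatibility with $y_F=p^*\alpha$ is then built in. The work lies in showing that $\HH^\bullet(\zeta)$ and $\HH^\bullet(\eta)$ are isomorphisms, using $U_F\cong W\times Y_F$ and the contracting lemma.

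On hypercohomology, $\eta\zeta$ is restriction to $Y_F$ followed by Gysin pushforward, which is exactly your cup product with the compactly supported Thom class. So both routes produce the same map. The paper gets $y_F$-compatibility for free and pays for the isomorphism. You get the isomorphism from homotopy invariance and K\"unneth, but must argue compatibility separately. Your claim $j_F^*\cF\cong\pi'^*\cG$ is true, but the reason is not invariance under a connected group as such. It holds because, in the trivialization, the strata are $W\times(S_G\cap Y_F)$, so the cohomology sheaves are locally constant along the contractible $W$-factor. This is essentially the same input as the contracting lemma.

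\textbf{The gap in Step~2.} Knowing that each map in the composite is $\H^\bullet(Y;\kk)$-linear does not imply that the composite on $\HH^\bullet(\cF)$ is multiplication by the element computed for $\ukk_Y$. An $\H^\bullet(Y;\kk)$-linear endomorphism of $\HH^\bullet(\cF)$ need not be multiplication by any class. Even if it were, linearity does not tie that class to the value of the construction on a different object.

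What you need is the specific form of your middle map. It is $x\mapsto\tau\cup x$ for one fixed class $\tau=(p|_{U_F})^*[W]_c\in\H^{2\rk F}_c(U_F;\kk)$. Here $W=\C^B$ is the target of your $\pi$, and $[W]_c$ generates $\H^{2\rk F}_c(W;\kk)$. The pullback exists because $p|_{U_F}\colon U_F\cong W\times Y_F\to W$ is proper, since $Y_F$ is compact. The projection formula for $(j_F)_!$ then gives
$\mathrm{cores}\bigl(\tau\cup\mathrm{res}(m)\bigr)=\bigl((j_F)_!\tau\bigr)\cup m$
for every $\cF$ and every $m\in\HH^\bullet(\cF)$. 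This is the statement you want.

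\textbf{Identifying the class.} You do not need cycle classes of fibre closures in the singular variety $Y$, or the integrality fallback. Since $U_F=p^{-1}(W)$, extension by zero commutes with pullback along $p$. Hence $(j_F)_!\tau=p^*\alpha$, which is $y_F=\prod_{b\in B}y_b$ under the Huh--Wang isomorphism, for any coefficient ring $\kk$.
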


Before proving Proposition \ref{prop:action of yf}, we show how it implies  Theorem \ref{thm:parity sheaf to parity module}.  The proof will be based on the following result, which is an immediate consequence of Propositions \ref{prop:even gives sheaf and cosheaf} and \ref{prop:action of yf}.

\begin{corollary}\label{cor:span and ann}
	For any even object $\cF \in D^b_\cS(Y;\kk)$ and any $F \in \cL$, the kernel of the multiplication by $y_F$ on $\HH^\bullet(\cF)$ is equal to the kernel of restriction $\HH^\bullet(\cF) \to  \HH^\bullet(j_F^*\cF)$ and the image is equal to the image of the corestriction $\HH^\bullet_c(j_F^*\cF) \to \HH^\bullet_c(\cF)$.
\end{corollary}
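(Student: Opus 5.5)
The plan is to deduce Corollary \ref{cor:span and ann} directly from the factorization in Proposition \ref{prop:action of yf}, using Proposition \ref{prop:even gives sheaf and cosheaf} to control injectivity and surjectivity of the pieces. Proposition \ref{prop:action of yf} writes multiplication by $y_F$ on $\HH^\bullet(\cF)$ as a composition of three maps. The first is the restriction $r_F\colon \HH^\bullet(\cF)\to \HH^\bullet(j_F^*\cF)$. The second is the natural isomorphism $\HH^\bullet(j_F^*\cF)\cong \HH^\bullet_c(j_F^*\cF)[2\rk F]$. The third is the corestriction $c_F\colon \HH^\bullet_c(j_F^*\cF)[2\rk F]\to \HH^\bullet_c(\cF)[2\rk F]$, combined with the identification $\HH^\bullet_c(\cF)=\HH^\bullet(\cF)$, which holds because $Y$ is compact.

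First I will treat the kernel. The middle isomorphism is bijective, so $\ker(y_F)\supseteq \ker(r_F)$, with equality precisely when $c_F$ is injective. By Proposition \ref{prop:even gives sheaf and cosheaf}, the cosheaf $\Sigma\mapsto \HH_c^\bullet(j_\Sigma^*\cF)$ is coflabby. This gives injectivity of the corestriction from $U_F=U_{\Sigma_{\geq F}}$ to $U_\cL=Y$, and in fact split injectivity, as shown in the proof of that proposition. Therefore $\ker(y_F)=\ker(r_F)$.

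Second, I will treat the image. Here $\im(y_F)\subseteq \im(c_F)$, with equality when $r_F$ is surjective. The sheaf $\Sigma\mapsto\HH^\bullet(j_\Sigma^*\cF)$ is flabby, again by Proposition \ref{prop:even gives sheaf and cosheaf}. Hence restriction from $\Sigma=\cL$ to $\Sigma_{\geq F}$ is surjective, which gives $\im(y_F)=\im(c_F)$.

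No step is a real obstacle. The only point to check is that the flabbiness and coflabbiness in Proposition \ref{prop:even gives sheaf and cosheaf} refer to exactly the restriction and corestriction maps appearing in Proposition \ref{prop:action of yf}, namely those for the inclusion $U_F\subseteq Y$, and that the shift $[2\rk F]$ does not affect kernels or images. Both hold by construction.
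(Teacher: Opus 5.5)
Your proposal is correct and is essentially the paper's own argument. The paper presents this corollary as an immediate consequence of the factorization of $y_F$ in Proposition~\ref{prop:action of yf} (restriction, then the isomorphism $\HH^\bullet(j_F^*\cF)\cong\HH^\bullet_c(j_F^*\cF)[2\rk F]$, then corestriction), combined with the flabbiness and coflabbiness from Proposition~\ref{prop:even gives sheaf and cosheaf}; you simply spell this out, and your ``equality precisely when'' phrasing is stronger than needed, since only the ``if'' directions are used and those are correct.
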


\begin{remark}
This corollary can be interpreted as saying that the exploded module 
$\cV \coloneq \Lambda\HH^\bullet(\cF)$ is built from the spaces $\cV(F) = \HH^\bullet(j^*_F\cF) \cong \HH^\bullet_c(j^*_F\cF)[2 \rk F]$, with the maps $\varphi^G_F$ given by restriction and the maps $\psi^G_F$ given by corestriction.	
\end{remark}

\begin{proof}[Proof of Theorem \ref{thm:parity sheaf to parity module}]
	We first show that, for any even object $\cF$ in $D^b_\cS(Y;\kk)$, the $\H(\cL;\kk)$-module $\M \coloneq \HH^\bullet(\cF)$ (with degrees halved) 
	is pure.  More precisely, following Definition \ref{new pure}, we show that the exploded module $\cV \coloneq \Lambda\M$ is flabby, coflabby, and locally free.	
	
	Let $\Sigma\subseteq \cL$ be an upper set.  We have isomorphisms
	\[\Gamma_!(\Sigma,\cV) \cong \varinjlim_{F\in \Sigma} \HH^\bullet_c(j^*_F\cF) \cong \HH^\bullet_c(j^*_\Sigma\cF), \]
	where the first isomorphism follows from Corollary \ref{cor:span and ann} and the second follows from the cosheaf property of Proposition \ref{prop:even gives sheaf and cosheaf}.  Proposition  \ref{prop:even gives sheaf and cosheaf} then shows that $\cV$ is coflabby.
	
	In the same way, we have isomorphisms 
	\[\Gamma_*(\Sigma,\cV) \cong \varprojlim_{F\in \Sigma} \HH^\bullet(j^*_F\cF) \cong \HH^\bullet(j^*_\Sigma\cF), \] 
	and we conclude by Proposition  \ref{prop:even gives sheaf and cosheaf} that $\cV$ is flabby.
	
	Next, we show that the stalk and costalk of $\cV$ at any $F\in \cL$ are identified with their topological counterparts.  For the stalk, we have
	\begin{align*}
		\cV_F & = \coker \Psi_{F,\cV}\\
		& \cong \coker\left(\HH^\bullet_c(j^*_{\Sigma_{>F}}\cF)[2\rk F] \to \HH^\bullet_c(j^*_F\cF)[2\rk F] \right)\\
		& \cong \HH^\bullet_c(i^*_F\cF)[2\rk F]\\
		&\cong \HH^\bullet(i^*_F\cF),
	\end{align*}
	where the second isomorphism follows from the exact sequence \eqref{eqn:stalk exact sequence} (recall that the connecting homomorphisms of this sequence vanish).  Since the cohomology sheaves of $i^*_F\cF$ are constant free $\kk$-local systems, $\HH^\bullet(i^*_F\cF)$ is free, and so $\cV$ is locally free.  We have completed the proof that $\cV$ is pure.
	
	We have a dual isomorphism for the costalk:
	\begin{align*}
		\cV_{[F]} & = \ker \Phi_{F,\cV} \\
		& \cong \ker\left(\HH^\bullet(j^*_F\cF) \to \HH^\bullet(j^*_{\Sigma_{>F}}\cF)\right)\\
		& \cong \HH^\bullet(i_F^!\cF),
	\end{align*} 
	where the second isomorphism follows from the exact sequence \eqref{eqn:costalk exact sequence}.  Combining these isomorphisms, we see that the 
	costalk-stalk homomorphism $\beta_F(\M)$, which is the composition
	\[\cV_{[F]} \longrightarrow \cV(F) \longrightarrow \cV_F,\]
	is identified with the natural map 
	\[\HH^\bullet(i_F^!\cF) \longrightarrow \HH^\bullet(i_F^*\cF).\]
	
	Now suppose that $\cF$ is the parity sheaf $\cE(F;\kk)$.  We wish to show that the exploded module $\cV = \Lambda\HH^\bullet(\cF)$ is isomorphic to $\eE^F_{\cL,\kk}$ as defined in Theorem \ref{thm:indecomposables}, since  Lemma \ref{unexploding indecomposables} then yields $\M \cong \Theta\cV \cong \E(\cL^F;\kk)$.  We know that $\cV$ is pure, so we must verify the remaining conditions of Theorem \ref{thm:indecomposables}.
	
	For any $G \not\le F$ we have $\cV(G) \cong \HH^\bullet(j_G^*\cF) = 0$, since $\cE(F;\kk)$ is supported on $\overline{S_F}$, and we also have $\cV(F) \cong \HH^\bullet(j_F^*\cF) \cong\kk$, since $\cE(F;\kk)$ restricts to a constant rank one local system on $S_F$.
	
	It remains to show that $\cV$ is taut at all $G\ne F$.   This is trivial if $G \not\le F$ since $\cV(G) = 0$, so suppose that $G < F$.  The restriction $j^*_G\cF$ is indecomposable by \cite[Proposition 2.11]{JMW14}, and $S_G$ is closed in $U_G$, so we can apply \cite[Lemma 2.21]{JMW14} together with the fact that $S_G$ is contractible to deduce that the 
	image of the natural homomorphism
	\[\HH^\bullet(i^!_G\cF) \longrightarrow \HH^\bullet(i^*_G\cF)\]
	does not contain any non-trivial direct summands as $\kk$-modules.  
	Equivalently, the induced homomorphism of $\olkk$-modules 
	\[\overline{\HH^\bullet(i^!_G\cF)} \longrightarrow \overline{\HH^\bullet(i^*_G\cF)}\]
	is zero.\footnote{Note that, if $\kk$ is a field, then $\kk=\olkk$ and the two homomorphisms are the same.}  We have seen that this map is identified with $\overline{\beta_G(\M)}$, so Lemma \ref{lem:tautness and (co)stalks} shows that $\cV$ is taut at $G$, as desired.  
\end{proof}

\begin{proof}[Proof of Proposition \ref{prop:action of yf}]
	Let $B \subseteq \cL^1$ be a basis for $F$.  In other words, $B$ is a minimal set of atoms whose join is $F$.  We have $|B| = \rk F$.
	
	Let $X = (\PP^1)^B$, and let $p\colon Y \to X$ denote the inclusion of $Y$ into $(\PP^1)^{\cL^1}$ followed by projection onto the factors indexed by $B$.  This map is equivariant for the action of $(V,+)$, where the action of $v\in V$ on $X$ forgets the coordinates not in $B$.  If $W = \C^B \subseteq X$ is the locus of points where no coordinate is $\infty$, then we have $p^{-1}(W) = U_F$, the union of all $S_G$ with $G \ge F$.

	The class $y_F\in \H^{\rk F}(\cL;\kk) \cong \H^{2\rk F}(Y;\kk)$ is the product of $y_b$ for $b\in B$.  This implies that $y_F = p^*\alpha$, where 
	$\alpha\in \H^{2\rk F}(X;\kk)$ is the standard generator.  We can describe the class $\alpha$ as a map of sheaves in the following way.   Let $i$ be the inclusion of $\{0\} \subseteq W$ into $X$.	
	Under the natural isomorphism 
	\[\H^{2\rk F}(X;\kk) \cong \Hom_{D^b(X;\kk)}(\ukk_X,\ukk_X[2\rk F]),\]
	the class $\alpha$ is given by the composition
	\begin{equation}\label{eqn:alpha as a sheaf map}
		\ukk_X \longrightarrow i_*i^*\ukk_X \stackrel\cong\longrightarrow i_!i^!\ukk_X[2\rk F] \longrightarrow \ukk_X[2\rk F],	
	\end{equation}
	where the first and third maps are the adjunctions and the middle map is obtained using the standard orientation of $X$ as a smooth complex variety.  More precisely, if $a \colon X \to \{0\}$ is the projection, the orientation induces an isomorphism $a^!\ukk_{\{0\}} \cong \ukk_X[2\rk F]$, and applying $i^!$ gives an isomorphism
	\[\ukk_{\{0\}} \cong i^!a^!\ukk_{\{0\}} \cong i^!\ukk_X[2\rk F]\]
	which, together with the fact that $i_* = i_!$, gives the required map.
	
	We can obtain a map of sheaves representing the class $y_F = p^*\alpha$ by pulling back the map \eqref{eqn:alpha as a sheaf map} by $p$.  We have $p^*\ukk_X = \ukk_Y$, so this will give a map $\ukk_Y \to \ukk_Y[2 \rk F]$.  To see what the pullback of the middle sheaf $i_*i^*\ukk_X$ is, consider $Y_F \coloneq p^{-1}(0)$. It is the arrangement Schubert variety associated with the vector subspace $V_F \coloneq V \cap p^{-1}(0)$, considered as a subspace of $\C^{\cL^1\setminus F}$.  Let $f\colon Y_F\to Y$ denote the inclusion.	
	Since $i_* = i_!$, proper base change implies that $$p^*i_*i^*\ukk_X\cong f_*f^*p^*\ukk_X \cong f_*f^*\ukk_Y.$$  
	Thus pulling back \eqref{eqn:alpha as a sheaf map} by $p$ expresses the cohomology class $y_F$ as a composition
	\[\ukk_Y \longrightarrow f_*f^*\ukk_Y \longrightarrow \ukk_Y[2\rk F].\]
	The action of $y_F$ on the hypercohomology of $\cF\in D^b_\cS(Y;\kk)$ 
	is then given by tensoring this map with $\cF$, using the natural isomorphism $\ukk_Y\otimes \cF \simeq \cF$, and taking hypercohomology.  The projection formula and compatibility of tensor with pullbacks give
	\begin{align*}
		f_*f^*\ukk_Y \otimes \cF & \cong f_!f^*\ukk_Y \otimes \cF \\
		& \cong f_!(f^*\ukk_Y \otimes f^*\cF) \\
		& \cong f_*f^*(\ukk_Y \otimes \cF)\\
		& \cong f_*f^*\cF.
	\end{align*}
	This gives the bottom row in the following diagram:
	\[\begin{tikzcd}
		& (j_F)_*j_F^*\cF \ar[rr] \ar[dr, "\zeta"] & & (j_F)_!j_F^*\cF[2\rk F] \ar[dr]& \\
		\cF \ar[ur] \ar[rr] & & f_*f^*\cF \ar[ur, "\eta"]\ar[rr] & & \cF[2\rk F]
	\end{tikzcd}\]
	The outer diagonal maps come from the adjunctions, together with the fact that $j_F^* = j_F^!$.  To construct the inner diagonal maps $\zeta$ and $\eta$, apply $(j_F)_*j_F^*$ to the bottom left map, apply $(j_F)_!j_F^*$ to the bottom right map, and use the fact that 
	the adjunction maps 
	\[(j_F)_!j_F^*f_*f^*\cF \longrightarrow f_*f^*\cF \longrightarrow (j_F)_*j_F^*f_*f^*\cF\] 
	are isomorphisms, because the support of $f_*f^*\cF$ is contained in $Y_F$, which is a closed subset of $U_F$.
	
	Taking hypercohomology of the upper path in the diagram gives homomorphisms
	\[\HH^\bullet(\cF) \longrightarrow \HH^\bullet(j_F^*\cF) \stackrel{\HH^\bullet(\eta\zeta)}{\longrightarrow} \HH^\bullet_c(j_F^*\cF)[2\rk F] \longrightarrow \HH^\bullet_c(\cF)[2\rk F] = \HH^\bullet(\cF)[2\rk F]\]
	factoring the action of $y_F$.  What remains is to show that $\HH^\bullet(\eta\zeta)$ is an isomorphism.  In fact, we will show that $\HH^\bullet(\eta)$ and $\HH^\bullet(\zeta)$ are both isomorphisms.  This will follow from understanding the local geometry of $Y$ near $Y_F$.  We have an isomorphism $(p,q)\colon U_F \stackrel{\sim}{\longrightarrow} W \times Y_F$, where $q\colon U_F\to Y_F$ is defined as follows.  Choose a complement  $\widetilde{W}$ to $V_F$ in $V$.  The restriction $\tilde{p}=p|_{\widetilde{W}}\colon \widetilde{W} \to W$ is an isomorphism, and we let $q(x) = x - \tilde{p}^{-1}(p(x))$ for $x\in U_F$.   The inverse map is given by $(w,x)\mapsto x + \tilde{p}^{-1}(w)$ for $w \in W$, $x\in Y_F$. 
	
	Using this, one can construct a stratification-preserving action of $\C^\times$ on $U_F$ with fixed point set $Y_F$, and which contracts $U_F$ onto $Y_F$, given by the formula\footnote{ Beware that this action does \emph{not} extend to an action on $Y$.}
	\[(p(t\cdot x),q(t \cdot x)) = (tp(x), q(x)),\;\; t\in \C^\times, x \in U_F.\]
	
	If we let $g\colon Y_F \to U_F$ denote the inclusion, so $f = j_F \circ g$, then the contracting lemma \cite[Theorem 2.10.3]{Achar} gives isomorphisms $q_* \simeq g^*$ and $q_! \simeq g^!$ of functors $D^b_\cS(U_F;\kk) \to D^b(Y_F;\kk)$.  In particular, for an object $\cG$ of $D^b_\cS(U_F;\kk)$, we have natural isomorphisms 
	\[\HH^\bullet(\cG) \cong \HH^\bullet(q_*\cG) \cong \HH^\bullet(g^*\cG),\;\;
	\HH^\bullet_c(\cG)  \cong \HH^\bullet(q_!\cG) \cong \HH^\bullet(g^!\cG).\]
	Since both the source and target of $\zeta$ are pushforwards of objects on $U_F$, to show that $\HH^\bullet(\zeta)$ is an isomorphism it is enough to note that all of the maps in the left triangle become isomorphisms after applying $f^* = g^*j_F^*$.  Similarly, the fact that $\HH^\bullet(\eta)$ is an isomorphism follows by noting that the maps in the right triangle become isomorphisms after applying $f^! = g^!j_F^!$.
\end{proof}

\bibliography{refs}
\bibliographystyle{amsalpha}

\end{document}